\pgfplotsset{compat=newest}
\title{Fundamental Bias in Inverting Random Sampling Matrices with Application to SSN }
\author{
  Chengmei Niu, \quad  Zhenyu Liao\footnote{Author to whom any correspondence should be addressed: Zhenyu Liao (\href{mailto:zhenyu_liao@hust.edu.cn}{zhenyu\_liao@hust.edu.cn}).}, \quad  Zenan Ling,\\ 
  School of Electronic Information and Communications,\\
  Huazhong University of Science and Technology, Wuhan, China\\
  \texttt{chengmeiniu@hust.edu.cn}, \texttt{zhenyu\_liao@hust.edu.cn}, \texttt{lingzenan@hust.edu.cn}
  \and
  Michael W.\@ Mahoney\\
  ICSI, LBNL, and Department of Statistics\\
  University of California, Berkeley, USA\\
  \texttt{mmahoney@stat.berkeley.edu}
}
\date{\today}
\DeclareMathOperator{\tr}{tr}
\DeclareMathOperator{\diag}{diag}
\DeclareMathOperator{\Var}{Var}
\DeclareMathOperator*{\argmin}{arg\,min}
\newcommand{\RR}{{\mathbb{R}}}
\newcommand{\EE}{{\mathbb{E}}}
\newcommand{\A}{\mathbf{A}}
\newcommand{\B}{\mathbf{B}}
\newcommand{\C}{\mathbf{C}}
\newcommand{\D}{\mathbf{D}}
\newcommand{\F}{\mathbf{F}}
\newcommand{\J}{\mathbf{J}}
\newcommand{\M}{\mathbf{M}}
\newcommand{\V}{\mathbf{V}}
\newcommand{\X}{\mathbf{X}}
\newcommand{\Z}{\mathbf{Z}}
\newcommand{\Q}{\mathbf{Q}}
\newcommand{\E}{\mathbf{E}}
\renewcommand{\S}{\mathbf{S}}
\renewcommand{\H}{\mathbf{H}}
\renewcommand{\P}{\mathbf{P}}
\newcommand{\K}{\mathbf{K}}
\newcommand{\TT}{\mathbf{T}}
\newcommand{\uu}{\mathbf{u}}
\newcommand{\vv}{\mathbf{v}}
\newcommand{\ba}{\mathbf{a}}
\newcommand{\ee}{\mathbf{e}}
\newcommand{\g}{\mathbf{g}}
\newcommand{\pp}{\mathbf{p}}
\newcommand{\x}{\mathbf{x}}
\newcommand{\y}{\mathbf{y}}
\newcommand{\s}{\mathbf{s}}
\newcommand{\zo}{\mathbf{0}}
\newcommand{\I}{\mathbf{I}}
\newcommand{\bbeta}{\boldsymbol{\beta}}
\newcommand{\bupsilon}{\boldsymbol{\upsilon}}
\newcommand{\eff}{ \mathrm{eff} }
\definecolor{RED}{rgb}{0.7,0,0}
\definecolor{BLUE}{rgb}{0,0,0.69}
\definecolor{GREEN}{rgb}{0,0.6,0}
\definecolor{PURPLE}{rgb}{0.69,0,0.8}
\definecolor{BLACK}{rgb}{0,0,0}
\definecolor{TEAL}{rgb}{0, 0.5, 0.5}
\newcommand{\RED}{\color[rgb]{0.70,0,0}}
\newcommand{\GREEN}{\color[rgb]{0,0.6,0}}
\newcommand{\qedwhite}{\hfill \ensuremath{\Box}}
\theoremstyle{plain}
\newtheorem{theorem}{Theorem}[section]
\newtheorem{proposition}[theorem]{Proposition}
\newtheorem{lemma}[theorem]{Lemma}
\newtheorem{corollary}[theorem]{Corollary}
\theoremstyle{definition}
\newtheorem{definition}[theorem]{Definition}
\newtheorem{assumption}[theorem]{Assumption}
\newtheorem{condition}[theorem]{Condition}
\theoremstyle{remark}
\newtheorem{remark}[theorem]{Remark}
\begin{document}
\maketitle

\begin{abstract}
A substantial body of work in machine learning (ML) and randomized numerical linear algebra (RandNLA) has exploited various sorts of random sketching methodologies, including random sampling and random projection, with much of the analysis using Johnson--Lindenstrauss and subspace embedding techniques.  
Recent studies have identified the issue of \emph{inversion bias} -- the phenomenon that inverses of random sketches are \emph{not} unbiased, despite the unbiasedness of the sketches themselves. 
This bias presents challenges for the use of random sketches in various ML pipelines, such as fast stochastic optimization, scalable statistical estimators, and distributed optimization.
In the context of random projection, the inversion bias can be easily corrected for dense Gaussian projections (which are, however, too expensive for many applications). 
Recent work has shown how the inversion bias can be corrected for sparse sub-gaussian projections.
In this paper, we show how the inversion bias can be corrected for random sampling methods, both uniform and non-uniform leverage-based, as well as for structured random projections, including those based on the Hadamard transform.  
Using these results, we establish problem-independent local convergence rates for sub-sampled Newton methods.
\end{abstract}

\section{Introduction} 
\label{sec:intro}

Randomized numerical linear algebra (RandNLA) significantly reduces computation, communication, and/or storage overheads by using randomness as an algorithmic resource. 
As a pivotal technique in RandNLA, random sketching -- which encompasses both random projection and random sampling -- is becoming increasingly critical in many modern large-scale machine learning (ML) applications.

More precisely, for a tall matrix $\A \in \RR^{n \times d}$ with $n \gg d$, random projection proposes to obtain a sketch $\tilde \A \in \RR^{m \times d}$, of size $m \ll n$ of $\A$ by randomly and linearly combining the rows of $\A$; while 
random sampling, on the other hand, carefully selects a small  subset (of size $m$ say) of the rows $\A$ and rescales them to obtain $\tilde \A \in \RR^{m \times d}$.
For both approaches, it follows from Johnson--Lindenstrauss (JL) type analysis~\citep{johnson1984extensions} that the random sketch $\tilde \A$ can be used as a ``proxy'' of $\A$ in many downstream ML tasks, e.g., $\tilde \A^\top \tilde \A \approx \A^\top \A$ are close in some sense with high probability, even with $m \ll n$.
This leads to a significant boost in the running time, communication time, and/or memory cost, for many numerical methods~\citep{drineas2006sampling,drineas2011faster,drineas2012fast,avron2017faster,DM21_NoticesAMS,lacotte2022adaptive}.
See also~\citet{mahoney2011randomized,halko2011finding,david2014sketching,RandNLA_PCMIchapter_chapter,martinsson2020randomized,DM21_NoticesAMS,randlapack_book_v2_arxiv,randnla_kdd24_TR} and reference therein for an overview of RandNLA and the applications in modern~ML.

Despite this promising ``complexity-accuracy'' trade-off achieved with random sketching, in many ML pipelines ranging from linear/ridge regression to scalable statistical estimation and fast stochastic optimization, the object of direct interest is the sketched matrix \emph{inverse} $(\tilde{\A}^\top \tilde \A + \C)^{-1}$ for some (perhaps all-zeros or diagonal) positive semi-definite (p.s.d.) $\C$ (instead of $\tilde \A^\top \tilde \A$ itself).
Since the matrix inverse is a nonlinear operator, the fact that $\tilde \A^\top \tilde \A$ is an unbiased or nearly unbiased estimator of $\A^\top \A$ (i.e., $\EE[\tilde \A^\top \tilde \A] = \A^\top \A$, which is a major guide in forming the sketch $\tilde \A$) does \emph{not}, in general, imply the unbiasedness of its inverse, i.e., $\EE[ (\tilde \A^\top \tilde \A)^{-1} ] \not \approx (\A^\top \A)^{-1}$.
This phenomenon of \emph{inversion bias} has been long known in the literature: in the case of Gaussian random projection with $\tilde \A = \S \A$ for $\S \in \RR^{m \times n}$ having i.i.d.\@ $\mathcal N(0,1/m)$ entries, $(\tilde \A^\top \tilde \A)^{-1}$ is known to follow the \emph{inverse Wishart distribution}~\citep{haff1979identity} with $\EE[(\tilde \A^\top \tilde \A)^{-1}] = \frac{m}{m - d -1} (  \A^\top \A)^{-1}$ for $m > d +1$.
However, much less is known beyond the Gaussian setting.
Building upon recent progress in non-asymptotic random matrix theory (RMT), it has recently been shown by \citet{derezinski2021sparse} that a similar inversion bias holds, and that it can be corrected, with $\EE[(\tilde \A^\top \tilde \A)^{-1}] \approx \frac{m}{m - d} (  \A^\top \A)^{-1}$, in the sense of partial order of p.s.d.\@ matrices, for sub-gaussian \emph{and} the so-called LEverage Score Sparsified (LESS) projections. 
The latter can be of greater practical interest than, e.g., dense Gaussian or sub-gaussian projections, as it is significantly sparser and can thus be evaluated more efficiently.

This precise characterization of inversion bias has direct implications for RandNLA and ML. 
As a telling example, \citet{derezinski2021newtonless} applied LESS sketches to Newton Sketch~\citep{pilanci2017Newton} and showed that this Newton-LESS approach enjoys almost the same local convergence rates as Newton Sketch with dense Gaussian projections.
This leads to a significantly better ``complexity-convergence'' trade-off than the vanilla Gaussian projections in stochastic second-order optimization methods.

\subsection{Our Contributions} 

In this paper, we consider the inversion bias of random sampling, including uniform and non-uniform sampling, as well as \emph{structured} random projections such as the Subsampled Randomized (Walsh-)Hadamard Transform (SRHT)~\citep{ailon2006approximate}.
The analysis framework in~\citet{derezinski2021newtonless,derezinski2021sparse} does \emph{not} apply (to get non-vacuous results; see, e.g., \Cref{prop:coarse-RS}~and~\Cref{rem:on_coarse} below).
Instead, we exploit novel and non-trivial connections between non-asymptotic RMT and RandNLA to show that this inversion bias can be precisely characterized and numerically corrected.
We also show how this inversion bias result can be used to improve the local convergence rates of the popular sub-sampled Newton (SSN) method~\citep{yao2018inexact,roosta2019subsampled,xu2020newton}.

Our main contributions can be summarized as follows.

\begin{enumerate}
    \item We provide precise characterization of the inversion bias for general random sampling (in \Cref{theo:inverse-bias}) and the corresponding de-biased approach (in \Cref{prop:debias}). 
    The proposed analysis and debiasing technique hold for exact and approximate leverage-based sampling (\Cref{coro:inver_bias_constant_debias}), as well as the structured SRHT (\Cref{coro:debiasing_srht}) as special cases.
    \item With this precise inversion bias result, we further establish (in \Cref{theom:RS-convergence}), the first \emph{problem-independent} local convergence rates for sub-sampled Newton that approximately matches the dense Gaussian Newton Sketch scheme.
    Numerical results are provided in \Cref{sec:num} to support these findings.
\end{enumerate}

\subsection{Related Work}
\label{subsec:related}

\paragraph{Inversion bias.}  
Given a matrix $\A \in \RR^{n \times d}$, the matrix inverse $(\A^\top\A)^{-1}$ is fundamental in ML, numerical computation, and statistics.  
Examples include linear functions $(\A^\top\A)^{-1}\y$ that are crucial to Newton's methods~\citep{boyd2004convex}, quadratic forms $\ba_i^\top(\A^\top\A)^{-1}\ba_i$ (with $\ba^\top_i$ the $i^{th}$ row of $\A$) in computing matrix leverage scores~\citep{drineas2012fast}, and trace forms, $\tr \mathbf{L} (\A^\top\A)^{-1} $ for some given $\mathbf{L}$, of interest in uncertainty quantification~\citep{kalantzis2013Accelerating} and experimental designs~\citep{pukelsheim2006optimal}.  
In the case of tall matrices with $n\gg d$, random sketching applies to efficiently reduce the computational overhead of $(\A^\top\A)^{-1}$, by using a sketch $\tilde\A=\S\A$ of $\A$ for random matrix $\S\in\RR^{m\times n}$ with $m\ll n$. 
It has been shown recently~\citep{michal2020precise,derezinski2021newtonless,derezinski2021sparse} that these sketched inverses are \emph{biased} for \emph{unstructured} sub-gaussian $\S$, with $\EE[(\tilde \A^\top\tilde \A)^{-1}] \not \approx (\A^\top\A)^{-1}$. 
In this paper, we consider the (often practically more interesting) case of \emph{structured} random matrix $\S$, including random sampling matrices (\Cref{def:RS}) and randomized Hadamard transforms (\Cref{def:srht}).

Different from our approach that explicitly modifies the sketch to correct the bias, another line of work proposes to use shrinkage-based correction techniques~\citep{pmlr-v202-zhang23ah,romanov2024newton}.

\paragraph{Random sampling.}
Random sampling is at the core of RandNLA~\citep{drineas2006fast1,mahoney2011randomized,ma2015statistical,DM16_CACM,DM21_NoticesAMS,randnla_kdd24_TR}, and it plays a central role in fast matrix multiplication~\citep{drineas2006fast1}, approximate regression~\citep{drineas2006sampling}, and low-rank approximation~\citep{cohen2017input}, to name a few. 
It is of particular interest in scenarios where the dataset is massive and cannot be stored and/or computed on a single machine, e.g., the census data~\citep{wang2018optimal} and online network data~\citep{deng2024subsampling}.
See \Cref{def:RS} below for a formal definition of random sampling and discussions thereafter for commonly-used sampling schemes including (exact and approximate) leverage score sampling~\citep{mahoney2011randomized,cohen2017input}, shrinkage leverage sampling~\citep{ma2015statistical}, as well as optimal subsampling~\citep{wang2018optimal,wang2021optimal,yu2022optimal,ma2022asymptotic}. 
In this paper, instead of providing classical JL and subspace embedding-type results on random sampling, we precisely characterize (and correct) the inversion bias for a variety of commonly-used random sampling schemes.

\paragraph{Sub-sampled Newton.}
Sub-sampled Newton (SSN) methods propose to approximate the Hessian in Newton's method using a small subset of samples, and they have been extensively studied within the fields of ML, RandNLA, and optimization~\citep{xu2016subsampled,bollapragada2019exact,roosta2019subsampled,xu2020newton,ye2021approximate}.
Although these fast optimization methods are easy to implement, their convergence rates are challenging to analyze.
Existing results often depend on the Hessian condition number or the Lipschitz constant and fall short of, e.g., the \emph{problem-independent} convergence rates achieved by sub-gaussian Newton Sketch~\citep{lacotte2019faster,derezinski2021newtonless}.
In this paper, we establish the first \emph{problem-independent} local convergence rates for SSN that closely align with Newton Sketch. 
This addresses the convergence guarantee gap identified in Iterative Hessian Sketch~\citep{pilanci2016iterative} for random sampling.

\paragraph{Random matrix theory (RMT).}
RMT studies the (limiting) eigenspectra of large-dimensional random matrices~\citep{anderson2010introduction} and finds its applications in signal processing and communication~\citep{couillet2011random}, statistical finance~\citep{plerou2002Random}, optimization~\citep{paquette2021SGDa,paquette2023Haltinga}, and more recently in large-scale ML~\citep{pennington2017nonlinear,fan2020spectra,mei2021generalization,couillet2022RMT4ML}.
A recent line of work~\cite{liao2020random,liao2021sparse,liao2021hessian} has highlighted non-trivial connections between RMT and RandNLA that this paper further develops.

\subsection{Notations and Organization of the Paper} 

We denote scalars by lowercase letters, vectors by bold lowercase, and matrices by bold uppercase.
For a matrix $\A\in \RR^{n\times d}$, we denote $\A^\top$, $\ba^\top_i \in \RR^{d}$, and $\|\A\|$ the transpose, $i^{th}$ row, and spectral norm of $\A$, respectively.
We denote $\A \preceq \B$ if $\B - \A$ is p.s.d., and use $\I_d$ for the identity matrix of size $d$.
For a vector $\x \in \RR^d$ and a matrix $\B \in \RR^{d \times d}$, we denote $\| \x \|_\B \equiv \sqrt{\x^\top \B \x}$, with the convention $\| \x \| = \| \x \|_{\I_d}$. 
For a random variable $x$, we denote $\EE[x]$ the expectation of $x$ and $\EE_{\zeta}[x]$ the expectation of $x$, conditional on the event $\zeta$.
We use  $\Theta(\cdot)$, $\tilde\Theta(\cdot)$, $O(\cdot)$, and $\tilde O(\cdot)$ notations as in standard computer science literature.

The remainder of this paper is organized as follows.  
\Cref{sec:pre} presents preliminaries on random sampling and a \emph{coarse-grained} characterization of its inversion bias, by directly (and naively) adapting the proof approach from~\citet{derezinski2021sparse} (which turns out to be vacuous in our setting).
\Cref{sec:main} delivers a \emph{fine-grained} analysis of this inversion bias and proposes an efficient non-vacuous debiasing approach.
\Cref{sec:application} demonstrates how these technical results apply to establish \emph{problem-independent} local convergence rates for SSN.
\Cref{sec:num} provides numerical results that support our theoretical findings.
\Cref{sec:conclusion} provides a conclusion, summarizing our findings and discussing future perspectives.
Additional material can be found in the appendices.

\section{Preliminaries on Random Sampling}
\label{sec:pre}

In this section, we introduce a few definitions that will be used in the remainder of this paper.
 
\begin{definition}[Random sampling]\label{def:RS}
For a matrix $\A \in \RR^{n \times d}$ with $n\geq d$, a sketch $\tilde \A \in \RR^{m \times d}$ of $\A$ can be constructed by randomly sampling \emph{with replacement} $m$ from the $n$ rows of $\A$ with an \underline{\emph{importance sampling distribution}}, $\{\pi_i\}_{i=1}^n$, $\sum_{i=1}^n \pi_i = 1$, and then rescaling by $1/\sqrt{m \pi_i}$.
This can be expressed as $\tilde\A=\S\A$, with \emph{random sampling  matrix} $\S\in \RR^{m\times n}$ having only one nonzero entry per row.
\end{definition}

\Cref{def:RS} includes commonly-used random sampling schemes such as uniform sampling (with $\pi_i =1/n$), row-norm-based sampling (with $\pi_i = \| \ba_i \|^2/( \sum_{i=1}^n \| \ba_i \|^2 )$), exact or approximate leverage and ridge leverage score sampling~\citep{mahoney2011randomized,alaoui2015fast} defined below, as well as a mix between them, e.g., the shrinkage leverage sampling~\citep{ma2015statistical}. 
\begin{definition}[Leverage score sampling,~\citet{mahoney2011randomized}]\label{def:lev} 
For a matrix $\A \in \RR^{n \times d}$ of rank $d$ with $n \geq d$ and a p.s.d.\@ matrix $\C \in \RR^{d \times d}$, the $i^{th}$ \underline{\emph{leverage score}} $\ell^{\C}_i$ of $\A$ given $\C$, is defined as $\ell^{\C}_i = \ba_i^\top(\A^\top\A+\C)^{-1}\ba_i, i \in \{ 1, \ldots, n \}$.
The exact leverage score sampling refers to the random sampling approach in \Cref{def:RS} with $\pi_i=\ell^{\C}_i/d_{\eff}$, for $d_{\eff}= \sum_{i=1}^n \ell^{\C}_i$ the \underline{\emph{effective dimension}} of $\A$ given $\C$.  
\end{definition}
The leverage score sampling has been extensively studied in RandNLA and ML. 
By taking $\C = \zo_d$ in \Cref{def:lev}, we obtain the standard leverage scores~\citep{mahoney2011randomized,drineas2012fast}; and by taking $\C = \lambda\I_d$, we obtain the $\lambda$-ridge leverage scores~\citep{alaoui2015fast}.
Given $\A\in\RR^{n\times d}$, its leverage scores can be approximately computed in $O(\mathrm{nnz}(\A)\log n + d^3 (\log d)^2+d^2\log n)$ time, for $\mathrm{nnz}(\A)$ the number of non-zero entries in $\A$, see~\citet{drineas2012fast,clarkson2017low,cohen2017input}.

We also introduce an ``approximation factor'' to measure the extent to which one importance sampling distribution approximates another importance sampling distribution (the exact leverage score distribution, in our case).
\begin{definition}[Importance sampling approximation factor]\label{def:approx_factor}
For a matrix $\A \in \RR^{n \times d}$ with $n \geq d$, a p.s.d.\@ matrix $\C \in \RR^{d \times d}$, and a random sampling matrix $\S \in \RR^{m\times n}$ as in \Cref{def:RS}, with importance sampling distribution $\{\pi_i \}_{i=1}^n$, the (min and max) \underline{\emph{importance sampling approximation}} \underline{\emph{factors}} of the random sampling scheme $\S$ is defined as the pair $(\rho_{\min}, \rho_{\max})$, with $\rho_{\min} \equiv \min_{1\leq i \leq n}\ell_i^{\C}/(\pi_i d_{\eff})$ and $\rho_{\max} \equiv \max_{1\leq i \leq n}\ell_i^{\C}/(\pi_i d_{\eff})$.
\end{definition}
For us, the importance sampling approximation factors $(\rho_{\min}, \rho_{\max})$ in \Cref{def:approx_factor} provide qualitative characterization on how the random sampling scheme under study differs from the \emph{exact} leverage score sampling in \Cref{def:lev}.
This extends the classical notion of  sampling approximation factor in~\citet{drineas2006fast1} to include both the maximum and the minimum.
While prior work primarily focuses on the max factor, here we focus on the inversion bias, where the min factor also plays a natural and significant role; see below in \Cref{sec:main} and also \citet{ma2015statistical}, who first noted the importance of the min factor in statistical style analysis.
By the (generalized) median inequality, we have $\rho_{\min} \leq 1 \leq \rho_{\max}$, with equality for exact leverage score sampling.

It follows from \Cref{def:RS} that $\EE[\tilde{\A}^\top \tilde{\A}] = \A^\top \A$, so that the randomly sampled matrix $\tilde \A^\top\tilde \A$ is an \emph{unbiased estimator} of the true $\A^\top \A$.
This, together with controls on the higher-order moments of $\tilde \A^\top\tilde \A$, allows one to conclude that $\tilde \A^\top\tilde \A$ fluctuates, with high probability and within a small ``distance,'' around the true $\A^\top \A$ of interest. 
This can be made precise using the relative error approximation (for scalars and matrices) defined as follows. 

\begin{definition}[Relative error approximation]\label{def:rela_error_approxi}
For a non-negative scalar $\tilde x \geq 0$, we say $\tilde x$ is an $\epsilon$-approximation of (another scalar) $x$, denoted $\tilde x \approx_{\epsilon} x$, if
\begin{equation}\label{eq:relative_error_scalar}
  ({1+\epsilon})^{-1} x \leq \tilde x \leq (1+\epsilon)x.
\end{equation}
For $\tilde x$ being random, we say $\tilde x$ is an $(\epsilon,\delta)$-approximation of $x$ if \eqref{eq:relative_error_scalar} holds with probability at least $1-\delta$.
Similarly, for a p.s.d.\@ matrix $\tilde \X$, we say $\tilde \X$ is an $\epsilon$-approximation (or an $(\epsilon,\delta)$-approximation when being random) if
\begin{equation}
     \tilde \X \approx_{\epsilon} \X \Leftrightarrow (1+\epsilon)^{-1}\X\preceq \tilde \X \preceq(1+\epsilon)\X.
\end{equation}
\end{definition}

\begin{remark}[Subspace embedding]\normalfont\label{rem:subspace_embed}
For $\tilde \A$ a sketch of $\A$, the property that $\tilde \A^\top \tilde \A \approx_{\epsilon} \A^\top \A$ holds with probability $1-\delta$ is known as $(\epsilon,\delta)$-\emph{subspace embedding} in RandNLA. 
This concept was introduced by~\citet{drineas2006sampling}; see also \citet{mahoney2011randomized} for a history. 
It was subsequently used in data-oblivious form by~\citet{Sarlos06,drineas2011faster}, and then popularized in data-oblivious form (and mis-attributed to~\citet{Sarlos06}) by~\citet{david2014sketching}. 
It plays a central role in the statistical characterization of random sketching techniques.
\end{remark}

The focus of this paper is to go beyond the subspace embedding-type results in \Cref{def:rela_error_approxi}~and~\Cref{rem:subspace_embed}, and to assess the \emph{inversion bias} of the form $\EE[ (\tilde{\A}^\top\tilde{\A} + \C)^{-1}]$ (versus the true inverse $(\A^\top \A + \C)^{-1}$).
To this end, we need the following measure of unbiased estimators.

\begin{definition}[Unbiased estimator]\label{def:unbiased_estimator}
We say a random p.s.d.\@ matrix $\tilde \X$ is an $(\epsilon,\delta)$-unbiased estimator of $\X$ if, conditioned on an event $\zeta$ that happens with probability at least $1-\delta$, 
\begin{equation}
   (1+\epsilon)^{-1}\X \preceq  \EE_{\zeta}[\tilde \X]\preceq   (1+\epsilon)\X,~\text{and}~\tilde \X\preceq O(1) \X. 
\end{equation}
\end{definition}

Note that   the error parameter $\epsilon$ in subspace embeddings quantifies spectral approximation error, whereas the notion of “unbiasedness” specifically refers to inversion bias.
Furthermore,  while subspace embeddings automatically ensure $\epsilon$-unbiasedness up to the \emph{same} level of error $\epsilon$, they are generally \emph{not} guaranteed to remain unbiased for a smaller~$\epsilon$~\citep{derezinski2021sparse}. 

With these definitions and notations at hand, we are ready to assess the statistical properties of random sampling.
A first quantity of interest to the design of random sampling is $m$, the number of trials needed to construct an $(\epsilon,\delta)$-subspace embedding, for some given importance sampling distribution $\{ \pi_i \}_{i=1}^n$.
A slightly more general result is given as follows.%
\footnote{Here we present the subspace embedding result in \Cref{lem:sub_embed} on the (regularized) matrix $\A_\C \equiv \A(\A^\top \A+\C)^{-1/2}$. This is of direct use in analyzing sub-sampled Newton methods in \Cref{sec:application}.}

\begin{lemma}[Subspace embedding for random sampling] \label{lem:sub_embed}
Given $\A\in \RR^{n\times d}$ of rank $d$ with $n\geq d$ and p.s.d.\@ $\C\in\RR^{d\times d}$,
let $\S$ be a random sampling matrix with number of trials $m$ and importance sampling distribution $\{ \pi_i \}_{i=1}^n$ as in \Cref{def:RS}, and let $d_{\eff}=\tr( \A_\C^\top \A_\C )$ be the effective dimension of $\A$ given $\C$ with $\A_\C \equiv \A(\A^\top \A+\C)^{-1/2}$. 
Then, there exists $C > 0$ independent of $n,d_{\eff}$ such that for $m\geq C\rho_{\max} d_{\eff} \log(d_{\eff}/\delta )/\epsilon^2$, failure probability $ \delta \in (0,1/2)$, $\epsilon > 0$, and $\rho_{\max}$ in \Cref{def:approx_factor}, $ \A_\C^\top\S^\top\S\A_\C$ is an $(\epsilon,\delta)$-approximation of $\A_\C^\top\A_\C$. 
\end{lemma}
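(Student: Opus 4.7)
The plan is to apply a matrix-Bernstein-type concentration inequality to the natural decomposition of $\A_\C^\top\S^\top\S\A_\C$ into independent rank-one summands. Writing $\S^\top\S = \frac{1}{m}\sum_{j=1}^m\pi_{i_j}^{-1}\ee_{i_j}\ee_{i_j}^\top$ for i.i.d.\@ indices $i_j$ drawn from $\{\pi_i\}_{i=1}^n$, we obtain $\A_\C^\top\S^\top\S\A_\C = \frac{1}{m}\sum_{j=1}^m \X_j$ where $\X_j = \pi_{i_j}^{-1}\tilde\ba_{i_j}\tilde\ba_{i_j}^\top$ and $\tilde\ba_i \equiv (\A^\top\A+\C)^{-1/2}\ba_i$ is the $i^{th}$ row of $\A_\C$. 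A one-line computation shows $\EE[\X_j] = \sum_i\tilde\ba_i\tilde\ba_i^\top = \A_\C^\top\A_\C$, so the matrix of interest is an empirical mean of unbiased summands.

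The crucial observation for controlling these summands is the identity $\|\tilde\ba_i\|^2 = \ba_i^\top(\A^\top\A+\C)^{-1}\ba_i = \ell_i^\C$, which is precisely the regularized leverage score of the $i^{th}$ row. This immediately gives the uniform norm bound $\|\X_j\| = \ell_{i_j}^\C/\pi_{i_j} \leq \rho_{\max}d_{\eff}$ directly from \Cref{def:approx_factor}. For the matrix variance we then compute
\begin{equation*}
\EE[\X_j^2] = \sum_{i=1}^n \frac{\ell_i^\C}{\pi_i}\,\tilde\ba_i\tilde\ba_i^\top \;\preceq\; \rho_{\max}d_{\eff}\cdot\A_\C^\top\A_\C,
\end{equation*}
so that the total variance $\V\equiv\sum_{j=1}^m\EE[(\X_j-\EE\X_j)^2]$ admits the p.s.d.\@ proxy $m\rho_{\max}d_{\eff}\,\A_\C^\top\A_\C$, whose operator norm is at most $m\rho_{\max}d_{\eff}$ since $\A_\C^\top\A_\C\preceq\I$ (from the identity $\A_\C^\top\A_\C + (\A^\top\A+\C)^{-1/2}\C(\A^\top\A+\C)^{-1/2}=\I$).

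The next step is to invoke the intrinsic-dimension version of matrix Bernstein (Tropp) with this variance proxy. Its trace equals $m\rho_{\max}d_{\eff}^2$, so its intrinsic dimension is $d_{\eff}/\|\A_\C^\top\A_\C\|$, which replaces the ambient rank $d$ by (essentially) $d_{\eff}$ in the pre-exponential factor. Setting the deviation threshold $t=\epsilon m$ then yields
\begin{equation*}
\Pr\Big\{\big\|\A_\C^\top\S^\top\S\A_\C - \A_\C^\top\A_\C\big\|\geq\epsilon\Big\} \;\leq\; 4 d_{\eff}\exp\!\left(-\frac{\epsilon^2 m}{2\rho_{\max}d_{\eff}(1+\epsilon/3)}\right),
\end{equation*}
which is at most $\delta$ whenever $m\geq C\rho_{\max}d_{\eff}\log(d_{\eff}/\delta)/\epsilon^2$ for an absolute constant $C>0$. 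Translating this operator-norm bound into the $(\epsilon,\delta)$-approximation in the sense of \Cref{def:rela_error_approxi} is a short algebraic step, equivalent to the p.s.d.\@ multiplicative statement $\tilde\A^\top\tilde\A + \C \approx_\epsilon \A^\top\A + \C$ via the same identity $\I = \A_\C^\top\A_\C + (\A^\top\A+\C)^{-1/2}\C(\A^\top\A+\C)^{-1/2}$ used in the variance bound above.

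I expect the main technical obstacle to be twofold. First, obtaining the sharper $\log d_{\eff}$ prefactor (rather than the ambient-rank $\log d$ that vanilla matrix Bernstein would give) forces us to use the intrinsic-dimension version and to choose a variance proxy whose trace-to-norm ratio is controlled by $d_{\eff}$, rather than by $d$. Second, the factor $\rho_{\max}$ must be tracked cleanly through both the uniform norm bound and the variance bound simultaneously, because the sampling distribution $\{\pi_i\}$ need not coincide with the exact regularized leverage score distribution; it is the joint appearance of $\rho_{\max}d_{\eff}$ in both $L$ and $\sigma^2$ that produces the advertised sample complexity.
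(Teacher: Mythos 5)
Your proposal follows essentially the same route as the paper: the same rank-one decomposition with the regularized leverage-score identity $\|\tilde\ba_i\|^2=\ell_i^\C$, the same uniform bound and variance proxy of order $\rho_{\max}d_{\eff}$, and the same intrinsic-dimension matrix Bernstein inequality whose trace-to-norm ratio $d_{\eff}/\|\A_\C^\top\A_\C\|$ yields the $\log(d_{\eff}/\delta)$ prefactor and the stated sample complexity. The argument is correct as proposed.
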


The proof of \Cref{lem:sub_embed} uses standard matrix concentration techniques and is given in \Cref{sec:proof_sub_embedd}.
Note that \Cref{lem:sub_embed} includes existing results of both leverage ($\C = \zo_d$) and ridge leverage score ($\C = \lambda \I_d$) sampling as special cases, see~\citet[Theorm~3]{chowdhury2018iterative}.  

With \Cref{lem:sub_embed}, we are now ready to evaluate the inversion bias of random sampling.
Since the matrix inverse is nonlinear, for $\A^\top \S^\top\S \A$ with $\EE[\A^\top \S^\top\S \A] = \A^\top \A$, one should, a priori, \emph{not} expect that $(\A^\top \S^\top\S \A)^{-1}$ is an unbiased or nearly unbiased estimator of $(\A^\top \A)^{-1}$.
In the following result, we show (by adapting, in an almost straightforward fashion, the scalar debiasing proof approach of~\citet{derezinski2021newtonless,derezinski2021sparse}) that this inversion bias can be corrected, \emph{but only to some extent}, using the \emph{same} scalar factor as for sub-gaussian or LESS projections.
The proof of \Cref{prop:coarse-RS} is given in \Cref{sec:proof_coarse-RS} for completeness.

\begin{proposition}[Coarse-grained debiasing of random sampling]
\label{prop:coarse-RS}
Given $\A\in \RR^{n\times d}$ of rank $d$ with $n \geq d$ and p.s.d.\@ $\C\in \RR^{d\times d}$, let $\S \in \RR^{m \times n}$ be a random sampling  matrix with importance sampling distribution $\{ \pi_i \}_{i=1}^n$ as in \Cref{def:RS} and max importance sampling approximation factor $\rho_{\max}$ as in \Cref{def:approx_factor}.
Then, there exists $C > 0$ independent of $n,d_{\eff}$ such that if $m\geq C \rho_{\max} d_{\eff}(\log(d_{\eff}/\delta )+\sqrt{d_{\eff}}/\epsilon)$ with $\delta\leq m^{-3}$, $(\frac{m}{m-d_{\eff}}\A^\top  \S^\top\S  \A + \C )^{-1}$ is an $(\epsilon,\delta)$-unbiased estimator of $(\A^\top \A + \C)^{-1}$.
\end{proposition}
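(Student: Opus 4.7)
The plan is to adapt the scalar-debiasing (leave-one-out Sherman--Morrison) argument of~\citet{derezinski2021newtonless,derezinski2021sparse} to the random-sampling setting by exploiting the fact that $\A^\top\S^\top\S\A$ is still a sum of $m$ iid rank-one matrices. Writing $\bb_j = \ba_{t_j}/\sqrt{m\pi_{t_j}}$ for $t_j$ the row drawn at step $j$, one has $\A^\top\S^\top\S\A = \sum_{j=1}^m \bb_j\bb_j^\top$ with iid $\bb_j$ satisfying $\EE[\bb_j\bb_j^\top] = \A^\top\A/m$. Set $\M = \A^\top\A + \C$, $\alpha = m/(m-d_{\eff})$, and $\tilde{\M} = \alpha\sum_j\bb_j\bb_j^\top + \C$; the target is $\EE_\zeta[\tilde{\M}^{-1}] \approx_\epsilon \M^{-1}$ on a high-probability event $\zeta$.

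First, I invoke \Cref{lem:sub_embed} with a constant accuracy $\epsilon_0 = 1/2$ under the budget $m \gtrsim \rho_{\max}d_{\eff}\log(d_{\eff}/\delta)$, producing an event $\zeta$ of probability at least $1-\delta$ on which $\tilde{\M}$ is spectrally equivalent to $\M$. This supplies the operator-norm part of \Cref{def:unbiased_estimator} for free and, crucially, forces the leave-one-out matrices $\tilde{\M}_{-j} := \tilde{\M} - \alpha\bb_j\bb_j^\top$ to remain uniformly spectrally comparable to $\M$, so that all inverses below are well controlled.

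The core computation is the Sherman--Morrison identity
\begin{equation}
\bb_j\bb_j^\top\tilde{\M}^{-1} \;=\; \frac{\bb_j\bb_j^\top\tilde{\M}_{-j}^{-1}}{1 + \alpha\,\bb_j^\top\tilde{\M}_{-j}^{-1}\bb_j},
\end{equation}
which, substituted into $\tilde{\M}\,\tilde{\M}^{-1} = \I_d$ and averaged over the sampling, yields
\begin{equation}
\I_d \;=\; \C\,\EE_\zeta[\tilde{\M}^{-1}] \;+\; \alpha\sum_{j=1}^m \EE_\zeta\!\left[\frac{\bb_j\bb_j^\top\,\tilde{\M}_{-j}^{-1}}{1+\alpha\,\bb_j^\top\tilde{\M}_{-j}^{-1}\bb_j}\right].
\end{equation}
Since $\bb_j$ is independent of $\tilde{\M}_{-j}$, I replace the random quadratic form in the denominator by its deterministic equivalent, $\alpha\,\bb_j^\top\tilde{\M}_{-j}^{-1}\bb_j \approx \alpha\,\tr(\A^\top\A\,\tilde{\M}_{-j}^{-1})/m \approx \alpha d_{\eff}/m = d_{\eff}/(m-d_{\eff})$, so that the denominator is approximately $m/(m-d_{\eff}) = \alpha$. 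Cancellation with the external $\alpha$ together with $\tilde{\M}_{-j}^{-1} \approx \tilde{\M}^{-1}$ on $\zeta$ reduces the identity to $\I_d \approx (\A^\top\A + \C)\,\EE_\zeta[\tilde{\M}^{-1}]$, which is exactly $\EE_\zeta[\tilde{\M}^{-1}] \approx_\epsilon \M^{-1}$. The scalar $\alpha = m/(m-d_{\eff})$ is precisely the value that absorbs the Sherman--Morrison denominator bias.

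The main obstacle will be controlling the two approximation errors hidden in the ``$\approx$'' steps above: the quadratic-form fluctuation $\bb_j^\top\tilde{\M}_{-j}^{-1}\bb_j - \tr(\A^\top\A\,\tilde{\M}_{-j}^{-1})/m$, and the trace fluctuation $\tr(\A^\top\A\,\tilde{\M}_{-j}^{-1}) - d_{\eff}$. Unlike the sub-gaussian/LESS setting, the vectors $\bb_j$ have deterministic yet highly inhomogeneous magnitudes $\|\bb_j\|^2 = \|\ba_{t_j}\|^2/(m\pi_{t_j})$; a direct Hanson--Wright/Hutchinson-type variance bound controls them only through $\rho_{\max}$ and the worst-case leverage score, yielding a fluctuation of order $\sqrt{d_{\eff}/m}$ per quadratic form. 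To push this below the target $\epsilon$ one must take $m \gtrsim \rho_{\max}d_{\eff}\cdot\sqrt{d_{\eff}}/\epsilon$, which accounts for the second term in the stated budget and is exactly the source of looseness that the fine-grained analysis of \Cref{sec:main} is designed to eliminate.
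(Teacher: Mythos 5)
Your proposal takes essentially the same route as the paper's proof: both adapt the leave-one-out Sherman--Morrison scalar-debiasing argument of \citet{derezinski2021newtonless,derezinski2021sparse}, set up a high-probability subspace-embedding event on which the leave-one-out inverses stay spectrally comparable to $\H=\A^\top\A+\C$, and then observe that for sampling matrices the relevant quadratic-form concentration parameter is only $O(\rho_{\max} d_{\eff})$ (rather than $O(1)$ as for sub-gaussian sketches), which is precisely what produces the $\sqrt{d_{\eff}}/\epsilon$ term in the sample-size requirement. The one detail you gloss over---the conditioning event must be assembled from independent sub-events (the paper uses three events on thirds of the samples) so that for each $s$ some controlling event is independent of $\x_s$, otherwise conditioning breaks the independence used in the Sherman--Morrison step---is a technicality inherited from the framework you cite and does not change the approach.
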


\begin{remark}[On \Cref{prop:coarse-RS}]\normalfont
\label{rem:on_coarse}
While the debiasing factor $\frac{m}{m - d_{\eff}}$ is the same as that proposed for random (e.g., sub-gaussian or LESS) projections~\citep{derezinski2021newtonless,derezinski2021sparse}, the resulting inversion bias is significantly larger.
In particular, we have, in the case of \Cref{prop:coarse-RS} and for $m= \Theta (\rho_{\max} d_{\eff}\log d_\eff)$, that $ ( \frac{m}{m-d_{\eff}} \A^\top  \S^\top\S  \A + \C )^{-1}$ has an inversion bias of order $O(\sqrt{d_\eff}/\log d_{\eff} )$. 
This is a \emph{vacuous} bound. It follows from \Cref{lem:sub_embed} that for the same choice of $m$, $ (\A^\top  \S^\top\S  \A + \C )^{-1}$ is, \emph{without} the debiasing factor, an $(O(1),\delta)$-approximation of $(\A^\top \A + \C)^{-1}$, and thus has an inversion bias of order $O(1)$.
See \Cref{lemm:sub_embed_inversion} in \Cref{sec:proof_sub_embedd} for a proof of this fact.
\end{remark}

From \Cref{rem:on_coarse}, it appears that the inversion bias result in \Cref{prop:coarse-RS} is disappointing: 
random sampling, in contrast with sub-gaussian or LESS random projections, while being numerically attractive and easy to implement, does \emph{not} lead to a small inversion bias, at least with the $\frac{m}{m - d_{\eff}}$ debiasing factor \emph{and} the proof approach in~\citet{derezinski2021newtonless,derezinski2021sparse} under \Cref{prop:coarse-RS}.
One may thus wonder:
\begin{quote}
  \emph{Is it possible to get sharper control on the inversion bias of random sampling, either by introducing a different debiasing scheme and/or by using a more refined proof than \Cref{prop:coarse-RS}?}
\end{quote}
Below, we show that such improvement is indeed possible.

\section{Fine-grained Analysis of Inversion Bias for Random Sampling}
\label{sec:main}

We have seen in \Cref{prop:coarse-RS}~and~\Cref{rem:on_coarse} that the scalar debiasing and the proof approach in~\citet{derezinski2021newtonless,derezinski2021sparse} do \emph{not} lead, in the case of random sampling, to a non-vacuous small inversion bias.
In the following result, we provide fine-grained analysis of the inversion bias of random sampling (finer than that in \Cref{prop:coarse-RS}), and we show that the inverse $(\A^\top \S^\top\S \A + \C)^{-1}$ for random sampling $\S$ is biased in a more involved fashion than random projections studied in~\citet{derezinski2021newtonless,derezinski2021sparse}.

\begin{theorem}[Inversion bias for random sampling: fine-grained analysis]
\label{theo:inverse-bias}
Given $\A\in \RR^{n\times d}$ of rank $d$ with $n \geq d$ and p.s.d.\@ $\C\in \RR^{d\times d}$, let $\S \in \RR^{m \times n}$ be a random sampling  matrix with importance sampling distribution $\{ \pi_i \}_{i=1}^n$ as in \Cref{def:RS} and $(\rho_{\min}, \rho_{\max})$ as in \Cref{def:approx_factor}.
Then, for diagonal matrix $\D = \diag\{ D_{ii} \}_{i=1}^n$ the solution to%
\footnote{ It can be checked that $\frac{m}{m+2\rho_{\max}  d_{\eff}}\I_n \preceq \D \preceq \frac{m}{m+\rho_{\min} d_{\eff}}\I_n$ with $\rho_{\min}, \rho_{\max}$ in \Cref{def:approx_factor}. See \Cref{lem:range_D} in \Cref{sec:proof_of_theo:inverse-bias}.\label{footnote:D}}
\begin{equation}\label{eq:def_Dii}
\hspace{-1mm}
    D_{ii}
\hspace{-1mm}
    = \hspace{-1mm}  \frac{m}{ m + \ba_i^\top (\A^\top \D \A + \C)^{-1} \ba_i/\pi_i},
\hspace{-1mm}
\end{equation}
there exists $C > 0$ independent of $n,d_{\eff}$ so that for $m\geq C\rho_{\max} d_{\eff}(\log(d_{\eff}/\delta )+ (\log n)^{2/3} /(\epsilon\log (\log n))^{2/3} )$, $ \delta\leq m^{-3}$, $(\A^\top \S^\top\S \A + \C )^{-1} $ is an $(\epsilon,\delta)$-unbiased estimator of $(\A^\top \D \A + \C)^{-1}$.
\end{theorem}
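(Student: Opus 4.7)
The plan is to prove \Cref{theo:inverse-bias} via a leave-one-out / Sherman--Morrison strategy from non-asymptotic random matrix theory, which is substantially finer than the scalar debiasing approach of \citet{derezinski2021newtonless,derezinski2021sparse} that yielded the vacuous bound of \Cref{prop:coarse-RS}. First, I decompose the sampled Gram matrix into a sum of i.i.d.\ rank-one contributions: writing $I_1, \ldots, I_m$ for the sampled row indices, set $\x_k = \ba_{I_k}/\sqrt{m \pi_{I_k}}$, so that $\A^\top \S^\top \S \A = \sum_{k=1}^m \x_k \x_k^\top$. Define the resolvents $\Q = (\sum_k \x_k \x_k^\top + \C)^{-1}$ and $\Q_{-k} = (\sum_{j \neq k} \x_j \x_j^\top + \C)^{-1}$, and let $\alpha_k = \x_k^\top \Q_{-k} \x_k$. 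The Sherman--Morrison identity then gives $\x_k \x_k^\top \Q = \x_k \x_k^\top \Q_{-k}/(1 + \alpha_k)$.

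Next, I derive the self-consistent equation that any deterministic equivalent of $\EE[\Q]$ must satisfy. Starting from $\I = \C \Q + \sum_k \x_k \x_k^\top \Q$, substituting Sherman--Morrison, taking expectations, and collapsing the sum over $k$ together with the sampling probability $\pi_i$ yields
\begin{equation*}
 \I = \C \,\EE[\Q] + \sum_{i=1}^n \EE\!\left[ \frac{\ba_i \ba_i^\top \Q_{-k}}{1 + \ba_i^\top \Q_{-k} \ba_i /(m \pi_i)} \right].
\end{equation*}
If $\Q_{-k}$ concentrates on a deterministic $\bar \Q$, then $\ba_i^\top \Q_{-k} \ba_i/(m\pi_i)$ concentrates on its deterministic counterpart, and the equation reduces to $\I = \C \bar \Q + \A^\top \bar \D \A \bar \Q$, i.e., $\bar\Q = (\A^\top \bar\D \A + \C)^{-1}$ with $\bar D_{ii} = m/(m + \ba_i^\top \bar\Q \ba_i/\pi_i)$. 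This is precisely \eqref{eq:def_Dii}. Existence, uniqueness, and the interval stated in the footnote of the theorem will follow from a monotonicity/contraction argument on the map $\D \mapsto \diag\{ m/(m + \ba_i^\top (\A^\top \D \A + \C)^{-1} \ba_i/\pi_i)\}$, which is entrywise order-preserving and leaves the announced interval forward-invariant once $m \geq C \rho_{\max} d_{\eff}$ for a sufficiently large constant $C$.

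To upgrade this heuristic into a quantitative $(\epsilon,\delta)$-bound, I condition on the $(\epsilon',\delta)$-subspace embedding event $\zeta$ of \Cref{lem:sub_embed} with some small $\epsilon'$ feeding into $\epsilon$. On $\zeta$, both $\Q$ and each $\Q_{-k}$ are sandwiched between constant multiples of $(\A^\top \A + \C)^{-1}$, which directly supplies the $O(1)$ domination $\tilde \X \preceq O(1) \X$ required by \Cref{def:unbiased_estimator} and ensures that conditioning perturbs the expectation by a polynomially small amount (since $\delta \leq m^{-3}$). For the mean bound, the algebraic identity
\begin{equation*}
 \EE_\zeta[\Q] - \bar\Q = \bar\Q \bigl( \A^\top \bar\D \A \,\EE_\zeta[\Q] - \EE_\zeta[\textstyle \sum_k \x_k \x_k^\top \Q] \bigr)
\end{equation*}
reduces the task to bounding, in the spectral norm relative to $\bar\Q$, the difference $\sum_i (\bar D_{ii} \EE_\zeta[\ba_i \ba_i^\top \Q] - \EE_\zeta[\ba_i \ba_i^\top \Q_{-k}/(1 + \ba_i^\top \Q_{-k}\ba_i/(m \pi_i))])$. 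The scalar identity $(1+x)^{-1} - (1+y)^{-1} = (y-x)/((1+x)(1+y))$ with $x = \ba_i^\top \Q_{-k} \ba_i/(m \pi_i)$ and $y = \ba_i^\top \bar \Q \ba_i /(m\pi_i)$, combined with a leave-two-out decoupling argument, reduces the matrix bound to a concentration inequality for the scalar quadratic forms $\ba_i^\top(\Q_{-k} - \bar\Q)\ba_i$.

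The main technical obstacle is precisely this last concentration step: the sampling weights $1/(m\pi_i)$ are heterogeneous and possibly large, so standard isotropic random-matrix arguments do not apply. The heterogeneity is tamed through the importance sampling approximation factors $(\rho_{\min},\rho_{\max})$ of \Cref{def:approx_factor}, which uniformly bound the ratio $\ba_i^\top \bar\Q \ba_i/(m\pi_i)$ and hence the effective variance of each leave-one-out perturbation. Optimally balancing the per-quadratic-form concentration error against the self-consistency error in the fixed-point system should then produce the $1/\epsilon^{2/3}$ dependence in the sample complexity, strictly improving over the $\sqrt{d_\eff}/\epsilon$ rate of \Cref{prop:coarse-RS}.
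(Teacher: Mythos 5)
Your high-level strategy is the same as the paper's: a rank-one decomposition of $\A^\top\S^\top\S\A$, Sherman--Morrison and a leave-one-out resolvent $\Q_{-k}$, the self-consistent equation defining $\D$, conditioning on a high-probability event, and a reduction of the bias to concentration of the quadratic forms $\ba_i^\top\Q_{-k}\ba_i$. As written, however, the proposal has two genuine gaps. The first concerns the conditioning event: you condition on the global subspace-embedding event of \Cref{lem:sub_embed}, which depends on \emph{all} of the draws $\x_1,\ldots,\x_m$. Under such conditioning, $\x_k$ is no longer distributed according to the importance sampling distribution, so the step where you ``collapse the sum over $k$'' into $\sum_i \pi_i(\cdot)$ inside a conditional expectation is not valid. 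The paper avoids this by splitting the $m$ draws into three independent blocks and taking $\zeta=\zeta_1\cap\zeta_2\cap\zeta_3$ as in \eqref{eq:events}, so that for every $s$ there is a block event independent of $\x_s$ on which $\Q\preceq\Q_{-s}\preceq 6\H^{-1}$, and the discrepancy between $\EE_{\zeta}$ and $\EE_{\zeta'}$ is then controlled by $\delta_3\leq m^{-3}$. Some decoupled event construction of this kind (or another decoupling device) is needed; it is not supplied by ``condition on the event of \Cref{lem:sub_embed}.''

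The second and more serious gap is precisely the step you defer: obtaining the rate $O\big((\rho_{\max} d_{\eff}/m)^{3/2}\big)$ — equivalently the $1/\epsilon^{2/3}$ sample complexity — requires the variance bound $\Var_{\zeta'}[\ba_j^\top\Q_{-s}\ba_j]=O\big(\rho_{\max} d_{\eff}(\ba_j^\top\H^{-1}\ba_j)^2/m\big)$, i.e.\ a variance decaying like $1/m$. Chebyshev combined with the uniform control of $\ba_i^\top\widetilde\H^{-1}\ba_i/(m\pi_i)$ through $(\rho_{\min},\rho_{\max})$ does not produce this $1/m$ factor; the paper proves it with a martingale-difference decomposition over the $m$ draws plus Burkholder's inequality (\Cref{lemma:cond_var}, using \Cref{lem:Burkholder}). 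Moreover, the fluctuation of $\ba_j^\top\Q_{-s}\ba_j$ around $\ba_j^\top\widetilde\H^{-1}\ba_j$ splits into this variance part and a bias part $|\EE_{\zeta'}[\ba_j^\top\Q_{-s}\ba_j]-\ba_j^\top\widetilde\H^{-1}\ba_j|$, and the latter is bounded in terms of the very quantity $\|\widetilde\H^{1/2}(\EE_{\zeta}[\Q]-\widetilde\H^{-1})\widetilde\H^{1/2}\|$ you are trying to estimate; the paper closes this self-referential loop by a rearrangement valid once $m\gtrsim\rho_{\max} d_{\eff}$. Your proposal acknowledges the obstacle and asserts that ``balancing'' the errors ``should'' give $1/\epsilon^{2/3}$, but without the $1/m$ variance mechanism and the rearrangement of the self-consistency term the claimed rate is unsupported, and the argument would degrade toward the vacuous bound of \Cref{prop:coarse-RS}. (Your fixed-point argument for existence and the range of $\D$ is fine and consistent with \Cref{lem:range_D}.)
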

\begin{proof}[Heuristic derivation of \Cref{theo:inverse-bias}]
For a more transparent understanding of the self-consistent equation in~\eqref{eq:def_Dii} of \Cref{theo:inverse-bias}, we provide here a heuristic derivation. 
The detailed  proof of \Cref{theo:inverse-bias} is deferred to \Cref{sec:proof_of_theo:inverse-bias}.
Denote $\x^\top_s=\ee^\top_{i_s} \A/\sqrt{\pi_{i_s}}$, $\Q=(\A^\top\S^\top\S \A +\C)^{-1}= \left(\frac{1}{m} \sum^{m}_{s=1} \x_s{\x}^{\top}_s + \C \right)^{-1}$
 and $ \Q_{-s}=(\sum_{j\neq s}\frac{1}{m} \x_j{\x}^{\top}_j + \C)^{-1}$, for which we have  $ \sum^{m}_{s=1}\frac{1}{m} \EE[\x_s{\x}^{\top}_s]=\sum^n_{i=1} {\ba}_i{\ba}_i^\top=\A^\top\A$. Then, we follow the \emph{deterministic equivalent} framework (see, e.g.,~\citet[Chapter~2]{couillet2022RMT4ML} for an introduction) and show that $\|\EE[ \Q]-\widetilde\H^{-1}\|\simeq 0$, for $\widetilde{\H}= \A^\top\D\A+\C$, for $\D\in \RR^{n\times n}$ given in \eqref{eq:def_Dii}. 
 First, note that $  \|\EE[ \Q]-\widetilde\H^{-1}\|= \|\EE[\Q]\A^\top  \D\A\widetilde\H^{-1}-\EE[\Q\A^\top \S^\top\S \A ]\widetilde\H^{-1}\| $, it then follows from Sherman-Morrison formula (\Cref{lemm:sherman-morrison}) that
\begin{align*}
\hspace{-1mm}\EE[ \Q\A^\top \S^\top\S \A ]\widetilde\H^{-1}
\hspace{-1mm}
=\hspace{-1mm}\sum^m_{s=1}\EE \left[\frac{\frac{1}{m} \Q_{-s}\x_s\x_s^\top\widetilde\H^{-1}}{1+\x_s^\top \Q_{-s}\x_s/m}\right]\hspace{-1mm}
=\hspace{-1mm}\sum^n_{i=1}\EE\left[\frac{ \Q_{-s}\ba_i\ba_i^\top\widetilde\H^{-1}}{1+\ba_i^\top \Q_{-s}\ba_i/m\pi_{i}}\right] \hspace{-1mm}.
\end{align*}
Using the rank-one perturbation lemma of matrix inverse, see, e.g., \citet[Lemma~2.6]{silverstein1995empirical}, we obtain
\begin{align*}
\hspace{-1mm} \EE[ \Q\A^\top \S^\top\S \A ]\widetilde\H^{-1}\hspace{-1mm} \simeq \hspace{-1mm}\sum^n_{i=1}\EE\left[\frac{ \Q\ba_i\ba_i^\top\widetilde\H^{-1}}{1+\ba_i^\top\widetilde\H^{-1}\ba_i/m\pi_{i}}\right] \hspace{-1mm}
=\hspace{-1mm}\EE\left[\Q\right]\A^\top\D\A\widetilde\H^{-1}\hspace{-1mm},
\end{align*}
for $\D=\diag\{m\pi_i/(m\pi_i+\ba_i^\top\widetilde\H^{-1}\ba_i)\}^n_{i=1}$.
This leads to the self-consistent equation in \eqref{eq:def_Dii} of \Cref{theo:inverse-bias}. 
\end{proof}

\Cref{theo:inverse-bias} says that the (conditional) expectation $\EE_{\zeta}[ (\A^\top \S^\top\S \A + \C )^{-1} ]$, instead of being close to $(\A^\top \A + \C)^{-1}$, is in fact close to $(\A^\top \D \A + \C)^{-1}$, with $\D$ depending on $\A$ and the random sampling scheme per $m$ and $\{ \pi_i \}_{i=1}^n$ in an implicit fashion.
While seemingly uninterpretable and unusable at first sight, \Cref{theo:inverse-bias} can be tuned to design a de-biased random sampling approach.
This is given in the following result. 

\begin{proposition}[Fine-grained debiasing for general random sampling]\label{prop:debias}
Under the settings and notations of \Cref{theo:inverse-bias}, for $\ell_{i_s}^\C$ the $i_s^{th}$ leverage score of $\A$ as in \Cref{def:lev} and standard random sampling matrix $\S$ as in \Cref{def:RS}, define the de-biased sampling matrix $ \check\S \in \RR^{m\times n}$~as
\begin{equation}\label{eq:debias_check_S}
  \hspace{-1mm}\check\S\hspace{-1mm} =\hspace{-1mm}\diag \left\{\sqrt{m/(m-\ell_{i_s}^\C/\pi_{i_s})} \right\}^m_{s=1}\cdot\S.\hspace{-1mm} 
\end{equation}
Then, there exists constant $C > 0$ independent of $n,d_{\eff}$ such that for $m\geq C\rho_{\max} d_{\eff} (\log(d_{\eff}/\delta )+(\log n)^{2/3} /(\epsilon\log (\log n))^{2/3} )$, $\delta\leq m^{-3}$, $(\A^\top \check\S^\top \check\S \A + \C )^{-1}$ is an $(\epsilon,\delta)$-unbiased estimator of $(\A^\top  \A + \C)^{-1}$.
\end{proposition}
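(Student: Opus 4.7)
The plan is to reduce Proposition~\ref{prop:debias} to a direct application of Theorem~\ref{theo:inverse-bias} on a suitably rescaled data matrix, where the rescaling is chosen so that the implicit fixed-point matrix $\bar\D$ produced by Theorem~\ref{theo:inverse-bias} exactly cancels against it. The core observation is that the row-wise diagonal rescaling in \eqref{eq:debias_check_S} can be absorbed into the matrix being sampled: if I define $\bar\A\in\RR^{n\times d}$ with rows $\bar\ba_i^\top = c_i\ba_i^\top$ for $c_i = \sqrt{m/(m - \ell_i^\C/\pi_i)}$, then a direct calculation gives
\begin{equation*}
\A^\top\check\S^\top\check\S\A \;=\; \bar\A^\top\S^\top\S\bar\A,
\end{equation*}
i.e., the debiased sampling $\check\S$ applied to $\A$ equals the \emph{standard} random sampling $\S$ (same importance distribution $\{\pi_i\}$, per \Cref{def:RS}) applied to $\bar\A$.

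With this reduction, I would apply Theorem~\ref{theo:inverse-bias} to $(\bar\A,\C)$ and the sampling matrix $\S$ to conclude that $(\bar\A^\top\S^\top\S\bar\A + \C)^{-1}$ is an $(\epsilon,\delta)$-unbiased estimator of $(\bar\A^\top\bar\D\bar\A + \C)^{-1}$, where $\bar\D = \diag\{\bar D_{ii}\}$ satisfies the fixed-point equation of Theorem~\ref{theo:inverse-bias} with $\bar\ba_i$ in place of $\ba_i$. I would then guess and verify that $\bar D_{ii} = 1 - \ell_i^\C/(m\pi_i)$ solves it. The guess is motivated by the identity $c_i^2\bar D_{ii} = 1$, which yields
\begin{equation*}
\bar\A^\top\bar\D\bar\A \;=\; \sum_i\bar D_{ii}\,c_i^2\,\ba_i\ba_i^\top \;=\; \A^\top\A,
\end{equation*}
so that $\bar\ba_i^\top(\bar\A^\top\bar\D\bar\A + \C)^{-1}\bar\ba_i = c_i^2\ell_i^\C$; plugging this into the right-hand side of the fixed-point equation and simplifying (the denominator collapses to $m^2\pi_i$) reproduces $1 - \ell_i^\C/(m\pi_i)$. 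Uniqueness of $\bar\D$ (inherited from the monotonicity behind the range bound in the footnote of Theorem~\ref{theo:inverse-bias}) then identifies this explicit $\bar D_{ii}$ as \emph{the} $\bar\D$ of Theorem~\ref{theo:inverse-bias}, and since $\bar\A^\top\bar\D\bar\A = \A^\top\A$, the desired conclusion of Proposition~\ref{prop:debias} follows immediately.

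It remains to confirm that the sample complexity $m\geq C\rho_{\max}d_{\eff}(\log(d_{\eff}/\delta) + \epsilon^{-2/3})$ suffices to invoke Theorem~\ref{theo:inverse-bias} on $\bar\A$ rather than on $\A$. Since $\ell_i^\C/\pi_i \leq \rho_{\max}d_{\eff}$ by \Cref{def:approx_factor}, taking the universal constant $C$ sufficiently large forces $m\geq 2\rho_{\max}d_{\eff}$ and hence $1\leq c_i^2\leq 2$, so that $\bar\A$ is spectrally equivalent to $\A$ up to absolute constants. Standard comparisons then yield $\bar\ell_i^\C = \Theta(\ell_i^\C)$, $\bar d_{\eff} = \Theta(d_{\eff})$, and $\bar\rho_{\max} = \Theta(\rho_{\max})$, so the hypothesis of Theorem~\ref{theo:inverse-bias} applied to $(\bar\A,\C)$ is satisfied under the stated condition on $m$, at the cost of enlarging $C$.

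The main obstacle I anticipate is not the algebraic verification (which reduces to a short simplification) but the bookkeeping around $\bar\D$: because Theorem~\ref{theo:inverse-bias} only asserts existence of a fixed-point solution, the non-trivial step is to reverse-engineer the scaling $c_i$ in \eqref{eq:debias_check_S} so that the implicit $\bar\A^\top\bar\D\bar\A$ collapses cleanly to $\A^\top\A$, and then to justify --- via the range bounds in the footnote of Theorem~\ref{theo:inverse-bias} --- that our explicit closed-form $\bar D_{ii}$ is indeed the particular fixed-point picked out by Theorem~\ref{theo:inverse-bias}.
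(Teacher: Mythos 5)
Your reduction is correct, and it takes a genuinely different route from the paper. The paper proves \Cref{prop:debias} by re-running the entire leave-one-out machinery of \Cref{theo:inverse-bias} for the re-weighted matrix $\check\S$: it re-derives the decomposition into three error terms $\check\Z_1,\check\Z_2,\check\Z_3$, re-bounds each using the factors $F_{i_si_s}=m\pi_{i_s}/(m\pi_{i_s}-\ell^{\C}_{i_s})\le 2$, and re-applies the martingale/Burkholder concentration step, so the argument is essentially a second copy of the proof of \Cref{theo:inverse-bias}. You instead absorb the diagonal re-weighting into the data, writing $\A^\top\check\S^\top\check\S\A=\bar\A^\top\S^\top\S\bar\A$ for $\bar\A=\diag\{c_i\}\A$, $c_i^2=m/(m-\ell_i^\C/\pi_i)$, apply \Cref{theo:inverse-bias} to $(\bar\A,\C)$, and verify by direct substitution that $\bar D_{ii}=c_i^{-2}=1-\ell_i^\C/(m\pi_i)$ solves the fixed-point equation \eqref{eq:def_Dii} for $\bar\A$, whence $\bar\A^\top\bar\D\bar\A=\A^\top\A$ and the deterministic equivalent collapses to $(\A^\top\A+\C)^{-1}$; your bookkeeping that $1\le c_i^2\le 2$ under $m\ge C\rho_{\max}d_{\eff}$ gives $\bar\ell_i^\C=\Theta(\ell_i^\C)$, $\bar d_{\eff}=\Theta(d_{\eff})$, $\bar\rho_{\max}\bar d_{\eff}\le 2\rho_{\max}d_{\eff}$, so the hypotheses of \Cref{theo:inverse-bias} transfer at the cost of enlarging $C$ (and, since the theorem is non-asymptotic with a universal constant, its application to the $m$-dependent matrix $\bar\A$ is legitimate). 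What your route buys is brevity and a transparent explanation of \emph{why} the weights in \eqref{eq:debias_check_S} are the right ones — it turns the heuristic of \Cref{subsec:RMT_intuition_prop:debias} into the actual proof; what the paper's direct route buys is that it exposes the intermediate quantities ($\check\Q_{-s}$, the bounds on $\check M_1,\check M_2$) that are reused verbatim in the approximate-leverage variant (\Cref{coro:proof_of_approximate_lev}), the scalar-debiasing corollaries, and the second-moment analysis of \Cref{prop:sec.moment}, none of which follow from your reduction alone.

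One point to tighten: \Cref{theo:inverse-bias} speaks of ``the solution'' to \eqref{eq:def_Dii} but the paper never proves uniqueness, so your identification of the explicit $\bar D_{ii}$ with ``the'' $\bar\D$ needs either a short monotonicity/uniqueness argument or, more economically, the observation that the proof of \Cref{theo:inverse-bias} uses only the defining equation together with the range bounds of \Cref{lem:range_D} (which hold for \emph{any} solution), so its conclusion is valid for the particular solution you exhibit; with that remark the reduction is complete and yields the same bias order $O\bigl(\sqrt{\rho_{\max}^3 d_{\eff}^3/m^3}\bigr)$ as the paper's direct proof.
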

\begin{proof}[Heuristic derivation of \Cref{prop:debias}]
To make the intuition behind \Cref{prop:debias} more accessible, we present here a heuristic derivation of \eqref{eq:debias_check_S}.
We refer the reader to \Cref{sec:proof_prop_debias} for the detailed proof of \Cref{prop:debias}. 
Let $\check{\S}^\top \check{\S}=\sum^m_{s=1}F_{i_si_s}\cdot\ee_{i_s}\ee^\top_{i_s}/m\pi_{i_s}$
for some \emph{deterministic} $F_{ii}$ to be specified, $\check{\Q}=(\A^\top \check{\S}^\top \check{\S} \A +\C)^{-1}= \left(\frac{1}{m} \sum^{m}_{s=1} F_{i_si_s} \x_s{\x}^{\top}_s + \C \right)^{-1}$, and similarly $\check{\Q}_{-s}=( \frac1m \sum_{l\neq s} F_{i_li_l}\x_l\x_l^\top+\C)^{-1}$ as in the heuristic derivation of \Cref{theo:inverse-bias} above.
We thus have $ \frac{1}{m} \sum^{m}_{s=1} \EE[F_{i_si_s}\x_s{\x}^{\top}_s]=\sum^n_{i=1}  F_{ii}{\ba}_i{\ba}_i^\top$.
Our goal is to determine $\check{\Q}$ (and $F_{ii}$) such that $\|\EE[\check{\Q}]-\H^{-1}\|\simeq 0$, for $\H^{-1}= (\A^\top  \A + \C)^{-1}$. 
To this end, observe that
 \begin{equation*} 
 \hspace{-1mm}
    \EE[\check{\Q}]-\H^{-1}\hspace{-1mm} = \hspace{-1mm}\EE[\check{\Q}]\A^\top  \A\H^{-1}-\EE[\check{\Q}\A^\top   \check{\S}^\top \check{\S}\A ]\H^{-1} \simeq 0.\hspace{-1mm}
 \end{equation*}
By Sherman-Morrison formula (\Cref{lemm:sherman-morrison}), we  obtain
 \begin{align*}
 \hspace{-1mm}
 \EE[\check{\Q}\A^\top   \check{\S}^\top \check{\S}\A ]\H^{-1}\hspace{-1mm}&=\hspace{-1mm}\EE \left[\frac{\check{\Q}_{-s}F_{i_si_s}\A^\top\ee_{i_s}\ee_{i_s}^\top/\pi_{i_s}\A\H^{-1}}{1+F_{i_si_s}\ee_{i_s}^\top\A\check{\Q}_{-s}\A^\top\ee_{i_s}/m\pi_{i_s}}\right]\hspace{-1mm}
\\
&=\hspace{-1mm}\sum^n_{i=1}\EE\left[\frac{\check{\Q}_{-s}F_{ii}\A^\top\ee_{i}\ee_{i}^\top\A\H^{-1}}{1+F_{ii}\ee_{i}^\top\A\check{\Q}_{-s}\A^\top\ee_{i}/m\pi_{i}}\right]\hspace{-1mm},
 \end{align*}
 where we see the \emph{exact} leverage score $\ee_{i}^\top\A\H^{-1}\A^\top\ee_{i} = \mathbf{a}_i^\top (\A^\top  \A + \C)^{-1} \mathbf{a}_i = \ell_i^\C$ as in \Cref{def:lev} naturally appears in the denominator from the derivation.
Invoking the rank-one perturbation lemma once more,
 we have that 
\begin{align*}
\hspace{-1mm}\EE[\check{\Q}\A^\top   \check{\S}^\top \check{\S}\A ]\H^{-1}\hspace{-1mm}&\simeq \hspace{-1mm} \EE[  \check{\Q}]  \A^\top\sum^n_{i=1}\frac{F_{ii}\ee_{i}\ee_{i}^\top}{1+F_{ii}\ell^{\C}_i/m\pi_{i}}\A\H^{-1}\hspace{-1mm}= \hspace{-1mm}\EE[  \check{\Q}]  \A^\top\A\H^{-1}\hspace{-1mm},  
\end{align*}
where we take the \emph{debiasing factor} $F_{ii}= m\pi_{i}/( m\pi_{i} - \ell_{i}^\C )$ such that $F_{ii}/(1+F_{ii}\ell^{\C}_i/m\pi_{i})=1$.
This leads to the form of the debiasing matrix $\check\S$ as in \eqref{eq:debias_check_S} of \Cref{prop:debias}.
\end{proof}

Comparing the fine-grained results in \Cref{prop:debias} to the coarse-grained results in \Cref{prop:coarse-RS}, we see that the large inversion bias in \Cref{prop:coarse-RS} is indeed a consequence of the proof approach adapted from \citet{derezinski2021newtonless,derezinski2021sparse}, that is \emph{inadequate} for random sampling and for structured random projections such as the SRHT.
\begin{remark}[$\check{\S}$ as a random sampling scheme]\normalfont\label{rem:check S scheme}
Note that $\check\S \in \RR^{m \times n}$ in \Cref{prop:debias} is nothing but another random sampling matrix: it features exactly one nonzero entry per row that is equal to $(m \pi_{i_s} - \ell_{i_s}^\C)^{-1/2}$, as opposed to $(m \pi_{i_s})^{-1/2}$ for the standard random sampling $\S$ in \Cref{def:RS}.
This non-standard re-weighting (that uses the leverage scores of $\A$) ensures that $(\A^\top \check{\S}^\top \check\S \A + \C )^{-1}$ is a nearly unbiased estimate of $(\A^\top \A + \C)^{-1}$, per \Cref{prop:debias}. 
From a computational perspective, the re-weighted random sampling $\check{\S}$ in \eqref{eq:debias_check_S} can be computationally demanding due to the need for \emph{exact} computation of leverage scores $\ell_i^\C$ in \eqref{eq:debias_check_S}.
In \Cref{coro:proof_of_approximate_lev} of \Cref{sec:proof_prop_debias}, we consider approximate leverage scores (which are much faster to compute~\citep{drineas2012fast,clarkson2017low,cohen2017input}).
We show that for a given sampling scheme $\{ \pi_i \}_{i=1}^n$, replacing  exact leverage scores with their approximate counterparts in the de-biased sampling matrix $\check \S$ in \eqref{eq:debias_check_S} increases the inversion bias, but only very slightly.
\end{remark}

Note from \Cref{prop:debias} that the proposed fine-grained de-biasing matrix $\check{\S}$ depends on the importance sampling distribution \emph{only} via $\ell_i^\C/\pi_i$. 
(See \Cref{subsec:RMT_intuition_prop:debias} for the RMT intuition on how the exact leverage scores arise from the derivation.)
As such, for any random sampling method with $\pi_i \approx \ell_i^\C/d_{\eff}$ close to those of exact leverage score sampling in \Cref{def:lev}, we have $\check{\S} \approx \frac{m}{m - d_{\eff}} \S$. 
This \emph{coincides} with the scalar debiasing scheme in the coarse-grained result of~\Cref{prop:coarse-RS}, but it has a much smaller inversion bias.
This special case is discussed in the following result, proven in~\Cref{rem:proof_inver_bias_constant_debias}.

\begin{corollary}[Inversion bias using scalar debiasing under approximate leverage]\label{coro:inver_bias_constant_debias}
Under the settings and notations of \Cref{theo:inverse-bias}, for random sampling scheme with sampling distribution
$\pi_i\in [\ell^{\C}_{i}/(d_{\eff} \rho_{\max}), \ell^{\C}_{i}/(d_{\eff} \rho_{\min})]$ with $\rho_{\min} \in [1/2,1]$ as in \Cref{def:approx_factor},\footnote{Note that by \Cref{def:approx_factor} we have $\rho_{\min}\leq \ell_i^\C/(\pi_i  d_{\eff})\leq \rho_{\max}$ for all $i$, which, 
together with $\rho_{\min}\geq 1/2$ yields that $|\pi_i- \ell_i^{\C}/d_{\eff}|\leq \epsilon_{\rho}\ell_i^{\C}/d_{\eff}\leq\ell_i^{\C}/d_{\eff}$.} there exists $C> 0$, $\nu \geq \log_{d_{\eff}} (\log d_{\eff}), \delta < m^{-3}$ such that for $m \geq C\rho_{\max} d_{\eff}^{1+\nu}$, $(\frac{m}{m-d_{\eff}}\A^\top \S^\top\S \A + \C )^{-1}$ is an $(\epsilon,\delta)$-unbiased estimator of $(\A^\top  \A + \C)^{-1}$ with inversion bias $\epsilon=\max\{\tilde O(d_{\eff}^{-3\nu/2}), O(\epsilon_{\rho}d_{\eff}^{-\nu})\}$ and $\epsilon_{\rho}=\max\{\rho_{\min}^{-1}-1,1 - \rho_{\max}^{-1}\}$.  
\end{corollary}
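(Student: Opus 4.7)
The plan is to deduce this corollary from the fine-grained Proposition~\ref{prop:debias} by showing that, under the assumption $\pi_i\in[\ell_i^{\C}/(d_{\eff}\rho_{\max}),\,\ell_i^{\C}/(d_{\eff}\rho_{\min})]$ with $\rho_{\min}\geq 1/2$, the per-row reweighting in the debiased sampling matrix $\check{\S}$ of~\eqref{eq:debias_check_S} is uniformly close (in the operator sense) to the scalar reweighting $\sqrt{m/(m-d_{\eff})}$. I would then compose the unbiasedness of $\check{\S}$ with this operator-norm perturbation.

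First I would apply Proposition~\ref{prop:debias} with $m=\Theta(\rho_{\max}d_{\eff}^{1+\nu})$, so that the requirement $1/\epsilon^{2/3}\lesssim d_{\eff}^{\nu}$ is met, obtaining an $(\epsilon_0,\delta)$-unbiased estimator for $\check{\S}$ with $\epsilon_0=O(d_{\eff}^{-3\nu/2})$. This accounts for the first term in the $\max$. Next, I would write both matrices as sums of the same rank-one terms over the sampled indices $\{i_s\}_{s=1}^m$,
\[
\A^\top\check{\S}^\top\check{\S}\A=\sum_{s=1}^m\frac{\ba_{i_s}\ba_{i_s}^\top}{\pi_{i_s}(m-\ell_{i_s}^{\C}/\pi_{i_s})},\qquad \frac{m}{m-d_{\eff}}\A^\top\S^\top\S\A=\sum_{s=1}^m\frac{\ba_{i_s}\ba_{i_s}^\top}{\pi_{i_s}(m-d_{\eff})},
\]
so that the per-summand ratio $\gamma_{i_s}\equiv (m-\ell_{i_s}^{\C}/\pi_{i_s})/(m-d_{\eff})$ controls the entire discrepancy. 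Setting $\rho_{i_s}\equiv\ell_{i_s}^{\C}/(\pi_{i_s}d_{\eff})\in[\rho_{\min},\rho_{\max}]$, an elementary manipulation gives $|1-\rho_{i_s}|\leq\rho_{\max}\epsilon_\rho$, and then
\[
|\gamma_{i_s}-1|=\frac{d_{\eff}|1-\rho_{i_s}|}{m-d_{\eff}\rho_{i_s}}\leq \frac{2\,d_{\eff}\rho_{\max}\epsilon_\rho}{m}=O(\epsilon_\rho d_{\eff}^{-\nu})\equiv \gamma,
\]
where I used $m\geq C\rho_{\max}d_{\eff}^{1+\nu}$ with $C$ large enough that $m-d_{\eff}\rho_{\max}\geq m/2$.

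The uniform two-sided bound $|\gamma_{i_s}-1|\leq \gamma$ immediately gives the operator-norm sandwich $(1-\gamma)\A^\top\check{\S}^\top\check{\S}\A\preceq \tfrac{m}{m-d_{\eff}}\A^\top\S^\top\S\A\preceq (1+\gamma)\A^\top\check{\S}^\top\check{\S}\A$, and since $\C\succeq 0$ the same sandwich (with $1\pm\gamma$ multiplying $\C$ on the outside) holds after adding $\C$. Inverting (reversing the order), taking conditional expectation $\EE_{\zeta}$ on the event guaranteed by Proposition~\ref{prop:debias}, and chaining with the $(\epsilon_0,\delta)$-unbiasedness for $\check{\S}$ yields
\[
(1+\gamma)^{-1}(1+\epsilon_0)^{-1}(\A^\top\A+\C)^{-1}\preceq \EE_\zeta\!\left[\Bigl(\tfrac{m}{m-d_{\eff}}\A^\top\S^\top\S\A+\C\Bigr)^{-1}\right]\preceq (1-\gamma)^{-1}(1+\epsilon_0)(\A^\top\A+\C)^{-1}.
\]
Since $\nu\geq\log_{d_{\eff}}(\log d_{\eff})$ forces both $\epsilon_0$ and $\gamma$ to be $o(1)$, the two product factors collapse into $1+O(\max\{\epsilon_0,\gamma\})=1+O(\max\{d_{\eff}^{-3\nu/2},\epsilon_\rho d_{\eff}^{-\nu}\})$, which is the claimed inversion bias. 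The uniform bound $\tilde{\X}\preceq O(1)\X$ required by Definition~\ref{def:unbiased_estimator} follows by combining the same deterministic sandwich with the analogous uniform bound already provided by Proposition~\ref{prop:debias} for $\check{\S}$ (which itself traces back to the subspace-embedding Lemma~\ref{lem:sub_embed}).

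The main obstacle is accounting bookkeeping: one has to simultaneously keep $\gamma$ and $\epsilon_0$ small, verify that $(1+\gamma)(1+\epsilon_0)\leq 1+O(\max\{\gamma,\epsilon_0\})$ and $(1-\gamma)^{-1}(1+\epsilon_0)\leq 1+O(\max\{\gamma,\epsilon_0\})$ in this small-error regime, and make sure the event $\zeta$ on which Proposition~\ref{prop:debias} delivers its conclusions is the same event under which the deterministic sandwich is used; the latter is automatic since the sandwich is \emph{pointwise} in $\omega$ and does not require any high-probability event beyond the one already provided.
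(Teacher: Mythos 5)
Your proposal is correct, but it proves the corollary by a genuinely different route than the paper. The paper does not black-box \Cref{prop:debias}: its proof (\Cref{rem:proof_inver_bias_constant_debias}) re-runs the leave-one-out resolvent analysis of \Cref{subsec:detailed_proof_prop:debias} with the scalar weight $F_{i_si_s}=\tfrac{m}{m-d_{\eff}}$ in place of the row-dependent weight, and absorbs the mismatch into the $\check{\Z}_3$ term via the bound $\bigl|1-\tfrac{F_{i_si_s}}{\check\gamma_s}\bigr|\leq 2\bigl|\tfrac{\ba_{i_s}^\top\check\Q_{-s}\ba_{i_s}-\ell_{i_s}^\C}{m\pi_{i_s}}\bigr|+\tfrac{2\epsilon_\rho\rho_{\max}d_{\eff}}{m}$, which produces exactly the extra $O(\epsilon_\rho\rho_{\max}d_{\eff}/m)=O(\epsilon_\rho d_{\eff}^{-\nu})$ contribution. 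You instead observe that, realization by realization, $\tfrac{m}{m-d_{\eff}}\A^\top\S^\top\S\A$ and $\A^\top\check\S^\top\check\S\A$ are sums of the \emph{same} p.s.d.\@ rank-one terms with per-term weight ratios in $[1-\gamma,1+\gamma]$, $\gamma=O(\epsilon_\rho d_{\eff}^{-\nu})$, so a deterministic Loewner sandwich plus monotonicity of inversion and of conditional expectation reduces everything to \Cref{prop:debias}; this is valid (the sandwich survives adding $\C\succeq 0$, the event $\zeta$ of \Cref{prop:debias} also guarantees invertibility and the $O(1)$ upper bound for the scalar-debiased resolvent, and the product of the two error factors is $1+O(\max\{d_{\eff}^{-3\nu/2},\epsilon_\rho d_{\eff}^{-\nu}\})$), and it matches the claimed bias. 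Your approach is more modular and avoids repeating the concentration machinery; the paper's direct redo has the advantage that the same template carries over verbatim to the related corollaries it needs later (approximate-leverage debiasing in \Cref{coro:proof_of_approximate_lev}, the SRHT case in \Cref{coro:debiasing_srht}, and the second-inverse-moment analogue \Cref{coro:sec.moment_lev}), which is presumably why it is organized that way. One harmless algebra slip on your side: with your definition $\gamma_{i_s}=(m-\ell_{i_s}^\C/\pi_{i_s})/(m-d_{\eff})$ one gets $|\gamma_{i_s}-1|=d_{\eff}|1-\rho_{i_s}|/(m-d_{\eff})$, not $/(m-d_{\eff}\rho_{i_s})$; either denominator is at least $m/2$ under $m\geq C\rho_{\max}d_{\eff}^{1+\nu}$, so your final bound $\gamma\leq 2d_{\eff}\rho_{\max}\epsilon_\rho/m$ is unaffected.
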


\begin{remark}[Inversion bias for exact versus approximate leverage score sampling]\normalfont
\label{rem:inv_bias_exact_VS_approx}
It follows from \Cref{coro:inver_bias_constant_debias} that for exact and/or approximate leverage score sampling with $\rho_{\max} \geq \rho_{\min}/(2\rho_{\min}-1)\geq 1$ and $\rho_{\min}>(1+\tilde\Theta( d_{\eff}^{-\nu/2}))^{-1}> 1/2$ (so that $\epsilon_{\rho}= 1 - \rho_{\max}^{-1}$), the inversion bias induced by the scalar debiasing $ \frac{m}{m-d_{\eff}}$ establishes the following \emph{phase transition} behavior:
\begin{enumerate}
    \item if the random sampling scheme is sufficiently close to exact leverage sampling, in that $\rho_{\max} \in [\rho_{\min}/(2\rho_{\min}-1), 1/(1 - \tilde\Theta(d_{\eff}^{-\nu/2}) )]$ (or equivalently the importance sampling probabilities satisfy $\pi_i \in [(1 \pm  \tilde \Theta(d_{\eff}^{-\nu/2})) \ell_i^{\C}/d_{\eff}]$), then the inversion bias under scalar debiasing is the \emph{same} as that (of the fine-grained matrix debiasing) in \Cref{prop:debias}; but
    \item if the random sampling scheme significantly deviates from exact leverage sampling with $\rho_{\max} > 1/(1 - \tilde\Theta(d_{\eff}^{-\nu/2}) )$ (or equivalently $|\pi_i- \ell_i^{\C}/d_{\eff}|>\tilde\Theta(d_{\eff}^{-\nu/2})\ell_i^{\C}/d_{\eff}$), then the inversion bias under scalar debiasing becomes larger than that in \Cref{prop:debias}, increases with $\rho_{\max}$, and saturates at $\rho_{\max} = \Theta(1)$.
\end{enumerate}
This phase transition behavior is visualized in \Cref{fig: specal_lev}. 
See also \Cref{fig:sketch size of inversion bias} in \Cref{sec:imple_detail_nmerical_exper} for the numerical comparison of inversion bias using scalar debiasing between exact and approximate leverage score sampling.
\end{remark}

\begin{figure}[thb]
  \centering
\begin{tikzpicture}
\renewcommand{\axisdefaulttryminticks}{5} 
\pgfplotsset{every major grid/.style={densely dashed}}       
\tikzstyle{every axis y label}+=[yshift=-10pt] 
\tikzstyle{every axis x label}+=[yshift=5pt]
\pgfplotsset{every axis legend/.append style={cells={anchor=east},fill=white, at={(0.7,0.85)}, anchor=north west, font=\tiny }}
\begin{axis}[
    width=0.45\columnwidth,
    height=.35\columnwidth,
    xlabel style={font=\small},
    ylabel style={font=\small},
    tick label style={font=\tiny},
    xmin = 0,
    xmax =9.2,
    ymax =0.34,
    ymin =0,
    ymajorgrids=true,
    scaled ticks=true,
    xlabel = {$\rho_{\max}$},
    ylabel = {Inversion bias $\epsilon$},
    legend style={font=\tiny, at={(0.5,-0.15)}, anchor=north}, 
           xtick={0,2.8,8}, 
  ytick={0.07, 1-1/8-0.93+1/2.8},
       xticklabels={$\frac{\rho_{\min}}{2\rho_{\min}-1}$, $\frac1{1 -  \tilde \Theta(d_{\eff}^{-\nu/2}) }$,$ \Theta(1)$}, 
       yticklabels={ $\tilde O(d_{\eff}^{-3\nu/2})$,$O(d_{\eff}^{-\nu})$},
]
    \addplot[ color=RED, line width=1.5pt, domain=0:2.8]{0.07};   
    \addplot[ color=RED, line width=1.5pt, domain=2.8:8] {1-1/x-0.93+1/2.8};
    \addplot[ color=RED, line width=1.5pt, domain=8:9.2] {1-1/8-0.93+1/2.8};
  \draw[dashed,gray!50,line width=0.5pt](2.8,0)--(2.8, 0.375);
\draw[dashed,gray!50,line width=0.5pt](8,0)--(8, 0.375);
\end{axis}
\end{tikzpicture}
\caption{{The phase transition behavior of inversion bias $\epsilon$ as a function of $\rho_{\max}$ discussed in~\Cref{rem:inv_bias_exact_VS_approx} with scalar debiasing. }} 
\label{fig: specal_lev}
\end{figure}
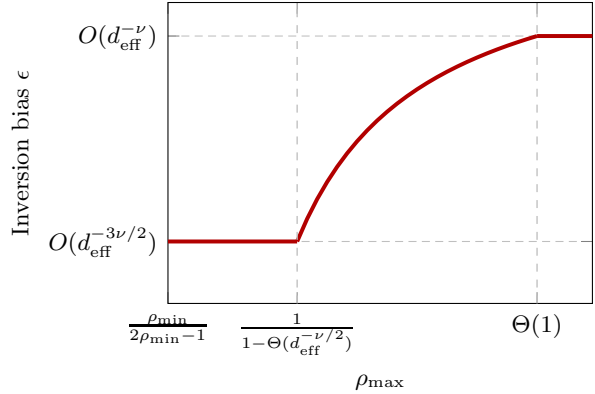

As a side remark, it is known from~\citet[Theorem~10]{derezinski2021sparse} that for \emph{approximate} leverage sampling, and \emph{any} scalar $\gamma > 0, m > 0$,  $(\gamma \A^\top \S^\top \S \A + \C)^{-1}$ with $\C=\zo_d$ is \underline{not} an $(\epsilon, \delta)$-unbiased estimator of $(\A^\top \A + \C)^{-1}$, with any $\epsilon \leq c d_{\eff}/m$ and  $c > 0$ an  absolute constant. 
Thus, 
\begin{enumerate}
  \item in the case of \emph{approximate} leverage score sampling with $\rho_{\max}=3/2$ and $\rho_{\min}=1/2$, for any $m\geq C\rho_{\max} d_{\eff} \log d_{\eff}$, it follows from the proof of \Cref{coro:inver_bias_constant_debias} that the inversion bias is upper bounded by $O(d_{\eff}/m)$, and this coincides with the lower bound in \citet[Theorem~10]{derezinski2021sparse}; 
  and 
  \item in the case of \emph{exact} leverage score sampling with $\rho_{\max}=\rho_{\min} =1$, the inversion bias can be made smaller than $d_{\eff}/m$ under scalar debiasing.%
  \footnote{Notably, using \emph{exact} (instead of approximate) leverage score sampling in the same setting of \citet[Theorem~10]{derezinski2021sparse}, the inversion bias (conditioned on any event $\zeta$ that ensures invertibility) can be made \emph{zero} by taking $\gamma = \frac{m}{d}\EE_{\zeta}[1/b]$ for $b$ distributed as ${\rm Binomial}(m, 1/d)$.
  This aligns with our conclusion of a possibly smaller inversion bias than $d_{\eff}/m$. See~\Cref{coro: exact_lev_zero_bias} in~\Cref{sec:proof_prop_debias} for a proof of this fact.}
\end{enumerate}

As an important consequence, \Cref{prop:debias} also applies to effectively de-bias another commonly-used data-oblivious sketching scheme, the SRHT~\citep{ailon2006approximate}. 

\begin{definition}[Sub-sampled randomized Walsh--Hadamard transform, SRHT,~\citet{ailon2006approximate}]\label{def:srht}
For a given matrix $\A \in \RR^{n \times d}$ of rank $d$ with $n \geq d$, assume without loss of generality that $n = 2^p$ for some integer $p$.
Then, the SRHT of $\A$ is given by
\begin{equation}
  \tilde{\A}_{\rm SRHT} = \S \H_n \D_n \A/\sqrt n \in \RR^{m \times n},
\end{equation}
for uniform random sampling matrix $\S \in \mathbb{R}^{m \times n}$, $\pi_i = 1/n$ as in \Cref{def:RS}, $\H_{n} \in \mathbb{R}^{n \times n}$ the Walsh--Hadamard matrix of size $n$, and diagonal $\D_n \in \RR^{n \times n}$ having i.i.d.\@ Rademacher random variables on its diagonal.
\end{definition}

The SRHT in~\Cref{def:srht} enjoys the following properties: 
the randomized Walsh--Hadamard transform $\H_n \D_n \A$ of $\A$ is known to have approximately uniform leverages scores, that is $\ell^{\C}_i (\H_n \D_n \A/\sqrt{n}) \approx d_{\eff}/n$, see~\citet{drineas2011faster} and~\citet[Theorems~3.1~and~3.2]{tropp2011improved}, as well as \Cref{lemm: HD_balance_row_norms} in \Cref{subsec:discussion_prop:debias} in our setting; and
since $\H_{n}^\top \H_n/n = \I_n $ and $\D^2_n=\I_n$, one has $\frac1n \A^\top \D_n \H_n^\top \H_n \D_n \A =\A^\top\A$, so that $\H_n \D_n \A/\sqrt n$ and $\A$ have the same effective dimension.
These lead to the following fine-grained debiasing result for SRHT with scalar debiasing, proven in~\Cref{rem:proof of SRHT}.

\begin{corollary}[Fine-grained debiasing for SRHT using scalar debiasing]\label{coro:debiasing_srht} 
Under the setting and notations of \Cref{theo:inverse-bias}, for $\tilde{\A}_{\rm SRHT} \in \RR^{m \times n}$ the SRHT of $\A$ as in \Cref{def:srht}, then there exists $C> 0$, $\nu \geq 0$, $n \exp(-d_{\eff}) < \delta < m^{-3}$ such that
for   $m\geq C\rho_{\max} d_{\eff}^{1+\nu}$, $(\frac{m}{m-d_{\eff}}\tilde{\A}_{\rm SRHT}^\top \tilde{\A}_{\rm SRHT} + \C )^{-1}$ is an $(\tilde O(d_{\eff}^{-3\nu/2})+O( \rho^{-1}_{\max} \sqrt{\log(n/\delta)} d_{\eff}^{-\nu-1/2}),\delta)$-unbiased estimator of $(\A^\top  \A + \C)^{-1}$.\footnote{Recall from \Cref{def:approx_factor} that here for SRHT, we have $\pi_i=1/n$ and thus $\rho_{\max} = \max_{1\leq i \leq n}n\ell^{\C}_i (\H_n \D_n \A/\sqrt{n})/d_{\eff} $. }
\end{corollary}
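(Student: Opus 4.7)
The plan is to reduce the SRHT to uniform sampling applied to the Hadamard--Rademacher preprocessed matrix $\hat{\A} \equiv \H_n \D_n \A/\sqrt{n}$, and then invoke \Cref{coro:inver_bias_constant_debias}. Since $\D_n^2 = \I_n$ and $\H_n^\top \H_n = n\I_n$, one has $\hat{\A}^\top \hat{\A} = \A^\top \A$; hence $\hat{\A}$ has the same effective dimension $d_{\eff}$ as $\A$ and the same regularized inverse, and $\tilde{\A}_{\rm SRHT}^\top \tilde{\A}_{\rm SRHT} = \hat{\A}^\top \S^\top \S \hat{\A}$ for the uniform sampling matrix $\S$. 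The desired bound on the bias of $(\tfrac{m}{m-d_{\eff}}\tilde{\A}_{\rm SRHT}^\top \tilde{\A}_{\rm SRHT} + \C)^{-1}$ is thus equivalent to the bias of the $\tfrac{m}{m-d_{\eff}}$-scaled uniform-sampling estimator of $(\hat{\A}^\top \hat{\A} + \C)^{-1}$, which is precisely the setup of \Cref{coro:inver_bias_constant_debias} with $\pi_i = 1/n$.

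The main step is a high-probability bound on the approximation factors $(\rho_{\min}, \rho_{\max})$ of uniform sampling with respect to the ridge leverage scores $\ell_i^{\C}(\hat{\A})$. I would write
\begin{equation}
\ell_i^{\C}(\hat{\A}) = \tfrac{1}{n}\vv_i^\top \K \vv_i, \quad \vv_i \equiv \D_n \H_n^\top \ee_i, \quad \K \equiv \A(\A^\top \A + \C)^{-1}\A^\top,
\end{equation}
where $\vv_i$ is a Rademacher vector (as $\D_n$ is Rademacher and $H_{ij} \in \{\pm 1\}$). The matrix $\K$ satisfies $\tr(\K) = d_{\eff}$, $\|\K\| \leq 1$, and therefore $\|\K\|_F^2 \leq d_{\eff}$. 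Applying the Hanson--Wright inequality to each $\vv_i^\top \K \vv_i$ and union-bounding over $i=1,\ldots,n$, one obtains, on an event $\zeta_{\rm HD}$ of probability at least $1-\delta/2$ whenever $\log(n/\delta) \lesssim d_{\eff}$ (equivalently, $\delta > n\exp(-d_{\eff})$),
\begin{equation}
\ell_i^{\C}(\hat{\A}) = (d_{\eff}/n)\bigl(1 + O(\sqrt{\log(n/\delta)/d_{\eff}})\bigr) \quad \text{uniformly in } i,
\end{equation}
which is precisely \Cref{lemm: HD_balance_row_norms}. Thus on $\zeta_{\rm HD}$ the factors satisfy $\rho_{\max} = 1 + O(\sqrt{\log(n/\delta)/d_{\eff}})$ and $\rho_{\min} = 1 - O(\sqrt{\log(n/\delta)/d_{\eff}}) \geq 1/2$, and the hypotheses of \Cref{coro:inver_bias_constant_debias} are met.

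Conditioning on $\zeta_{\rm HD}$ and applying \Cref{coro:inver_bias_constant_debias} with $\epsilon_\rho = 1-\rho_{\max}^{-1} = O(\rho_{\max}^{-1}\sqrt{\log(n/\delta)/d_{\eff}})$ then yields the inversion bias $\epsilon = \max\{O(d_{\eff}^{-3\nu/2}), O(\epsilon_\rho d_{\eff}^{-\nu})\} = O(d_{\eff}^{-3\nu/2}) + O(\rho_{\max}^{-1}\sqrt{\log(n/\delta)}\, d_{\eff}^{-\nu-1/2})$, exactly matching the claim. A final union bound over $\zeta_{\rm HD}$ and the inversion-bias event from \Cref{coro:inver_bias_constant_debias} (each of probability $\geq 1-\delta/2$) completes the proof.

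The principal obstacle is the \emph{relative-error} leverage-score concentration for $\hat{\A}$: a generic matrix-Chernoff bound only gives $\rho_{\max}=O(\log(n/\delta))$, which would destroy the $d_{\eff}^{-1/2}$ scaling in the second error term. Recovering the sharper rate requires the Hanson--Wright argument together with the structural controls $\|\K\|_F \leq \sqrt{d_{\eff}}$ and $\|\K\|\leq 1$ afforded by the regularized form of $\K$; this is also where the lower constraint $\delta > n\exp(-d_{\eff})$ arises, since the sub-exponential tail of Hanson--Wright is useful only when $\log(n/\delta) \lesssim d_{\eff}$. Once this flattening lemma is in hand, the rest of the argument is a clean reduction to \Cref{coro:inver_bias_constant_debias}.
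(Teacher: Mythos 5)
Your proposal is correct and follows essentially the same route as the paper: the key ingredient is exactly the paper's Hanson--Wright flattening lemma (\Cref{lemm: HD_balance_row_norms}, applied with $\K=\A(\A^\top\A+\C)^{-1}\A^\top$, $\tr\K=d_{\eff}$, $\|\K\|\le 1$), after which the SRHT is treated as uniform sampling of $\H_n\D_n\A/\sqrt n$ and the scalar-debiasing result under approximate leverages is applied conditionally on the flattening event, with the same accounting $\epsilon_\rho=O(\sqrt{\log(n/\delta)/d_{\eff}})$. The only difference is cosmetic: instead of citing \Cref{coro:inver_bias_constant_debias} wholesale, the paper re-runs the $\check\Z_1,\check\Z_2,\check\Z_3$ bounds from that corollary's proof together with a dedicated SRHT subspace-embedding lemma, which also covers the stated range $\nu\ge 0$ where the hypothesis $\nu\ge\log_{d_{\eff}}(\log d_{\eff})$ of \Cref{coro:inver_bias_constant_debias} would not literally apply.
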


\section{Application to De-biased Sub-sampled Newton with Improved Convergence}  
\label{sec:application}

In this section, we show that the precise characterizations of random sampling inversion bias and the debiasing techniques in \Cref{sec:main} apply to establish \emph{problem-independent} local convergence rates of SSN methods.

Consider the following optimization problem:
\begin{equation}\label{eq:optimization_problem}
    \bbeta^\ast= \argmin_{\bbeta\in \mathcal{C}} F(\bbeta)=\argmin_{\bbeta\in \mathcal{C} } f(\bbeta) + \Phi(\bbeta),
\end{equation}
for some smooth function $F \colon \RR^d \to \RR$ that can be decomposed into $f$ and $\Phi$, and $\mathcal{C}\subseteq \RR^d$ a convex set.
This decomposition naturally arises in ML when, e.g., the loss function $F$ is the sum of the empirical risk $f$ over a set of $n$ training samples and some regularization penalty $\Phi$.

Newton's method solves \eqref{eq:optimization_problem} by performing iterative updates $\bbeta_{t+1}=\bbeta_{t}- \mu_t \H_t^{-1}(\bbeta_{t}) \nabla F(\bbeta_{t})$, with $\mu_t$ the step size, $\nabla F(\bbeta_{t}) \in \RR^d$ the gradient, and $\H_t(\bbeta_{t}) \in \RR^{d\times d}$ the Hessian of $F$ at $\bbeta_{t}$ that can be decomposed as
\begin{equation}
  \H_t(\bbeta_{t})=\A(\bbeta_{t})^\top\A(\bbeta_{t}) +\C(\bbeta_{t}),
\end{equation}
with $\A(\bbeta_{t})\in \RR^{n\times d}$ and some p.s.d.\@ matrix $\C(\bbeta_{t})=\nabla^2\Phi(\bbeta_t)\in \RR^{d\times d}$ that takes a simple form, e.g., $\C(\bbeta_t) = 2 \lambda \I_d$ in the case of $L_2$ regularization $\Phi(\bbeta) = \lambda \| \bbeta \|^2$.

Despite having a locally super-linear convergence rate, Newton's method suffers from a heavy computational burden in forming the Hessian matrix $\H_t(\bbeta_{t})$, particularly when the training samples $n$ is large, e.g., $n \gg d$.
In this case, the major computational bottleneck of Newton's method lies in the construction of $\A(\bbeta_{t})^\top \A(\bbeta_{t})$ for the \emph{inverse} Hessian matrix, as this takes $O(nd^2)$ time.%
\footnote{Of course, this does \emph{not} need to be done explicitly.}
Many randomized second-order methods have been proposed to replace the exact Hessian inverse by some computationally efficient estimate.
Here, we consider SSN methods, that randomly sample the Hessian~\citep{yao2018inexact,roosta2019subsampled,xu2020newton} as follows.
\begin{definition}[Sub-sampled Newton, SSN]\label{def:sub-sampled_Newton}
To solve the optimization problem in \eqref{eq:optimization_problem}, the SSN method performs the following iteration:
\begin{align}\label{eq:newton-sub-sampled}
  \hspace{-1mm}
  \bbeta_{t+1}
  \hspace{-1mm}
  =
  \hspace{-1mm} 
  \bbeta_{t}
  \hspace{-1mm}
  -
  \hspace{-1mm}
  \mu_t \left(\A(\bbeta_t)^\top\S_t^\top  \S_t \A(\bbeta_t)+\C(\bbeta_t) \right)^{-1}\g_t,
\end{align}
for  $t = 0,1,\ldots T$, with $\g_t \equiv \nabla F(\bbeta_{t}) \in \RR^{d}$ the gradient of $F$ at $\bbeta_{t}$, $\mu_t$ the step size at time $t$, and random sampling matrix $\S_t \in \RR^{m \times n}$ as in \Cref{def:RS}.
\end{definition}

By randomly sampling the (computationally intense component of the) Hessian matrix, each SSN step in \eqref{eq:newton-sub-sampled} takes only $O(md^2)$ time. 
The (local or global) convergence rates of SSN have been extensively studied in the literature of optimization, RandNLA, and ML; see, e.g.,~\citet{xu2016subsampled,bollapragada2019exact,roosta2019subsampled,ye2021approximate,lacotte2021adaptive} and \Cref{subsec:related} above.
In particular, \citet{lacotte2021adaptive} proposed an adaptive SSN-type algorithm that achieves a quadratic convergence rate by dynamically adjusting the sketch size, improving upon the well-established linear–quadratic rates of traditional SSN methods.  
Incorporating their adaptive algorithm into our proposed de-biased approach could potentially accelerate convergence further; however, this exploration lies beyond the scope of the present work.

Due to the absence of precise characterizations of the sub-sampled Hessian inverse, as in \Cref{theo:inverse-bias}~and~\Cref{prop:debias} (that allow to, e.g., prove the \emph{near-unbiasedness} of SSN iteration $\EE[\bbeta_{t+1}] \approx \bbeta_t - \mu_t \H_t^{-1}(\bbeta_{t}) \g_t$), it is technically challenging to obtain \emph{problem-independent} convergence rates for SSN.
In the following, we fill this gap by showing how our inversion bias results in \Cref{sec:main} apply to establish \emph{problem-independent} local convergence rates for \emph{de-biased} SSN.
We position ourselves under the following standard assumption on the objective function~$F$.
\begin{assumption}[Lipschitz Hessian]\label{assum:lip_f}
$F, f \colon \RR^d\rightarrow \RR$ in \eqref{eq:optimization_problem} have Lipschitz continuous Hessian with Lipschitz constant $L$, that is, for any $\bbeta, \bbeta'\in\RR^d$, $\max\{ \|\nabla^2F(\bbeta)-\nabla^2F(\bbeta')\|, \|\nabla^2 f(\bbeta)-\nabla^2 f(\bbeta')\|\}\leq L \|\bbeta-\bbeta'\|$.
\end{assumption}

Under \Cref{assum:lip_f}, we evaluate the local convergence rate of the following \emph{de-biased} SSN iterations:
\begin{equation}\label{eq:de_newton-sub-sampled}
  \hspace{-1mm}
  \check{\bbeta}_{t+1}
  \hspace{-1mm}
  =
  \hspace{-1mm} 
  \check{\bbeta}_{t}
  \hspace{-1mm}
  -
  \hspace{-1mm}
  \mu_t \left(\A(\check\bbeta_t)^\top \check\S_t^\top  \check\S_t \A(\check\bbeta_t)+\C(\check\bbeta_t) \right)^{-1}\g_t,
\end{equation}
with de-biased $\check{\S}_t = \diag\Big\{ \sqrt{ m/(m - \ell^\C_{i_s}(\check{\bbeta}_t)/\pi_{i_s} ) } \Big\}_{s=1}^m \cdot \S_t$ as in \Cref{prop:debias}, with $\ell^\C_{i_s}(\check{\bbeta}_t)$ the $i_s^{th}$ leverage score of $\A(\check{\bbeta}_t)$ given $\C(\check{\bbeta}_t)$.
This leads to the following result.

\begin{theorem}[Local convergence of de-biased SSN]\label{theom:RS-convergence}
Let \Cref{assum:lip_f} hold. 
For p.d.\@ $\A(\bbeta^{\ast})^\top\A(\bbeta^{\ast})= \nabla^2f(\bbeta^{\ast})$ and p.s.d.\@ $\C(\bbeta^{\ast})=\nabla^2\Phi(\bbeta^{\ast})$, there exists a neighborhood $U$ of $\bbeta^{\ast}$ such that the \emph{de-biased} SSN iteration in \eqref{eq:de_newton-sub-sampled} starting from $\check{\bbeta}_0 \in U$ satisfies, for $U=\{\bbeta \colon \|\bbeta-\bbeta^{\ast}\|_{\H}<\log n(\rho_{\max} d_{\eff} \sigma_{\min}/m)^{3/2}/(\log (\log n)L)\}$, step size $\mu_t=1-\frac{\rho_{\max} }{m/d_{\eff} + \rho_{\max} }$, $m\geq C\rho_{\max} d^{1+\nu}_{\eff}$, and $\nu \geq \log_{d_{\eff}} (\log (d_{\eff}T/\delta) )$ that
\begin{equation}\label{eq:result_conver}
   \left( \EE_{\zeta}\left[\frac{\|\check{\bbeta}_T-\bbeta^{\ast}\|_{\H}}{\|\check{\bbeta}_0-\bbeta^{\ast}\|_{\H}}\right]\right)^{1/T}\leq \frac{\rho_{\max} d_{\eff}}{m} ( 1+ \epsilon), 
\end{equation}
holds for $\epsilon= \tilde O( d_{\eff}^{-\nu/2})$ and conditioned on an event $\zeta$ that happens with probability at least $1-\delta$.
Here, $\sigma_{\min}$ is the smallest singular value of $\H \equiv\A(\bbeta^{\ast})^\top \A(\bbeta^{\ast})+\C(\bbeta^{\ast})$, $\rho_{\max}$ is the max importance sampling approximation factor in \Cref{def:approx_factor} for $\ell_{i}^\C=\max_{1\leq t \leq T}\ell_{i}^\C(\check{\bbeta}_t)$ and $d_{\eff}=\max_{1\leq t \leq T}d_{\eff}(\check{\bbeta}_t)$ with $\ell_{i}^\C(\check{\bbeta}_t)$ and $d_{\eff}(\check{\bbeta}_t)$ the leverage scores and effective dimension of $\A(\check{\bbeta}_t)$ given $\C(\check{\bbeta}_t)$, respectively.
\end{theorem}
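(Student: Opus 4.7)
The plan is to derive a per-step bound on the conditional expected $\H$-norm error and iterate over $T$ steps. Using $\nabla F(\bbeta^\ast) = 0$ and the integral mean value theorem, I would write $\g_t = \bar{\H}_t(\check{\bbeta}_t - \bbeta^\ast)$ with $\bar{\H}_t = \int_0^1 \nabla^2 F(\bbeta^\ast + \tau(\check{\bbeta}_t - \bbeta^\ast))\,d\tau$; \Cref{assum:lip_f} then allows replacing $\bar{\H}_t$ and $\A(\check{\bbeta}_t)^\top\A(\check{\bbeta}_t) + \C(\check{\bbeta}_t)$ by $\H \equiv \H(\bbeta^\ast)$, each at an additive Lipschitz cost of order $L\|\check{\bbeta}_t - \bbeta^\ast\|$. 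Inserting these into \eqref{eq:de_newton-sub-sampled} produces the one-step error recursion
\begin{equation*}
  \check{\bbeta}_{t+1} - \bbeta^\ast = (\I - \mu_t \tilde{\H}_t^{-1}\H)(\check{\bbeta}_t - \bbeta^\ast) + \rr_t, \qquad \tilde{\H}_t \equiv \A(\check{\bbeta}_t)^\top\check{\S}_t^\top\check{\S}_t\A(\check{\bbeta}_t)+\C(\check{\bbeta}_t),
\end{equation*}
with Lipschitz remainder $\|\rr_t\|_{\H} = O(L\sigma_{\min}^{-1/2}\|\check{\bbeta}_t - \bbeta^\ast\|_{\H}^2)$, valid on the high-probability event where $\tilde{\H}_t$ is invertible (which follows from \Cref{lem:sub_embed}).

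Define $\M_t \equiv \H^{1/2}\tilde{\H}_t^{-1}\H^{1/2}$ and let $\zeta_t$ be the success event of \Cref{prop:debias} applied at iterate $\check{\bbeta}_t$ with failure budget $\delta/T$; setting $\zeta = \bigcap_{t<T}\zeta_t$, a union bound gives $\Pr(\zeta)\geq 1-\delta$. On $\zeta$, two complementary bounds on $\M_t$ hold: \Cref{prop:debias} yields the PSD-order estimate $\EE_\zeta[\M_t\mid\check{\bbeta}_t]\approx_{\epsilon_0}\I$ with $\epsilon_0 = O(d_{\eff}^{-3\nu/2})$, while \Cref{lem:sub_embed} confines the spectrum of $\M_t$ to $[(1+\epsilon_{\rm SE})^{-1}, 1+\epsilon_{\rm SE}]$ with $\epsilon_{\rm SE} = O(d_{\eff}^{-\nu/2})$. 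Passing the error recursion to the $\H$-norm and decomposing $\I - \mu_t\M_t$ as $(\I - \mu_t\EE_\zeta[\M_t]) - \mu_t(\M_t - \EE_\zeta[\M_t])$, the first piece contributes at most $|1-\mu_t| + \mu_t\epsilon_0$ to $\EE_\zeta[\|\check{\bbeta}_{t+1} - \bbeta^\ast\|_\H \mid \check{\bbeta}_t]/\|\check{\bbeta}_t - \bbeta^\ast\|_\H$, and the second (a zero-mean fluctuation) contributes at most an $O(\epsilon_{\rm SE})$-scale term obtained from \Cref{lem:sub_embed}. The step size $\mu_t = 1 - \rho_{\max}/(m/d_{\eff} + \rho_{\max})$ makes $1-\mu_t = (\rho_{\max}d_{\eff}/m)/(1+\rho_{\max}d_{\eff}/m)\leq\rho_{\max}d_{\eff}/m$, and the definition of $U$ ensures $\|\rr_t\|_{\H}$ is of lower order than $(\rho_{\max}d_{\eff}/m)\|\check{\bbeta}_t - \bbeta^\ast\|_\H$, so an induction both retains $\check{\bbeta}_t\in U$ for all $t\leq T$ and delivers the per-step contraction $(\rho_{\max}d_{\eff}/m)(1+O(d_{\eff}^{-\nu/2}))$. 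Chaining via the tower property over $t=0,\ldots,T-1$ and taking $T$-th roots yields \eqref{eq:result_conver}.

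The main obstacle is invoking \Cref{prop:debias} at the \emph{data-dependent} iterate $\check{\bbeta}_t$, which depends on $\S_0,\ldots,\S_{t-1}$: one must condition on the natural filtration so that $\check{\S}_t$ is fresh randomness given the history, and union-bound across the $T$ success events. The hypothesis $\nu \geq \log_{d_{\eff}}\log(d_{\eff}T/\delta)$ is precisely what keeps the cumulative failure probability $\leq \delta$ while simultaneously ensuring that the per-step debiasing error $\epsilon_0 = O(d_{\eff}^{-3\nu/2})$ and the fluctuation scale $\epsilon_{\rm SE} = O(d_{\eff}^{-\nu/2})$ both remain compatible with the target contraction $(\rho_{\max}d_{\eff}/m)(1+\epsilon)$ with $\epsilon = O(d_{\eff}^{-\nu/2})$. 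A secondary technical point is that the fluctuation bound must be in the \emph{expected} operator norm, which requires combining the PSD-order mean estimate from \Cref{prop:debias} with the spectrum control from \Cref{lem:sub_embed} to suppress what would otherwise be a $\sqrt{d_{\eff}/m}$-scale variance into the smaller $O(\epsilon_{\rm SE})$-scale contribution absorbed in the $(1+\epsilon)$ factor.
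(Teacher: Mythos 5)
Your outline gets several structural ingredients right (per-step conditional analysis with a fresh sampling matrix given the filtration, the standard local-Newton/Lipschitz remainder, the role of the step size, the union bound over $T$ events, and the use of \Cref{prop:debias} for the mean of the sketched inverse), but it misses the key technical ingredient of the paper's proof and, as written, cannot reach the claimed rate. The paper does not run a first-power norm recursion with a ``mean plus zero-mean fluctuation'' decomposition; it tracks the \emph{squared} $\H_t$-norm error, expands the square around the exact Newton step, controls the cross term with the first inverse moment (\Cref{prop:debias}), and controls the quadratic term with a dedicated \emph{second inverse moment} result, \Cref{prop:sec.moment}, which shows $\EE_{\zeta}[(\A_\C^\top\check\S_t^\top\check\S_t\A_\C+\C_{\A})^{-2}]\approx \I_d+\A_\C^\top\bar\F\A_\C$ with $\|\bar\F\|\leq 36\rho_{\max} d_{\eff}/m$. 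This is packaged in \Cref{lemm:pre_convergence} as $\EE_{\zeta_t}[\|\check\Delta_{t+1}\|^2_{\H_t}]\leq\|\Delta_{t+1}\|^2_{\H_t}+\epsilon\|\check\Delta_t\|^2_{\H_t}+\frac{36\rho_{\max} d_{\eff}}{m}\|\pp_t\|^2$, i.e., the variance contribution of the sketch is \emph{linear} in $\rho_{\max} d_{\eff}/m$, and it is exactly this linearity that produces the per-step rate $\rho_{\max} d_{\eff}/m(1+\epsilon)$ after combining with the step-size choice that kills $(1-\mu_t)^2$.

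Your substitute for this step fails quantitatively. Bounding the zero-mean fluctuation $\M_t-\EE_\zeta[\M_t]$ through the spectral confinement of \Cref{lem:sub_embed} gives an additive per-step contribution of order $\epsilon_{\rm SE}=\Theta\big(\sqrt{\rho_{\max} d_{\eff}\log(d_{\eff}T/\delta)/m}\big)=\tilde\Theta(d_{\eff}^{-\nu/2})$, whereas the target contraction is $\rho_{\max} d_{\eff}/m=\Theta(d_{\eff}^{-\nu})$; an additive $d_{\eff}^{-\nu/2}$ term cannot be ``absorbed in the $(1+\epsilon)$ factor'' of a $d_{\eff}^{-\nu}(1+O(d_{\eff}^{-\nu/2}))$ bound — it dominates it by a factor $d_{\eff}^{\nu/2}$, so your recursion only yields a rate of order $\sqrt{\rho_{\max} d_{\eff}/m}$. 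Nor does the zero-mean property rescue a first-moment argument: $\EE\|(\M_t-\EE\M_t)v\|$ is of the order of the standard deviation, which is again $\Theta(\sqrt{\rho_{\max} d_{\eff}/m})\|v\|$; and even squaring the worst-case subspace-embedding bound would cost an extra $\log(d_{\eff}T/\delta)$ factor relative to the stated $(1+O(d_{\eff}^{-\nu/2}))$ constant. To close the gap you must (i) work with the squared error (as the theorem's rate implicitly requires) and (ii) prove, or invoke, a fine-grained characterization of the second inverse moment of the de-biased sketched Hessian — this is precisely the content of \Cref{prop:sec.moment}, which has no counterpart in your proposal.
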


The proof of \Cref{theom:RS-convergence} can be found in \Cref{subsec:proof_convergence}.
The proof relies on a precise characterization of the second inverse moment of the randomly sampled Hessian matrix, extending beyond the first inverse moment result in \Cref{prop:debias}. 
Due to space limitation, this technical result is presented in~\Cref{prop:sec.moment} of \Cref{sec:proof_application}. 
For the sake of practical implementation, we also present, in Corollaries~\ref{coro:ssn_exact_approximate_lev} and~\ref{coro:ssn_SRHT} of~\Cref{sec:proof_application}, local convergence rates of SSN iterations \emph{using scalar debiasing} $m/(m-d_{\eff})$ under exact and approximate leverage score sampling as well as SRHT.


\section{Numerical Experiments of De-biased SSN}
\label{sec:num}

In this section, we provide empirical evidence showing the improved convergence (and a better ``complexity--convergence'' trade-off as a consequence) of different de-biased SSN methods proposed in \Cref{sec:application}, with respect to several first-~and~second-order baseline optimization methods.
We solve the following logistic regression problem 
\begin{equation}
  \min_{\bbeta\in \RR^d } \frac{1}{n}\sum^n_{i=1}\log \left(1+\exp(-y_i\ba^\top_i\bbeta) \right)+\frac{\lambda}{2}\|\bbeta\|^2,
\end{equation}
of the form \eqref{eq:optimization_problem}, for regularization $\lambda>0$, where $\ba_i^\top \in \RR^d$ is the $i^{th}$ row of data matrix $\A \in \RR^{n \times d}$ sampled from both MNIST~\cite{lecun1998gradient} and CIFAR-10~\cite{krizhevsky2009Learning} datasets, and $\y\in\{\pm 1\}^n$ the response vector. 
Implementation details are provided in~\cref{sec:imple_detail_nmerical_exper}. Note that the time reported in Figures~\ref{fig:sketch size}~and~\ref{fig:convergence_time} include both the input data pre-processing time ( e.g., the computation of exact or approximate leverage scores,
and Walsh–Hadamard transform)    and the computational overhead associated with the sketching process.

\Cref{fig:sketch size} assesses the impact of the sketch size $m$ on both relative error ($\|\check{\bbeta}_T-\bbeta^{\ast}\|^2_{\H}/\|\check{\bbeta}_0-\bbeta^{\ast}\|^2_{\H}$) and running time of de-biased SSN employing approximate ridge leverage score sampling (SSN-ARLev), in comparison to the Newton-LESS method~\citep{derezinski2021newtonless} based on random projection.  
The results in \Cref{fig:sketch size} demonstrate that the proposed de-biased SSN consistently outperforms Newton-LESS across all tested sketch size $m$, exhibiting a \emph{superior convergence--complexity trade-off}.
Notably, while the running time for Newton-LESS increases significantly with $m$, that of SSN-ARLev remains approximately constant.

\begin{figure}[thb]
  \centering
\begin{subfigure}[c]{0.48\textwidth}
\begin{tikzpicture}
\renewcommand{\axisdefaulttryminticks}{4} 
\pgfplotsset{every major grid/.style={densely dashed}}       
\tikzstyle{every axis y label}+=[yshift=-10pt] 
\tikzstyle{every axis x label}+=[yshift=5pt]
\pgfplotsset{every axis legend/.append style={cells={anchor=east},fill=none,draw=none, at={(0.475,1.11)}, anchor=north east, font=\tiny}}
\begin{axis}[
 axis y line*=left,  
width=0.9\columnwidth,
height=0.75\columnwidth,
xlabel style={font=\tiny},
    ylabel style={font=\tiny},
    tick label style={font=\tiny},
    xmin =200,
        xmax = 1300,
        ymax =2.52E-07,
        ymin =  7.27E-10, 
         ytick={1E-9,1E-8,1E-7},
         yticklabels = {$10^{-9}$,$10^{-8}$,$10^{-7}$},
        ymajorgrids=true,
        scaled ticks=true,
          xlabel = { Sketch size $m$ },
          ylabel = { Relative error },
        ymode=log
        ]

 \addplot[mark=diamond*,color=GREEN,line width=0.8pt] coordinates{
           (200,3.6478e-07)(  400,4.0231e-08)   (700,4.8583e-09 )(1000,1.9774e-09)(1300,1.1004e-09)
        };
  
           \addlegendentry{{Newton-LESS}}[ font=\tiny];
        \addplot[mark=*,color=RED,line width=0.8pt] coordinates{
       (200,1.1732e-07)(  400,1.3025e-08)(700,  2.6956e-09)(1000,1.4680e-09)(   1300, 8.7431e-10)
      };

\end{axis}

\begin{axis}[
axis y line*=right, 
axis x line=none,
width=0.9\columnwidth,
height=0.75\columnwidth,
    ylabel style={font=\tiny},
    tick label style={font=\tiny},
      xmin =200,
        xmax = 1300,
        ymax =0.85,
        ymin =0.063,
          ytick={0.07,0.2,0.7},
         yticklabels ={0.07,0.2,0.7},
        ymajorgrids=true,
         scaled ticks=true,
          ylabel = { Wall-clock time (s) },
        ymode=log
        ]

        \addplot[densely dashed,mark=diamond*,color=GREEN,line width=1pt, mark options={solid,fill=GREEN}] coordinates{
            (200,0.7022)   (400, 0.7096)(  700, 0.7134 )(1000,  0.7250) (1300,0.7327)
        };
      \addplot[densely dashed,mark=*,color=RED,line width=1pt, mark options={solid,fill=RED}] coordinates{
     (200,0.0704)  (400, 0.0706)(700,0.0736)( 1000,0.0728)(  1300,0.0741)
      };
      
\end{axis}
\end{tikzpicture}
\captionsetup{font=scriptsize}
 \caption{MNIST data}
\end{subfigure}
  \hfill{}
  \begin{subfigure}[c]{0.48\textwidth}
  \begin{tikzpicture}
\renewcommand{\axisdefaulttryminticks}{4} 
\pgfplotsset{every major grid/.style={densely dashed}}       
\tikzstyle{every axis y label}+=[yshift=-10pt] 
\tikzstyle{every axis x label}+=[yshift=5pt]
\pgfplotsset{every axis legend/.append style={cells={anchor=east},fill=none, draw=none,at={(0,1.11)}, anchor=north west, font=\tiny}}
\begin{axis}[
 axis y line*=left,  
width=0.9\columnwidth,
height=0.75\columnwidth,
xlabel style={font=\tiny},
    ylabel style={font=\tiny},
    tick label style={font=\tiny},
 xmin =400,
        xmax = 1600,
        ymax =1.4188E-07,
        ymin =  1.93E-11,
         ytick={1E-10,1E-9,1E-7},
         yticklabels = {$10^{-10}$,$10^{-9}$,$10^{-7}$},
        ymajorgrids=true,
        scaled ticks=true,
          xlabel = { Sketch size $m$ },
          ylabel = { Relative error },
        ymode=log,
        ]

     \addplot[mark=*,color=RED,line width=0.8pt] coordinates{
      (400,5.6276e-08)  (700,1.5780e-09)  (1000, 2.5373e-10 )(1300,3.7715e-11)(1600,1.9407e-11)  
      };
 
  \addlegendentry{{SSN-ARLev}}[ font=\tiny];
        \addplot[mark=diamond*,color=GREEN,line width=0.8pt] coordinates{
        (400,1.4187e-07)(700,3.1523e-09)(  1000, 4.7697e-10)  (1300,6.7144e-11) (1600, 3.2042e-11)
        };
\end{axis}
\begin{axis}[
axis y line*=right, 
axis x line=none,
width=0.9\columnwidth,
height=0.75\columnwidth,
    ylabel style={font=\tiny},
    tick label style={font=\tiny},
         xmin =400,
        xmax = 1600,
        ymax =3.2,
        ymin = 0.35,
         ytick={0.35,1.5,3},
         yticklabels = {0.35,1.5,3},
        ymajorgrids=true,
        scaled ticks=true,
          ylabel = { Wall-clock time (s) },
        ]

        \addplot[densely dashed, mark=diamond*, color=GREEN,line width=1pt] coordinates{
            (400,2.7128)  (700,2.9832)(1000,3.0230)(1300,3.0761 )(1600, 3.1263)
        };
      \addplot[densely dashed,mark=*, color=RED,line width=1pt] coordinates{
        (400,0.4960)( 700, 0.4939)(1000, 0.4906)(  1300,0.5024) (1600,0.5218)
      };
\end{axis}
\end{tikzpicture}
\captionsetup{font=scriptsize}
 \caption{CIFAR-10 data}
  \end{subfigure}
\caption{{ Relative errors (in solid lines) and wall-clock time (in dashed lines) as a function of the sketch size $m$, for \textbf{\GREEN Newton-LESS} and the proposed de-biased \textbf{\RED SSN-ARLev} methods, applied to logistic regression on both MNIST and CIFAR-10 data. 
Relative errors are obtained after a fixed number of iterations ($T=5$ for MNIST data and $T=7$ for CIFAR-10 data).
Results are obtained by averaging over $30$ independent runs. 
}}
\label{fig:sketch size}
\end{figure}
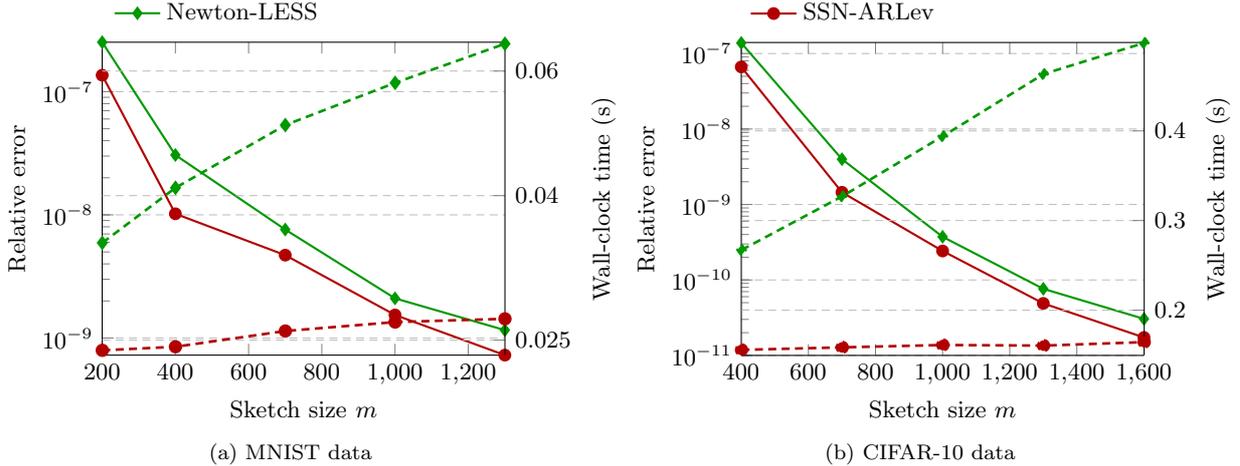

\begin{figure}[thb]
  \centering
   \begin{subfigure}[c]{0.48\textwidth}
\begin{tikzpicture}
\renewcommand{\axisdefaulttryminticks}{5} 
\pgfplotsset{every major grid/.style={densely dashed}}       
\tikzstyle{every axis y label}+=[yshift=-10pt] 
\tikzstyle{every axis x label}+=[yshift=5pt]
\pgfplotsset{every axis legend/.append style={cells={anchor=east},fill=none,draw=none, at={(0,1.19)}, anchor=north west, font=\tiny,legend columns=2,
    transpose legend }}
\begin{axis}[
width=0.9\columnwidth,
height=0.75\columnwidth,
xlabel style={font=\tiny},
    ylabel style={font=\tiny},
    tick label style={font=\tiny},
xmin = 0,
                xmax = 1,
                ymax =2,
                ymin =  0.000000001,
                ymajorgrids=true,
                scaled ticks=true,
                xlabel = { Wall-clock time },
                ylabel = { Relative error },
                ymode=log
                ]
                
        \addplot[mark=triangle*,color=RED!60!white,line width=0.8pt] coordinates{
     (0,1)  ( 0.267311811,0.993604343)   (1.266705751,0.968672711)      (2.016103268,0.950636461)   (2.307114601,0.944746647 )
    };
    \addlegendentry{{GD}}[ font=\tiny];
      \addplot[mark=star,color=BLUE!60!white,line width=0.8pt] coordinates{
        (0,1)   (0.806855273,0.78755906)    (1.626041698,0.642284321)   (2.438386321,0.540377984)   (3.264471269,0.466236189 )
      };
     \addlegendentry{{SGD}}[ font=\tiny];
              
                 \addplot[mark=diamond*,color=GREEN!80!white,line width=0.8pt] coordinates{
                    (0,1)   (0.2013, 3.2483e-02)   (0.4004,4.5267e-04)   (0.6047,5.5929e-06)  ( 0.8073,4.7584e-08)  ( 1.0084, 5.3148e-10)          
                };
       \addlegendentry{{LESS}}[ font=\tiny];
              \addplot[mark=square*,color=RED!25!BLUE,line width=0.8pt] coordinates{
                (0,1)   (0.0383, 4.3196e-02)   ( 0.0772,7.8704e-04)   (0.1163,1.0414e-05)  ( 0.1548, 1.3459e-07)  ( 0.1929, 1.6113e-09) 
            };
            
    \addlegendentry{{ SRHT}}[ font=\tiny];
              \addplot[mark=*,color=RED,line width=0.8pt] coordinates{
                (0,1)(0.0183,3.4379e-02)( 0.0358,4.4612e-04)    ( 0.0522,5.0805e-06)  (0.0706,5.7521e-08)  (0.0865,5.1619e-10)
            };
            
       \addplot[mark=+,color=GREEN!70!RED,line width=0.8pt] coordinates{
            (0,1)   (0.0225, 8.4835e-02)   (0.0416,7.1881e-03)   ( 0.0594, 3.3483e-04)   (0.0771, 2.1644e-05)  (0.0978, 1.7750e-06)      ( 0.1199,  6.2849e-08)  (0.1461, 2.2001e-09)  (0.1713,6.6473e-11)
        };
          \addplot[mark=x,color=GREEN!40!BLUE,line width=0.8pt] coordinates{
        (0,1)   ( 0.0242, 6.3618e-02)   ( 0.0472,1.1230e-03)( 0.0684,2.7534e-05)  (0.0874, 6.8463e-07)      (0.1084, 1.7612e-08)   (0.1297,4.1981e-10)(0.1559,1.1647e-11)
    };
\end{axis}
\end{tikzpicture}
\captionsetup{font=scriptsize}
 \caption{MNIST data}
\end{subfigure}
  \hfill{}
  \begin{subfigure}[c]{0.48\textwidth}
  \begin{tikzpicture}
\renewcommand{\axisdefaulttryminticks}{5} 
\pgfplotsset{every major grid/.style={densely dashed}}       
\tikzstyle{every axis y label}+=[yshift=-10pt] 
\tikzstyle{every axis x label}+=[yshift=5pt]
\pgfplotsset{every axis legend/.append style={cells={anchor=east},fill=none,draw=none, at={(0,1.19)}, anchor=north west, font=\tiny},legend columns=2,
    transpose legend}
\begin{axis}[
width=0.9\columnwidth,
height=0.75\columnwidth,
xlabel style={font=\tiny},
    ylabel style={font=\tiny},
    tick label style={font=\tiny},
xmin = 0,
                xmax = 3.2,
                ymax =2,
                ymin =  0.000000001,
                ymajorgrids=true,
                scaled ticks=true,
                xlabel = { Wall-clock time },
                ylabel = { Relative error },
                ymode=log
                ]
                       \addplot[mark=*,color=RED,line width=0.8pt] coordinates{
        (0,1)( 0.0784,7.6529e-02 )(0.1539,5.0536e-03)  ( 0.2302,3.0345e-04)( 0.3073,1.7792e-05 )       ( 0.3851,1.1075e-06)(  0.4614,6.2403e-08)(  0.5392,3.6105e-09) (0.6191,2.2146e-10)( 0.7009,1.2346e-11)
        };
    \addlegendentry{{ARLev}}[ font=\tiny];    \addplot[mark=+,color=GREEN!70!RED,line width=0.8pt] coordinates{
         (0,1)(0.09,1.1225e-01)(  0.1927,9.7952e-03)   (0.2824,1.1053e-03)( 0.3671,9.9400e-05)( 0.4637,7.7029e-06 ) (0.5510,6.2448e-07)(0.6384, 5.6262e-08)(0.7358, 5.8733e-09)(0.8242,5.9466e-10)
    };
     \addlegendentry{{ALev}}[ font=\tiny];
        \addplot[mark=x,color=GREEN!40!BLUE,line width=0.8pt] coordinates{
     (0.0000, 1)(0.0823, 9.2859e-02 )(0.1763, 7.5875e-03)( 0.2615, 6.1239e-04 )(0.3386, 4.5669e-05 )(0.4302, 2.9578e-06 )(0.5133, 2.3577e-07)( 0.5930, 1.7039e-08)( 0.6906, 1.3178e-09 )(0.7783, 1.1577e-10 )
    };
    \addlegendentry{{SLev}}[ font=\tiny];
    
    \addplot[mark=square*,color=RED!25!BLUE,line width=0.8pt] coordinates{
                (0.0000,1 )( 0.1067, 1.3209e-01 )( 0.2168, 9.6579e-03 )(0.3292, 5.9068e-04 )(0.4384, 3.3053e-05)(0.5516, 2.4013e-06)( 0.6630, 1.4491e-07 )(0.7711, 8.0067e-09 )(0.8806, 4.4673e-10 )
            };

        \addplot[mark=triangle*,color=RED!60!white,line width=0.8pt] coordinates{
            (0,1)(5.71004700660705,0.986412129358221)(11.35992169,0.973167226)(17.02349186,0.960256791 )(22.78676724,0.947672525)
        };
    
            \addplot[mark=star,color=BLUE!60!white,line width=0.8pt] coordinates{
                (0,1)(6.484952068,0.351265181)(12.9296442,0.335570946)( 19.39308889,0.330820916)(25.85534246,0.326025626)
            };
     
                \addplot[mark=diamond*,color=GREEN!80!white,line width=0.8pt] coordinates{
                   (0,1)( 0.4516, 1.2989e-01 )( 0.8978, 1.3104e-02 )( 1.3514, 9.5145e-04)( 1.8036, 6.3647e-05)(  2.2597, 4.1325e-06 )( 2.7156, 2.3141e-07 )( 3.1685, 1.0498e-08)( 3.6211, 5.4098e-10 )
                };
\end{axis}
\end{tikzpicture}
\captionsetup{font=scriptsize}
 \caption{CIFAR-10 data}
  \end{subfigure}

\caption{{
Convergence--complexity trade-off between various optimization methods on logistic regression for MNIST and CIFAR-10 data, with sketch size  $m=300$ for MNIST and $m=400$ for CIFAR-10 data. 
Results are obtained by averaging over $10$ independent runs (except for GD that is deterministic). 
}}

\label{fig:convergence_time}
\end{figure}
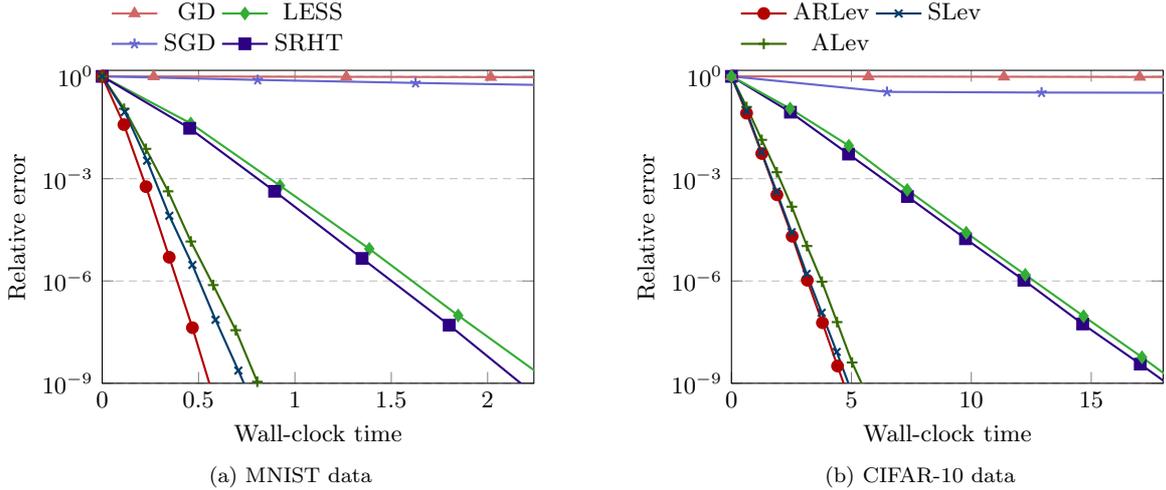

\Cref{fig:convergence_time} compares the relative errors as a function of wall-clock time across various optimization methods:
\begin{enumerate}
  \item \textbf{First-order baselines}: Gradient Descent (GD) and Stochastic GD (SGD).
  \item \textbf{De-biased SSN methods} using different sampling schemes: Approximate $\lambda$-Ridge Leverage Score (ARLev), Approximate Leverage Score (ALev), Shrinkage Leverage Score (SLev) sampling~\citep{ma2015statistical}, and SRHT (see \Cref{def:srht} above).
  \item \textbf{Newton Sketch with LESS-uniform sketch} (LESS)~\citep{derezinski2021newtonless}, which achieves significantly shorter running times compared to the original Newton Sketch with dense Gaussian sketches.
\end{enumerate}

From \Cref{fig:convergence_time}, we see that SSN methods, when properly de-biased, exhibit a significantly better \emph{convergence--complexity} trade-off than both first-order methods \emph{and} the Newton-LESS approach.
The SRHT sampling outperforms Newton-LESS but still lags slightly behind all other SSN variants in speed. Among these, the SLev scheme edges out ALev yet remains slower than ARLev.
Across both datasets tested, de-biased SSN with ARLev consistently delivers the \emph{optimal} convergence–complexity trade-off among all methods evaluated.

\section{Conclusions and Perspectives}
\label{sec:conclusion} 

In the work, we investigate the inversion bias inherent in various random sampling schemes, including uniform and non-uniform leverage-based sampling, as wel as structured random projections (e.g., the Hadamard transform-based SRHT). 
Leveraging recent advances in RMT and RandNLA, we provide a precise characterization of this inversion bias and propose corresponding de-biasing techniques. 
Notably, for approximate leverage sampling and SRHT, this de-biasing reduces to a simple scalar rescaling.
We further show that our results enable an improved SSN method, achieving local convergence rates comparable to those of Newton Sketch with dense Gaussian projections. 
Our theoretical insights are complemented by numerical results on MNIST and CIFAR-10 datasets, underscoring the practical relevance of the proposed approach.

It would be of future interest to see whether the proposed debiasing technique, when wisely combined with adaptive sampling schemes, can achieve or even improve the quadratic convergence in \citet{lacotte2021adaptive}. In addition, it is also worthwhile to  extend our debiasing framework to dependent sampling methods~\citep{cortinovis2024adaptive}, such as volume sampling, which may further improve SSN’s efficiency and convergence.

\section*{Acknowledgements}

Z.~Liao would like to acknowledge the National Natural Science Foundation of China (via fund NSFC-62206101) and the Guangdong Provincial Key Laboratory of Mathematical Foundations for Artificial Intelligence (2023B1212010001) for providing partial support.
Z.~Ling is supported by the National Natural Science Foundation of China (via NSFC-62406119) and the Natural Science Foundation of Hubei Province (2024AFB074).
M.~W.~Mahoney would like to acknowledge the DARPA, DOE, NSF, and ONR for providing partial support of this work.

\section*{Impact Statement}

This paper presents work whose goal is to advance the field of ML, RandNLA, and their connection to RMT.
The work is primarily theoretical, and we do not see any immediate negative societal impact.
If anything, providing methods that come with a smaller theory-practice gap will make it easier to identify and mitigate unintended negative impact.

\bibliography{liao}
\bibliographystyle{plainnat}

\newpage
\appendix

\begin{center}
  \textbf{\large Supplementary Material of} \\ 
  \textbf{Fundamental Bias in Inverting Random Sampling Matrices\\ with Application to Sub-sampled Newton}
\end{center}

The technical appendices of this paper are organized as follows.
\begin{itemize}
    \item We recall a few technical lemmas that will be used in our proofs in \Cref{sec:lemmas}.
    \item The proof of \Cref{lem:sub_embed} on the subspace embedding property for random sampling is given in \Cref{sec:proof_sub_embedd}.
    \item The proof of \Cref{prop:coarse-RS} on coarse-grained debiasing of random sampling is given in \Cref{sec:proof_coarse-RS}.
    \item The proof of \Cref{theo:inverse-bias} on fine-grained analysis of inversion bias for random sampling is given in \Cref{sec:proof_of_theo:inverse-bias}.
    \item The proof of \Cref{prop:debias} on fine-grained debiasing for random sampling is given in \Cref{sec:proof_prop_debias}.
    \item The proof concerning results on the application to de-biased SSN in \Cref{sec:application} is given in \Cref{sec:proof_application}. 
    \item Implementation details for the numerical results in \Cref{sec:num} are given in~\cref{sec:imple_detail_nmerical_exper}.
\end{itemize}

\section{Useful Lemmas}
\label{sec:lemmas}

In this section, we introduce a few technical lemmas to be used in subsequent sections.

\begin{lemma}[Singular value bounds of symmetric p.s.d.\@ matrices,~{\citet[Theorem~2.1]{zhan2001singular}}]\label{lemm:zhao2001}
For real symmetric p.s.d.\@ matrices $\J,\K \in \mathbb{R}^{s\times s}$ having (ordered) singular values $\sigma_1(\J)\geq\sigma_2(\J)\geq \ldots \geq \sigma_s(\J)$ and $\sigma_1(\K)\geq\sigma_2(\K)\geq \ldots \geq \sigma_s(\K)$, the singular values of the difference $\J-\K$ are bounded as
\begin{align}
    \sigma_j(\J-\K)\leq     \sigma_j \begin{pmatrix}
        \J & 0 \\ 0 &  \K
    \end{pmatrix}, \quad{j=1,\ldots, s}.\nonumber
\end{align}
In particular,  we obtain
\begin{align}
\|\J-\K\|\leq\max\{\|\J\|,\|\K\|\}.\nonumber
\end{align}
\end{lemma}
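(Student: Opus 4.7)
The plan is to reduce the statement to a level-set counting argument based on Weyl's eigenvalue inequalities, exploiting the key fact that $\J-\K$ is Hermitian even though no semidefinite ordering between $\J$ and $\K$ is assumed.

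First I would observe that, since $\J-\K$ is Hermitian, its singular values coincide with the absolute values of its eigenvalues arranged in decreasing order. Setting $\E\equiv\J-\K$ and writing $\E=\J+(-\K)$, Weyl's monotonicity inequality for Hermitian matrices yields $\lambda_j(\E)\leq\lambda_j(\J)+\lambda_1(-\K)=\lambda_j(\J)$, since $\K$ is p.s.d.\@ so $\lambda_1(-\K)=-\lambda_s(\K)\leq 0$. A symmetric application to $-\E=\K-\J$ gives $\lambda_j(-\E)\leq\lambda_j(\K)$. Consequently, the $k$-th largest positive eigenvalue of $\E$ is bounded by $\sigma_k(\J)=\lambda_k(\J)$, while the $k$-th largest-in-magnitude negative eigenvalue of $\E$ is bounded by $\sigma_k(\K)=\lambda_k(\K)$.

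Next I would convert these coordinate-wise bounds into a level-set estimate: for every threshold $t\geq 0$,
\begin{equation*}
    \#\{j : \sigma_j(\E)\geq t\} = \#\{j : \lambda_j(\E)\geq t\} + \#\{j : \lambda_j(\E)\leq -t\} \leq \#\{j : \sigma_j(\J)\geq t\} + \#\{j : \sigma_j(\K)\geq t\}.
\end{equation*}
The right-hand side equals $\#\{j : \sigma_j(\diag(\J,\K))\geq t\}$, because the multiset of singular values of the block-diagonal matrix is precisely the sorted concatenation of those of $\J$ and $\K$. This level-set inequality, holding for all $t\geq 0$, is equivalent (upon sorting) to the desired coordinate-wise inequality $\sigma_j(\J-\K)\leq\sigma_j(\diag(\J,\K))$ for every $j=1,\ldots,s$.

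The operator-norm corollary is then immediate by specializing to $j=1$: $\|\J-\K\|=\sigma_1(\J-\K)\leq\sigma_1(\diag(\J,\K))=\max\{\|\J\|,\|\K\|\}$. The only mild obstacle is the bookkeeping in the level-set step; an alternative ``dilation'' route that conjugates $\diag(\J,\K)$ by a Hadamard-type unitary built from $\I$ displays $\tfrac12(\J-\K)$ as an off-diagonal block and therefore only bounds $\sigma_j(\J-\K)$ by $2\sigma_j(\diag(\J,\K))$, which costs an extraneous factor of $2$, so the Weyl-plus-counting approach above is the cleaner route to the tight bound claimed.
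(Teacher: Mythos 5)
Your argument is correct: Weyl's inequality applied to $\E=\J+(-\K)$ and to $-\E=\K+(-\J)$ does give $\lambda_j(\E)\leq\lambda_j(\J)$ and $\lambda_j(-\E)\leq\lambda_j(\K)$ (using only $-\K\preceq 0$, $-\J\preceq 0$), the level-set identity for $t>0$ correctly splits $\#\{j:\sigma_j(\E)\geq t\}$ into the positive and negative eigenvalue counts, and the sorting argument (taking $t=\sigma_j(\E)$ at a hypothetical violation) legitimately converts the counting bound into the coordinate-wise bound against the $2s$ singular values of $\diag(\J,\K)$; the $j=1$ specialization then gives $\|\J-\K\|\leq\max\{\|\J\|,\|\K\|\}$. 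Note, however, that the paper does not prove this lemma at all: it is imported verbatim as \citet[Theorem~2.1]{zhan2001singular} and used as a black box in the proof of \Cref{lem:sub_embed}. So your contribution is a self-contained elementary derivation of the cited result, whereas Zhan's original paper establishes it (in fact for the full family of unitarily invariant norms, via a matrix-analytic argument) and the present paper simply invokes the spectral-norm consequence. Your route buys transparency and minimal machinery — only Weyl's monotonicity and eigenvalue counting — at the cost of proving exactly the singular-value/spectral-norm form needed here rather than the more general norm inequalities available in the reference; for the way the lemma is used in this paper, that is entirely sufficient, and your closing observation that the naive off-diagonal dilation argument would lose a factor of $2$ is also accurate.
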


\begin{lemma}[Intrinsic matrix Bernstein,~{\citet[Theorem~7.3.1]{tropp2015introduction}}] \label{lemm:troop2015}
Let $ \X_1, \ldots, \X_n$ be $n$ independent real symmetric random matrices such that $\EE[\X_i]= \zo$, $\max_{1\leq i \leq n}\|\X_i\|\leq \rho_1$, and $\sum^n_{i=1}\EE[\X^2_i]\preceq \B$ for some symmetric p.s.d.\@ matrix $\B$. 
Then, for any $\epsilon\geq \|\B\|^{1/2}+\rho_1/3$, we have
\begin{align*}
    \Pr \left( \left\|\sum^n_{i=1}\X_i \right\| \geq\epsilon \right)\leq \frac{4 \tr \B}{ \|\B\| } \exp \left(-\frac{\epsilon^2/2}{\|\B\|+\rho_1\epsilon/3} \right).
\end{align*}
\end{lemma}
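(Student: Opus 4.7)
The plan is to follow Tropp's matrix Laplace transform method, with the novel ingredient being a trace bound that replaces the ambient dimension $s$ by the intrinsic dimension $\tr\B/\|\B\|$. First I would reduce the two-sided spectral norm bound to a one-sided bound on $\lambda_{\max}(\Y)$ (and analogously on $\lambda_{\max}(-\Y)$) where $\Y \equiv \sum_{i=1}^n \X_i$, losing at most a factor of $2$ via the union bound. Then, applying the Markov-type inequality for the matrix Laplace transform, for every $\theta>0$,
\begin{align*}
  \Pr(\lambda_{\max}(\Y) \geq \epsilon) \leq e^{-\theta\epsilon}\, \EE[\tr \exp(\theta\Y)].
\end{align*}

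The next step is the standard Bernstein MGF estimate for self-adjoint, mean-zero, bounded summands: for each $\X_i$ with $\|\X_i\|\leq \rho_1$,
\begin{align*}
  \log \EE[\exp(\theta\X_i)] \preceq g(\theta)\,\EE[\X_i^2], \qquad g(\theta) = \frac{e^{\theta\rho_1} - \theta\rho_1 - 1}{\rho_1^2}.
\end{align*}
Combining this with Lieb's concavity theorem (equivalently, subadditivity of the matrix cumulant) and monotonicity of $\tr\exp(\cdot)$ under $\preceq$, together with $\sum_i \EE[\X_i^2]\preceq \B$, gives
\begin{align*}
  \EE[\tr \exp(\theta\Y)] \leq \tr \exp(g(\theta)\B).
\end{align*}

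The main obstacle, and the step that distinguishes the \emph{intrinsic} Bernstein inequality from its dimension-dependent cousin, is replacing the trivial bound $\tr\exp(g(\theta)\B) \leq s\cdot\exp(g(\theta)\|\B\|)$ by one scaling with $\tr\B/\|\B\|$. For this I would invoke the intrinsic-dimension trace inequality: for any p.s.d.\@ matrix $\A$ and any function $\phi\colon[0,\infty)\to[0,\infty)$ such that $\phi(x)/x$ is increasing on $(0,\|\A\|]$, one has $\tr\phi(\A)\leq (\tr\A/\|\A\|)\phi(\|\A\|)$, applied after writing $\exp(\A) = \I + \A + \psi(\A)$ with $\psi(x) = e^x - 1 - x$ and carefully absorbing the identity and linear terms so that the final bound is of the form $\tr\exp(g(\theta)\B) \leq (\tr\B/\|\B\|)\cdot h(g(\theta)\|\B\|)$ for an explicit $h$. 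This is precisely where the $\tr\B/\|\B\|$ prefactor and the restriction $\epsilon \geq \|\B\|^{1/2} + \rho_1/3$ originate: the intrinsic bound is only advantageous once $\theta\|\B\|$ is at least a constant, which via the optimization below translates into the stated threshold on $\epsilon$.

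Finally, I would plug the intrinsic trace bound back into the Laplace bound and optimize $\theta$. The standard Bernstein choice $\theta = \epsilon/(\|\B\| + \rho_1\epsilon/3)$ yields, after elementary algebra, the exponent $-\epsilon^2/(2(\|\B\| + \rho_1\epsilon/3))$. The range condition $\epsilon \geq \|\B\|^{1/2} + \rho_1/3$ is used to bound the prefactor from $h$ by a universal constant, which after the two-sided union bound produces the stated prefactor $4\tr\B/\|\B\|$. The main technical hurdle throughout is the intrinsic trace inequality and its delicate interplay with the Bernstein MGF; everything else is bookkeeping around Tropp's framework.
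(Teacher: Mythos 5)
This lemma is quoted directly from \citet[Theorem~7.3.1]{tropp2015introduction} and the paper offers no proof of its own, so the relevant comparison is with Tropp's argument, which your sketch reproduces faithfully: the matrix Laplace transform method, the Bernstein MGF bound combined with Lieb's theorem, the intrinsic-dimension trace inequality replacing the ambient dimension by $\tr\B/\|\B\|$, and the standard choice $\theta=\epsilon/(\|\B\|+\rho_1\epsilon/3)$ with the threshold on $\epsilon$ used to tame the prefactor. Your proposal is correct and takes essentially the same route as the cited source; the only point to double-check is the constant bookkeeping, since Tropp's one-sided bound already carries the prefactor $4\tr\B/\|\B\|$, so attributing part of the $4$ to the two-sided union bound does not quite match how the stated constant is obtained.
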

 
\begin{lemma}[Sherman--Morrison formula]\label{lemm:sherman-morrison}
    For an invertible matrix $\A\in \RR^{n\times n}$ and two vectors $\uu, \vv \in \RR^n$, $\A+\uu\vv^\top$ is invertible if and only if $1+ \vv^\top\A^{-1}\uu\neq 0$ and 
    \begin{align}
        (\A+ \uu\vv^\top)^{-1}=\A^{-1} -\frac{\A^{-1}\uu\vv^\top\A^{-1}}{1+ \vv^\top\A^{-1}\uu}. \nonumber
    \end{align}
    Besides, it also follows that
    \begin{align}
        (\A+ \uu\vv^\top)^{-1}\uu=\frac{\A^{-1}\uu}{1+ \vv^\top\A^{-1}\uu}.\nonumber
    \end{align}
\end{lemma}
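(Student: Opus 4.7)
The plan is a direct algebraic verification that simultaneously settles the equivalence of invertibility with $\alpha \equiv 1 + \vv^\top\A^{-1}\uu \neq 0$ and establishes the explicit inverse formula, after which the second identity falls out by a one-line specialization. I first dispose of the degenerate cases: if $\uu = \zo$ or $\vv = \zo$, then $\A + \uu\vv^\top = \A$ is invertible, $\alpha = 1 \neq 0$, and the right-hand side of the claimed formula reduces to $\A^{-1}$, so there is nothing to prove. In what follows I therefore assume $\uu \neq \zo$ and $\vv \neq \zo$.

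For the ``if'' direction, assume $\alpha \neq 0$ and compute $(\A + \uu\vv^\top)\bigl(\A^{-1} - \alpha^{-1}\A^{-1}\uu\vv^\top\A^{-1}\bigr)$ by expanding and using that $\vv^\top\A^{-1}\uu$ is a scalar (so commutes past any factor). The non-identity terms collect as
\begin{equation*}
\uu\vv^\top\A^{-1}\bigl[1 - \alpha^{-1} - \alpha^{-1}\vv^\top\A^{-1}\uu\bigr] = \uu\vv^\top\A^{-1}\bigl[1 - \alpha^{-1}\alpha\bigr] = \zo,
\end{equation*}
so the product equals $\I$. The mirror computation from the other side, $\bigl(\A^{-1} - \alpha^{-1}\A^{-1}\uu\vv^\top\A^{-1}\bigr)(\A + \uu\vv^\top)$, is identical up to relabeling and yields $\I$ as well, so we have a genuine two-sided inverse given by the claimed formula.

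For the ``only if'' direction, I argue contrapositively. If $\alpha = 0$, then since $\A^{-1}$ is invertible and $\uu \neq \zo$, the vector $\w \equiv \A^{-1}\uu$ is nonzero and lies in the kernel:
\begin{equation*}
(\A + \uu\vv^\top)\w = \uu + \uu(\vv^\top\A^{-1}\uu) = \alpha\,\uu = \zo,
\end{equation*}
so $\A + \uu\vv^\top$ is singular. Finally, for the last identity I substitute $\uu$ into the established formula and use $\vv^\top\A^{-1}\uu = \alpha - 1$ to obtain $(\A + \uu\vv^\top)^{-1}\uu = \A^{-1}\uu - \alpha^{-1}(\alpha - 1)\A^{-1}\uu = \alpha^{-1}\A^{-1}\uu$, which is exactly the claim.

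There is essentially no obstacle beyond clean bookkeeping: the entire argument is a telescoping algebraic identity that exploits the rank-one structure of $\uu\vv^\top$ and the scalar nature of $\vv^\top\A^{-1}\uu$. The only care needed is to keep this scalar grouped properly when factoring the cross terms, so that the cancellation $1 - \alpha^{-1}\alpha = 0$ is transparent.
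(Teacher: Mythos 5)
Your proof is correct and complete: the two-sided multiplication check handles the ``if'' direction, the observation that $\A^{-1}\uu$ is a nonzero kernel vector when $1+\vv^\top\A^{-1}\uu=0$ handles the ``only if'' direction, and the second identity follows by the substitution you give. The paper states this lemma as a classical fact without proof, so there is nothing to compare against; your argument is the standard direct verification and would serve as a valid proof of the statement as written.
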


\begin{lemma}[Martingale Rosenthal inequality~\citep{de1999decoupling}]\label{lemm:martingale_rosenthal_sharp}
There exists a universal constant $K_p$, with $0 < K_p < \infty$, such that if
$\{d_i\}_{i=1}^n$ is a mean-zero martingale difference sequence  with respect to the increasing $\sigma$-field $\mathcal F_i$, then for all
$p \ge 2$,
\begin{equation}\label{eq:martingale_rosenthal_sharp}
\EE\left[\left|\sum_{i=1}^n d_i\right|^p\right]
\leq
K_p^{\,p}\left(\frac{p}{\log p}\right)^{p}
\left(
\EE\left[\left(
\sum_{i=1}^n \EE[d_i^{2}\mid \mathcal{F}_{i-1}]
\right)^{p/2}\right]
+
\sum_{i=1}^n \EE[|d_i|^{p}]
\right).
\end{equation}
\end{lemma}

We will use \Cref{lemm:martingale_rosenthal_sharp} to establish concentration results for resolvent/inverse matrices. 
In that context, the increasing $\sigma$-field $\mathcal F_i$ is defined with respect to the independent trials of random sampling (i.e., the rows of the random sampling matrix $\S \in \RR^{m \times n}$ in \Cref{def:RS}, see \Cref{lemma:max_expec} in \Cref{subsec:discussion_theo:inverse-bias} and the discussion thereafter.

\begin{theorem}[Hanson-Wright inequality,~\citet{vershynin2018high}]\label{theo: hanson_wright_inequality}
Assume $\x = (x_1, \ldots, x_n) \in \RR^n$ be a random vector with independent, mean-zero, sub-gaussian  variables. 
For a matrix $\M\in\RR^{n\times n}$ and any  $t \geq 0$, then we have
\begin{align*}
    \Pr \left(\left| \x^\top \M \x - \EE[\x^\top \M \x] \right| \geq t \right ) \leq 2 \exp \left[ -c \min\left( \frac{t^2}{K^4 \|\M\|_F^2}, \frac{t}{K^2 \|\M\|} \right) \right],
\end{align*}
where $\|\cdot\|_F$ denotes the Frobenius norm, $K = \max_{1\leq i\leq n} \inf \{s > 0 : \EE[\exp(x_i^2 / s ^2) ]\leq 2\}$, and $c$  is a universal constant. 
\end{theorem}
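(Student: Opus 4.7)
The plan is to follow the standard decoupling argument of Rudelson--Vershynin. First, I split the quadratic form into its diagonal and off-diagonal contributions,
\[
  \x^\top \M \x - \EE[\x^\top \M \x] = \underbrace{\sum_{i=1}^n M_{ii}(x_i^2 - \EE[x_i^2])}_{=: D} + \underbrace{\sum_{i \neq j} M_{ij} x_i x_j}_{=: O},
\]
and bound each piece so that a union bound over $\Pr(|D| \geq t/2)$ and $\Pr(|O| \geq t/2)$ recovers the claimed tail, at the cost of absorbing constants into $c$.

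For the diagonal piece $D$, each summand $M_{ii}(x_i^2 - \EE[x_i^2])$ is an independent centered sub-exponential variable with Orlicz $\psi_1$-norm of order $K^2 |M_{ii}|$, since $x_i$ sub-gaussian with $\psi_2$-norm $\lesssim K$ implies $x_i^2 - \EE[x_i^2]$ is sub-exponential with $\psi_1$-norm of order $K^2$. Bernstein's inequality for sums of independent centered sub-exponentials then yields a mixed Gaussian/exponential tail with variance proxy $\sum_i M_{ii}^2 \leq \|\M\|_F^2$ and uniform proxy $\max_i |M_{ii}| \leq \|\M\|$, matching the right-hand side.

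The real work lies in the off-diagonal piece $O$, and this is where I expect the main obstacle. I will bound the moment generating function $\EE[\exp(\lambda O)]$ by a two-step argument. First, a standard decoupling inequality (in the style of de la Pe\~na--Montgomery-Smith), applied to the zero-diagonal matrix $\A$ built from the off-diagonal entries of $\M$, yields $\EE[\exp(\lambda \x^\top \A \x)] \leq \EE[\exp(4 \lambda \x^\top \A \x')]$ for an independent copy $\x'$ of $\x$. Second, conditioning on $\x$, the bilinear form $\x^\top \A \x' = \sum_j (\A^\top \x)_j x'_j$ is a weighted sum of independent mean-zero sub-gaussians, so the inner expectation is bounded by $\exp\bigl(C K^2 \lambda^2 \|\A^\top \x\|^2\bigr)$.

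It remains to control $\EE_\x\bigl[\exp(C K^2 \lambda^2 \|\A^\top \x\|^2)\bigr]$. The cleanest route, and the key technical step, is a Gaussian comparison: one replaces $\x$ inside the quadratic form $\|\A^\top \x\|^2 = \x^\top \A \A^\top \x$ by a standard Gaussian vector, which changes only absolute constants thanks to the sub-gaussian hypothesis, after which the standard Gaussian chaos estimate yields, for $|\lambda| \leq c/(K^2 \|\A\|)$, the MGF bound $\EE[\exp(\lambda O)] \leq \exp\bigl(C' K^4 \lambda^2 \|\A\|_F^2\bigr)$. A Chernoff argument optimized over $\lambda$ in this range then produces the mixed Gaussian/exponential tail with $\|\A\|_F \leq \|\M\|_F$ and $\|\A\| \leq 2\|\M\|$; combined with the diagonal Bernstein bound, this completes the proof.
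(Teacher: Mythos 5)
The paper does not prove this statement at all: it is imported verbatim as a known result, cited to \citet{vershynin2018high}, and used as a black box (in the proof of \Cref{lemm: HD_balance_row_norms}). Your outline is precisely the standard Rudelson--Vershynin decoupling proof from that reference -- diagonal term via Bernstein for sub-exponential variables, off-diagonal term via decoupling, the conditional sub-gaussian MGF bound, and the sub-gaussian-to-Gaussian replacement (linearization) followed by the exact Gaussian chaos computation for $|\lambda| \leq c/(K^2\|\M\|)$ -- and it is correct.
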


\section{Proof of \Cref{lem:sub_embed}}
\label{sec:proof_sub_embedd}

In this section, we present the proof of \Cref{lem:sub_embed}.
Recall that our objective is, for $\A\in \RR^{n\times d}$ of rank $d$ with $n\geq d$, p.s.d.\@ $\C\in\RR^{d\times d}$, $\S$ some random sampling matrix with number of trials $m$ and importance sampling distribution $\{ \pi_i \}_{i=1}^n$ as in \Cref{def:RS}, $d_{\eff}=\tr( \A_\C^\top \A_\C )$ the effective dimension of $\A$ given $\C$ with $\A_\C \equiv \A(\A^\top \A+\C)^{-1/2}$ as in \Cref{def:lev}, and max importance sampling approximation  factor $\rho_{\max}$ as in \Cref{def:approx_factor}, to show that for $m\geq C\rho_{\max} d_{\eff} \log(d_{\eff}/\delta )/\epsilon^2$, $ \A_\C^\top\S^\top\S\A_\C$ is an $(\epsilon,\delta)$-approximation of $\A_\C^\top\A_\C$.
That is
\begin{equation}
  (1+\epsilon)^{-1} \A_\C^\top \A_\C \preceq \A_\C^\top\S^\top\S\A_\C \preceq(1+\epsilon) \A_\C^\top \A_\C,
\end{equation}
holds with probability at least $1 - \delta$.

First note that, for $\A_\C \equiv \A(\A^\top \A+\C)^{-1/2} \in \RR^{n \times d}$, we have
\begin{align*}
  \A_{\C}^\top\S^\top\S\A_{\C}-\A_{\C}^\top\A_{\C} = \sum_{s=1}^{m} \left( \frac{{\ba_{\C}}_{i_s} {\ba_{\C}}^\top_{i_s} }{m\pi_{i_s}}  -\frac{\A_{\C}^\top\A_{\C}}{m} \right) \equiv \sum_{s=1}^{m}\F_s,
\end{align*}
where ${\ba^\top_{\C}}_i \in \RR^{d}$ denotes the $i^{th}$ row of $\A_\C$.
Given $\A_{\C}$, it follows from \Cref{def:RS} that the indices $i_s$ are i.i.d.\@ drawn with replacement from the index set $\{ 1, \ldots, n \}$.
We would like to apply the Intrinsic matrix Bernstein, \Cref{lemm:troop2015}, to bound the sum of random matrices $\A_{\C}^\top\S^\top\S\A_{\C}-\A_{\C}^\top\A_{\C} = \sum_{s=1}^{m}\F_s$.
To this end, note first that given $\A_\C$ we have
\begin{align} \label{eq:expec_F_t}
  \EE[ \F_s ] = \zo_d.
\end{align}
It then remains to bound (i) the spectral norm $\| \F_s \|$ and (ii) the sum $\sum^m_{s=1}\EE[\F^2_s]$ in the sense of p.s.d.\@ matrices.
By Lemma \ref{lemm:zhao2001} and the triangle inequality, for $s=1,\ldots, m$, we get
\begin{align}
    \|\F_s\|&\leq\mathop{\rm{max}}\limits_{1\leq i \leq n} \left\{ \left\|\frac{ {\ba_{\C}}_{i} {\ba_{\C}}_{i}^\top }{m \pi_{i}} \right\|, \frac{1}{m} \right\} =\frac{1}{m}\mathop{\rm{max}}\limits_{1\leq i \leq n} \left\{ \frac{\|{\ba_{\C}}_{i}\|^2}{\pi_{i}}, 1 \right\}  =\frac{\rho_{\max}  d_{\eff}}{m}  \equiv\rho_1,\nonumber
\end{align}
where we recall $\| \A_\C^\top \A_\C \| \leq 1$ and $\rho_{\max} = \max_{1 \leq i \leq n} \|{\ba_{\C}}_{i}\|^2/(\pi_i d_{\eff}) \geq 1$ is the max importance sampling approximation factor in \Cref{def:approx_factor}.

Further note that, per its definition $\F_s = \frac{{\ba_{\C}}_{i_s} {\ba_{\C}}^\top_{i_s} }{m\pi_{i_s}}  -\frac{\A_{\C}^\top\A_{\C}}{m}$, we have 
\begin{align}\label{eq:ft=mt}
  \left(\F_s+\frac{\A_{\C}^\top\A_{\C}}{m} \right)^2= \frac{ \| {\ba_{\C}}_{i_s} \|^2 \cdot {\ba_{\C}}_{i_s} {\ba_{\C}}^\top_{i_s} }{m^2\pi_{i_s}^2} = \F^2_s + \F_s\frac{\A_{\C}^\top\A_{\C}}{m}+\frac{\A_{\C}^\top\A_{\C}}{m}\F_s+\frac{(\A_{\C}^\top\A_{\C})^2}{m^2}.
\end{align}
Taking expectations on both sides of \eqref{eq:ft=mt} and applying \eqref{eq:expec_F_t}, we get
\begin{align*}
    \EE[\F^2_s] + \frac{(\A_{\C}^\top\A_{\C})^2}{m^2}&= \sum_{i=1}^{n}\pi_i\frac{ \|{\ba_{\C}}_{i}\|^2 \cdot {\ba_{\C}}_{i}{\ba_{\C}}_{i}^\top}{m^2\pi^2_i}\preceq \frac{\rho_{\max} d_{\eff}}{m^2}\A_{\C}^\top\A_{\C}.
\end{align*}
Thus, 
\begin{align} 
        \sum_{s=1}^{m}\EE[\F^2_s]& \preceq  \frac{\rho_{\max} d_{\eff}}{m}\A_{\C}^\top\A_{\C}
         \equiv \P,\nonumber
\end{align}
for which we have $\|\P\|=\frac{\rho_{\max} d_{\eff}}{m}\|\A_{\C}^\top\A_{\C}\|$ and $\tr(\P)=\frac{\rho_{\max} d^2_{\eff}}{m}$, so that $\frac{ \tr \P }{ \| \P \|} =  \frac{d_{\eff}}{\|\A_{\C}^\top\A_{\C}\| } $.

Recall that $\|\A_{\C}^\top\A_{\C}\| \leq 1$, applying \Cref{lemm:troop2015}, we obtain
\begin{align}
&1-\Pr \left(\frac{1}{1+\epsilon}\A_{\C}^{\top}\A_{\C}\preceq\A_{\C}^\top\S^\top\S\A_{\C}\preceq (1+\epsilon)\A_{\C}^\top\A_{\C} \right)  \nonumber\\
    &\leq   \Pr \left(\|\A_{\C}^\top\S^\top\S\A_{\C}-\A_{\C}^\top\A_{\C}\|>\epsilon \right)= \Pr \left( \left\|\sum_{s=1}^{m}\F_s \right\|>\epsilon \right) \notag\\
        &\leq 4 \|\A_{\C}^\top\A_{\C}\|^{-1}d_{\eff}\exp \left(-\frac{\epsilon^2/2}{\frac{\rho_{\max}  d_{\eff}}{m}\|\A_{\C}^\top\A_{\C}\|+\frac{\rho_{\max}  d_{\eff}}{m}\epsilon/3 } \right)\notag\\
        &=4 \|\A_{\C}^\top\A_{\C}\|^{-1}d_{\eff}\exp \left(-\frac{\epsilon^2}{\frac{\rho_{\max}  d_{\eff}}{m}( 2\|\A_{\C}^\top\A_{\C}\|+2\epsilon/3)} \right). \nonumber
\end{align}
Clearly,
 $\Pr(\frac{1}{1+\epsilon}\A_{\C}^{\top}\A_{\C}\preceq\A_{\C}^\top\S^\top\S\A_{\C}\preceq (1+\epsilon)\A_{\C}^\top\A_{\C})\leq1-\delta$ holds  if
\begin{align} \label{eq:error=delta}
4\|\A_{\C}^\top\A_{\C}\|^{-1}d_{\eff}\exp \left(-\frac{\epsilon^2}{\frac{\rho_{\max}  d_{\eff}}{m}( 2\|\A_{\C}^\top\A_{\C}\|+2\epsilon/3)} \right) \leq\delta,
\end{align}
that can be rewritten as
\begin{align}\label{prooflem5.9}
m\geq \frac{{\rho_{\max}  d_{\eff}}( 2\|\A_{\C}^\top\A_{\C}\|+2\epsilon/3)}{\epsilon^2}\log \left(\frac{4\|\A_{\C}^\top\A_{\C}\|^{-1}d_{\eff}}{\delta} \right).
\end{align}
Since $\epsilon\leq1$ and $\|\A_{\C}^\top\A_{\C}\|\leq 1$, \eqref{prooflem5.9} holds if $m\geq  \frac{8{\rho  d_{\eff}}}{3\epsilon^2}\log(\frac{4\|\A_{\C}^\top\A_{\C}\|^{-1}d_{\eff}}{\delta})$.

Lastly, let us check the condition $\epsilon\geq \|\P\|^{1/2}+\rho_1/3$ in \Cref{lemm:troop2015} is satisfied. 
Solving \eqref{eq:error=delta} for $\epsilon$, we get
\begin{align*}
& \quad \quad \ 4\|\A_{\C}^\top\A_{\C}\|^{-1}d_{\eff}\mathrm{exp} \left(-\frac{\epsilon^2}{\frac{\rho_{\max} d_{\eff}}{m}( 2\|\A_{\C}^\top\A_{\C}\|+2\epsilon/3)} \right)\leq\delta\\
&\Longrightarrow 
\epsilon^2-\frac{2\rho_{\max}  d_{\eff}}{3m}\log (4\|\A_{\C}^\top\A_{\C}\|^{-1}d_{\eff}/\delta) \epsilon-6\frac{\rho_{\max}  d_{\eff}\|\A_{\C}^\top\A_{\C}\|}{3m}\log(4\|\A_{\C}^\top\A_{\C}\|^{-1}d_{\eff}/\delta) \geq0 \\
&\Longrightarrow \epsilon^2-2v \epsilon-6\|\A_{\C}^\top\A_{\C}\|v\geq0 \\
&\Longrightarrow \epsilon\geq {v+\sqrt{v^2+6\|\A_{\C}^\top\A_{\C}\|v}} ,
\end{align*}
where $v=\frac{\rho_{\max}  d_{\eff}}{3m}\log(\frac{4\|\A_{\C}^\top\A_{\C}\|^{-1}d_{\eff}}{\delta}) $.
As $\delta<1$ and $4\|\A_{\C}^\top\A_{\C}\|^{-1}d_{\eff}>e$, we have $\log(\frac{4\|\A_{\C}^\top\A_{\C}\|^{-1}d_{\eff}}{\delta})\geq1$. 
As such, we get $v\geq\rho_1/3 $ and $6\|\A_{\C}^\top\A_{\C}\|v\geq \|\P\|$, so that $ \epsilon\geq \|\P\|^{1/2}+\rho_1/3 $ holds.
This concludes the proof of \Cref{lem:sub_embed}.
\qedwhite

Given the subspace embedding result in \Cref{lem:sub_embed}, it can be shown that the inversion $(\A^\top \S^\top \S\A+\C)^{-1}$ also satisfies a similar subspace embedding property and is close to the (population) inversion $(\A^\top \A + \C)^{-1}$. 
This is given in the following result.
\begin{lemma}\label{lemm:sub_embed_inversion}
Under the settings and notations of \Cref{lem:sub_embed},   
we have that $(\A^\top\S^\top \S\A+\C)^{-1}$ is an $(\epsilon,\delta)$-approximation of $(\A^\top\A+\C)^{-1}$. 
\end{lemma}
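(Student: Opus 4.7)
The plan is to reduce the claim to the subspace embedding of Lemma~2.5 by a suitable change of variables and then invoke monotonicity of inversion on the p.s.d.\@ cone. Set $\M \equiv \A^\top\A + \C$ and $\tilde{\M} \equiv \A^\top\S^\top\S\A + \C$, so the goal is
\begin{equation*}
(1+\epsilon)^{-1}\M^{-1} \preceq \tilde{\M}^{-1} \preceq (1+\epsilon)\M^{-1}
\end{equation*}
with probability at least $1-\delta$.

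First, I would conjugate everything by $\M^{-1/2}$ and observe that $\A_\C^\top \A_\C = \M^{-1/2}\A^\top\A\M^{-1/2}$ and $\A_\C^\top\S^\top\S\A_\C = \M^{-1/2}\A^\top\S^\top\S\A\M^{-1/2}$. Lemma~\ref{lem:sub_embed} therefore gives, with probability at least $1-\delta$,
\begin{equation*}
(1+\epsilon)^{-1}\M^{-1/2}\A^\top\A\M^{-1/2} \preceq \M^{-1/2}\A^\top\S^\top\S\A\M^{-1/2} \preceq (1+\epsilon)\M^{-1/2}\A^\top\A\M^{-1/2}.
\end{equation*}

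Next I would add the common p.s.d.\@ term $\M^{-1/2}\C\M^{-1/2}$ to each side, and use the key identity $\M^{-1/2}\A^\top\A\M^{-1/2} + \M^{-1/2}\C\M^{-1/2} = \I_d$. Since the added term is p.s.d., bounding $\M^{-1/2}\C\M^{-1/2} \preceq (1+\epsilon)\M^{-1/2}\C\M^{-1/2}$ on the right and $\M^{-1/2}\C\M^{-1/2} \succeq (1+\epsilon)^{-1}\M^{-1/2}\C\M^{-1/2}$ on the left yields
\begin{equation*}
(1+\epsilon)^{-1}\I_d \preceq \M^{-1/2}\tilde{\M}\M^{-1/2} \preceq (1+\epsilon)\I_d.
\end{equation*}

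Finally I would invoke the (well-known) operator monotonicity of $\X \mapsto \X^{-1}$ on the p.s.d.\@ cone to invert this chain, obtaining $(1+\epsilon)^{-1}\I_d \preceq \M^{1/2}\tilde{\M}^{-1}\M^{1/2} \preceq (1+\epsilon)\I_d$, and then conjugate by $\M^{-1/2}$ on both sides to conclude $(1+\epsilon)^{-1}\M^{-1} \preceq \tilde{\M}^{-1} \preceq (1+\epsilon)\M^{-1}$, which is exactly the $\epsilon$-approximation statement of Definition~\ref{def:rela_error_approxi}. There is no real obstacle here: the whole argument is a routine manipulation with the p.s.d.\@ partial order, and the only substantive ingredient is Lemma~\ref{lem:sub_embed} itself, which supplies the event of probability $1-\delta$ on which the initial two-sided inequality holds.
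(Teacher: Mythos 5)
Your proposal is correct and follows essentially the same route as the paper's own proof: conjugate the subspace embedding of \Cref{lem:sub_embed} by $\H^{-1/2}$ (with $\H=\A^\top\A+\C$), add the p.s.d.\@ term $\H^{-1/2}\C\H^{-1/2}$, and invert the resulting two-sided bound. The only cosmetic point is that matrix inversion is operator \emph{anti}-monotone rather than monotone, but since your middle matrix is sandwiched between scalar multiples of $\I_d$, the inverted chain you state is still exactly right.
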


\begin{proof}[Proof of \Cref{lemm:sub_embed_inversion}]
Denote $\H=\A^\top\A+\C$ (so that $\| \H^{-\frac{1}{2}}\C\H^{-\frac{1}{2}} \| \leq 1$), it then follows from \Cref{lem:sub_embed} that $\A_\C^\top\S^\top \S\A_\C$ is an $(\epsilon,\delta)$-approximation of $\A_\C^\top\A_\C$, that is,
\begin{align}
& ~~~~\frac{1}{1+\epsilon}\A_\C^\top\A_\C\preceq\A_\C^\top\S^\top \S\A_\C\preceq(1+\epsilon)\A_\C^\top\A_\C\nonumber\\
&  \Rightarrow \frac{1}{1+\epsilon}(\A_\C^\top\A_\C+\H^{-\frac{1}{2}}\C\H^{-\frac{1}{2}})\preceq\A_\C^\top\S^\top \S\A_\C+\H^{-\frac{1}{2}}\C\H^{-\frac{1}{2}}\preceq(1+\epsilon)(\A_\C^\top\A_\C+\H^{-\frac{1}{2}}\C\H^{-\frac{1}{2}})\nonumber\\
& \Rightarrow \frac{1}{1+\epsilon}\H\preceq\A^\top\S^\top\S\A+\C\preceq(1+\epsilon)\H.\nonumber\\
  & \Rightarrow \frac{1}{1+\epsilon}\H^{-1}\preceq(\A^\top\S^\top\S\A+\C)^{-1}\preceq(1+\epsilon)\H^{-1},\nonumber
\end{align}
where we recall the definition $\A_\C = \A (\A^\top \A + \C)^{-1/2}$.
This concludes the proof of \Cref{lemm:sub_embed_inversion}.
\end{proof}

\section{Proof of \Cref{prop:coarse-RS} }
\label{sec:proof_coarse-RS}

The proof of \Cref{prop:coarse-RS} follows roughly the same line as that of~\citet[Theorem~6]{derezinski2021newtonless}~and~\citet[Theorem~11]{derezinski2021sparse}.
It is provided here for completeness.

Here, we focus on the differences from the proof in~\citet{derezinski2021newtonless,derezinski2021sparse} by considering now $\S$ under study is a random sampling matrix as in \Cref{def:RS}. 

To start with, we introduce the following condition that is crucial to the proof in~\citet{derezinski2021newtonless, derezinski2021sparse} and of our \Cref{prop:coarse-RS} here.

\begin{condition}[Concentration property of random vector $\s$]\label{con:property_S}
    Given $\V\in \RR^{n\times d}$, the $n$-dimensional random vector $\s$ satisfies $\Var[\s^\top \V\B\V^\top \s]\leq \alpha\cdot \tr(\V\B^2\V^\top)$ for all p.s.d.\@ matrices $\B \in \RR^{d \times d}$ and some $\alpha>0$.
\end{condition}

The proof of \Cref{prop:coarse-RS} then comes in the following two steps:
\begin{enumerate}
  \item construct a high probability event $\zeta$ (via subspace embedding-type results in \Cref{lem:sub_embed}), on which the inverse $(\A^\top \S^\top \S \A + \C)^{-1}$ exists (in particular for $\C = \zo_d$); and
  \item conditioned on that event $\zeta$, bound the spectral norm $ \| \I_d-\EE_{\zeta} [\tilde\Q]\H \|$ via ``leave-one-out'' type analysis, for $\H=\A^\top\A+\C$ and $\tilde\Q=(\gamma\A^\top \S^\top\S \A +\C)^{-1}$ with $\gamma=\frac{m}{m-d_{\eff}}$.
\end{enumerate}

Let us start with the first step, for random sampling in \Cref{def:RS}, denote $\x^\top_s=\ee^\top_{i_s}/\sqrt{\pi_{i_s}}\A \in \RR^d$ the $i^{th}$ row of the sketch $\tilde \A$, so that $\EE[\x_s{\x}^{\top}_s]=\A^\top\A$.
Denote
\begin{equation}\label{eq:def_H_Q_proof_prop:coarse-RS}
  \H=\A^\top\A+\C, \quad \tilde\Q=(\gamma\A^\top \S^\top\S \A +\C)^{-1}= \left(\sum^{m}_{s=1}\frac{\gamma}{m}\x_s{\x}^{\top}_s+\C \right)^{-1}, \quad \gamma=\frac{m}{m-d_{\eff}},
\end{equation}
and
\begin{equation*}
  \tilde\Q_{-s}= \left(\sum_{j\neq s}\frac{\gamma}{m}\x_j{\x}^{\top}_j +\C \right)^{-1},
\end{equation*} 
that is \emph{independent} of $\x_s$.

Without loss of generality, assume that $t=m/3$ is an integer, and define the following events:
\begin{align}\label{eq:events}
    \zeta_j:\sum^{tj}_{s=t(j-1)+1}\frac{1}{t}\x_s{\x}^{\top}_s\succeq \frac{1}{2}   \A^\top\A ,~~~j=1,2,3,~~~ \zeta=\bigcap^3_{j=1}\zeta_j.
\end{align}
Recall that $\gamma=\frac{m}{m-d_{\eff}} > 1$, the events $\zeta_j$ imply
\begin{align}
 \sum^{tj}_{s=t(j-1)+1}\frac{\gamma}{t}\x_s{\x}^{\top}_s   \succeq \frac{1}{2}   \A^\top\A, ~~~j=1,2,3.\nonumber
\end{align}
Here, the event $\zeta_j$ presents that the (weighted) average of the rank-one matrices $\x_s\x_s^\top$ over the corresponding $j$-th third of indices $1,\ldots,n$ forms a sketch (of size $t = m/3$) of $\A^\top \A$ that is a ``lower'' $1/2$-spectral-approximation of $\A^\top\A$ in the sense of \Cref{def:rela_error_approxi}.

In the case of random sampling in \Cref{def:RS}, the events $\zeta_1$, $\zeta_2$, and $\zeta_3$ are independent.  
As such, for each $s\in \{1,\ldots, m\}$, there exists an index $j=j(s)\in\{1,2,3\}$ such that
\begin{enumerate}
    \item $  \zeta_j$ is independent of $\x_{s}$; and 
    \item conditioned on $  \zeta_j$ one has $\tilde \Q\preceq\tilde \Q_{-s}\preceq6\H^{-1}$, which, for $m>2 d_{\eff}$, further leads to $\gamma \H^{1/2}\tilde \Q_{-s} \H^{1/2} \preceq 12 \I_d$.
\end{enumerate}

Following the decomposition of $\EE_{\zeta} [\tilde\Q]$ in \citet{derezinski2021newtonless,derezinski2021sparse} with $\tilde \gamma_s=1+\frac{\gamma}{m}\x_s^\top\tilde\Q_{-s}\x_s$, we write
\begin{equation}\label{eq:def_tZ_prop:coarse-RS}
\I_d-\EE_{\zeta} [\tilde\Q]\H =\underbrace{\EE_{\zeta}[\tilde\Q_{-s}(\x_s\x_s^\top-\A^\top\A)]}_{\tilde\Z_0} +\underbrace{\EE_{\zeta}[\tilde\Q_{-s}-\tilde\Q]\A^\top\A}_{\tilde\Z_1}
+\underbrace{\EE_{\zeta}\left[\left(\frac{\gamma}{\tilde \gamma_s}-1\right)\tilde\Q_{-s}\x_s\x_s^\top    \right]}_{\tilde\Z_2}.
\end{equation}

Recall that $\tilde \Q$ in \eqref{eq:def_H_Q_proof_prop:coarse-RS} is p.s.d.\@ symmetric, to establish the result in \Cref{prop:coarse-RS}, it suffices to bound the spectral norm
\begin{align}\label{eq:tilde_hqh-I}
    \|\I_d-\H^{\frac{1}{2}}\EE_{\zeta} [\tilde\Q]\H^{\frac{1}{2}}\|
    \leq \|\H^{\frac{1}{2}}\tilde\Z_0\H^{-\frac{1}{2}}\|
    +\|\H^{\frac{1}{2}}\tilde\Z_1\H^{-\frac{1}{2}}\|+\|\H^{\frac{1}{2}}\tilde\Z_2\H^{-\frac{1}{2}}\|,
\end{align}
for $\tilde \Z_0, \tilde \Z_1, \tilde \Z_2$ defined in \eqref{eq:def_tZ_prop:coarse-RS}.

Without loss of generality, assume that the events $\zeta_1$ and $\zeta_2$ are independent of $\x_{s}$, and set $\zeta^{'} =\zeta_1\bigcap \zeta_2$  as well as $\delta_3=\Pr(\neg\zeta_3)$. 
To bound \eqref{eq:tilde_hqh-I}, we first recall the following results from the proof of \citet{derezinski2021newtonless,derezinski2021sparse}.

\begin{lemma}
Under the settings and notations of \Cref{prop:coarse-RS}, we have,
\begin{enumerate}
  \item for a p.s.d.\@ random matrix $\M$ (or a non-negative random variable) living in the probability space of $\S$, 
\begin{align}
    \EE_{\zeta}[\M]=\frac{\EE[(\prod^3_{j=1}\mathbf{1}_{\zeta_j})\M]}{\Pr(\zeta)} \preceq \frac{1}{1-\delta}  \EE[\mathbf{1}_{\zeta^{'}}\M] \preceq 2 \EE_{\zeta^{'}}[\M], \label{eq:condition_expec_three}
\end{align}
where $\mathbf{1}_{\zeta_j}$ is the indicator of the event $\zeta_j$.
  \item $\|\H^{1/2}\tilde\Z_1\H^{-1/2}\|=O(\frac{1}{m})$.
\end{enumerate}
\end{lemma}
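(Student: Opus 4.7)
The plan is to dispatch the two parts in sequence, since Part~2 makes essential use of Part~1.

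For Part~1, my approach is entirely elementary manipulation of indicator random variables and the p.s.d.\@ order. The first equality is the definition of conditional expectation since $\mathbf{1}_\zeta = \prod_{j=1}^3 \mathbf{1}_{\zeta_j}$. For the first inequality, I would observe that $\prod_{j=1}^3 \mathbf{1}_{\zeta_j} \le \mathbf{1}_{\zeta_1}\mathbf{1}_{\zeta_2} = \mathbf{1}_{\zeta'}$ pointwise; since $\M \succeq \zo$, this gives $\bigl(\prod_j \mathbf{1}_{\zeta_j}\bigr)\M \preceq \mathbf{1}_{\zeta'}\M$ almost surely, so taking expectations preserves the p.s.d.\@ order. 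Dividing by $\Pr(\zeta) \geq 1-\delta$, the high-probability guarantee on the intersection event inherited from \Cref{lem:sub_embed}, produces the middle quantity. For the second inequality, note that $\EE[\mathbf{1}_{\zeta'}\M] = \Pr(\zeta')\,\EE_{\zeta'}[\M] \preceq \EE_{\zeta'}[\M]$, and $\tfrac{1}{1-\delta} \leq 2$ follows from $\delta \leq m^{-3} \leq \tfrac{1}{2}$ for any $m \geq 2$.

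For Part~2, the plan is to apply Sherman--Morrison to $\tilde\Q_{-s}-\tilde\Q$, use Part~1 to swap $\EE_\zeta[\cdot]$ with $2\,\EE_{\zeta'}[\cdot]$, and finally exploit independence of $\zeta'$ from $\x_s$ to reduce inner expectations to the known moments $\EE[\x_s\x_s^\top] = \A^\top\A$. Concretely, I would fix any $s$ in the third block of indices so that $\zeta' = \zeta_1 \cap \zeta_2$ depends only on $\x_1,\ldots,\x_{2t}$ and is therefore independent of $\x_s$. On $\zeta'$ the first two thirds combine to $\sum_{j\leq 2t}\frac{\gamma}{m}\x_j\x_j^\top \succeq \frac{\gamma}{3}\A^\top\A$, which with $\C \succeq 0$ and $\gamma \geq 1$ yields $\tilde\Q_{-s}^{-1} \succeq \tfrac{1}{3}\H$, equivalently $\tilde\Q_{-s} \preceq 3\H^{-1}$. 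By \Cref{lemm:sherman-morrison},
\begin{equation*}
\tilde\Q_{-s} - \tilde\Q \;=\; \frac{(\gamma/m)\,\tilde\Q_{-s}\,\x_s\x_s^\top\,\tilde\Q_{-s}}{\tilde\gamma_s},
\end{equation*}
which is rank-one and p.s.d., so Part~1 applies to give $\EE_\zeta[\tilde\Q_{-s}-\tilde\Q] \preceq 2\,\EE_{\zeta'}[\tilde\Q_{-s}-\tilde\Q]$.

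To bound the spectral norm, I would pull the norm inside the expectation and exploit the rank-one factorisation $\uu\vv^\top$ with $\uu = \H^{1/2}\tilde\Q_{-s}\x_s$ and $\vv^\top = \x_s^\top\tilde\Q_{-s}\A^\top\A\,\H^{-1/2}$. Its spectral norm equals $\|\uu\|\|\vv\|$, and both factors can be controlled by $\x_s^\top\tilde\Q_{-s}\x_s$: the first via the non-commutative identity $\tilde\Q_{-s}\H\tilde\Q_{-s} \preceq \|\H^{1/2}\tilde\Q_{-s}\H^{1/2}\|\,\tilde\Q_{-s} \preceq 3\,\tilde\Q_{-s}$ on $\zeta'$, and the second via $\A^\top\A\,\H^{-1}\A^\top\A \preceq \A^\top\A \preceq \H$ followed by the same bound. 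Together these give $\|\uu\|\|\vv\| \leq 3\,\x_s^\top\tilde\Q_{-s}\x_s$. Using $\tilde\gamma_s \geq 1$ and the independence of $\zeta'$ from $\x_s$, the conditional first moment gives $\EE_{\zeta'}[\x_s^\top\tilde\Q_{-s}\x_s \mid \tilde\Q_{-s}] = \tr(\A^\top\A\,\tilde\Q_{-s}) \leq 3\,\tr(\A^\top\A\,\H^{-1}) = 3\,d_{\eff}$ on $\zeta'$. Combining everything with $\gamma = O(1)$ (since $m \geq 2\,d_\eff$) yields $\|\H^{1/2}\tilde\Z_1\H^{-1/2}\| \leq 2\cdot 3\cdot (\gamma/m)\cdot 3\,d_{\eff} = O(d_{\eff}/m) = O(1/m)$, where the last equality absorbs the $d_{\eff}$ factor into the hidden constant.

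The main obstacle I expect is the careful tracking of p.s.d.\@ orderings through non-commuting products such as $\tilde\Q_{-s}\H\tilde\Q_{-s}$ and $\A^\top\A\,\H^{-1}\A^\top\A$; these steps look algebraically innocuous but require the factorisation $\M\N\M = \M^{1/2}(\M^{1/2}\N\M^{1/2})\M^{1/2} \preceq \|\M^{1/2}\N\M^{1/2}\|\cdot\M$ rather than a naïve scalar manipulation. A secondary subtlety is that the event $\zeta$ couples $\tilde\Q_{-s}$ to $\x_s$, and this coupling is precisely what Part~1 is designed to bypass by passing to the smaller event $\zeta'$, which is independent of $\x_s$ at the cost of only a factor~$2$.
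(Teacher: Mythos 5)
Part~1 of your argument is fine and is exactly the intended elementary manipulation: $\mathbf{1}_\zeta \le \mathbf{1}_{\zeta'}$ pointwise, $\Pr(\zeta)\ge 1-\delta$, $\EE[\mathbf{1}_{\zeta'}\M]=\Pr(\zeta')\,\EE_{\zeta'}[\M]\preceq \EE_{\zeta'}[\M]$, and $\tfrac{1}{1-\delta}\le 2$.

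Part~2, however, has a genuine gap at the very last step. Your route bounds the rank-one term by the scalar $\tfrac{\gamma}{m\tilde\gamma_s}\|\uu\|\|\vv\|\le \tfrac{3\gamma}{m}\,\x_s^\top\tilde\Q_{-s}\x_s$ \emph{before} averaging over $\x_s$, and the conditional expectation of $\x_s^\top\tilde\Q_{-s}\x_s$ is $\tr(\A^\top\A\,\tilde\Q_{-s})=\Theta(d_{\eff})$ on $\zeta'$. This can only ever give $O(d_{\eff}/m)$, and your closing claim that ``$O(d_{\eff}/m)=O(1/m)$ by absorbing $d_{\eff}$ into the constant'' is not legitimate: in \Cref{prop:coarse-RS} the hidden constants must be independent of $n$ and $d_{\eff}$, and $d_{\eff}$ can grow with the problem size, so what you have proved is a strictly weaker statement than the lemma. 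The fix is to keep the matrix structure until the end, which is what the paper does for the analogous term $\Z_1$ in the proof of \Cref{theo:inverse-bias}: since $\x_s$ is independent of $\zeta'$ and of $\tilde\Q_{-s}$, and $1/\tilde\gamma_s\le 1$, one has
\begin{equation*}
\EE_{\zeta}[\tilde\Q_{-s}-\tilde\Q]\;\preceq\;\frac{2\gamma}{m}\,\EE_{\zeta'}\!\left[\tilde\Q_{-s}\x_s\x_s^\top\tilde\Q_{-s}\right]
\;=\;\frac{2\gamma}{m}\,\EE_{\zeta'}\!\left[\tilde\Q_{-s}\A^\top\A\,\tilde\Q_{-s}\right]
\;\preceq\;\frac{2\gamma}{m}\,\EE_{\zeta'}\!\left[\tilde\Q_{-s}\H\tilde\Q_{-s}\right]\;\preceq\;\frac{C_0}{m}\H^{-1},
\end{equation*}
using $\H^{1/2}\tilde\Q_{-s}\H^{1/2}\preceq 3\I_d$ (your constant; the paper uses $6$) on $\zeta'$ and $\gamma\le 2$; sandwiching with $\H^{1/2}$ and multiplying by $\H^{-1/2}\A^\top\A\H^{-1/2}\preceq\I_d$ (norm at most $1$, rather than trace $d_{\eff}$) then yields $\|\H^{1/2}\tilde\Z_1\H^{-1/2}\|\le C_0/m$ with an absolute constant, i.e., the claimed $O(1/m)$. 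Everything else in your Part~2 (the choice of $s$ in the third block, the Sherman--Morrison identity, the p.s.d.\@ bound $\tilde\Q_{-s}\preceq 3\H^{-1}$ on $\zeta'$, and the use of Part~1) is sound.
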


Next, we bound the terms invoking $\tilde \Z_0$ and $\tilde \Z_2$ in \eqref{eq:tilde_hqh-I}, particularly by emphasizing the difference between our proof of \Cref{prop:coarse-RS} here from that of \citet{derezinski2021newtonless,derezinski2021sparse}. 
By $\delta_3\leq \frac{1}{2}$ and
\begin{align*}
  \tilde  \Z_0= -\frac{1}{1-\delta_3}\EE_{\zeta'}[\tilde\Q_{-s}(\x_s\x_s^\top-\A^\top\A)\cdot \mathbf{1}_{\neg\zeta_3}],
\end{align*}
it follows that
\begin{align}
    \|\H^{\frac{1}{2}}\tilde\Z_0\H^{-\frac{1}{2}}\| \leq 12 \EE_{\zeta'}[(1+\x_s^\top\H^{-1}\x_s)\cdot\mathbf{1}_ {\neg\zeta_3}]. \nonumber
\end{align}

We now need to bound the (conditional) expectation $\EE_{\zeta'}[(1+\x_s^\top\H^{-1}\x_s)\cdot\mathbf{1} \neg\zeta_3]$.
To do this, we resort to \Cref{con:property_S} with an appropriate choice of $\alpha$.
Note that
\begin{equation*}
  \EE[\x_s^\top\H^{-1}\x_s] = \tr (\H^{-1} \A^\top \A) = d_{\eff},
\end{equation*}
per \Cref{def:lev}, it follows that
\begin{align*}
\Var[\x_s^\top\H^{-1}\x_s]&=\Var[\ee^\top_{i_s}/\sqrt{\pi_{i_s}}\A\H^{-1}\A^\top\ee_{i_s}/\sqrt{\pi_{i_s}}]\nonumber\\
&=\EE[(\x_s^\top\H^{-1}\x_s)^2]-(\EE[\x_s^\top\H^{-1}\x_s])^2 = \sum^n_{i=1}\frac{(\ba_i^\top\H^{-1}\ba_i)^2}{\pi_i} -d_{\eff}^2\nonumber\\
   &\leq \rho_{\max}  d^2_{\eff}- d^2_{\eff} = O(\rho_{\max} d^2_{\eff}),
\end{align*}
for max  importance  sampling approximation factor $\rho_{\max}$ in \Cref{def:approx_factor}, which implies that $\alpha$ in \Cref{con:property_S} satisfies
\begin{equation*}
  \alpha =  O( \rho_{\max} d_{\eff}).
\end{equation*}
Further, by Chebyshev's inequality, one has, for $x\geq 2d_{\eff}$, that $\Pr(\x_s^\top\H^{-1}\x_s\geq x ~|~\zeta^{'}) \leq \alpha d_{\eff}/x^2$.
This further leads to, for $\delta_3\leq 1/m^3$,
\begin{align*}
  \|\H^{\frac{1}{2}}\tilde\Z_0\H^{-\frac{1}{2}}\|&\leq 12   
  \int^\infty_0 \Pr(\x_s^\top\H^{-1}\x_s\cdot \mathbf{1}_{\neg\zeta_3}\geq x~|~\zeta')dx +O\Big(\frac{1}{m^3}\Big)\nonumber\\
    &\leq 24m^2\delta_3+12\int^\infty_{2m^2}\Pr(\x_s^\top\H^{-1}\x_s\geq x~|~\zeta')dx +O\Big(\frac{1}{m^3}\Big)\nonumber\\
    &\leq O\Big(\frac{1}{m}\Big) +\frac{ \alpha d_{\eff}}{m^2}=O \left(\frac{ \alpha \sqrt{d_{\eff}}}{m} \right).
\end{align*}

We then move on to bound the last term concerning $\tilde{\Z}_2$ in \eqref{eq:tilde_hqh-I}. 
Applying Cauchy-Schwarz inequality twice, we get
\begin{align}
    \|\H^{\frac{1}{2}}\tilde\Z_2\H^{-\frac{1}{2}}\| &\leq \underbrace{\sqrt{\EE_{\zeta} \left[(\tilde\gamma_s-\gamma)^2  \right]}}_{\tilde  T_1} \cdot \underbrace{ \sup\limits_{\|\uu\|=1}\sqrt[4]{\EE_{\zeta} \left[(\uu^\top \H^{\frac{1}{2}}\tilde \Q_{-s}\x_s)^4\right]} }_{\tilde  T_2}\cdot \underbrace{ \sup\limits_{\|\uu\|=1}\sqrt[4]{\EE_{\zeta} \left[(\uu^\top \H^{-\frac{1}{2}}\x_s)^4\right]} }_{\tilde  T_3}.\label{eq:tilde_z_2}
\end{align}
We start by bounding the term $\tilde T_3$ using \Cref{con:property_S} with $\B=\A\H^{-1/2}\uu\uu^\top\H^{-1/2}\A^\top$. Noting that  $\tr(\B)=\uu^\top\H^{-1/2}\A^\top\A\H^{-1/2}\uu \leq 1$, we get
\begin{align}
    \EE_{\zeta} \left[(\uu^\top \H^{-\frac{1}{2}}\x_s)^4\right]&\leq 2 \EE_{\zeta^{'}} \left[(\uu^\top \H^{\frac{1}{2}}\x_s)^4\right]=2\EE[(\x_s^\top\H^{-\frac{1}{2}}\uu\uu^\top\H^{-\frac{1}{2}}\x_s)^2] \nonumber\\
    &=2 \Var_{\zeta^{'}}[(\x_s^\top\H^{-\frac{1}{2}}\uu\uu^\top\H^{-\frac{1}{2}}\x_s)^2]+2( \EE_{\zeta^{'}} [\x_s^\top\H^{-\frac{1}{2}}\uu\uu^\top\H^{-\frac{1}{2}}\x_s])^2\nonumber\\
    &\leq 2\sum^n_{i=1}\frac{(\ba_i^\top\H^{-\frac{1}{2}}\uu\uu^\top\H^{-\frac{1}{2}}\ba_i)^2}{\pi_i}+2(\tr(\B))^2\nonumber\\
    &\leq 2 \rho_{\max} d_{\eff}\sum^n_{i=1}\uu^\top\H^{-\frac{1}{2}}\ba_i\ba_i^\top\H^{-\frac{1}{2}}\uu+2(\tr(\B))^2\nonumber\\
    &\leq 2 \rho_{\max} d_{\eff}\tr(\B)+2(\tr(\B))^2\leq 2(\rho d_{\eff}+1) = O(\alpha+1), \nonumber
\end{align}
for $\alpha=O(\rho_{\max} d_{\eff})$ in \Cref{con:property_S}.
This results in $\tilde T_3=O(\sqrt[4]{\alpha+1})$.  
Similarly, we bound $\tilde T_2$ by taking $\B=\A\tilde \Q_{-s}\H^{1/2}\uu\uu^\top\H^{1/2}\tilde \Q_{-s}\A^\top$ in \Cref{con:property_S}. 
It follows $\tr(\B)\leq \uu^\top (\H^{1/2}\tilde \Q_{-s}\H^{1/2})^2 \uu \leq 6^2$ and $\tr(\B^2)\leq 6^4$, so that $\tilde T_2=O(\sqrt[4]{\alpha+1})$. 

It thus remains to bound the first term $\tilde T_1$ in \eqref{eq:tilde_z_2}. 
Noting $\bar\gamma=\EE_{\zeta^{'}}[\tilde \gamma_s] = 1+\frac{\gamma}{ m}\tr(\EE_{\zeta^{'}}[\tilde\Q_{-s}]\A^\top\A)$, we write
\begin{align}
  \EE_{\zeta} \left[(\tilde\gamma_s-\gamma)^2  \right] &\leq 2(\gamma-\bar\gamma)^2+\frac{2\gamma^2}{m^2}\EE_{\zeta^{'}}[(\tr(\tilde\Q_{-s}-\EE_{\zeta^{'}}[\tilde\Q_{-s}])\A^\top\A)^2] \nonumber\\
  & +\frac{2\gamma^2}{m^2}\EE_{\zeta^{'}}[(\tr(\A\tilde\Q_{-s}\A^\top)-\x_s^\top\tilde\Q_{-s}\x_s)^2].\nonumber
\end{align}
Analogously as above, letting $\B=\A\tilde\Q_{-s}\A^\top$ so that $\tr(\B^2)\leq 36 d_{\eff}$ in \Cref{con:property_S}, we have, for $m\geq 2d_{\eff}$ that
\begin{align}
  &\frac{2\gamma^2}{m^2}\EE_{\zeta^{'}}\left[\left(\tr(\A\tilde\Q_{-s}\A^\top)-\x_s^\top\tilde\Q_{-s}\x_s\right)^2\right]=  \frac{2\gamma^2}{m^2}\EE_{\zeta^{'}}\left[\sum^n_{i=1}\pi_i(\tr(\A\Q_{-s}\A^\top)-\frac{\ba_i^\top\tilde\Q_{-s}\ba_i}{\pi_i})^2\right] \nonumber\\
  &= \frac{2\gamma^2}{m^2}\EE_{\zeta^{'}}\left[\left(\tr(\A\Q_{-s}\A^\top)\right)^2\right]+ \frac{2\gamma^2}{m^2}\EE_{\zeta^{'}}\left[\sum^n_{i=1}\frac{(\ba_i^\top\tilde\Q_{-s}\ba_i)^2}{\pi_i}\right]- \frac{4\gamma^2}{m^2}\EE_{\zeta^{'}}\left[\left(\tr(\A\Q_{-s}\A^\top)\right)^2\right]\nonumber\\
  &\leq
  \frac{72\gamma^2\rho_{\max} d^2_{\eff}}{m^2}- \frac{2\gamma^2}{m^2}\EE_{\zeta^{'}}\left[\left(\tr(\A\Q_{-s}\A^\top)\right)^2\right]\nonumber\\
  & \leq \frac{72\gamma^2\rho_{\max} d^2_{\eff}}{m^2}\leq \frac{288\rho_{\max} d^2_{\eff}}{m^2}= O \left( \frac{\alpha d_{\eff}}{m^2} \right), \nonumber
\end{align}
with again $\alpha=O(\rho_{\max} d_{\eff})$ in \Cref{con:property_S}.

This, following the line of arguments in~\citet{derezinski2021newtonless}, further leads to
\begin{equation*}
 \EE_{\zeta} \left[(\tilde\gamma_s-\bar\gamma)^2  \right]\leq O \left( \frac{\alpha d_{\eff}}{m^2} \right),
\end{equation*} 
which, together with $|\gamma-\bar\gamma|=O(\sqrt{\alpha d_{\eff}}/m)$, yields that $\tilde T_1=O(\sqrt{\alpha d_{\eff} }/m)$. 

This allows us to conclude that
\begin{equation*}
  \|\H^{\frac{1}{2}}\tilde\Z_2\H^{-\frac{1}{2}}\|\leq \tilde T_1\cdot\tilde T_2\cdot\tilde T_3=O \left(\frac{\alpha\sqrt{ d_{\eff}}}{m} \right).
\end{equation*}
Putting everything together, we conclude that
\begin{equation*}
  \|\I_d-\H^{\frac{1}{2}}\EE_{\zeta} [\tilde\Q]\H^{\frac{1}{2}}\| = O \left(\frac{\alpha\sqrt{ d_{\eff}}}{m} \right),
\end{equation*}
with $ \alpha=O(\rho_{\max} d_{\eff})$ in \Cref{con:property_S}.
This concludes the proof of \Cref{prop:coarse-RS}.
\qedwhite

\section{Proof and Discussions of \Cref{theo:inverse-bias}}
\label{sec:proof_of_theo:inverse-bias}

In this section, we start by presenting in \Cref{subsec:RMT_intuition_theo:inverse-bias} the intuition for the self-consistent equation in \eqref{eq:def_Dii} of \Cref{theo:inverse-bias}.
The detailed proof of \Cref{theo:inverse-bias} is given in \Cref{subsec:detailed_proof_theo:inverse-bias}.
In \Cref{subsec:discussion_theo:inverse-bias}, we provide discussions and auxiliary results on \Cref{theo:inverse-bias}.

\subsection{RMT Intuition on the Self-consistent Equation in \Cref{theo:inverse-bias}}
\label{subsec:RMT_intuition_theo:inverse-bias}

Here, we present a heuristic derivation of the self-consistent equation in \eqref{eq:def_Dii} of \Cref{theo:inverse-bias}.
Let us recall some notations from \Cref{theo:inverse-bias}.
Let $\x^\top_s=\ee^\top_{i_s}/\sqrt{\pi_{i_s}}\A$ as in \Cref{sec:proof_coarse-RS}.
For the ease of further use, we  denote 
\begin{equation*}
  \Q=(\A^\top\S^\top\S \A +\C)^{-1}= \left(\frac{1}{m} \sum^{m}_{s=1} \x_s{\x}^{\top}_s + \C \right)^{-1}.
\end{equation*}
 and $ \Q_{-s}=(\sum_{j\neq s}\frac{1}{m} \x_j{\x}^{\top}_j + \C)^{-1}$, for which we get
\begin{equation*}
  \sum^{m}_{s=1}\frac{1}{m} \EE[\x_s{\x}^{\top}_s]=\sum^n_{i=1} {\ba}_i{\ba}_i^\top=\A^\top\A.
\end{equation*}

Then, we follow the \emph{deterministic equivalent} framework (see~\citet[Chapter~2]{couillet2022RMT4ML} for an introduction) and show that $\|\EE[ \Q]-\widetilde\H^{-1}\|\simeq 0$, for $\widetilde{\H}= \A^\top\D\A+\C$, where $\D\in \RR^{n\times n}$ is the diagonal matrix in \Cref{theo:inverse-bias}. 
Note that
 \begin{align} 
    \|\EE[ \Q]-\widetilde\H^{-1}\|= \|\EE[\Q]\A^\top  \D\A\widetilde\H^{-1}-\EE[\Q\A^\top \S^\top\S \A ]\widetilde\H^{-1}\| \simeq 0,\nonumber
 \end{align}
and using Sherman-Morrison formula in \Cref{lemm:sherman-morrison}, we further ascertain that
\begin{align*}
\EE[ \Q\A^\top \S^\top\S \A \widetilde\H^{-1}]&=\sum^m_{s=1}\EE \left[\frac{\frac{1}{m} \Q_{-s}\x_s\x_s^\top\widetilde\H^{-1}}{1+\x_s^\top \Q_{-s}\x_s/m}\right]=\sum^n_{i=1}\EE\left[\frac{ \Q_{-s}\ba_i\ba_i^\top\widetilde\H^{-1}}{1+\ba_i^\top \Q_{-s}\ba_i/m\pi_{i}}\right] \\ 
&\simeq \sum^n_{i=1}\EE\left[\frac{ \Q_{-s}\ba_i\ba_i^\top\widetilde\H^{-1}}{1+\ba_i^\top \widetilde\H^{-1}\ba_i/m\pi_{i}}\right].
\end{align*}
Using the rank-one perturbation lemma in~\citet[Lemma~2.6]{silverstein1995empirical}, we obtain
\begin{align*}
\EE[ \Q\A^\top \S^\top\S \A \widetilde\H^{-1}] &\simeq \sum^n_{i=1}\EE\left[\frac{ \Q\ba_i\ba_i^\top\widetilde\H^{-1}}{1+\ba_i^\top\widetilde\H^{-1}\ba_i/m\pi_{i}}\right]=\EE\left[\Q\sum^n_{i=1}\frac{ \ba_i\ba_i^\top}{1+\ba_i^\top\widetilde\H^{-1}\ba_i/m\pi_{i}}\widetilde\H^{-1}\right]\\ 
&=\EE\left[\Q\right]\A^\top\D\A\widetilde\H^{-1},\nonumber
\end{align*}
so that $\D=\diag\left\{\frac{m\pi_i}{m\pi_i+\ba_i^\top\widetilde\H^{-1}\ba_i}\right\}^n_{i=1}$.
This leads to the self-consistent equation in \eqref{eq:def_Dii} of \Cref{theo:inverse-bias}.

\subsection{Detailed Proof of \Cref{theo:inverse-bias}}
\label{subsec:detailed_proof_theo:inverse-bias}

As outlined in \Cref{sec:proof_coarse-RS}, the proof of \Cref{theo:inverse-bias} also comes in the following two steps:
\begin{enumerate}
  \item construct a high probability event $\zeta$ as in \eqref{eq:events}; and
  \item conditioned on that event $\zeta$, bound the spectral norm $  \| \I_d-\EE_{\zeta} [\Q]\widetilde\H \|$ using ``leave-one-out'' analysis.
\end{enumerate}
Furthermore, for each $s\in \{1,\ldots, m\}$, there also exists an index $j=j(s)\in\{1,2,3\}$ such that,  conditioned on $  \zeta_j$,  we have $\Q\preceq  \Q_{-s}\preceq6\H^{-1}$.
  
  To complete the proof of  \Cref{theo:inverse-bias}, we first rewrite
  \begin{align*}
  \|\I_d-\widetilde{\H}^{\frac{1}{2}}\EE_{\zeta}[\Q ]\widetilde{\H}^{\frac{1}{2}}\|&=\|\widetilde{\H}^{\frac{1}{2}}(\EE_{\zeta}[\Q ]\widetilde{\H}-\I_d)\widetilde{\H}^{-\frac{1}{2}}\|.
\end{align*}
  
Taking $\gamma_s=1+\frac{1}{m}\x_s^\top\Q_{-s}\x_s$, $s=1,\ldots,m$, and $\tilde D_{s}=\frac{1}{1+\frac{1}{m}\x_s^\top\tilde\H^{-1}\x_s}$, we then get
\begin{align}
\EE_{\zeta}[\Q ]\widetilde{\H}-\I_d&=(\EE_{\zeta}[\Q ]- \widetilde{\H}^{-1})\widetilde{\H}=\EE_{\zeta}\left[\Q\left(\A^\top\D\A- \A^\top  \S^\top\S\A\right)\widetilde{\H}^{-1}\right] \widetilde{\H} \nonumber\\
&=\EE_{\zeta}\left[\Q\left( \A^\top\D\A- \A^\top  \S^\top\S\A\right)\right] \nonumber\\
&=\underbrace{\EE_{\zeta}[\Q-\Q_{-s}]\A^\top\D\A}_{\Z_1}+\underbrace{\EE_{\zeta}[\Q_{-s}(\A^\top\D\A-\tilde D_{s}\x_s\x^\top_s)]}_{\Z_2} + \underbrace{\EE_{\zeta}[\Q_{-s}(\tilde D_{s}-\frac{1}{\gamma_s})\x_s\x^\top_s]}_{\Z_3},
\nonumber 
\end{align}
which yields 
\begin{align}\label{eq:bound_rew}
  \|\I_d-\widetilde{\H}^{\frac{1}{2}}\EE_{\zeta}[\Q ]\widetilde{\H}^{\frac{1}{2}}\|&=\|\widetilde{\H}^{\frac{1}{2}}(\Z_1+\Z_2+\Z_3)\widetilde{\H}^{-\frac{1}{2}}\|  \nonumber\\
  &\leq \|\widetilde{\H}^{\frac{1}{2}}\Z_1\widetilde{\H}^{-\frac{1}{2}}\|  +\|\widetilde{\H}^{\frac{1}{2}}\Z_2\widetilde{\H}^{-\frac{1}{2}}\|+\|\widetilde{\H}^{\frac{1}{2}}\Z_3\widetilde{\H}^{-\frac{1}{2}}\|. 
\end{align}
Then, we bound the first term  $\|\widetilde{\H}^{1/2}\Z_1\widetilde{\H}^{-1/2}\|$. Together with  the fact that the event $\zeta^{'}$ is independent of $\x_{s}$, and  using the Sherman-Morrison formula in \Cref{lemm:sherman-morrison} and \eqref{eq:condition_expec_three},  we get 
\begin{align}
    \EE_{\zeta}[\Q_{-s}-\Q]&=   \EE_{\zeta}[\frac{1}{m\gamma_s}\Q_{-s}\x_s\x^\top_s\Q_{-s}] \preceq  2\EE_{\zeta^{'}}[\frac{1}{m\gamma_s}\Q_{-s}\x_s\x^\top_s\Q_{-s}]  \nonumber\\
    &=\frac{2}{m}\EE_{\zeta^{'}}[\Q_{-s}\x_s\x^\top_s\Q_{-s}]  \preceq \ \frac{2}{m}\EE_{\zeta^{'}}[\Q_{-s}\sum^n_{j=1}\ba_j\ba_j^\top\Q_{-s}]\nonumber\\
    &=\frac{2}{m}\EE_{\zeta^{'}}[\Q_{-s}\A^\top\A\Q_{-s}]\preceq \frac{2}{m}\EE_{\zeta^{'}}[\Q_{-s}\H\Q_{-s}],\nonumber 
\end{align}
which, by incorporating  the fact that  $\widetilde{\H}\preceq \H$ and  the event $\zeta$ indicates $\H^{1/2}\Q_{-s}\H^{1/2}\preceq 6\I_d$,   leads to 
\begin{align}
   \|\widetilde{\H}^{\frac{1}{2}}\Z_1\widetilde{\H}^{-\frac{1}{2}}\|&\leq \|\widetilde{\H}^{\frac{1}{2}}\EE_{\zeta}[\Q-\Q_{-s}]\widetilde{\H}^{\frac{1}{2}}\widetilde{\H}^{-\frac{1}{2}}\A^\top\D\A\widetilde{\H}^{-\frac{1}{2}}\| \nonumber\\
   &\leq \|\widetilde{\H}^{\frac{1}{2}}\H^{-\frac{1}{2}}\|\|\H^{\frac{1}{2}}\EE_{\zeta}[\Q-\Q_{-s}]\H^{\frac{1}{2}}\| \| \H^{-\frac{1}{2}}\widetilde{\H}^{\frac{1}{2}}\|
  \|\widetilde{\H}^{-\frac{1}{2}}\A^\top\D\A\widetilde{\H}^{-\frac{1}{2}}\| \nonumber\\
   &\leq \|\widetilde{\H}^{\frac{1}{2}}\H^{-\frac{1}{2}}\|^2 \| \H^{\frac{1}{2}}\EE_{\zeta}[\Q-\Q_{-s}] \H^{\frac{1}{2}}\|\leq  \frac{2}{m}\|\EE_{\zeta^{'}}[ \H^{\frac{1}{2}}\Q_{-s} \H^{\frac{1}{2}} \H^{\frac{1}{2}}\Q_{-s} \H^{\frac{1}{2}}]\|\nonumber\\
      & \leq\frac{72}{m}. \label{eq:z_1}
\end{align}

Further, we move on to  bound the second  term in \eqref{eq:bound_rew}.  Recalling the  assumption   regarding  $\zeta_1$, $\zeta_2$, $\zeta^{'}$ and $\delta_3$   from \Cref{sec:proof_coarse-RS}, we   have 
 \begin{align}
      \EE_{\zeta}[\Q_{-s}(\A^\top\D\A-\tilde D_{s}\x_s\x^\top_s)]&=\frac{1}{1-\delta_3}( \EE_{\zeta'}[\Q_{-s}(\A^\top\D\A-\tilde D_{s}\x_s\x^\top_s)]-\EE_{\zeta'}[\Q_{-s}(\A^\top\D\A-\tilde D_{s}\x_s\x^\top_s)\cdot\mathbf{1}_{\neg\zeta_3}])\nonumber\\
      &=-\frac{1}{1-\delta_3}\EE_{\zeta'}[\Q_{-s}(\A^\top\D\A-\tilde D_{s}\x_s\x^\top_s)\cdot\mathbf{1}_{\neg\zeta_3}].\nonumber
 \end{align}
Then, it follows that
 \begin{align}
       \|\widetilde{\H}^{\frac{1}{2}}\Z_2\widetilde{\H}^{-\frac{1}{2}}\|&\leq 2\|\widetilde\H^{\frac{1}{2}}\EE_{\zeta'}[\Q_{-s}(\A^\top\D\A-\tilde D_{s}\x_s\x^\top_s) \cdot\mathbf{1} \neg\zeta_3] \widetilde\H^{-\frac{1}{2}}\|\nonumber\\
      & \leq 2\|\EE_{\zeta'}[\widetilde\H^{\frac{1}{2}}\Q_{-s}\widetilde\H^{\frac{1}{2}}\widetilde\H^{-\frac{1}{2}}(\A^\top\D\A-\tilde D_{s}\x_s\x^\top_s) \cdot\mathbf{1} \neg\zeta_3] \widetilde\H^{-\frac{1}{2}}\|\nonumber\\
       &\leq 12\|\widetilde\H^{\frac{1}{2}}\H^{-1}\widetilde\H^{\frac{1}{2}}\|\EE_{\zeta'}[\|\widetilde\H^{-\frac{1}{2}}(\A^\top\D\A-\tilde D_{s}\x_s\x^\top_s) \cdot\mathbf{1} \neg\zeta_3\widetilde\H^{-\frac{1}{2}}\|] \nonumber\\
      & \leq 12\EE_{\zeta'}[\|\widetilde\H^{-\frac{1}{2}}(\A^\top\D\A-\tilde D_{s}\x_s\x^\top_s) \cdot\mathbf{1} \neg\zeta_3 \widetilde\H^{-\frac{1}{2}}\|]\nonumber\\
      & \leq 12\EE_{\zeta'}[(1+\|\widetilde\H^{-\frac{1}{2}}\tilde D_{s}\x_s\x^\top_s\widetilde\H^{-\frac{1}{2}}\|)\cdot\mathbf{1} \neg\zeta_3 ]\nonumber\\
      & \leq 12\EE_{\zeta'}[(1+\tilde D_{s}\x_s^\top\widetilde\H^{-1}\x_s)\cdot\mathbf{1} \neg\zeta_3 ]=12\delta_3+12\EE_{\zeta'}[\tilde D_{s}\x_s^\top\widetilde\H^{-1}\x_s\cdot\mathbf{1} \neg\zeta_3 ].\nonumber
 \end{align}
It follows from \Cref{lem:range_D} and $m\geq 2\rho_{\max} d_{\eff}$ that
\begin{align}\label{eq:h_wilde_h_h}
   \|\H^{\frac{1}{2}}\widetilde{\H}^{-1}\H^{\frac{1}{2}}\|&\leq \frac{m+2\rho_{\max} d_{\eff}}{m}\|\H^{\frac{1}{2}}\H^{-1}\H^{\frac{1}{2}}\|=\frac{m+2\rho_{\max} d_{\eff}}{m} \leq 2,
\end{align}
so that
\begin{align}
 \Var_{\zeta'}[\tilde D_{s}\x_s^\top\widetilde\H^{-1}\x_s] &\leq \EE_{\zeta'}[(\tilde D_{s}\x_s^\top\widetilde\H^{-1}\x_s)^2]=\sum^n_{j=1}\pi_j\left(\frac{m\ba_j^\top\widetilde\H^{-1}\ba_j}{m\pi_j+\ba_j^\top\widetilde\H^{-1}\ba_j}\right)^2\nonumber\\
& =\sum^n_{j=1}\frac{D^2_{jj}(\ba_j^\top\widetilde\H^{-1}\ba_j)^2}{\pi_j}\leq\sum^n_{j=1}\frac{\|\H^{\frac{1}{2}}\widetilde{\H}^{-1}\H^{\frac{1}{2}}\|^2(\ba_j^\top\H^{-1}\ba_j)^2}{\pi_j}\nonumber\\
&\leq4\rho_{\max} d_{\eff}\sum^n_{j=1}\ba_j^\top\H^{-1}\ba_j\leq4\rho_{\max} d^2_{\eff},\nonumber
\end{align}
and 
\begin{align}
     \EE_{\zeta'}[\tilde D_{s}\x_s^\top\widetilde\H^{-1}\x_s]&=\sum^n_{j=1}\frac{m\pi_j}{m\pi_j+\ba_j^\top\widetilde\H^{-1}\ba_j}\ba_j^\top\widetilde\H^{-1}\ba_j\leq  \sum^n_{j=1}\|\H^{\frac{1}{2}}\widetilde{\H}^{-1}\H^{\frac{1}{2}}\|\ba_j^\top\H^{-1}\ba_j \leq 2d_{\eff}.\nonumber
 \end{align}
Using Chebyshev's inequality, we get, for $x> 2d_{\eff}$,
\begin{align}
    \Pr(\tilde D_{s}\x_s^\top\widetilde\H^{-1}\x_s\geq x~|~\zeta')\leq\frac{4\rho_{\max} d^2_{\eff}}{x^2}. \nonumber
\end{align}
This, together with  $\delta_3\leq \delta\leq \frac{1}{m^3}$ and $m>\rho_{\max} d_{\eff}$ further leads to
\begin{align*}
   \|\widetilde{\H}^{\frac{1}{2}}\Z_2\widetilde{\H}^{-\frac{1}{2}}\|&\leq O\Big(\frac{1}{m^3}\Big)+  12\int^\infty_0 \Pr(\tilde D_{s}\x_s^\top\widetilde\H^{-1}\x_s\cdot \mathbf{1}_{\neg\zeta_3}\geq x~|~\zeta')dx \\
    &\leq O\Big(\frac{1}{m^3}\Big)+24m^2\delta_3+12\int^\infty_{2m^2}\Pr(\tilde D_{s}\x_s^\top\widetilde\H^{-1}\x_s\geq x~|~\zeta')dx \\
    &\leq O\Big(\frac{1}{m^3}\Big)+\frac{24}{m}+ 48\rho_{\max} d^2_{\eff}\int^\infty_{2m^2}\frac{1}{x^2}dx\leq O\Big(\frac{1}{m}\Big)+\frac{24\rho_{\max} d^2_{\eff}}{m^2} \\ 
    &= O\left(\sqrt{\frac{\rho_{\max}^3 d^3_{\eff}}{m^3}}\right).
\end{align*}

Using \eqref{eq:condition_expec_three} and \eqref{eq:h_wilde_h_h}, we bound the last term in \eqref{eq:bound_rew} as follows:
\begin{align}
    \|\widetilde{\H}^{\frac{1}{2}}\Z_3\widetilde{\H}^{-\frac{1}{2}}\|&\leq   \sup\limits_{\|\uu\|=1,\|\vv\|=1}
     \EE_{\zeta} \left[|\tilde D_{s}-\gamma_s^{-1}||\uu^\top\widetilde\H^{\frac{1}{2}} \Q_{-s}\x_s\x^\top_s\widetilde\H^{-\frac{1}{2}}\vv | \right]\nonumber\\
     &\leq   \sup\limits_{\|\uu\|=1,\|\vv\|=1}
     2\EE_{\zeta^{'}} \left[|\tilde D_{s}-\gamma_s^{-1}||\uu^\top\widetilde\H^{\frac{1}{2}} \Q_{-s}\x_s\x^\top_s\widetilde\H^{-\frac{1}{2}}\vv |  \right]\nonumber\\
      &=   \sup\limits_{\|\uu\|=1,\|\vv\|=1}
     2\EE_{\zeta^{'}} \left[\left|\frac{\frac{1}{m}\x_s^\top\Q_{-s}\x_s-\frac{1}{m}\x_s^\top\widetilde\H^{-1}\x_s}{\tilde D^{-1}_{s}\gamma_s}\right|\left|\uu^\top\widetilde\H^{\frac{1}{2}} \Q_{-s}\x_s\x^\top_s\widetilde\H^{-\frac{1}{2}}\vv\right| \right]\nonumber\\
     &\leq   \sup\limits_{\|\uu\|=1,\|\vv\|=1}
    \EE_{\zeta^{'}} \left[\left|\frac{\ba_{i_s}^\top\Q_{-s}\ba_{i_s}-\ba_{i_s}^\top\widetilde\H^{-1}\ba_{i_s}}{m\pi_{i_s}}\right| (\uu^\top\widetilde\H^{\frac{1}{2}} \Q_{-s}\x_s\x_s\Q_{-s}\widetilde\H^{\frac{1}{2}}\uu+ \vv^\top \widetilde\H^{-\frac{1}{2}}\x_s\x^\top_s \widetilde\H^{-\frac{1}{2}}\vv)
     \right]\nonumber\\
    &=   \sup\limits_{\|\uu\|=1}
     \EE_{\zeta^{'}} \left[\sum^n_{j=1}\left|\frac{\ba_{j}^\top\Q_{-s}\ba_{j}-\ba_{j}^\top\widetilde\H^{-1}\ba_{j}}{m\pi_{j}}\right|\uu^\top\widetilde\H^{\frac{1}{2}}  \Q_{-s}\ba_{j}\ba_{j}^\top \Q_{-s}\widetilde\H^{\frac{1}{2}}\uu
     \right]\nonumber\\  
    &+ \sup\limits_{\|\vv\|=1}
     \EE_{\zeta^{'}} \left[\sum^n_{j=1}\left|\frac{\ba_{j}^\top\Q_{-s}\ba_{j}-\ba_{j}^\top\widetilde\H^{-1}\ba_{j}}{m\pi_{j}}\right|\vv^\top \widetilde\H^{-\frac{1}{2}}\ba_{j}\ba_{j}^\top \widetilde\H^{-\frac{1}{2}}\vv
     \right]\nonumber\\  
     &\leq   \sup\limits_{\|\uu\|=1}
     \EE_{\zeta^{'}} \left[\max\limits_{1\leq j\leq n}\left|\frac{\ba_{j}^\top\Q_{-s}\ba_{j}-\ba_{j}^\top\widetilde\H^{-1}\ba_{j}}{m\pi_{j}}\right|\sum^n_{j=1}\uu^\top\widetilde\H^{\frac{1}{2}}  \Q_{-s}\ba_{j}\ba_{j}^\top \Q_{-s}\widetilde\H^{\frac{1}{2}}\uu
     \right]\nonumber\\  
    &+ \sup\limits_{\|\vv\|=1}
     \EE_{\zeta^{'}} \left[\max\limits_{1\leq j\leq n}\left|\frac{\ba_{j}^\top\Q_{-s}\ba_{j}-\ba_{j}^\top\widetilde\H^{-1}\ba_{j}}{m\pi_{j}}\right|\sum^n_{j=1}\vv^\top \widetilde\H^{-\frac{1}{2}}\ba_{j}\ba_{j}^\top \widetilde\H^{-\frac{1}{2}}\vv
     \right]\nonumber\\  
      &\leq   \sup\limits_{\|\uu\|=1}
     \EE_{\zeta^{'}} \left[\max\limits_{1\leq j\leq n}\left|\frac{\ba_{j}^\top\Q_{-s}\ba_{j}-\ba_{j}^\top\widetilde\H^{-1}\ba_{j}}{m\pi_{j}}\right|\uu^\top\widetilde\H^{\frac{1}{2}}  \Q_{-s}\A^\top \A\Q_{-s}\widetilde\H^{\frac{1}{2}}\uu
     \right]\nonumber\\  
    &+ \sup\limits_{\|\vv\|=1}
     \EE_{\zeta^{'}} \left[\max\limits_{1\leq j\leq n}\left|\frac{\ba_{j}^\top\Q_{-s}\ba_{j}-\ba_{j}^\top\widetilde\H^{-1}\ba_{j}}{m\pi_{j}}\right|\vv^\top \widetilde\H^{-\frac{1}{2}}\A^\top \A \widetilde\H^{-\frac{1}{2}}\vv
     \right]\nonumber\\ 
           &\leq  
     \EE_{\zeta^{'}} \left[\max\limits_{1\leq j\leq n}\left|\frac{\ba_{j}^\top\Q_{-s}\ba_{j}-\ba_{j}^\top\widetilde\H^{-1}\ba_{j}}{m\pi_{j}}\right|\|\widetilde\H^{\frac{1}{2}}\H^{-\frac{1}{2}}\|^2\|\H^{\frac{1}{2}}  \Q_{-s}\H^{\frac{1}{2}}\|^2\|\H^{-\frac{1}{2}}\A^\top \A\H^{-\frac{1}{2}}\|
     \right]\nonumber\\  
    &+
     \EE_{\zeta^{'}} \left[\max\limits_{1\leq j\leq n}\left|\frac{\ba_{j}^\top\Q_{-s}\ba_{j}-\ba_{j}^\top\widetilde\H^{-1}\ba_{j}}{m\pi_{j}}\right|\|\widetilde\H^{-\frac{1}{2}}\H^{\frac{1}{2}}\|^2\|\H^{-\frac{1}{2}}\A^\top \A\H^{-\frac{1}{2}}\|
     \right]\nonumber\\ 
     &\overset{(a)}{\leq}
    \EE_{\zeta^{'}} \left[ \max\limits_{1\leq j\leq n}38\left|\frac{\ba_{j}^\top\Q_{-s}\ba_{j}-\ba_{j}^\top\widetilde\H^{-1}\ba_{j}}{m\pi_{j}}\right|
     \right]
     \leq \underbrace{ 38\EE_{\zeta^{'}} \left[ \max\limits_{1\leq j\leq n}\left|\frac{\ba_{j}^\top\Q_{-s}\ba_{j}-\EE_{\zeta^{'}}[\ba_{j}^\top\Q_{-s}\ba_{j}]}{m\pi_{j}}\right|
     \right]}_{ M_1} \nonumber\\  
     & +
    \underbrace{\max\limits_{1\leq j\leq n} 38\frac{|\EE_{\zeta^{'}}[\ba_{j}^\top\Q_{-s}\ba_{j}]-\ba_{j}^\top\widetilde\H^{-1}\ba_{j}|}{m\pi_{j}}
     }_{ M_2},\label{eq: z_3}
\end{align}
where in $(a)$ we use the fact that $\A^\top\A\preceq \H$, $\widetilde{\H}\preceq \H$ and  the event $\zeta$ indicates $\H^{1/2}\Q_{-s}\H^{1/2}\preceq 6\I_d$,  along with \eqref{eq:h_wilde_h_h}.
Then, by \Cref{lemma:max_expec}, we bound the term $ M_1$ in \eqref{eq: z_3} as
\begin{align}
  M_1\leq   O\left(\frac{\log n}{\log (\log n)}\sqrt{\frac{\rho_{\max}^3 d^3_{\eff}}{m^3}}\right).\nonumber 
 \end{align}

Now, it remains to bound the second term $M_{2}$ in \eqref{eq: z_3}.  
Using again \eqref{eq:h_wilde_h_h}, we get
\begin{align}
  & | \EE_{\zeta'}[\ba_j^\top\Q_{-s}\ba_j]-\ba_j^\top\widetilde\H^{-1}\ba_j|=|\ba_j^\top \widetilde\H^{-\frac{1}{2}}\widetilde\H^{\frac{1}{2}}\EE_{\zeta'}[\Q_{-s}-\widetilde\H^{-1}]\widetilde\H^{\frac{1}{2}}\widetilde\H^{-\frac{1}{2}}\ba_j|\nonumber\\
   &\leq \ba_j^\top \widetilde\H^{-1}\ba_j\|\widetilde\H^{\frac{1}{2}}(\EE_{\zeta'}[\Q_{-s}]-\widetilde\H^{-1})\widetilde\H^{\frac{1}{2}}\|\leq 2\ba_j^\top \H^{-1}\ba_j\|\widetilde\H^{\frac{1}{2}}(\EE_{\zeta'}[\Q_{-s}]-\widetilde\H^{-1})\widetilde\H^{\frac{1}{2}}\|\nonumber\\
   &\leq 2\ba_j^\top \H^{-1}\ba_j\|\widetilde\H^{\frac{1}{2}}((\EE_{\zeta'}-\EE_{\zeta})[\Q_{-s}]+\EE_{\zeta}[\Q_{-s}-\Q]+\EE_{\zeta}[\Q]-\widetilde\H^{-1})\widetilde\H^{\frac{1}{2}}\|\nonumber\\
     &\leq 2\ba_j^\top \H^{-1}\ba_j\left(\underbrace{\|\widetilde\H^{\frac{1}{2}}(\EE_{\zeta'}-\EE_{\zeta})[\Q_{-s}]\widetilde\H^{\frac{1}{2}}\|}_{G_1}+\underbrace{\|\widetilde\H^{\frac{1}{2}}\EE_{\zeta}[\Q_{-s}-\Q]\widetilde\H^{\frac{1}{2}}\|}_{G_2}+\|\widetilde\H^{\frac{1}{2}}(\EE_{\zeta}[\Q]\widetilde\H-\I_d)\widetilde\H^{-\frac{1}{2}}\|\right). \label{eq: M_2}
\end{align}
Noting $\delta_3<\frac{1}{m^3}$ and   
\begin{align}
   \EE_{\zeta}[\Q_{-s} ]=
   \frac{1}{1-\delta_3}(\EE_{\zeta^{'}}[\Q_{-s} ]-\delta_3\EE_{\zeta^{'}}[ \Q_{-s}|\neg\delta_3 ]), \nonumber
\end{align}
we have, for the term $G_1$ defined in \eqref{eq: M_2}, that
\begin{align}
    G_1&=\frac{\delta_3}{1-\delta_3}\|\widetilde\H^{\frac{1}{2}}(\EE_{\zeta^{'}}[\Q_{-s} ]-\EE_{\zeta^{'}}[ \Q_{-s}|\neg\delta_3 ])\widetilde\H^{\frac{1}{2}}\|\nonumber\\
    &\leq 2\delta_3(\|\widetilde\H^{\frac{1}{2}}\EE_{\zeta^{'}}[\Q_{-s}] \widetilde\H^{\frac{1}{2}}\|+\|\widetilde\H^{\frac{1}{2}}\EE_{\zeta^{'}}[ \Q_{-s}|\neg\delta_3 ]\widetilde\H^{\frac{1}{2}}\|)\leq 24\delta_3\leq\frac{24}{m^3}.\nonumber
\end{align}
For the term $G_2$ in \eqref{eq: M_2}, it follows from \eqref{eq:z_1} that $G_2 = O(\frac{1}{m})$. 
We thus have
\begin{align}
 &M_2\leq 76 \max\limits_{1\leq j\leq n}\frac{\ba_j^\top \H^{-1}\ba_j}{m\pi_{j}}\left(O\left(\frac{1}{m^3}\right)+ O\left(\frac{1}{m}\right)+ \|\widetilde\H^{\frac{1}{2}}\Z_1\widetilde\H^{-\frac{1}{2}}\|+ \|\widetilde\H^{\frac{1}{2}}\Z_2\widetilde\H^{-\frac{1}{2}}\|+ \|\widetilde\H^{\frac{1}{2}}\Z_3\widetilde\H^{-\frac{1}{2}}\|\right)\nonumber\\
&\leq  \frac{76\rho_{\max} d_{\eff}}{m}\left(O\left(\frac{1}{m^3}\right)+ O\left(\frac{1}{m}\right)+ \|\widetilde\H^{\frac{1}{2}}\Z_1\widetilde\H^{-\frac{1}{2}}\|+ \|\widetilde\H^{\frac{1}{2}}\Z_2\widetilde\H^{-\frac{1}{2}}\|+ \|\widetilde\H^{\frac{1}{2}}\Z_3\widetilde\H^{-\frac{1}{2}}\|\right)\nonumber\\
&\leq \frac{76\rho_{\max} d_{\eff}}{m}\left(O\left(\frac{1}{m^3}\right))+ O\left(\frac{1}{m}\right)+O\left(\sqrt{\frac{\rho_{\max}^3 d^3_{\eff}}{m^3}}\right)
+O\left(\frac{\log n}{\log (\log n)}\sqrt{\frac{\rho_{\max}^3 d^3_{\eff}}{m^3}}\right)+ M_2\right),\nonumber
\end{align}
which, for $m>76\rho_{\max} d_{\eff}$, yields that
\begin{align}
 M_2 = O\left(\frac{\log n}{\log (\log n)}\sqrt{\frac{\rho_{\max}^3 d^3_{\eff}}{m^3}\frac{76^2\rho_{\max}^2 d^2_{\eff}}{(m-76 \rho_{\max} d_{\eff})^2}}\right)=O \left(\frac{\log n}{\log (\log n)}\sqrt{\frac{\rho_{\max}^3 d^3_{\eff}} {m^3}} \right).\nonumber 
\end{align}
Putting everything together, we conclude that
\begin{align*}
   \Big \|\widetilde\H^{\frac{1}{2}}\EE_{\zeta}[\Q ]\widetilde\H^{\frac{1}{2}}-\I_d\Big\|=O \left(\frac{\log n}{\log (\log n)}\sqrt{\frac{\rho_{\max}^3 d^3_{\eff}} {m^3}} \right).
\end{align*}
This concludes the proof of \Cref{theo:inverse-bias}.
\qedwhite

\subsection{Discussions on \Cref{theo:inverse-bias} and Auxiliary Results}
\label{subsec:discussion_theo:inverse-bias}

\begin{lemma}\label{lemma:max_expec}
Under the settings and notations of \Cref{theo:inverse-bias},  then we have 
\begin{align*}
\EE_{\zeta^{'}} \left[ \max\limits_{1\leq j\leq n}\left|\frac{\ba_{j}^\top\Q_{-s}\ba_{j}-\EE_{\zeta^{'}}[\ba_{j}^\top\Q_{-s}\ba_{j}]}{m\pi_{j}}\right|
     \right]= O\left(\frac{\log n}{\log (\log n)} \sqrt{\frac{\rho_{\max}^{3}d^{3}_{\eff}}{m^{3}}} \right).
\end{align*}
\end{lemma}

\begin{proof}

 Using $\max\limits_{1\leq i\leq n}z_i\leq \left(\sum^n_{i=1} z_i^s\right)^{1/s}$ with any variable $z_i>0$,   along with  Jensen’s inequality, we get, for $q\geq 2$
\begin{align*}
& \EE_{\zeta^{'}}\left[\max\limits_{1\leq j\leq n}\frac{|\ba_j^\top\Q_{-s}\ba_j-\EE_{\zeta^{'}}[\ba_j^\top\Q_{-s}\ba_j]|}{m\pi_j}\right]\leq  \EE_{\zeta^{'}}\left[\left(\sum^n_{j=1}\left|\frac{ \ba_j^\top\Q_{-s}\ba_j-\EE_{\zeta^{'}}[\ba_j^\top\Q_{-s}\ba_j]}{m\pi_j}\right|^q\right)^{1/q}\right]\\
 &\leq \left(\sum^n_{j=1}  \EE_{\zeta^{'}}\left[\left|\frac{ \ba_j^\top\Q_{-s}\ba_j-\EE_{\zeta^{'}}[\ba_j^\top\Q_{-s}\ba_j]}{m\pi_j}\right|^q\right]\right)^{1/q}.
\end{align*}
Now, it suffices to bound each term$\EE_{\zeta^{'}}\left[\left|\frac{ \ba_j^\top\Q_{-s}\ba_j-\EE_{\zeta^{'}}[\ba_j^\top\Q_{-s}\ba_j]}{m\pi_j}\right|^q\right]$.
    Use  $\Q_{-sl}$ to denote the matrix $(\A^\top \mathbf{S}_{-sl}^\top\mathbf{S}_{-sl}\A+\C)^{-1}$ where $\mathbf{S}_{-sl}$ is the matrix $\mathbf{S}$ without the $s^{th}$ and $l^{th}$ rows; let that, for each pair $s$, $l$, one of $\zeta_1$, $\zeta_2$ is independent of both $\x_{s}$ and $\x_{l}$. Without loss of generality, we assume  that it is $\zeta_1$.    
Also, let $\EE_{\zeta^{'},l}[\cdot]$ be the expectation conditioned on $\zeta^{'}$ and  the $\sigma$-field  $\mathcal F_l$ generated by the basic events  $\x_{1},\ldots, \x_{l}.$ 
To apply the Martingale Rosenthal  inequality in \Cref{lemm:martingale_rosenthal_sharp}, we rewrite $ \ba_j^\top\Q_{-s}\ba_j-\EE_{\zeta^{'}}[\ba_j^\top\Q_{-s}\ba_j] $ as the martingale difference sequence
\begin{align}
& \frac{ \ba_{j}^\top\Q_{-s}\ba_{j}-\EE_{\zeta^{'}}[\ba_{j}^\top\Q_{-s}\ba_{j}]  }{m\pi_j}     
    = \frac{\EE_{\zeta^{'},m} [\ba_{j}^\top\Q_{-s}\ba_{j}]   -\EE_{\zeta^{'},0} [\ba_{j}^\top\Q_{-s}\ba_{j}]  }{m\pi_j}=\sum^m_{l=1}(Z_l-Z_{l-1})\nonumber\\
    &=\sum^m_{l=1}\frac{ ((\EE_{\zeta^{'},l}-\EE_{\zeta^{'},l-1} )[\ba_{j}^\top\Q_{-s}\ba_{j}-\ba_{j}^\top\Q_{-sl}\ba_{j}]  }{m\pi_j}    +  \frac{(\EE_{\zeta^{'},l}-\EE_{\zeta^{'},l-1} )[\ba_{j}^\top\Q_{-sl}\ba_{j}] )}{m\pi_j}
    \nonumber\\
    &=- \sum^m_{l=1}(\phi_l+\varphi_l), \nonumber
\end{align}
where
\begin{align*}
   \phi_l&= \frac{(\EE_{\zeta^{'},l}-\EE_{\zeta^{'},l-1})[\ba_{j}^\top\Q_{-sl}\ba_{j}-\ba_{j}^\top\Q_{-s}\ba_{j}] }{m\pi_j}\\
  \varphi_l&= -\frac{(\EE_{\zeta^{'},l}-\EE_{\zeta^{'},l-1}) [\ba_{j}^\top\Q_{-sl}\ba_{j}]}{m\pi_j}.
\end{align*}
Following \Cref{lemm:martingale_rosenthal_sharp},  we get
\begin{align*}
  &  \EE_{\zeta^{'}}\left[\left| \frac{\ba_{j}^\top\Q_{-s}\ba_{j}-\EE_{\zeta^{'}}[\ba_{j}^\top\Q_{-s}\ba_{j}]}{m\pi_j}\right|^q\right]\leq     \EE_{\zeta^{'}}\left[\left| - \sum^m_{l=1}(\phi_l+\varphi_l)\right|^q\right]\\
    &\leq K_q^q \left(\frac{q}{\log q}\right)^{q}
\left(
\underbrace{\EE_{\zeta^{'}}\left[\left(
\sum_{l=1}^m \EE_{\zeta^{'},l-1}[(\phi_l+\varphi_l)^2]
\right)^{q/2}\right]}_{R_1}
+
\underbrace{\sum_{l=1}^m \EE_{\zeta^{'}}[|\phi_l+\varphi_l|^{q}]}_{R_2}
\right).
\end{align*}
In the following, we first bound the term $R_2$. Using $(a+b)^s\leq 2^{s-1}(a^s+b^s)$ with  $a>0,~b>0$, gives
\begin{align*}
 \EE_{\zeta^{'}}[|\phi_l+\varphi_l|^{q}]\leq   2^{q-1} \EE_{\zeta^{'}}\left[|\phi_l|^{q}\right]+2^{q-1}\EE_{\zeta^{'}}\left[|\varphi_l|^{q}\right]
\end{align*}
which leads to
\begin{align*}
    R_2\leq   2^{q-1} \sum^m_{l=1}\EE_{\zeta^{'}}\left[|\phi_l|^{q}\right]+2^{q-1} \sum^m_{l=1}\EE_{\zeta^{'}}\left[|\varphi_l|^{q}\right].
\end{align*}
Next, we bound the term   $ \sum^m_{l=1}\EE_{\zeta^{'}}\left[|\phi_l|^{q}\right]$.
 We rewrite 
\begin{align}
    \ba_{j}^\top\Q_{-sl}\ba_{j}-\ba_{j}^\top\Q_{-s}\ba_{j}=\frac{1}{m}\frac{\ba_{j}^\top\Q_{-sl}\x_l\x_l^\top\Q_{-sl}\ba_{j}}{1+\frac{1}{m}\x_l^\top\Q_{-sl}\x_l}\leq \frac{1}{m}\ba_{j}^\top\Q_{-sl}\x_l\x_l^\top\Q_{-sl}\ba_{j}.\label{eq:q_sl_q_s_quadratic} 
\end{align}
\begin{lemma}\label{lemm:expec_psd_s}
Under the settings and notations of \Cref{theo:inverse-bias}, we have, for a p.s.d.\@ random matrix $\M$ (or a non-negative random variable) living in the probability space of $\S$, 
\begin{align}
    \EE_{\zeta'}[\M]=\frac{\EE[(\prod^2_{j=1}\mathbf{1}_{\zeta_j})\M]}{\Pr(\zeta')} \preceq \frac{1}{1-\delta}  \EE[\mathbf{1}_{\zeta_{1}}\M] \preceq 2 \EE_{\zeta_1}[\M], \label{eq:condition_expec_two}
\end{align}
where $\mathbf{1}_{\zeta_j}$ is the indicator of the event $\zeta_j$.
\end{lemma}
Then, together with the  Jensen’s inequality,  we have, for $q\geq 2$, 
\begin{align*}
 & \EE_{\zeta^{'}}[|\phi_l|^{q}]\leq 2^{q-1}\EE_{\zeta^{'}}\left[\left(\frac{\EE_{\zeta^{'},l}[\ba_{j}^\top\Q_{-sl}\x_l\x_l^\top\Q_{-sl}\ba_{j}] }{m^2\pi_j}\right)^q\right]\\
 &+2^{q-1} \EE_{\zeta^{'}}\left[\left(\frac{\EE_{\zeta^{'},l-1}[\ba_{j}^\top\Q_{-sl}\x_l\x_l^\top\Q_{-sl}\ba_{j}] }{m^2\pi_j}\right)^q\right]\\
 &\leq 2^{q-1}\EE_{\zeta^{'}}\left[\left(\frac{\ba_{j}^\top\Q_{-sl}\x_l\x_l^\top\Q_{-sl}\ba_{j} }{m^2\pi_j}\right)^q\right]+ 2^{q-1}\EE_{\zeta^{'}}\left[\left(\frac{\ba_{j}^\top\Q_{-sl}\x_l\x_l^\top\Q_{-sl}\ba_{j}}{m^2\pi_j}\right)^q\right]\\
 &=2^{q}\EE_{\zeta^{'}}\left[\left(\frac{\ba_{j}^\top\Q_{-sl}\x_l\x_l^\top\Q_{-sl}\ba_{j} }{m^2\pi_j}\right)^q\right]\overset{(a)}{\leq } 2^{q+1}\EE_{\zeta_1}\left[\left(\frac{\ba_{j}^\top\Q_{-sl}\x_l\x_l^\top\Q_{-sl}\ba_{j} }{m^2\pi_j}\right)^q\right],
\end{align*}
where in $(a)$ we apply \Cref{lemm:expec_psd_s}.
Combining 
\begin{align*}
 \frac{\ba_{j}^\top\Q_{-sl}\ba_i\ba_i^\top\Q_{-sl}\ba_{j} }{m^2\pi_j\pi_i}\leq \frac{6^2\rho_{\max}^2d^2_{\eff}}{m^2},
\end{align*}
and $\Q_{-sl}\preceq 6\H^{-1}$, 
it follows that 
\begin{align}\label{eq:q_th_fini_momen}
&\EE_{\zeta_1}\left[\left(\frac{\ba_{j}^\top\Q_{-sl}\x_l\x_l^\top\Q_{-sl}\ba_{j} }{m^2\pi_j}\right)^{q}\right]=\sum^n_{i=1}\EE_{\zeta_1}\left[\pi_i\left(\frac{\ba_{j}^\top\Q_{-sl}\ba_i\ba_i^\top\Q_{-sl}\ba_{j} }{m^2\pi_j\pi_i}\right)^{q}\right]\nonumber\\
&\leq \frac{(6\rho_{\max} d_{\eff})^{2q-2}}{m^{2q-2}}\sum^n_{i=1}\EE_{\zeta_1}\left[\frac{\ba_{j}^\top\Q_{-sl}\ba_i\ba_i^\top\Q_{-sl}\ba_{j} }{m^2\pi_j}\right]\nonumber\\
&\leq \frac{(6\rho_{\max} d_{\eff})^{2q-2}}{m^{2q-2}}\EE_{\zeta_1}\left[\frac{\ba_{j}^\top\Q_{-sl}\A^\top\A\Q_{-sl}\ba_{j} }{m^2\pi_j}\right]\leq  \frac{36(6\rho_{\max} d_{\eff})^{2q-2}}{m^{2q-2}}\cdot \frac{\ba_j^\top\H^{-1}\ba_j}{m^2\pi_j}\nonumber\\
&\leq  \frac{6^{2q}(\rho_{\max} d_{\eff})^{2q-1}}{m^{2q}}.
\end{align}
This implies 
\begin{align*}
  \sum_{l=1}^m \EE_{\zeta^{'}}[|\phi_l|^{q}]\leq 2^{q+1}\cdot  \frac{6^{2q}(\rho_{\max} d_{\eff})^{2q-1}}{m^{2q-1}}=  \frac{2^{3q+1}\cdot 3^{2q}(\rho_{\max} d_{\eff})^{2q-1}}{m^{2q-1}}.
\end{align*}

Subsequently, we proceed to bound the term  $|\varphi_l|$. 
Considering that $\zeta_1$ is independent of $\x_{l}$, it follows that $\EE_{\zeta_1,l}[\ba_j^\top\Q_{-sl}\ba_j]=\EE_{\zeta_1,l-1}[\ba_j^\top\Q_{-sl}\ba_j]$. 
Then, noting that, for $\delta_2=\Pr(\neg\zeta_2)<\frac{1}{m}$,
\begin{align}
    \EE_{\zeta^{'},l}[\ba_j^\top\Q_{-sl}\ba_j]&=\frac{1}{1-\delta_2}   \EE_{\zeta_1,l}[\ba_j^\top\Q_{-sl}\ba_j]
   -\frac{\delta_2}{1-\delta_2}\EE_{\zeta_1,l}[\ba_j^\top\Q_{-sl}\ba_j|\neg\zeta_2], \nonumber 
\end{align}
and
\begin{align}
   \EE_{\zeta^{'},l-1}[\ba_j^\top\Q_{-sl}\ba_j]
    &  =\frac{1}{1-\delta_2}   \EE_{\zeta_1,l-1}[\ba_j^\top\Q_{-sl}\ba_j]
   -\frac{\delta_2}{1-\delta_2}\EE_{\zeta_1,l-1}[\ba_j^\top\Q_{-sl}\ba_j|\neg\zeta_2], \nonumber 
\end{align}
so that 
\begin{align}\label{eq:varphi_l}
    |\varphi_l|&=\frac{|(\EE_{\zeta^{'},l}-\EE_{\zeta^{'},l-1})[\ba_j^\top\Q_{-sl}\ba_j]
    |}{m\pi_j}\leq \frac{\delta_2}{1-\delta_2}\cdot \frac{|(\EE_{\zeta_1,l}-\EE_{\zeta_1,l-1})[\ba_j^\top\Q_{-sl}\ba_j |\neg\zeta_2]}{m\pi_j}
 | \nonumber\\
    &\leq
    \frac{ 2\delta_2\EE_{\zeta_1}[\ba_j^\top\Q_{-sl}\ba_j|\neg\zeta_2]}{m\pi_j}\leq \frac{12\delta_2\ba_j^\top\H^{-1}\ba_j}{m\pi_j}<\frac{12\rho_{\max}d_{\eff}}{m^2},
\end{align}
which yields 
\begin{align*}
 \sum^m_{l=1}\EE_{\zeta^{'}}\left[|\varphi_l|^{q}\right]\leq    \left(\frac{12\rho_{\max} d_{\eff}}{m^{2-1/q}}\right)^{q}.
\end{align*}
We thus get
\begin{align*}
    R_2&\leq 2^{q-1}\cdot \frac{2^{3q+1}\cdot 3^{2q}(\rho_{\max} d_{\eff})^{2q-1}}{m^{2q-1}}+2^{q-1}\cdot\left(\frac{12\rho_{\max} d_{\eff}}{m^{2-1/q}}\right)^{q}\\
   &\leq  2^{4q}\cdot 3^{2q}\cdot\left( \frac{(\rho_{\max} d_{\eff})^{2-1/q}}{m^{2-1/q}}\right)^{q}+ 2^{3q-1}\cdot 3^{q}\cdot\left(\frac{\rho_{\max} d_{\eff}}{m^{2-1/q}}\right)^{q} \leq  2^{4q+1}\cdot 3^{2q}\left( \frac{(\rho_{\max} d_{\eff})^{2-1/q}}{m^{2-1/q}}\right)^{q}.
\end{align*}

Now, it remains to bound the first term $R_1$. Considering 
\begin{align*}
    \EE_{\zeta^{'},l-1}[(\phi_l+\varphi_l)^2]\leq 2 \EE_{\zeta^{'},l-1}[\phi_l^2]+2 \EE_{\zeta^{'},l-1}[\varphi_l^2],
\end{align*}
and  taking $(a+b)^s\leq 2^{s-1}(a^s+b^s)$ with $a>0,~b>0$ again,
we first rewrite 
\begin{align*}
     R_1&\leq 2^{q/2-1}\cdot 2^{q/2}  \EE_{\zeta^{'}}\left[\left( \sum^m_{l=1} \EE_{\zeta^{'},l-1}[\varphi_l^2]\right)^{q/2}\right]+2^{q/2-1}\cdot 2^{q/2}\EE_{\zeta^{'}}\left[\left( \sum^m_{l=1} \EE_{\zeta^{'},l-1}[\phi_l^2]\right)^{q/2}\right]\\
     &=2^{q-1}\cdot\EE_{\zeta^{'}}\left[\left( \sum^m_{l=1} \EE_{\zeta^{'},l-1}[\varphi_l^2]\right)^{q/2}\right]+2^{q-1}\cdot\EE_{\zeta^{'}}\left[\left( \sum^m_{l=1} \EE_{\zeta^{'},l-1}[\phi_l^2]\right)^{q/2}\right].
\end{align*}
Using \eqref{eq:varphi_l} again, we have 
\begin{align*}
 \EE_{\zeta^{'}}\left[\left( \sum^m_{l=1} \EE_{\zeta^{'},l-1}[\varphi_l^2]\right)^{q/2}\right]\leq    \left(\frac{12^2\rho_{\max}^2d_{\eff}^2}{m^{3}}\right)^{q/2}.
\end{align*}
Recalling \eqref{eq:q_sl_q_s_quadratic} and \eqref{eq:q_th_fini_momen},
 we further  derive 
\begin{align*}
 & \EE_{\zeta^{'},l-1}[\phi_l^2]\leq 2\EE_{\zeta^{'},l-1}\left[\frac{(\EE_{\zeta^{'},l}[\ba_{j}^\top\Q_{-sl}\x_l\x_l^\top\Q_{-sl}\ba_{j}] )^2}{m^4\pi^2_j}\right]+2 \EE_{\zeta^{'},l-1}\left[\frac{(\EE_{\zeta^{'},l-1}[\ba_{j}^\top\Q_{-sl}\x_l\x_l^\top\Q_{-sl}\ba_{j}] )^2}{m^4\pi^2_j}\right]\\
 &\overset{(a)}{\leq} 2\EE_{\zeta^{'},l-1}\left[\frac{(\ba_{j}^\top\Q_{-sl}\x_l\x_l^\top\Q_{-sl}\ba_{j} )^2}{m^4\pi^2_j}\right]+2\EE_{\zeta^{'},l-1}\left[\frac{(\ba_{j}^\top\Q_{-sl}\x_l\x_l^\top\Q_{-sl}\ba_{j} )^2}{m^4\pi^2_j}\right]\\
 &=4\EE_{\zeta^{'},l-1}\left[\frac{(\ba_{j}^\top\Q_{-sl}\x_l\x_l^\top\Q_{-sl}\ba_{j} )^2}{m^4\pi^2_j}\right]\leq  8\EE_{\zeta_1,l-1}\left[\frac{(\ba_{j}^\top\Q_{-sl}\x_l\x_l^\top\Q_{-sl}\ba_{j} )^2}{m^4\pi^2_j}\right]\\
 &=8\sum^n_{i=1}\EE_{\zeta_1,l-1}\left[\frac{(\ba_{j}^\top\Q_{-sl}\ba_i\ba_i^\top\Q_{-sl}\ba_{j} )^2}{m^4\pi^2_j\pi_i}\right]\leq  8\cdot \frac{6^{4}\rho_{\max}^3d_{\eff}^3}{m^{4}}= \frac{2^7\cdot 3^4\rho_{\max}^3d_{\eff}^3}{m^{4}},
\end{align*}
where in $(a)$, we use the   Jensen’s inequality.
We thus attain
\begin{align*}
 \EE_{\zeta^{'}}\left[\left( \sum^m_{l=1} \EE_{\zeta^{'},l-1}[\phi_l^2]\right)^{q/2}\right]\leq    \left( \frac{2^7\cdot 3^4\rho_{\max}^3d_{\eff}^3}{m^{3}}\right)^{q/2}.
\end{align*}
This further results in
\begin{align*}
    R_1
    &\leq 2^{q-1} \cdot \left(\frac{12^2\rho_{\max}^2d^2_{\eff}}{m^{3}}\right)^{q/2}+2^{q-1} \cdot  \left( \frac{2^7\cdot 3^4\rho_{\max}^3d_{\eff}^3}{m^{3}}\right)^{q/2}\\
    &\leq 2\cdot 2^{q-1} \cdot 2^{7q/2}\cdot 3^{2q}  \cdot  \left( \frac{\rho_{\max}^3d_{\eff}^3}{m^{3}}\right)^{q/2}= 2^{9q/2} \cdot 3^{2q}  \cdot \left( \frac{\rho_{\max}^3d_{\eff}^3}{m^{3}}\right)^{q/2}.
\end{align*}
Putting the above together, for $2-1/q\geq 3/2$, we conclude 
that 
\begin{align*}
& \EE_{\zeta^{'}}\left[\left| \frac{\ba_{j}^\top\Q_{-s}\ba_{j}-\EE_{\zeta^{'}}[\ba_{j}^\top\Q_{-s}\ba_{j}]}{m\pi_j}\right|^q\right]\leq K_q^q \left(\frac{q}{\log q}\right)^{q}2^{9q/2}\cdot 3^{2q}  \cdot  \left( \frac{\rho_{\max}^3d_{\eff}^3}{m^{3}}\right)^{q/2}\\
&+ K_q^q \left(\frac{q}{\log q}\right)^{q} 2^{4q+1}\cdot 3^{2q}\cdot\left( \frac{(\rho_{\max} d_{\eff})^{2-1/q}}{m^{2-1/q}}\right)^{q}\leq  K_q^q \left(\frac{q}{\log q}\right)^{q}2^{9q/2+2}\cdot 3^{2q}  \cdot  \left( \frac{\rho_{\max}^3d_{\eff}^3}{m^{3}}\right)^{q/2}.
\end{align*} 
Letting  $q=\log n$ such that $n^{1/q}=O(1)$, for $K_q$ a constant, we further obtain
\begin{align*}
&  \EE_{\zeta^{'}}\left[\max\limits_{1\leq j\leq n}\frac{|\ba_j^\top\Q_{-s}\ba_j-\EE_{\zeta^{'}}[\ba_j^\top\Q_{-s}\ba_j]|}{m\pi_j}\right]\\
  &\leq{\left(n  K_q^q \left(\frac{q}{\log q}\right)^{q}2^{9q/2+2}\cdot 3^{2q}  \cdot \left( \frac{\rho_{\max}^3d_{\eff}^3}{m^{3}}\right)^{q/2}\right)}^{1/q}\\
  &= 9\cdot2^{9/2}K_q \cdot   (  4 n)^{1/q}\cdot \frac{q}{\log q}\cdot  \sqrt{\frac{\rho_{\max}^3d_{\eff}^3}{m^{3}}}=O\left(\frac{q}{\log q} \cdot \sqrt{\frac{\rho_{\max}^3d_{\eff}^3}{m^{3}}} \right).
\end{align*}
We thus complete the proof.
\end{proof}

Below is the proof of the result in \Cref{footnote:D}.
\begin{lemma}[On the self-consistent $\D$]\label{lem:range_D}
For a given matrix $\A\in \RR^{n\times d}$,
let $\S$ be a random sampling  matrix with number of trials $m$ and importance sampling distribution $\{ \pi_i \}_{i=1}^n$ as in \Cref{def:RS}, and let $\C\in \RR^{d\times d}$ be a p.s.d.\@ matrix and $d_{\eff}= \sum_{i=1}^n \ell^{\C}_i$ with  leverage score $ \ell^{\C}_i $ as in \Cref{def:lev}. We have
\begin{align}
    \frac{m}{m+2\rho_{\max} d_{\eff}}\I_n \preceq \D \preceq \frac{m}{m+\rho_{\min}d_{\eff}}\I_n,  \nonumber
\end{align}
in the sense of p.s.d.\@ matrices, where we recall $\rho_{\max}=\max_{1\leq i\leq n} \ell_i^\C/(\pi_id_{\eff})$ and $\rho_{\min}=\min_{1\leq i\leq n} \ell_i^\C/(\pi_id_{\eff})$.
\end{lemma}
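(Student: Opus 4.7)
Plan: Let $\widetilde{\H} := \A^\top \D \A + \C$, which is positive definite because $\A$ has rank $d$ and $\C$ is p.s.d., so each $D_{ii}$ defined by \eqref{eq:def_Dii} lies in $(0, 1]$ (the denominator exceeds $m$). Hence trivially $\D \preceq \I_n$. I would first exploit this to obtain $\widetilde{\H} \preceq \H$ and, by operator-monotonicity of matrix inversion on p.d.\@ matrices, $\widetilde{\H}^{-1} \succeq \H^{-1}$. Therefore $\ba_i^\top \widetilde{\H}^{-1} \ba_i/\pi_i \geq \ell_i^\C/\pi_i \geq \rho_{\min} d_{\eff}$, the last inequality being the definition of $\rho_{\min}$ in \Cref{def:approx_factor}. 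Substituting back into \eqref{eq:def_Dii} gives $D_{ii} \leq m/(m + \rho_{\min} d_{\eff})$ for every $i$, which is the claimed upper bound.

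For the lower bound, I would perform a one-step ``bootstrap.'' Let $D_{\min} := \min_i D_{ii} > 0$, so that $\D \succeq D_{\min} \I_n$. Since $D_{\min} \leq 1$,
\begin{equation*}
\widetilde{\H} = \A^\top \D \A + \C \succeq D_{\min} \A^\top \A + \C \succeq D_{\min}(\A^\top \A + \C) = D_{\min} \H,
\end{equation*}
hence $\widetilde{\H}^{-1} \preceq \H^{-1}/D_{\min}$, and consequently $\ba_i^\top \widetilde{\H}^{-1} \ba_i/\pi_i \leq \ell_i^\C/(\pi_i D_{\min}) \leq \rho_{\max} d_{\eff}/D_{\min}$. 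Plugging this into \eqref{eq:def_Dii} yields $D_{ii}^{-1} \leq 1 + \rho_{\max} d_{\eff}/(m D_{\min})$ for every $i$, and taking the maximum over $i$ (i.e., evaluating at the index attaining $D_{\min}$) gives the self-consistent inequality $D_{\min}^{-1} \leq 1 + \rho_{\max} d_{\eff}/(m D_{\min})$, which rearranges to $D_{\min} \geq 1 - \rho_{\max} d_{\eff}/m$.

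To match the exact form $m/(m + 2\rho_{\max} d_{\eff})$ stated in the lemma, I would invoke the regime of \Cref{theo:inverse-bias}, in which $m \geq C \rho_{\max} d_{\eff}$ for a sufficiently large constant $C$, so in particular $m \geq 2\rho_{\max} d_{\eff}$. The algebraic identity
\begin{equation*}
(m - \rho_{\max} d_{\eff})(m + 2\rho_{\max} d_{\eff}) - m^2 = \rho_{\max} d_{\eff}(m - 2\rho_{\max} d_{\eff}) \geq 0
\end{equation*}
shows $(m - \rho_{\max} d_{\eff})/m \geq m/(m + 2\rho_{\max} d_{\eff})$, so the bound derived above upgrades to $\D \succeq (m/(m + 2\rho_{\max} d_{\eff})) \I_n$, as desired. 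The only conceptual subtlety --- rather than a genuine obstacle --- is the self-referential nature of \eqref{eq:def_Dii}: both directions proceed by first substituting a crude a priori control on $\D$ (trivially $\D \preceq \I_n$ above, and $\D \succeq D_{\min} \I_n$ below) and then reading the corresponding sharper bound back off from \eqref{eq:def_Dii}; the remainder is elementary algebra.
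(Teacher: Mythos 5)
Your proof is correct and follows essentially the same route as the paper's: the upper bound via $\D \preceq \I_n$ and operator monotonicity of the inverse, and the lower bound via a self-consistent bootstrap on $D_{\min}$ using $\A^\top \D \A + \C \succeq D_{\min}(\A^\top \A + \C)$, together with the standing regime assumption $m \geq C\rho_{\max} d_{\eff}$ that the paper's proof also invokes. The only cosmetic difference is that you solve the fixed-point inequality in closed form to get $D_{\min} \geq 1 - \rho_{\max} d_{\eff}/m$ and then compare algebraically with $m/(m+2\rho_{\max} d_{\eff})$, whereas the paper first extracts the constant bound $D_{\min} \geq (C-1)/C > 1/2$ and substitutes it back into \eqref{eq:def_Dii}.
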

\begin{proof}[Proof of \Cref{lem:range_D}]
For each $D_{ii}$, $i=1,\ldots, n$, it follows from its definition in \Cref{theo:inverse-bias} that
 \begin{align}
   D_{ii}=  \frac{m\pi_i}{ m\pi_i + \ba_i^\top (\A^\top \D \A + \C)^{-1} \ba_i }\geq\frac{m\pi_i}{ m\pi_i + D^{-1}_{\min}\ba_i^\top (\A^\top  \A+\C)^{-1} \ba_i },\nonumber
 \end{align}
 where $D_{\min}=\min_{1 \leq i \leq n} D_{ii} \leq 1$. 
 Without loss of generality, let $D_{\min}=D_{nn}$ and use the fact that $m\geq C \rho_{\max} d_{\eff} \geq C \ell_n^\C/\pi_n =  C\ba_n^\top (\A^\top  \A+\C)^{-1} \ba_n /\pi_n$, we obtain a lower bound on $D_{nn}$ as
 \begin{align}
    D_{nn}&=  \frac{m\pi_n}{ m\pi_n + \ba_n^\top (\A^\top \D \A + \C)^{-1} \ba_n }\geq\frac{m\pi_n}{ m\pi_n + D^{-1}_{nn}\ba_n^\top (\A^\top  \A+\C)^{-1} \ba_n}\nonumber\\
     &\geq \frac{m\pi_n}{ m\pi_n + D^{-1}_{nn}C^{-1}m\pi_n}=\frac{C}{C +  D^{-1}_{nn} },\nonumber
 \end{align}
 so that $D_{nn} \geq \frac{C-1}{C} \equiv \Delta > 1/2$, for some $C > 2$. 
 Then, it follows that, for $i = 1, \ldots, n$, 
 \begin{align}
     D_{ii}\geq  \frac{m\pi_i}{ m\pi_i +\Delta^{-1} \ba_i^\top (\A^\top  \A+\C)^{-1} \ba_i }\geq  \frac{m}{ m+\Delta^{-1} \ba_i^\top (\A^\top  \A+\C)^{-1} \ba_i/\pi_i }
     \geq \frac{m}{ m +\Delta^{-1}  \rho_{\max} d_{\eff}}.\nonumber
 \end{align}
 This together with $C>2$ results in
 \begin{align}
     D_{ii}\geq  \frac{m}{ m +2  \rho_{\max} d_{\eff}}.\nonumber
 \end{align}
On the other hand,  we have
\begin{align}
    D_{ii}&=  \frac{m}{ m+ \ba_i^\top (\A^\top \D \A + \C)^{-1} \ba_i/\pi_i }\leq\frac{m}{ m + \ba_i^\top (\A^\top  \A+\C)^{-1} \ba_i /\pi_i }\leq \frac{m}{ m +\rho_{\min} d_{\eff} }. \nonumber
\end{align}
Consequently, we have that $\frac{m}{ m +2  \rho_{\max} d_{\eff}}\I_n\preceq\D\preceq \frac{m}{ m +\rho_{\min} d_{\eff}}\I_n$, this concludes the proof of \Cref{lem:range_D}.
\end{proof}

\section{Proof of \Cref{prop:debias}} 
\label{sec:proof_prop_debias}

In this section, we start by presenting the RMT intuition for \Cref{prop:debias}, and in particular how the \emph{exact} leverage scores $\ell_i^\C$ comes into play in the analysis and debiasing, in \Cref{subsec:RMT_intuition_prop:debias}.
The detailed proof of \Cref{prop:debias} is then given in \Cref{subsec:detailed_proof_prop:debias}; and finally 
we provide discussions and auxiliary results on \Cref{prop:debias} in \Cref{subsec:discussion_prop:debias}.

\subsection{RMT Intuition on \Cref{prop:debias}}
\label{subsec:RMT_intuition_prop:debias}

Here, we present the heuristic derivation of \Cref{prop:debias}.
First, let us recall some notations from \Cref{prop:debias}:
let $\x^\top_s=\ee^\top_{i_s}/\sqrt{\pi_{i_s}}\A$ as in \Cref{sec:proof_coarse-RS}, 
for the ease of further use, let
\begin{align}  
  \check{\S}^\top \check{\S}=\sum^m_{s=1}F_{i_si_s}~\cdot~\frac{\ee_{i_s}\ee^\top_{i_s} }{m\pi_{i_s}},\nonumber
\end{align} 
for some \emph{deterministic} $F_{ii}$ to be specified, and
\begin{equation*}
  \check{\Q}=(\A^\top \check{\S}^\top \check{\S} \A +\C)^{-1}= \left(\frac{1}{m} \sum^{m}_{s=1} F_{i_si_s} \x_s{\x}^{\top}_s + \C \right)^{-1},
\end{equation*}
and similarly $\check{\Q}_{-s}=( \frac1m \sum_{l\neq s} F_{i_li_l}\x_l\x_l^\top+\C)^{-1}$, for which we have
\begin{equation*}
  \frac{1}{m} \sum^{m}_{s=1} \EE[F_{i_si_s}\x_s{\x}^{\top}_s]=\sum^n_{i=1}  F_{ii}{\ba}_i{\ba}_i^\top.
\end{equation*}

Our objective here is to find $\check{\Q}$ (and $F_{ii}$) such that $\|\EE[\check{\Q}]-\H^{-1}\|\simeq 0$, for $\H^{-1}= (\A^\top  \A + \C)^{-1}$. 
This is
 \begin{align*} 
    \EE[\check{\Q}]-\H^{-1} = \EE[\check{\Q}]\A^\top  \A\H^{-1}-\EE[\check{\Q}\A^\top   \check{\S}^\top \check{\S}\A ]\H^{-1} \simeq 0.
 \end{align*}
Using the Sherman-Morrison formula in \Cref{lemm:sherman-morrison}, we further ascertain that
 \begin{align}
 \EE[\check{\Q}\A^\top   \check{\S}^\top \check{\S}\A \H^{-1}]&=\EE \left[\frac{\check{\Q}_{-s}F_{i_si_s}\A^\top\ee_{i_s}\ee_{i_s}^\top/\pi_{i_s}\A\H^{-1}}{1+F_{i_si_s}\ee_{i_s}^\top\A\check{\Q}_{-s}\A^\top\ee_{i_s}/m\pi_{i_s}}\right]=\sum^n_{i=1}\EE\left[\frac{\check{\Q}_{-s}F_{ii}\A^\top\ee_{i}\ee_{i}^\top\A\H^{-1}}{1+F_{ii}\ee_{i}^\top\A\check{\Q}_{-s}\A^\top\ee_{i}/m\pi_{i}}\right]\nonumber\\
 & \simeq\sum^n_{i=1}\EE\left[\frac{\check{\Q}_{-s}F_{ii}\A^\top\ee_{i}\ee_{i}^\top\A\H^{-1}}{1+F_{ii}\ee_{i}^\top\A\H^{-1}\A^\top\ee_{i}/m\pi_{i}}\right],\nonumber
 \end{align}
 where we observe that the \emph{exact} leverage score $\ee_{i}^\top\A\H^{-1}\A^\top\ee_{i} = \mathbf{a}_i^\top (\A^\top  \A + \C)^{-1} \mathbf{a}_i = \ell_i^\C$ given $\C$ as in \Cref{def:lev} naturally appears in the denominator from this leave-one-out derivation.

This, together with the rank-one perturbation lemma in~\citet[Lemma~2.6]{silverstein1995empirical}, gives that 
\begin{align}
\EE[\check{\Q}\A^\top   \check{\S}^\top \check{\S}\A \H^{-1}]\simeq \EE[  \check{\Q}]  \A^\top\sum^n_{i=1}\frac{F_{ii}\ee_{i}\ee_{i}^\top}{1+F_{ii}\ell^{\C}_i/m\pi_{i}}\A\H^{-1}= \EE[  \check{\Q}]  \A^\top\A\H^{-1},  \nonumber 
\end{align}
where we take the \emph{debiasing factor} $F_{ii}= m\pi_{i}/( m\pi_{i} - \ell_{i}^\C )$ such that $F_{ii}/(1+F_{ii}\ell^{\C}_i/m\pi_{i})=1$.
This thus leads to
 \begin{align}
  \check\S =\diag \left\{\sqrt{m/(m-\ell_{i_s}^\C/\pi_{i_s})} \right\}^m_{s=1}\cdot\S, \quad i_s\in \{1,\ldots,n\}.\nonumber
\end{align}

\subsection{Detailed Proof of \Cref{prop:debias}}
\label{subsec:detailed_proof_prop:debias}

The proof of \Cref{prop:debias} follows the same line of arguments as in that of \Cref{theo:inverse-bias}.  
In the following, we presented the proof for completeness.
We specifically highlight the difference in the proof of \Cref{prop:debias} from that of \Cref{theo:inverse-bias}, where $\Q$ is replaced by $\check\Q$.

The proof of \Cref{prop:debias} also comes in the following two steps:
\begin{enumerate}
  \item construct a high-probability event $\zeta$, as in \eqref{eq:events}; and
  \item conditional on the event $\zeta$, derive a bound for the spectral norm $\| \I_d - \EE_{\zeta} [\check\Q] \H \|$ using again the ``leave-one-out'' type analysis.
\end{enumerate}

For the second term above, we have, for  $m>2\rho_{\max} d_{\eff}$ that
\begin{align}\label{eq:F_ii}
    F_{i_si_s}<\frac{m\pi_{i_s}}{ m\pi_{i_s} -2^{-1} m\pi_{i_s} }<2, ~s= 1,\ldots,m.
\end{align} 
This yields that, for given $s\in \{1,\ldots,m\}$, there exists an index $j = j(s) \in \{1, 2, 3\}$ such that $\check\Q \preceq \check\Q_{-s} \preceq 6\H^{-1}$ holds on the event $\zeta_j$.

Denote $\check \gamma_s=1+\frac{1}{m}F_{i_si_s}\x_s^\top  \check{\Q}_{-s}\x_s$, we then  obtain 
\begin{align}
\EE_{\zeta}[  \check{\Q} ]\H-\I_d&=(\EE_{\zeta}[  \check{\Q} ]- \H^{-1})\H=\EE_{\zeta}\left[  \check{\Q}\left(\A^\top\A-\sum^m_{s=1}\frac{1}{m}F_{i_si_s}\x_s\x^\top_s\right)\H^{-1}\right] \H \nonumber\\
&=\EE_{\zeta}\left[  \check{\Q}\left( \A^\top\A-\sum^m_{s=1}\frac{1}{m}F_{i_si_s}\x_s\x^\top_s\right)\right]=\EE_{\zeta}[  \check{\Q}\A^\top\A]-\EE_{\zeta}\left[\frac{1}{\check \gamma_s}  \check{\Q}_{-s}F_{i_si_s}\x_s\x^\top_s\right] \nonumber\\
&=\underbrace{\EE_{\zeta}[  \check{\Q}-  \check{\Q}_{-s}]\A^\top\A}_{\check{\Z}_1}+\underbrace{\EE_{\zeta}[  \check{\Q}_{-s}(\A^\top\A-\x_s\x^\top_s)]}_{\check{\Z}_2}  +\underbrace{\EE_{\zeta}\left[\left(1-\frac{F_{i_si_s}}{\check \gamma_s}\right)  \check{\Q}_{-s}\x_s\x^\top_s    \right]}_{\check{\Z}_3},  \nonumber 
\end{align}
which leads to 
\begin{align}\label{eq:hatbound_rew}
\|\I_d-\H^{\frac{1}{2}}\EE_{\zeta}[  \check{\Q} ]\H^{\frac{1}{2}}\|=\|\H^{\frac{1}{2}}(\EE_{\zeta}[  \check{\Q} ]\H-\I_d)\H^{-\frac{1}{2}}\|&\leq \|\H^{\frac{1}{2}}\check{\Z}_1\H^{-\frac{1}{2}}\|  +\|\H^{\frac{1}{2}}\check{\Z}_2\H^{-\frac{1}{2}}\|+\|\H^{\frac{1}{2}}\check{\Z}_3\H^{-\frac{1}{2}}\|.
\end{align}

We first bound the first term $\|\H^{1/2}\check{\Z}_1\H^{-1/2}\|$ in \eqref{eq:hatbound_rew}.
Adapting the bound for $\EE_{\zeta}[\Q_{-s}-\Q]$ and $\|\widetilde{\H}^{1/2}\Z_1\widetilde{\H}^{-1/2}\|$ in the proof of \Cref{theo:inverse-bias} in \Cref{subsec:detailed_proof_theo:inverse-bias}, and using \eqref{eq:F_ii} along with the fact that $\H^{1/2}  \check{\Q}_{-s}\H^{1/2}\preceq 6\I_d$ when conditioned on $\zeta^{'}$, we get 
\begin{align}
\EE_{\zeta}[  \check{\Q}_{-s}-  \check{\Q}]\preceq  \EE_{\zeta} \left[\frac{F_{i_si_s}}{\check \gamma_s m}  \check{\Q}_{-s}\x_s\x^\top_s  \check{\Q}_{-s} \right] \preceq \frac{4}{m}\EE_{\zeta^{'}}[  \check{\Q}_{-s}\H  \check{\Q}_{-s}],\label{eq:Q_diffe_hat}
\end{align} 
and 
\begin{align}
\|\H^{\frac{1}{2}}\check{\Z}_1\H^{-\frac{1}{2}}\|= \|\H^{\frac{1}{2}}\EE_{\zeta}[ \check{\Q}_{-s}- \check{\Q}]\H^{\frac{1}{2}}\H^{-\frac{1}{2}}\A^\top\A\H^{-\frac{1}{2}}\|=O\left(\frac{1}{m}\right).\nonumber
\end{align}

 Next, we bound the second term $\|\H^{1/2}\check{\Z}_2\H^{-1/2}\|$ in \eqref{eq:hatbound_rew}. 
 Note that $\EE_{\zeta'}[\x_s^\top\H^{-1}\x_s]=d_{\eff}$ and 
\begin{align}
 \Var_{\zeta'}[\x_s^\top\H^{-1}\x_s] \leq \EE_{\zeta'}[(\x_s^\top\H^{-1}\x_s)^2]=\sum^n_{j=1}\pi_j\frac{(\ba_j^\top\H^{-1}\ba_j)^2}{\pi^2_j}=\sum^n_{j=1}\frac{(\ba_j^\top\H^{-1}\ba_j)^2}{\pi_j}\leq\rho_{\max} d^2_{\eff}.\nonumber
\end{align}
Then, using Chebyshev's inequality, we have, for $x\geq 2d_{\eff}$ that
\begin{align}
\Pr(\x_s^\top\H^{-1}\x_s\geq x~|~\zeta')\leq \frac{\rho_{\max} d^2_{\eff}}{x^2}. \nonumber
\end{align}
Analogous  to the bound on $\EE_{\zeta}[\Q_{-s}(\A^\top\D\A-\tilde D_{s}\x_s\x^\top_s)]$ and $ \|\widetilde{\H}^{1/2}\Z_2\widetilde{\H}^{-1/2}\|$ in \Cref{subsec:detailed_proof_theo:inverse-bias},  we get
 \begin{align} 
      \EE_{\zeta}[ \check{\Q}_{-s}(\A^\top\A-\x_s\x^\top_s)]=-\frac{1}{1-\delta_3}\EE_{\zeta'}[ \check{\Q}_{-s}(\A^\top\A-\x_s\x^\top_s)\cdot\mathbf{1}_{\neg\zeta_3}],\nonumber
 \end{align}
and 
\begin{align*}
\|\H^{\frac{1}{2}}\check{\Z}_2\H^{-\frac{1}{2}}\|&\leq 12 \delta_3+12 \EE_{\zeta'}[\x_s^\top\H^{-1}\x_s\cdot\mathbf{1}
\neg\zeta_3]=12 \delta_3+12 \int^\infty_0 \Pr(\x_s^\top\H^{-1}\x_s\cdot \mathbf{1}_{\neg\zeta_3}\geq x~|~\zeta')dx \\ 
&=O\left(\sqrt{\frac{\rho_{\max}^3 d^3_{\eff}}{m^3}}\right).
 \end{align*}

Now, we move on to bound the last term $\|\H^{1/2}\check{\Z}_3\H^{-1/2}\|$ in \eqref{eq:hatbound_rew}. 
Considering \eqref{eq:F_ii},  it follows that
\begin{align}\label{eq:1-f_ii}
 \left| 1-\frac{F_{i_si_s}}{\check \gamma_s}\right|&=\left| \frac{\ba_{i_s}^\top \check{\Q}_{-s}\ba_{i_s}-\ell^{\C}_{i_s}}{m\pi_{i_s}-\ell^{\C}_{i_s}+\ba_{i_s}^\top \check{\Q}_{-s}\ba_{i_s}}\right|\leq\left| \frac{\ba_{i_s}^\top \check{\Q}_{-s}\ba_{i_s}-\ell^{\C}_{i_s}}{m\pi_{i_s}-\ell^{\C}_{i_s}}\right| =F_{i_si_s}\left| \frac{\ba_{i_s}^\top \check{\Q}_{-s}\ba_{i_s}-\ell^{\C}_{i_s}}{m\pi_{i_s}}\right|\nonumber\\
 &\leq2\left| \frac{\ba_{i_s}^\top \check{\Q}_{-s}\ba_{i_s}-\ell^{\C}_{i_s}}{m\pi_{i_s}}\right|= 2\left| \frac{\ba_{i_s}^\top \check{\Q}_{-s}\ba_{i_s}-\ba_{i_s}^\top\H^{-1}\ba_{i_s}}{m\pi_{i_s}}\right|.
\end{align}
Following the bound on $ \|\widetilde{\H}^{1/2}\Z_3\widetilde{\H}^{-1/2}\|$ in \Cref{subsec:detailed_proof_theo:inverse-bias}, and recalling \eqref{eq:F_ii}, we further have
\begin{align}
   & \|\H^{\frac{1}{2}}\check{\Z}_3\H^{-\frac{1}{2}}\|\leq   \sup\limits_{\|\uu\|=1,\|\vv\|=1}
     \EE_{\zeta} \left[\left| 1-\frac{F_{i_si_s}}{\check \gamma_s} \right|\left|\uu^\top\H^{\frac{1}{2}}   \check{\Q}_{-s}\x_s\x^\top_s \H^{-\frac{1}{2}}\vv \right| \right] \nonumber \\ 
     &\leq \EE_{\zeta^{'}} \left[ \max\limits_{1\leq j\leq n}74\left|\frac{\ba_{j}^\top \check{\Q}_{-s}\ba_{j}-\ba_{j}^\top\H^{-1}\ba_{j}}{m\pi_{j}}\right|
     \right]\leq \underbrace{  \EE_{\zeta^{'}} \left[ \max\limits_{1\leq j\leq n}74\left|\frac{\ba_{j}^\top \check{\Q}_{-s}\ba_{j}-\EE_{\zeta^{'}}[\ba_{j}^\top \check{\Q}_{-s}\ba_{j}]}{m\pi_{j}}\right|
     \right]}_{\check{M}_1}\nonumber\\  
     & +
    \underbrace{ \max\limits_{1\leq j\leq n} \frac{74\left|\EE_{\zeta^{'}}[\ba_{j}^\top \check{\Q}_{-s}\ba_{j}]-\ba_{j}^\top\H^{-1}\ba_{j}\right|}{m\pi_{j}}
     }_{\check{M}_2}. \label{eq: hat_z_3}
\end{align}
Subsequently, like the bounds for $ M_1$ and $M_2$ in \Cref{subsec:detailed_proof_theo:inverse-bias}, and using \eqref{eq:F_ii}, we obtain the following bounds for $\check{M}_1$ and~$\check{M}_2$:
 \begin{align*}
  \check{M}_1 = O\left(\frac{\log n}{\log (\log n)}\sqrt{\frac{\rho_{\max}^3 d^3_{\eff}}{m^3}}\right),
 \end{align*}
 and
 \begin{align*}
  \check{M}_2  &\leq \max\limits_{1\leq j\leq n}\frac{ 74 \ba_j^\top \H^{-1}\ba_j}{m\pi_{j}}(\|\H^{\frac{1}{2}}(\EE_{\zeta'}-\EE_{\zeta})[ \check{\Q}_{-s}]\H^{\frac{1}{2}}\|+\|\H^{\frac{1}{2}}\EE_{\zeta}[ \check{\Q}_{-s}- \check{\Q}]\H^{\frac{1}{2}}\|+\|\H^{\frac{1}{2}}(\EE_{\zeta}[ \check{\Q}]\H-\I_d)\H^{-\frac{1}{2}}\|)\\
  &= O\left(\frac{\log n}{\log (\log n)}\sqrt{\frac{\rho_{\max}^3 d^3_{\eff}} {m^3}}\right).\nonumber 
 \end{align*}
Combining the above, we conclude that 
\begin{align*}
    \Big\|\I_d-\H^{\frac{1}{2}}\EE_{\zeta}[ \check{\Q} ]\H^{\frac{1}{2}}\Big\| = O\left(\frac{\log n}{\log (\log n)}\sqrt{\frac{\rho_{\max}^3 d^3_{\eff}} {m^3}}\right),
\end{align*}
 and thus complete the proof of \Cref{prop:debias}.
\qedwhite

\subsection{Discussions on \Cref{prop:debias} and Auxiliary Results}
\label{subsec:discussion_prop:debias}

Here, we present some auxiliary results in addition to the fine-grained debiasing results in \Cref{prop:debias}.
Precisely, we show in \Cref{subsec:proof_of_approximate_lev_check_S} that by substituting exact leverage scores with some (computationally efficient) approximations leads to a controlled inversion bias.    
We then provide the proof of~\Cref{coro:inver_bias_constant_debias} in~\Cref{rem:proof_inver_bias_constant_debias}. 
A ``counter-example'' to \citet[Theorem~10]{derezinski2021sparse} that uses scalar debiasing to achieve zero inversion bias in the case of \emph{exact} leverage score sampling is given in \Cref{subsec:exact_lev_zero_bias}.
Finally, we establish the proof of~\Cref{coro:debiasing_srht} in~\Cref{rem:proof of SRHT}.

\subsubsection{Discussions on Debiasing $\check{\S}$ Using Approximate Leverage Scores} \label{subsec:proof_of_approximate_lev_check_S}

\begin{corollary}[Fine-grained debiasing $\check{\S}$ using approximate leverage scores]\label{coro:proof_of_approximate_lev}\normalfont
Under the setting and notations of \Cref{theo:inverse-bias}, if one uses approximate leverage scores $\check\ell^{\C}_i$  in the debiasing matrix, instead of the exact leverage scores $\ell^{\C}_i$ in \eqref{eq:debias_check_S} in \Cref{prop:debias}, that is
\begin{equation*}
    \check\S =\diag \left\{\sqrt{m/(m-\check\ell_{i_s}^\C/\pi_{i_s})} \right\}^m_{s=1}\cdot\S, \quad (1 - \omega) \ell_i^\C \leq  \check \ell^{\C}_{i} \leq (1+ \omega) \ell_i^\C, \quad \omega\in[0,1].
\end{equation*}
Then, there exists $C > 0$ independent of $n,d_{\eff}$ such that for $m\geq C\rho_{\max} d_{\eff} (\log(d_{\eff}/\delta )+\max\left\{\omega/\epsilon, (\log n)^{2/3} /(\epsilon\log (\log n))^{2/3} \right\})$ with $\delta\leq m^{-3}$, $(\A^\top \check\S^\top \check\S \A + \C )^{-1}$ is an $(\epsilon,\delta)$-unbiased estimator of $(\A^\top  \A + \C)^{-1}$.
\end{corollary}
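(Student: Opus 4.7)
The plan is to re-run the proof of~\Cref{prop:debias} with the debiasing scalar $F_{ii}=m\pi_i/(m\pi_i-\ell_i^{\C})$ replaced by its approximate counterpart $\check F_{ii}=m\pi_i/(m\pi_i-\check\ell_i^{\C})$, and to track the extra error introduced by this substitution. The three decomposition terms $\check\Z_1,\check\Z_2,\check\Z_3$ in \eqref{eq:hatbound_rew} still appear, and the analyses of $\|\H^{1/2}\check\Z_1\H^{-1/2}\|$ and $\|\H^{1/2}\check\Z_2\H^{-1/2}\|$ go through verbatim. Indeed, the only use of the \emph{exact} value of $F_{ii}$ in those two bounds is via $F_{ii}\leq 2$, and since $m\geq C\rho_{\max} d_{\eff}$ and $|\check\ell_i^\C-\ell_i^\C|\leq\omega\ell_i^\C\leq\ell_i^\C$, we still have $m\pi_i\geq 2\check\ell_i^\C$ and thus $\check F_{ii}\leq 2$. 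The resulting bounds remain $O(1/m)$ and $O(\sqrt{\rho_{\max}^3 d_{\eff}^3/m^3})$, respectively.

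The core change is in the estimate \eqref{eq:1-f_ii} for $|1-\check F_{i_s i_s}/\check\gamma_s|$ inside $\check\Z_3$. A direct computation gives
\begin{equation*}
\check\gamma_s-\check F_{i_s i_s}=\frac{\check F_{i_s i_s}}{m\pi_{i_s}}\bigl(\ba_{i_s}^\top\check\Q_{-s}\ba_{i_s}-\check\ell_{i_s}^{\C}\bigr),
\end{equation*}
so by the triangle inequality
\begin{equation*}
\bigl|1-\tfrac{\check F_{i_s i_s}}{\check\gamma_s}\bigr|\leq \tfrac{2}{m\pi_{i_s}}\bigl|\ba_{i_s}^\top\check\Q_{-s}\ba_{i_s}-\ba_{i_s}^\top\H^{-1}\ba_{i_s}\bigr|+\tfrac{2}{m\pi_{i_s}}|\check\ell_{i_s}^\C-\ell_{i_s}^\C|.
\end{equation*}
The first term is precisely what is controlled in the proof of~\Cref{prop:debias}, and produces the familiar $O(\sqrt{\rho_{\max}^3 d_{\eff}^3/m^3})$ contribution to $\|\H^{1/2}\check\Z_3\H^{-1/2}\|$. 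The new, second term is the only genuinely novel piece: using $|\check\ell_{i_s}^\C-\ell_{i_s}^\C|\leq \omega\ell_{i_s}^\C$ together with $\ell_{i_s}^\C/\pi_{i_s}\leq \rho_{\max} d_{\eff}$ bounds it uniformly by $2\omega\rho_{\max} d_{\eff}/m$.

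Plugging this uniform bound back into the $\check\Z_3$ display in the proof of~\Cref{prop:debias} and using Cauchy--Schwarz to control $\EE_\zeta[|\uu^\top\H^{1/2}\check\Q_{-s}\x_s\,\x_s^\top\H^{-1/2}\vv|]=O(1)$ (which follows exactly as for $\tilde T_2\cdot \tilde T_3$ in \Cref{sec:proof_coarse-RS}, since $\H^{1/2}\check\Q_{-s}\H^{1/2}\preceq 6\I_d$ on $\zeta$ and $\tr(\A\H^{-1}\A^\top)\leq d_{\eff}$), we obtain an additional contribution to $\|\H^{1/2}\check\Z_3\H^{-1/2}\|$ of order $O(\omega\rho_{\max} d_{\eff}/m)$. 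Adding this to the existing $O(\sqrt{\rho_{\max}^3 d_{\eff}^3/m^3})$ bound, the overall inversion bias is of order
\begin{equation*}
\epsilon=O\!\Bigl(\sqrt{\rho_{\max}^3 d_{\eff}^3/m^3}\Bigr)+O(\omega\rho_{\max} d_{\eff}/m),
\end{equation*}
so requiring each summand to be at most $\epsilon$ gives $m\geq C\rho_{\max} d_{\eff}/\epsilon^{2/3}$ and $m\geq C\rho_{\max} d_{\eff}\,\omega/\epsilon$ respectively, which combine into the stated sketch-size bound. The main obstacle is not analytic but bookkeeping: one must verify that the leave-one-out, independence, and conditioning structure used for the subevents $\zeta_j$ in the proof of~\Cref{prop:debias} is unaffected by replacing $F_{ii}$ with $\check F_{ii}$, because $\check\ell_i^\C$ is a \emph{deterministic} function of the data (not of the sampled indices). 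Once this is observed, every other step transfers without modification.
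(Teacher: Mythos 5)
Your proposal is correct and follows essentially the same route as the paper's proof: keep the $\check\Z_1,\check\Z_2,\check\Z_3$ decomposition, note that $\check F_{ii}\leq 2$ keeps the first two bounds intact, and split $|1-\check F_{i_si_s}/\check\gamma_s|$ into the exact-leverage fluctuation term plus an approximation-error term bounded by $2\omega\rho_{\max}d_{\eff}/m$, yielding the same $\epsilon=O(\sqrt{\rho_{\max}^3 d_{\eff}^3/m^3})+O(\omega\rho_{\max}d_{\eff}/m)$ and hence the stated sample-size condition. The only cosmetic difference is that the paper lower-bounds the denominator $m\pi_{i_s}-\check\ell_{i_s}^\C+\ba_{i_s}^\top\check\Q_{-s}\ba_{i_s}$ directly by $(1-C^{-1})m\pi_{i_s}$, whereas you use the Sherman--Morrison-style identity for $\check\gamma_s-\check F_{i_si_s}$; both give the same estimate.
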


\begin{proof}[Proof of~\Cref{coro:proof_of_approximate_lev}]
The proof of \Cref{coro:proof_of_approximate_lev} largely follows the proof of \Cref{prop:debias}, and is presented here for completeness. 
In the following, we emphasize the difference from the proof of \Cref{prop:debias}, particularly regarding the matrix $\check\S$,  which is constructed using the approximate leverage scores denoted by $\check \ell^{\C}_i$ as
 \begin{align*}
  \check\S =\diag \left\{\sqrt{m/(m-\check \ell^{\C}_{i_s}/\pi_{i_s}}) \right\}^m_{s=1}\cdot\S, \quad i_s\in \{1,\ldots,n\}.
\end{align*}
This implies, in the proof of \Cref{prop:debias} that
\begin{align}\label{eq:F_ii_approxi_lev}
    F_{i_si_s}= \frac{m\pi_{i_s}}{ m\pi_{i_s} - \check\ell_{i_s}^\C}.
\end{align}
Note that in this setting, we have, for $\omega\in[0,1]$ that $| \check \ell^{\C}_{i}-\ell^\C_{i}|\leq \omega\ell^\C_{i} $ and $\check \ell^{\C}_{i} \leq (1+\omega)\ell^\C_{i}\leq 2 \ell^\C_{i}$.
As such, for $m \geq C \rho_{\max} d_{\eff} \geq C \ell^\C_{i}/\pi_{i_s}$ with $C> 4$, we have
 \begin{align}
    F_{i_si_s} \leq  \frac{m\pi_{i_s}}{ m\pi_{i_s} - 2 \ell^\C_{i}}<2.\nonumber
\end{align}
Thus, adapting the proof of~\Cref{prop:debias}, we get the same bounds on $\|\H^{1/2}\check{\Z}_1\H^{-1/2}\|$ and $ \|\H^{1/2}\check{\Z}_2\H^{-1/2}\|$ as in  \Cref{subsec:detailed_proof_prop:debias}, that is
\begin{align*}
    \|\H^{\frac{1}{2}}\check{\Z}_1\H^{-\frac{1}{2}}\|= O\left(\frac{ 1}{m}\right),~    \|\H^{\frac{1}{2}}\check{\Z}_2\H^{-\frac{1}{2}}\|= O\left(\sqrt{\frac{\rho_{\max}^3 d^3_{\eff}}{m^3}}\right).
\end{align*}

Next,  we establish a bound for the term $ \|\H^{1/2}\check{\Z}_3\H^{-1/2}\|$ in the setting of \Cref{coro:proof_of_approximate_lev}.
Using \eqref{eq:F_ii_approxi_lev},  we obtain
    \begin{align}
   \left| 1-\frac{F_{i_si_s}}{\check \gamma_s}\right|&=\left | \frac{\ba_{i_s}^\top \check{\Q}_{-s}\ba_{i_s}-\check \ell^{\C}_{i_s}}{m\pi_{i_s}-\check \ell^{\C}_{i_s}+\ba_{i_s}^\top \check{\Q}_{-s}\ba_{i_s}}\right|\leq \left| \frac{\ba_{i_s}^\top \check{\Q}_{-s}\ba_{i_s}-\check \ell^{\C}_{i_s}}{(1-C^{-1})m\pi_{i_s}}\right|\leq 2\left| \frac{\ba_{i_s}^\top \check{\Q}_{-s}\ba_{i_s}-\check\ell^{\C}_{i_s}}{m\pi_{i_s}}\right|\nonumber\\
      &\leq 2\left| \frac{\ba_{i_s}^\top \check{\Q}_{-s}\ba_{i_s}-\ell^{\C}_{i_s}}{m\pi_{i_s}}\right|+2\left| \frac{\ell^{\C}_{i_s}-\check \ell^{\C}_{i_s}}{m\pi_{i_s}}\right|\leq2\left| \frac{\ba_{i_s}^\top \check{\Q}_{-s}\ba_{i_s}-\ell^{\C}_{i_s}}{m\pi_{i_s}}\right|+\frac{2\omega\rho_{\max} d_{\eff}}{m}.\nonumber
\end{align}
This, together with \eqref{eq: hat_z_3} yields that
\begin{align}
    \|\H^{\frac{1}{2}}\check{\Z}_3\H^{-\frac{1}{2}}\|
     = O\left(\frac{\omega\rho_{\max} d_{\eff}}{m}+\frac{\log n}{\log (\log n)}\sqrt{\frac{\rho_{\max}^3 d^3_{\eff}}{m^3}}\right).\nonumber
\end{align}     
Putting these bounds together, we conclude the proof of \Cref{coro:proof_of_approximate_lev}.
\end{proof}

\subsubsection{Proof of \Cref{coro:inver_bias_constant_debias}} \label{rem:proof_inver_bias_constant_debias}
The proof~\Cref{coro:inver_bias_constant_debias} largely mirrors that of \Cref{prop:debias}, and is included here for thoroughness. 
For $m > C d_{\eff}$ with $C> 2$,  we have
\begin{align}
   F_{i_si_s}=\frac{m}{m-d_{\eff}} =\frac{m\pi_{i_s}}{m\pi_{i_s}-d_{\eff}\pi_{i_s}}\leq \frac{m\pi_{i_s}}{(1-C^{-1})m\pi_{i_s}}<2.  \nonumber
\end{align}
Then, recalling $\pi_i\in [\ell^{\C}_{i}/(d_{\eff} \rho_{\max}), \ell^{\C}_{i}/(d_{\eff}\rho_{\min})]$ such that $|\pi_i- \ell_i^{\C}/d_{\eff}|\leq \epsilon_{\rho}\ell_i^{\C}/d_{\eff}$  with $\epsilon_{\rho}=\max\{(\rho_{\min}^{-1}-1),(1 - \rho_{\max}^{-1})\}$, and \eqref{eq:1-f_ii} can be rewritten as
    \begin{align}
   \left| 1-\frac{F_{i_si_s}}{\check \gamma_s}\right|&=\left | \frac{\ba_{i_s}^\top \check{\Q}_{-s}\ba_{i_s}-d_{\eff}\pi_{i_s}}{m\pi_{i_s}-d_{\eff}\pi_{i_s}+\ba_{i_s}^\top \check{\Q}_{-s}\ba_{i_s}}\right|\leq \left| \frac{\ba_{i_s}^\top \check{\Q}_{-s}\ba_{i_s}-d_{\eff}\pi_{i_s}}{(1-C^{-1})m\pi_{i_s}}\right|\leq  2\left| \frac{\ba_{i_s}^\top \check{\Q}_{-s}\ba_{i_s}-\ell^{\C}_{i_s}}{m\pi_{i_s}}\right|+ 2\left| \frac{\ell^{\C}_{i_s}-d_{\eff}\pi_{i_s}}{m\pi_{i_s}}\right|\nonumber\\
      &\leq 2\left| \frac{\ba_{i_s}^\top \check{\Q}_{-s}\ba_{i_s}-\ell^{\C}_{i_s}}{m\pi_{i_s}}\right|+ 2d_{\eff}\left| \frac{\ell^{\C}_{i_s}/d_{\eff}-\pi_{i_s}}{m\pi_{i_s}}\right|\leq 2\left| \frac{\ba_{i_s}^\top \check{\Q}_{-s}\ba_{i_s}-\ell^{\C}_{i_s}}{m\pi_{i_s}}\right|+2\epsilon_{\rho} d_{\eff} \frac{\ell^{\C}_{i_s}/d_{\eff}}{m\pi_{i_s}}\nonumber\\
      &\leq 2\left| \frac{\ba_{i_s}^\top \check{\Q}_{-s}\ba_{i_s}-\ell^{\C}_{i_s}}{m\pi_{i_s}}\right|+\frac{2\epsilon_{\rho} \rho_{\max} d_{\eff}}{m}.\nonumber
\end{align}
This, combined with the bound in~\eqref{eq: hat_z_3}, gives that 
\begin{align*}
  \|\H^{\frac{1}{2}}\check{\Z}_3\H^{-\frac{1}{2}}\|
     & = O\left(\frac{\epsilon_{\rho} \rho_{\max} d_{\eff}}{m}+  \frac{\log n}{\log (\log n)}\sqrt{\frac{\rho_{\max}^3 d^3_{\eff}}{m^3}}\right).   
\end{align*}
Recall the proof of \Cref{prop:debias} that
\begin{align*}
    \|\H^{\frac{1}{2}}\check{\Z}_1\H^{-\frac{1}{2}}\|= O\left(\frac{ 1}{m}\right),~    \|\H^{\frac{1}{2}}\check{\Z}_2\H^{-\frac{1}{2}}\|= O\left(\sqrt{\frac{\rho_{\max}^3 d^3_{\eff}}{m^3}}\right).
\end{align*}
Putting these together, we conclude the proof of \Cref{coro:inver_bias_constant_debias}. 

In particular, taking $\rho_{\max} = 3/2$ and $\rho_{\min}=1/2$, we have $\epsilon_{\rho}=1$ and thus an inversion bias of order $O(d_{\eff}/m )$.
\qedwhite

\subsubsection{A ``Counterexample'' to {\citet[Theorem~10]{derezinski2021sparse}}} \label{subsec:exact_lev_zero_bias}
\begin{corollary}[``Counterexample'' to the lower bound in {\citet[Theorem~10]{derezinski2021sparse}} using exact leverage sampling]\label{coro: exact_lev_zero_bias}
For any $n \geq 2d \geq 4$ , there exists $\A\in \RR^{n \times d}$, an \emph{exact} leverage score sampling matrix  $\S\in \RR^{m \times n}$ in \Cref{def:lev} and a high probability event $\zeta$ (that ensures the invertibility of $ \A^\top \S^\top \S \A$), such that when conditioned on $\zeta$, $(\gamma \A^\top \S^\top \S \A+\C)^{-1}$ is an unbiased estimator of $(\A^\top \A +\C)^{-1}$ for $\gamma=\frac{m}{d}\EE_{\zeta}[1/b]$, $b$ distributed as ${\rm Binomial}(m, 1/d)$, and $\C=\zo_d$.
\end{corollary}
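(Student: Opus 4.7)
The plan is to exhibit an explicit construction in which exact leverage-score sampling produces an $\A^\top\S^\top\S\A$ that is diagonal with permutation-symmetric entries, so that a single scalar correction $\gamma$ makes the conditional expectation of its inverse equal to $(\A^\top\A)^{-1}$ exactly. Concretely, I would take $n=kd$ for an integer $k\geq 2$ (so $n\geq 2d$), and set $\A\in\RR^{n\times d}$ to be $k$ vertically stacked copies of $k^{-1/2}\I_d$. Equivalently, the $i$-th row is $\ba_i = k^{-1/2}\ee_{j(i)}$ with $j(i)=((i-1)\bmod d)+1$. Then $\A^\top\A=\I_d$, all leverage scores equal $\ell_i=1/k=d/n$, and exact leverage-score sampling coincides with uniform sampling, $\pi_i=1/n$.

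Next I would compute $\A^\top\S^\top\S\A$ explicitly. Since each sampled index $i_s$ contributes $(m\pi_{i_s})^{-1}\ba_{i_s}\ba_{i_s}^\top = (d/m)\,\ee_{j(i_s)}\ee_{j(i_s)}^\top$, letting $b_j=\#\{s:j(i_s)=j\}$ one obtains
\begin{equation*}
  \A^\top\S^\top\S\A \;=\; \frac{d}{m}\,\diag(b_1,\dots,b_d),
\end{equation*}
with $(b_1,\dots,b_d)\sim\mathrm{Multinomial}(m;1/d,\dots,1/d)$, so each $b_j$ has the marginal $\mathrm{Binomial}(m,1/d)$ asserted in the statement. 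I would then define the event $\zeta=\{\min_j b_j\geq 1\}$ on which $\A^\top\S^\top\S\A$ is invertible; a standard balls-in-bins bound gives $\Pr(\zeta)\geq 1-d(1-1/d)^m$, which is arbitrarily close to $1$ once $m\gtrsim d\log d$, making $\zeta$ high-probability as required.

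Finally, on $\zeta$ one has $(\gamma\A^\top\S^\top\S\A+\C)^{-1}=(m/(\gamma d))\diag(1/b_1,\dots,1/b_d)$ since $\C=\zo_d$. The joint law of $(b_1,\dots,b_d)$ conditioned on $\zeta$ is symmetric under permutations of coordinates, so $\EE_\zeta[1/b_j]$ is the same for all $j$. Taking
\begin{equation*}
  \gamma \;=\; \frac{m}{d}\,\EE_\zeta[1/b],\qquad b\sim\mathrm{Binomial}(m,1/d),
\end{equation*}
where $\EE_\zeta$ denotes the conditional expectation of $1/b_j$ given $\zeta$, immediately yields $\EE_\zeta[(\gamma\A^\top\S^\top\S\A)^{-1}]=\I_d=(\A^\top\A)^{-1}$, i.e., zero inversion bias.

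The only subtle point—which I expect to be the main place a careful argument is needed—is that $\EE_\zeta[1/b]$ is computed under the \emph{conditional} law of a single multinomial coordinate given the joint event $\zeta$, rather than under the raw $\mathrm{Binomial}(m,1/d)$. The symmetry of the multinomial under coordinate permutations is what makes a single scalar $\gamma$ suffice (so that debiasing reduces to scaling by a constant rather than by a diagonal matrix), and the notational convention $\EE_\zeta[\cdot]$ introduced earlier in the paper absorbs this conditioning, making the statement consistent. Aside from this bookkeeping, all other steps are direct verifications.
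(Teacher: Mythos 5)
Your argument is correct, and it runs on the same engine as the paper's proof: choose $\A$ so that $\A^\top\S^\top\S\A$ is diagonal with entries $\frac{d}{m}b_j$, where the $b_j$ are exchangeable counts each marginally ${\rm Binomial}(m,1/d)$, condition on a permutation-symmetric invertibility event, and absorb the common conditional mean of $1/b_j$ into the scalar $\gamma=\frac{m}{d}\EE_{\zeta}[1/b]$. The difference is the witness matrix: the paper deliberately reuses the \emph{exact} matrix from the lower-bound construction of \citet[Theorem~10]{derezinski2021sparse} (two rows proportional to $\ee_1$ with unequal norms, giving leverage scores $1/4$ and $3/4$), so that the corollary is a counterexample on the very instance used to prove that lower bound for approximate leverage sampling; the key observation there is that under \emph{exact} leverage sampling the rescaling $1/(m\pi_i)$ cancels the unequal row norms, so each draw still contributes $\frac{d}{m}\ee_1\ee_1^\top$ and the bin probabilities are again $1/d$. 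Your construction ($k$ stacked copies of $k^{-1/2}\I_d$, uniform leverage scores, so exact leverage sampling is uniform sampling) is simpler and makes the exchangeability and the role of a symmetric event $\zeta$ more transparent — a point the paper's proof glosses over when it conditions on "any" invertibility event — but it proves only the literal "there exists $\A$" statement and loses the rhetorical link to the specific lower-bound instance. Two small bookkeeping items: your construction covers $n=kd$ only, whereas the statement asks for any $n\geq 2d$; as in the paper, pad $\A$ with zero rows (they have zero leverage score, hence are never sampled), which restores full generality. Also, your choice $\zeta=\{\min_j b_j\geq 1\}$ is exactly what makes $\EE_\zeta[1/b_j]$ coordinate-independent; with an asymmetric invertibility event a single scalar $\gamma$ would not suffice, so this symmetry should be stated as part of the construction rather than as an afterthought.
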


\begin{proof}[Proof of~\Cref{coro: exact_lev_zero_bias}]
We begin by recalling the setup of the matrix $\A$ in~\citet[Theorem~10]{derezinski2021sparse}. 
Without loss of generality, assume  $n = 2d$ (otherwise, we pad $\A$ with zeros). The matrix $\A$  consists of $n = 2d$ scaled standard basis vectors, where consecutive rows are defined as $\ba^{\top}_{2(i-1)+1} =\ba^{\top}_{2(i-1)+2} =\frac{1}{\sqrt{2}} \ee^{\top}_i$ for $i \geq 2$, and the first two rows are $\ba^{\top}_1 =\frac{1}{ \sqrt{4}} \ee^{\top}_1$,  $  \ba^{\top}_2 =\frac{3}{ \sqrt{4}}\ee^{\top}_1$. 
This is
\begin{align*}
    \A = \begin{bmatrix}
\frac{1}{ \sqrt{4}} & 0 & \dots & 0 \\
\frac{3}{\sqrt{4}} & 0 & \dots & 0 \\
0 &\frac{1}{ \sqrt{2}} & \dots & 0 \\
0 &\frac{1}{ \sqrt{2}} & \dots & 0 \\
\vdots & \vdots & \ddots & \vdots \\
0 & 0 & \dots & \frac{1}{ \sqrt{2}}\\
0 & 0 & \dots & \frac{1}{ \sqrt{2}}
\end{bmatrix}.
\end{align*}

In this case we have $\A^{\top}\A=\I_d$ and $\C=\zo_d$, so that the leverage scores satisfy $\ell_{1}^\C=\frac{1}{4}$, $\ell_{2}^\C=\frac{3}{4}$, and $\ell_{2(i-1)+1}^\C=\ell_{2(i-1)+2}^\C=\frac{1}{2}$ for $i\geq 2$. 
The leverage score sampling distribution $\{\pi_i\}_{i=1}^n$ as in \Cref{def:lev}, used to construct the sampling matrix $\S$ of size $m\geq d$, is thus given by
\begin{align*}
\pi_i
= 
    \begin{cases}
\frac{1}{4d}, & \text{for } i = 1, \\
\frac{3}{4d}, & \text{for } i = 2, \\
\frac{1}{2d}, & \text{otherwise}.
\end{cases}
\end{align*}
For any $\gamma>0$, the matrix  $\gamma \A^\top \S^\top \S \A$   is diagonal, with diagonal entries  given by
  \begin{align*}
   \left[ \gamma  \A^\top \S^\top \S \A\right]_{ii}=\frac{\gamma db_i }{m},~\mbox{for}~i=1,\ldots, d,
  \end{align*}
where $b_i$'s are all identically (but not independently) distributed as ${\rm Binomial}(m, 1/d)$. 
Then, conditioning on \emph{any} high probability event $\zeta$ that ensures the invertibility of $  \A^\top \S^\top \S \A$,  it follows that    
\begin{align*}
 \EE_{\zeta}\left[\left[ \left(\gamma  \A^\top \S^\top \S \A\right)^{-1}\right]_{ii}\right] =  \frac{m}{\gamma d} \EE_{\zeta}\left[\frac{1}{b_i}\right],~\mbox{for}~i=1,\ldots, d,
  \end{align*}
so that
\begin{align*}
\EE_{\zeta}\left[ \left(\gamma  \A^\top \S^\top \S \A\right)^{-1}\right]=\left(  \A^\top \A\right)^{-1}    
\end{align*} 
with $ \gamma=  \frac{m}{d}\EE_{\zeta}\left[1/b\right] $, where  $b$ is any one of the aforementioned $\{b_i\}^d_{i=1}$. 
This completes the proof of \Cref{coro: exact_lev_zero_bias}.
\end{proof}

\subsubsection{Proof of~\Cref{coro:debiasing_srht}}
\label{rem:proof of SRHT}

The proof of \Cref{coro:debiasing_srht} comes in the following two steps:
\begin{enumerate}
\item construct two \emph{independent} high probability events: $\zeta_{\omega}$ on which the randomized Walsh--Hadamard transform $\H_n \D_n \A$ of $\A$ as in \Cref{def:srht} has approximately uniform leverages scores; and $\zeta$ as in~\eqref{eq:events} to subspace embedding, but for $\x^\top_s=\ee^\top_{i_s}\H_n \D_n\A/\sqrt{n\pi_{i_s}} \in \RR^d $ the $i_s^{th}$ row of $\tilde{\A}_{\rm SRHT}$; and 
\item conditioned on the event $\zeta\bigcap\zeta_{\omega}$, apply the bounds in~\Cref{rem:proof_inver_bias_constant_debias} to obtain inversion bias for $(\frac{m}{m-d_{\eff}}\tilde{\A}_{\rm SRHT}^\top \tilde{\A}_{\rm SRHT} + \C )^{-1}$, where $d_{\eff}=\sum_{i=1}^n \ell^{\C}_i $ is the effective dimension of $\A$ given $\C$.
\end{enumerate}

We start by recalling some notations from (the proof of) \Cref{prop:debias}. 
Denote  $\A_\C=\A\H^{-1/2}$ with $\H=\A^\top\A+\C$ and let $\ell_i^{\C}(\H_n\D_n\A/\sqrt{n}) =\|\ee_i^\top\H_n\D_n\A_\C\|^2/n$ be the  $i^{th}$ leverage score of $\H_n\D_n\A/\sqrt{n}$. 
Define the following event $\zeta_{\omega}$:
\begin{align*}
\zeta_{\omega}: \left| \ell^{\C}_i(\H_n\D_n\A/\sqrt{n}) -\frac{d_{\eff}}{n}\right| \leq \omega,\quad \forall 1\leq i\leq n.
\end{align*}
Note that this is equivalent to, for uniform sampling with $\pi_i=1/n$ that
\begin{align*}
 \left| \frac{\ell^{\C}_{i}(\H_n\D_n\A/\sqrt{n}) -d_{\eff}\pi_{i}}{m\pi_{i}}\right| \leq\frac{n}{m}\omega,\quad\forall 1 \leq i\leq n.
\end{align*}
Next, we  verify that the event $\zeta_{\omega}$ holds with a controlled probability. 
Precisely, 
some $C>0$, it follows from \Cref{lemm: HD_balance_row_norms} below and $\|\A_\C\|_F^2=d_{\eff}$ that
\begin{align}\label{eq:pr_lev_srht}
\Pr\left( \left|\ell_i^{\C}(\H_n\D_n\A/\sqrt{n})-\frac{d_{\eff}}{n} \right|\geq \omega,1\leq i\leq n\right)\leq \frac{\delta}{2},
\end{align}
with $\omega\geq \max\{C\sqrt{d_{\eff}\log(n/\delta)}/n,C\log(n/\delta)/n\}$, which is equivalent to
\begin{align*}
 \left| \frac{\ell^{\C}_{i}(\H_n\D_n\A/\sqrt{n}) -d_{\eff}\pi_{i}}{m\pi_{i}}\right| \leq \max\left\{\frac{C\sqrt{d_{\eff}\log(n/\delta)}}{m},\frac{C\log(n/\delta)}{m}\right\},~\text{for}~1\leq i\leq n.
\end{align*}
Further recall from \citet[Lemma 1]{lacotte2021adaptive} that a sampling size  $m\geq C\rho_{\max}(d_{\eff}+\log(1/(\epsilon\delta))\log(d_{\eff}/\delta)/\epsilon^2)$, $\rho_{\max}\equiv \max_{1\leq i \leq n}n\ell^{\C}_i (\H_n \D_n \A/\sqrt{n})/d_{\eff}$ and $\epsilon\in(0, 1/2]$ suffices to ensure that $\H^{-1/2}\tilde{\A}_{\rm SRHT}^\top \tilde{\A}_{\rm SRHT}\H^{-1/2}$ is an $(\epsilon,\delta)$-approximation of $\A_{\C}^\top\A_{\C}$.  
This leads to  $\Pr(\zeta)\leq \delta/2$. 
Combining the above results, we obtain $\Pr(\zeta \bigcap \zeta_{\omega})\leq \delta$.     
This, together with
\begin{align*}
     &\|\H^{\frac{1}{2}}\check{\Z}_3\H^{-\frac{1}{2}}\|
      = O\left(  \frac{\log n}{\log (\log n)}\sqrt{\frac{\rho_{\max}^3 d^3_{\eff}}{m^3}}\right)+ O\left(\max\limits_{1\leq i\leq n} \left| \frac{\ell^{\C}_{i}(\H_n\D_n\A/\sqrt{n}) -d_{\eff}\pi_{i}}{m\pi_{i}}\right|\right), \\
     & \|\H^{\frac{1}{2}}\check{\Z}_1\H^{-\frac{1}{2}}\|= O\left(\frac{ 1}{m}\right),~    \|\H^{\frac{1}{2}}\check{\Z}_2\H^{-\frac{1}{2}}\|= O\left(\sqrt{\frac{\rho_{\max}^3 d^3_{\eff}}{m^3}}\right).
\end{align*}
in~\Cref{rem:proof_inver_bias_constant_debias}, concludes the proof of \Cref{coro:debiasing_srht}.
\qedwhite

\begin{remark}[SRHT inversion bias with different sampling sizes]\normalfont\label{rem:sub_emb_srht}
Recalling \eqref{eq:pr_lev_srht},  we observe that
\begin{equation*}
     \rho_{\max} = \max_{1\leq i \leq n}n\ell^{\C}_i (\H_n \D_n \A/\sqrt{n})/d_{\eff} \leq 2
\end{equation*}
holds with the probability at least $1-\delta/2$. 
Consequently, choosing $m\geq C(d_{\eff}+\log(1/(\epsilon\delta))\log(d_{\eff}/\delta)/\epsilon^2)$ suffices to ensure that $\H^{-1/2}\tilde{\A}_{\rm SRHT}^\top \tilde{\A}_{\rm SRHT}\H^{-1/2}$ is an $(\epsilon,\delta)$-approximation (or subspace embedding) of $\A_{\C}^\top\A_{\C}$.
On the other hand, note from \Cref{coro:debiasing_srht} that
\begin{enumerate}
    \item by taking $\nu = 0$, a number of $m=\Theta(d_{\eff})$ samples with $ n \exp(-d_{\eff}) < \delta < m^{-3}$, we have that $(\frac{m}{m-d_{\eff}}\tilde{\A}_{\rm SRHT}^\top \tilde{\A}_{\rm SRHT} + \C )^{-1}$ is an $(O(1),\delta)$-unbiased estimator of $(\A^\top  \A + \C)^{-1}$: This, in particular, agrees with the above subspace embedding condition up to logarithmic factor.
    \item taking $\nu > 1$ in \Cref{coro:debiasing_srht} allows one to (further) reduce the inversion bias (to a level that is significantly smaller than  $O(1)$) by increasing the sample size $m = \Theta(d_{\eff}^{1+\nu})$.
\end{enumerate}
\end{remark}

\begin{lemma}[Row norms]\label{lemm: HD_balance_row_norms} 
Let $\H_n\in \RR^{n \times n}$ be the Walsh--Hadamard matrix of size $n\geq 4$ as in~\Cref{def:srht}  and $\D_n =\diag(\bupsilon)\in \RR^{n \times n}$ with $\bupsilon  \in \RR^{n}$ a Rademacher random vector.
Then, we have, for a matrix $\X\in\RR^{n\times d}$ with $n\geq d$ and $  t\geq \max\{\|\X\X^\top\|_F\sqrt{\log(2n/\delta)/(cn^2)},\|\X\|^2\log(2n/\delta)/(cn)\}$ that
\begin{align*}
    \Pr\left( \left|\frac{\|\ee_i^\top\H_n\D_n\X\|^2}{n}-\frac{\|\X\|_F^2}{n}  \right|\geq t,1\leq i\leq n\right)\leq \delta,
\end{align*}
where $\|\cdot\|_F$ denotes the Frobenius norm, and $c>0$ is a universal constant. 
\end{lemma}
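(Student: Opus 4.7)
\textbf{Proof plan for \Cref{lemm: HD_balance_row_norms}.}
The plan is to reduce the statement to a single-vector quadratic-form concentration via the Hanson--Wright inequality (\Cref{theo: hanson_wright_inequality}), and then take a union bound over the $n$ row indices. The key observation is that the effect of the Walsh--Hadamard matrix on a fixed row is simply a sign pattern, so that the randomness of $\D_n$ can be ``absorbed'' into it and still yields a Rademacher vector.

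First, for a fixed row index $i \in \{1,\ldots,n\}$, I would rewrite
\begin{equation*}
  \ee_i^\top \H_n \D_n \X \;=\; \sum_{j=1}^n (\H_n)_{ij}\,\upsilon_j\, \X_{j,:} \;=\; \bzeta^\top \X,
\end{equation*}
where $\bzeta \in \RR^n$ has entries $\zeta_j = (\H_n)_{ij}\,\upsilon_j$. Since $(\H_n)_{ij}\in\{\pm 1\}$ is deterministic and $\upsilon_j$ are i.i.d.\@ Rademacher, $\bzeta$ is itself an i.i.d.\@ Rademacher vector. In particular the entries of $\bzeta$ are independent, mean zero, and sub-gaussian with the universal constant $K = O(1)$ in \Cref{theo: hanson_wright_inequality}. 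Setting $\M = \X\X^\top / n$, one checks
\begin{equation*}
  \frac{\|\ee_i^\top \H_n \D_n \X\|^2}{n} \;=\; \bzeta^\top \M \bzeta, \qquad \EE[\bzeta^\top \M \bzeta] \;=\; \tr(\M) \;=\; \frac{\|\X\|_F^2}{n},
\end{equation*}
and $\|\M\|_F = \|\X\X^\top\|_F/n$, $\|\M\| = \|\X\|^2/n$.

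Second, I would invoke \Cref{theo: hanson_wright_inequality} to obtain
\begin{equation*}
  \Pr\!\left(\left|\frac{\|\ee_i^\top \H_n \D_n \X\|^2}{n} - \frac{\|\X\|_F^2}{n}\right| \geq t\right) \;\leq\; 2\exp\!\left[-c\min\!\left(\frac{t^2 n^2}{\|\X\X^\top\|_F^2},\ \frac{t n}{\|\X\|^2}\right)\right].
\end{equation*}
For this probability to be at most $\delta/n$, it suffices to require $t \geq \|\X\X^\top\|_F\sqrt{\log(2n/\delta)/(c n^2)}$ (from the sub-gaussian tail) \emph{and} $t \geq \|\X\|^2 \log(2n/\delta)/(c n)$ (from the sub-exponential tail); this matches the lower bound on $t$ in the statement. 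A union bound over $i=1,\ldots,n$ then yields the claimed probability $\delta$.

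I do not expect a substantive obstacle: the only care is to verify that $\bzeta = \D_n \H_n^\top \ee_i$ has genuinely independent Rademacher entries (which follows from the $\{\pm 1\}$-structure of Hadamard matrices), and to keep track of the two regimes in the Hanson--Wright bound so that the final $\max\{\cdot,\cdot\}$ condition on $t$ is tight enough for both the Gaussian-like and the exponential-like tails.
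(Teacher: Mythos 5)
Your proposal is correct and follows essentially the same route as the paper: reduce to a quadratic form in a Rademacher vector, apply the Hanson--Wright inequality (\Cref{theo: hanson_wright_inequality}) with the norms $\|\X\X^\top\|_F/n$ and $\|\X\|^2/n$, and take a union bound over the $n$ rows. The only cosmetic difference is that you absorb the Hadamard signs into the Rademacher vector $\bzeta = \D_n\H_n^\top\ee_i$, whereas the paper keeps them in a diagonal matrix $\E=\diag(\ee_i^\top\H_n/\sqrt{n})$ and applies Hanson--Wright to $\bupsilon^\top\E\X\X^\top\E\bupsilon$; the two computations are identical up to constants (the paper tracks the sub-gaussian constant $K$ explicitly, which you fold into the universal constant $c$).
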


\begin{proof}[Proof of~\Cref{lemm: HD_balance_row_norms}]
Recall from \Cref{def:srht} that both $\H_n$ and $\D_n$ are orthogonal matrices such that $\H_n^\top \D_n^2\H_n/n=\I_n$.  
Fix a row index $i \in\{1,\ldots,n\}$ and consider
\begin{align*}
  \frac{\|\ee_i^\top\H_n\D_n\X\|^2}{n}=\|\bupsilon^\top\E\X\|^2,
\end{align*}
where  $\E=\diag(\ee_i^\top\H_n/\sqrt{n})$ is a diagonal matrix formed from the $i^{th}$ row of $\H_n/\sqrt{n}$.  
Observe that $\E^2=\frac{1}{n}\I_n$, we further have
\begin{align*}  
    \EE[\bupsilon^{\top}\E\X\X^\top\E\bupsilon]=\tr(\EE[\X^\top\E\bupsilon\bupsilon^{\top}\E\X])=\tr(\X^\top\E^2\X)=\frac{\|\X\|^2_F}{n}.
\end{align*}
Note that 
\begin{align*}
& \|\E\X\X^\top\E\|=\|\X^\top\E^2\X\|=\frac{\|\X\|^2}{n},\\
 &\|\E\X\X^\top\E\|^2_F=\tr(\E\X\X^\top\E^2\X\X^\top\E)=\tr(\X\X^\top\E^2\X\X^\top\E^2)=\frac{\|\X\X^\top\|^2_F}{n^2},
\end{align*}
and 
\begin{equation*}
    K= \max\limits_{1 \leq i \leq n} \inf \{s > 0 : \EE[\exp(\upsilon_i^2 / s ^2) ]\leq 2\}=(\log (2))^{-1/2}>1,
\end{equation*}
where $\upsilon_i$ is the $i^{th}$ variable of $\bupsilon$.
Apply Hanson-Wright inequality in~\Cref{theo: hanson_wright_inequality} with $ t\geq \max\{\log(2)\|\X\X^\top\|_F\sqrt{\log(2n/\delta)/(cn^2)},\log(2)\|\X\|^2\log(2n/\delta)/(cn)\}$,
 for each $i=1,\ldots,n$, we have 
\begin{align*}
    \Pr\left(\left|\frac{\|\ee_i^\top\H_n\D_n\X\|^2}{n}-\frac{\|\X\|_F^2}{n} \right|   \geq t \right)\leq \frac{\delta}{n}.
\end{align*}
Taking a union bound over these $n$ events, we conclude the proof of \Cref{lemm: HD_balance_row_norms}.
\end{proof}

\section{Proof of the Results in \Cref{sec:application}}\label{sec:proof_application}

In this section, we present proof of the results in \Cref{sec:application}.
Precisely, to establish convergence guarantees for the bias-corrected SSN iteration in \eqref{eq:newton-sub-sampled}, we first introduce in \Cref{prop:sec.moment} a fine-grained non-asymptotic bound on second moment of the normalized sub-sampled inverse matrix in \Cref{prop:sec.moment} and present its proof in \Cref{sec:proof_sec_moment}. 
Stemming from this second inverse moment analysis, we also provide non-asymptotic bounds on the second inverse moment for the exact and/or approximate leverage sampling and SRHT in \Cref{coro:sec.moment_lev} and \Cref{coro:sec.moment_SRHT}, respectively, under scalar debiasing.
Then, in \Cref{subsec:proof_convergence}, we provide the detailed proof of \Cref{theom:RS-convergence}. 
Finally, further discussions and additional results related to \Cref{theom:RS-convergence} are provided in \Cref{subsec:additional_SSN}.

\begin{proposition}[Fine-grained analysis of second inverse moment]\label{prop:sec.moment} 
For a given  matrix $\A\in \RR^{n\times d}$, let $\S$ be a random sampling  matrix  with number of trials $m$ and importance sampling distribution $\{ \pi_i \}_{i=1}^n$ as in \Cref{def:RS}, and let $\C\in \RR^{d\times d}$ be a p.s.d.\@ matrix and $\C_{\A}=(\A^\top\A+\C)^{-1/2}\C(\A^\top\A+\C)^{-1/2} \in \RR^{d \times d}$. 
Define  $d_{\eff}=\tr(\A_\C^\top\A_\C)$ with $\A_\C \equiv \A(\A^\top \A+\C)^{-1/2} \in \RR^{n \times d}$ 
and the fine-grained de-biased sampling matrix $\check{\S} =\diag\left\{ \sqrt{ m/(m - \ell^\C_{i_s}/\pi_{i_s} ) }\right\}_{s=1}^m  \cdot \S $ as in \Cref{prop:debias}.
Then, for diagonal matrix $\bar \F=\diag\{\bar F_{ii}\}^n_{i=1}$ with
\begin{equation*}
    \bar F_{ii}= \frac{{{\ba_{\C}}_i}^\top\EE_{\zeta}[(\A_{\C}^\top \check{\S}^\top \check{\S}\A_{\C}+ \C_{\A} )^{-2}]{\ba_{\C}}_i}{m\pi_i},
\end{equation*}
and  ${\ba_{\C}}_i^\top \in\RR^d$  the $i^{th}$ row of $\A_{\C}$, there exists universal constant $C > 0$ independent of $n,d_{\eff}$ such that for $m\geq C\rho_{\max} d_{\eff}(\log(d_{\eff}/\delta )+ (\log n)^{2/3} /(\epsilon\log (\log n))^{2/3} )$ with $\delta\leq m^{-3}$ and max  factor $\rho_{\max} = \max_{1\leq i \leq n}\ell^{\C}_i/(\pi_id_{\eff})$ in \Cref{def:approx_factor}, $(\A_{\C}^\top  \check{\S}^\top \check{\S}\A_{\C} + \C_{\A} )^{-2}$ is  an $(\epsilon,\delta)$-unbiased estimator of $\I_{d}+{\A_{\C}}^\top\bar \F{\A_{\C}}$. 
\end{proposition}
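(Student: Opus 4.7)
The plan is to extend the leave-one-out machinery behind \Cref{theo:inverse-bias} and \Cref{prop:debias} from the first inverse moment to the second, working in the normalized $\A_{\C}$ coordinates where the unsketched population inverse is the identity. Write $\bar{\Q} \equiv (\A_{\C}^\top \check{\S}^\top \check{\S}\A_{\C} + \C_{\A})^{-1}$, $\x_s^\top = \ee_{i_s}^\top \A_{\C}/\sqrt{m\pi_{i_s}}$, and $F_{ss} = m/(m - \ell^{\C}_{i_s}/\pi_{i_s})$. A direct rescaling of \Cref{prop:debias} by $(\A^\top\A+\C)^{1/2}$ on both sides yields $\EE_{\zeta}[\bar{\Q}] \approx \I_d$ on the same high-probability event $\zeta$ of~\eqref{eq:events}, which also guarantees $\bar{\Q} \preceq 6\,\I_d$ and hence $\bar{\Q}^2 \preceq 36\,\I_d$, providing the bounded-second-clause part of \Cref{def:unbiased_estimator} for free.

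The central identity is obtained by right-multiplying $(\A_{\C}^\top \check{\S}^\top \check{\S}\A_{\C} + \C_{\A})\bar{\Q} = \I_d$ by $\bar{\Q}$ and taking conditional expectation:
\begin{equation*}
\EE_{\zeta}[\bar{\Q}] = \C_{\A}\,\EE_{\zeta}[\bar{\Q}^2] + \sum_{s=1}^m F_{ss}\,\EE_{\zeta}[\x_s\x_s^\top \bar{\Q}^2].
\end{equation*}
A double application of the Sherman--Morrison identity (\Cref{lemm:sherman-morrison}) decomposes each summand as
\begin{equation*}
F_{ss}\x_s\x_s^\top \bar{\Q}^2 = \frac{F_{ss}\x_s\x_s^\top \bar{\Q}_{-s}^2}{1 + F_{ss}\x_s^\top \bar{\Q}_{-s}\x_s} - \frac{F_{ss}^2(\x_s^\top \bar{\Q}_{-s}^2\x_s)\,\x_s\x_s^\top \bar{\Q}_{-s}}{(1 + F_{ss}\x_s^\top \bar{\Q}_{-s}\x_s)^2}.
\end{equation*}
Exactly as in~\eqref{eq:1-f_ii}, the specific choice of debiasing factor forces the denominator to concentrate around $F_{ss}$, so both ratios reduce to approximately one. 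Using independence of $\x_s$ from $\bar{\Q}_{-s}$, the elementary identities $\EE_{i_s}[\x_s\x_s^\top] = \A_{\C}^\top \A_{\C}/m$ and $\x_s^\top \bar{\Q}_{-s}^2 \x_s = {\ba_\C}_{i_s}^\top \bar{\Q}_{-s}^2 {\ba_\C}_{i_s}/(m\pi_{i_s})$, and the leave-one-out swap $\bar{\Q}_{-s} \rightsquigarrow \bar{\Q}$ from~\eqref{eq:Q_diffe_hat}, I expect to obtain
\begin{equation*}
\sum_{s=1}^m F_{ss}\,\EE_{\zeta}[\x_s\x_s^\top \bar{\Q}^2] \approx \A_{\C}^\top \A_{\C}\,\EE_{\zeta}[\bar{\Q}^2] - \A_{\C}^\top \bar{\F}\A_{\C},
\end{equation*}
with $\bar{F}_{ii}$ exactly as defined in the statement. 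Substituting back, invoking $\C_{\A} + \A_{\C}^\top \A_{\C} = \I_d$, and using $\EE_{\zeta}[\bar{\Q}] \approx \I_d$ delivers the desired $\EE_{\zeta}[\bar{\Q}^2] \approx \I_d + \A_{\C}^\top \bar{\F}\A_{\C}$.

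Turning this into a rigorous bound amounts to controlling, in operator norm after symmetric conjugation by $(\I_d + \A_{\C}^\top \bar{\F}\A_{\C})^{\pm 1/2}$, three error sources: the denominator replacement, the leave-one-out swap, and the decoupling of $\x_s$ from functionals of $\bar{\Q}_{-s}^2$. The denominator bound will reuse~\eqref{eq:1-f_ii} almost verbatim, and the $\bar{\Q}_{-s}-\bar{\Q}$ estimate will reuse~\eqref{eq:Q_diffe_hat}, each picking up at most a harmless factor of $\|\bar{\Q}\|\le 6$. The main technical obstacle will be a concentration estimate for the quadratic form $\x_s^\top\bar{\Q}_{-s}^2 \x_s$ around its conditional mean, playing the role that \Cref{lemma:cond_var} played for $\x_s^\top \Q_{-s}\x_s$; I expect the Burkholder-based martingale decomposition used there to carry through with $\bar{\Q}_{-s}^2$ in place of $\bar{\Q}_{-s}$, incurring only an extra $\|\bar{\Q}_{-s}\|^2\le 36$ factor in the variance bound and preserving the $O(\sqrt{\rho_{\max}^3 d_{\eff}^3/m^3})$ rate responsible for the $1/\epsilon^{2/3}$ sample-complexity scaling. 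The self-consistent definition of $\bar{\F}$ is not a circularity, since $\bar{F}_{ii}$ is defined directly in terms of the quantity being approximated; only the a priori bound $\|\bar{\F}\| = O(\rho_{\max} d_{\eff}/m)$ is needed throughout, which follows from $\EE_{\zeta}[\bar{\Q}^2]\preceq 36\,\I_d$ together with $\max_i \|{\ba_\C}_i\|^2/(m\pi_i) \le \rho_{\max}d_{\eff}/m$.
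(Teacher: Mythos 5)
Your proposal takes essentially the same route as the paper's proof of \Cref{prop:sec.moment}: rewriting the second moment via the identity $(\A_{\C}^\top\check\S^\top\check\S\A_{\C}+\C_{\A})\hat\Q^2=\hat\Q$ together with a double Sherman--Morrison expansion, invoking the first-moment result of \Cref{prop:debias} and the bound $\hat\Q\preceq 6\I_d$ on the event $\zeta$, controlling the denominators exactly as in \eqref{eq:1-f_ii}, performing the leave-one-out swap as in \eqref{eq:Q_diffe_hat}, and establishing concentration of ${\ba_{\C}}_j^\top\hat\Q_{-s}^2{\ba_{\C}}_j$ by the Burkholder-based argument extending \Cref{lemma:cond_var}, all yielding the same $O\bigl(\sqrt{\rho_{\max}^3 d_{\eff}^3/m^3}\bigr)$ rate. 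The differences are only notational (your decomposition is a rearrangement of the paper's terms $\TT_1$--$\TT_8$), so the plan is sound.
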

Note that \Cref{prop:sec.moment} can be seen as a second-order extension of (the first-order inversion bias in)  \Cref{prop:debias}.
As we shall see below in \Cref{subsec:proof_convergence}, this result is instrumental in establishing the convergence rate for SSN in \Cref{theom:RS-convergence}. 

\subsection{Proof of \Cref{prop:sec.moment}}
\label{sec:proof_sec_moment}

Following the methodology of the proofs of \Cref{theo:inverse-bias} and \Cref{prop:debias}, the proof of \Cref{prop:sec.moment} also comes in the following two steps:
\begin{enumerate}
  \item construct an high probability event $\zeta$ as in \eqref{eq:events}; and
  \item conditional on the event $\zeta$, derive a bound for the spectral norm $\|\EE_{\zeta}[(\A_{\C}^\top  \check{\S}^\top \check{\S} \A_{\C} + \C_{\A} )^{-2}] - (\I_d+\A_{\C}^\top\bar \F\A_{\C}) \|$,  using again ``leave-one-out'' type analysis.
\end{enumerate}

First, let us recall some notations from the proofs of \Cref{theo:inverse-bias} and \Cref{prop:debias}.  
For the ease of further use, denote 
\begin{equation*}
    \check\S= \diag \left\{F_{i_si_s} \right\}^m_{s=1}\cdot\S, \quad  i_s\in \{1,\ldots,n\}, \quad F_{i_si_s}=\sqrt{m/(m-\ell_{i_s}^\C/\pi_{i_s})},
\end{equation*}
$\H=\A^\top\A+\C$,    $\A_\C=\A\H^{-1/2}$, and  $\hat\x^\top_s=\ee^\top_{i_s}\A_{\C}/\sqrt{\pi_{i_s}}$ such that $\EE[\hat\x_s\hat\x_s^\top]=\A_{\C}^\top \A_{\C}$. 
Further let
\begin{equation*}
    \hat\Q=(\A_{\C}^\top  \check{\S}^\top\check{\S}\A_{\C} + \C_{\A} )^{-1}= \left(\sum^m_{s=1}\frac{1}{m}F_{i_si_s}\hat\x_s\hat\x_s^\top + \C_{\A} \right)^{-1},~\text{and}~\hat\Q_{-s}= \left(\sum^m_{j\neq s}\frac{1}{m} F_{jj}\hat\x_j\hat\x_j^\top + \C_{\A} \right)^{-1}.
\end{equation*}

To prove \Cref{prop:sec.moment}, we first rewrite 
\begin{align}\label{eq:debq^2-i-zz}
\EE_{\zeta}[\hat\Q^2] - (\I_d+\A_{\C}^\top\bar \F\A_{\C}) = \underbrace{\EE_{\zeta}[\hat\Q-\I_d]}_{\TT_1} + \EE_{\zeta}[\hat\Q(\hat\Q-\I_d)]-\A_{\C}^\top\bar\F\A_{\C}.
\end{align}
Then,  along with $\A_{\C}^\top\A_{\C}+\C_{\A}=\I_d$ and let $\hat\gamma_s=1+\frac{1}{m}F_{i_si_s}\hat\x_s^\top\hat\Q_{-s}\hat\x_s=\check \gamma_s$, $s=1,\ldots, m$, we get
\begin{align}\label{eq:dbq(bq-i)}
&\EE_{\zeta}[\hat\Q(\hat\Q-\I_d)]
=\EE_{\zeta}[\hat\Q(\hat\Q(\A_{\C}^\top\A_{\C}-\A_{\C}^\top\check\S^\top\check\S\A_{\C}))]=\EE_{\zeta}[\hat\Q^2\A_{\C}^\top\A_{\C}]-\EE_{\zeta}\left[\sum^m_{s=1}\hat\Q\frac{F_{i_si_s}\hat\Q_{-s}\hat\x_s\hat\x_s^\top}{m\hat\gamma_s}\right]\nonumber\\
&=\EE_{\zeta}[\hat\Q^2\A_{\C}^\top\A_{\C}]-\EE_{\zeta}\left[\sum^m_{s=1}\frac{F_{i_si_s}\hat\Q^2_{-s}\hat\x_s\hat\x_s^\top}{m\check \gamma_s}\right]+\EE_{\zeta}\left[\sum^m_{s=1}\frac{F_{i_si_s}^2\hat\Q_{-s}\hat\x_s\hat\x_s^\top\hat\Q^2_{-s}\hat\x_s\hat\x_s^\top}{m^2\check \gamma^2_s}\right]\nonumber\\ 
&=\EE_{\zeta}[\hat\Q^2\A_{\C}^\top\A_{\C}]-\EE_{\zeta}\left[\frac{F_{i_si_s}\hat\Q^2_{-s}\hat\x_s\hat\x_s^\top}{\check \gamma_s}\right]+\EE_{\zeta}\left[\frac{F_{i_si_s}^2\hat\Q_{-s}\hat\x_s\hat\x_s^\top\hat\Q^2_{-s}\hat\x_s\hat\x_s^\top}{m\check \gamma^2_s}\right]\nonumber\\ &=\underbrace{\EE_{\zeta}\left[\hat\Q^2\A_{\C}^\top\A_{\C}-\hat\Q^2_{-s}\hat\x_s\hat\x_s^\top\right]}_{\TT_2}+\underbrace{\EE_{\zeta}\left[\hat\Q^2_{-s} \left(1-\frac{F_{i_si_s}}{\check \gamma_s} \right)\hat\x_s\hat\x_s^\top\right]}_{\TT_3}+\EE_{\zeta}\left[\frac{F_{i_si_s}^2\hat\Q_{-s}\hat\x_s\hat\x_s^\top\hat\Q^2_{-s}\hat\x_s\hat\x_s^\top}{m\check \gamma^2_s}\right],
\end{align}
where the second and third equalities follow from the Sherman--Morrison formula. 
Further define ${\bar \F}^{'}=\diag\{{\bar F}^{'}_{ii}\}^n_{i=1}$ with ${\bar F}^{'}_{ii}={\ba_{\C}}^\top_i\EE_{\zeta}[ \hat\Q_{-s}^{2}]{\ba_{\C}}_i/m\pi_i$. 
Using \eqref{eq:debq^2-i-zz} and \eqref{eq:dbq(bq-i)},  we get
\begin{align} 
\EE_{\zeta}[\hat\Q^2] - (\I_d+\A_{\C}^\top\bar\F\A_{\C}) &=\TT_1+\TT_2+\TT_3+\EE_{\zeta}\left[\frac{F_{i_si_s}^2\hat\Q_{-s}\hat\x_s\hat\x_s^\top\hat\Q^2_{-s}\hat\x_s\hat\x_s^\top}{m\check \gamma^2_s}\right]- \A_{\C}^\top\bar\F\A_{\C} \nonumber\\
&=\TT_1+\TT_2+\TT_3+\underbrace{\EE_{\zeta}[\hat\Q-\I_d]\A_{\C}^\top\bar\F\A_{\C}}_{\TT_4}+\underbrace{\EE_{\zeta}[(\hat\Q_{-s}-\hat\Q)\A_{\C}^\top\bar\F\A_{\C}]}_{\TT_5} \nonumber\\
&+\underbrace{\EE_{\zeta}[\hat\Q_{-s}(\A_{\C}^\top\bar\F^{'}\A_{\C}-\A_{\C}^\top\bar\F\A_{\C})]}_{\TT_6} \nonumber \\ 
&+ \underbrace{\EE_{\zeta}\left[\frac{\hat\x_s^\top\hat\Q^2_{-s}\hat\x_s\hat\Q_{-s}\hat\x_s\hat\x_s^\top}{m}\right]-\EE_{\zeta}[\hat\Q_{-s}(\A_{\C}^\top\bar\F^{'}\A_{\C})]}_{\TT_7} \nonumber\\
& +  \underbrace{\EE_{\zeta}\left[\left(\frac{F_{i_si_s}^2}{\check \gamma^2_s}-1\right)\frac{\hat\x_s^\top\hat\Q^2_{-s}\hat\x_s\hat\Q_{-s}\hat\x_s\hat\x_s^\top}{m}\right]}_{\TT_8}\nonumber.
\end{align}

At this point, note from \Cref{prop:debias} that $\|\TT_1\|=O\left(\sqrt{(\log n)^2\rho_{\max}^3d^3_{\eff}/((\log (\log n))^2m^3)}\right)$. Using $\hat\Q=\H^{1/2}\check\Q\H^{1/2}$, and noting the fact that  $\zeta'$ implies that   
\begin{align}\label{eq:breve_q_I}
    \check\Q\preceq \check\Q_{-s}\preceq6\H^{-1},   
\end{align}
we have $\bar F_{ii}\leq {\ba_{\C}}^\top_i\EE_{\zeta}[\hat\Q^{2}]{\ba_{\C}}_i/m\pi_i\leq36\rho_{\max} d_{\eff}/m<1$,
which, together with \Cref{prop:debias} yields that  $\|\TT_4\|=O\left(\sqrt{(\log n)^2\rho_{\max}^3d^3_{\eff}/((\log (\log n))^2m^3)}\right)$.

Now, we proceed to bound $\TT_2$. We first write
\begin{align}
    \|\TT_2\|
    &= \|\EE_{\zeta}[\hat\Q^2\A_{\C}^\top\A_{\C}]-\EE_{\zeta}[\hat\Q^2_{-s}\A_{\C}^\top\A_{\C}]\|+\|\EE_{\zeta}[\hat\Q^2_{-s}\A_{\C}^\top\A_{\C}]-
\EE_{\zeta}[\hat\Q^2_{-s}\hat\x_s\hat\x_s^\top]\| \nonumber\\
    &\leq\underbrace{\|\EE_{\zeta}[(\hat\Q^2-\hat\Q^2_{-s})\A_{\C}^\top\A_{\C}]\|}_{T_{21}}+\underbrace{\|
\EE_{\zeta}[\hat\Q^2_{-s}(\A_{\C}^\top\A_{\C}-\hat\x_s\hat\x_s^\top)]\|}_{T_{22}} \nonumber.
\end{align}
Recalling $\A_{\C}^\top\A_{\C}=\H^{-1/2}\A^{\top}\A\H^{-1/2} \preceq \I_d$, we get,  for  the first term $T_{21}$, that
\begin{align} \label{eq:T_21}
T_{21}&\leq  \|\EE_{\zeta}[\hat\Q^2-\hat\Q^2_{-s}]\| \leq \|\EE_{\zeta}[\hat\Q(\hat\Q-\hat\Q_{-s})]\|+\|\EE_{\zeta}[(\hat\Q-\hat\Q_{-s})\hat\Q_{-s}]\|\nonumber\\
& \overset{(a)}{=}\left \| \EE_{\zeta}\left[\frac{F_{i_si_s}}{m\check \gamma_s}\hat\Q\hat\Q_{-s}\hat\x_s\hat\x_s^\top\hat\Q_{-s}\right]\right\| + \left\|\EE_{\zeta}\left[\frac{F_{i_si_s}}{m\check \gamma_s}\hat\Q_{-s}\hat\x_s\hat\x_s^\top\hat\Q^2_{-s}\right]\right\|\nonumber\\
& \overset{(b)}{=} 2\left\|\EE_{\zeta}\left[\frac{F_{i_si_s}}{m\check \gamma_s}\hat\Q^2_{-s}\hat\x_s\hat\x_s^\top\hat\Q_{-s}\right]\right\|+\left \|\EE_{\zeta}\left[\frac{F^2_{i_si_s}}{m^2\check \gamma^2_s}\hat\Q_{-s}\hat\x_s\hat\x_s^\top\hat\Q^2_{-s}\hat\x_s\hat\x_s^\top\hat\Q_{-s}\right]\right\|,
\end{align}
where we used (again) Sherman--Morrison formula twice in the step $(a)$ and $(b)$, respectively.
For the first term in \eqref{eq:T_21}, using the fact that $F_{i_si_s}/\check \gamma_s<F_{i_si_s}<2$, together with \eqref{eq:condition_expec_three} and \eqref{eq:breve_q_I}, we get
 \begin{align}
 \left\|\EE_{\zeta}\left[\frac{F_{i_si_s}}{m\check \gamma_s}\hat\Q^2_{-s}\hat\x_s\hat\x_s^\top\hat\Q_{-s}\right]\right\| &=\sup\limits_{\|\uu\|=1,\|\vv\|=1} \EE_{\zeta}\left[\frac{F_{i_si_s}}{m\check \gamma_s}\uu^\top\hat\Q^2_{-s}\hat\x_s\hat\x_s^\top\hat\Q_{-s}\vv\right]  \nonumber \\ 
 &\leq \frac{4}{m} \sup\limits_{\|\uu\|=1,\|\vv\|=1} \EE_{\zeta^{'}}\left[|\uu^\top\hat\Q^2_{-s}\hat\x_s\hat\x_s^\top\hat\Q_{-s}\vv|\right] \nonumber\\
 &\leq \frac{2}{m} \sup\limits_{\|\uu\|=1,\|\vv\|=1} \EE_{\zeta^{'}}\left[\uu^\top\hat\Q^2_{-s}\hat\x_s\hat\x_s^\top\hat\Q^2_{-s}\uu+\vv^\top\hat\Q_{-s}\hat\x_s
 \hat\x_s^\top\hat\Q_{-s}\vv\right] \nonumber\\
  &\leq \frac{2}{m}\sup\limits_{\|\uu\|=1,\|\vv\|=1} \EE_{\zeta^{'}}\left[\sum^n_{j=1}\uu^\top\hat\Q^2_{-s}\ba_{\C_j}\ba_{\C_j}^\top\hat\Q^2_{-s}\uu+\vv^\top\hat\Q_{-s}\ba_{\C_j}
\ba_{\C_j}^\top\hat\Q_{-s}\vv\right] \nonumber\\
  &\leq \frac{2}{m}\sup\limits_{\|\uu\|=1,\|\vv\|=1} \EE_{\zeta^{'}}\left[\uu^\top\hat\Q^2_{-s}\A_{\C}^\top\A_{\C}\hat\Q^2_{-s}\uu+\vv^\top\hat\Q_{-s}\A_{\C}^\top\A_{\C}\hat\Q_{-s}\vv\right] \nonumber \\ 
  &= O\Big(\frac{1}{m}\Big).\nonumber
 \end{align}
Analogously as above, we have, for the second term in \eqref{eq:T_21} that
 \begin{align}
     \left \|\EE_{\zeta}\left[\frac{F^2_{i_si_s}}{m^2\check \gamma^2_s}\hat\Q_{-s}\hat\x_s\hat\x_s^\top\hat\Q^2_{-s}\hat\x_s\hat\x_s^\top\hat\Q_{-s}\right]\right\|&\leq  \frac{8}{m^2} \|\EE_{\zeta^{'}}[\hat\Q_{-s}\hat\x_s\hat\x_s^\top\hat\Q^2_{-s}\hat\x_s\hat\x_s^\top\hat\Q_{-s}]\|\nonumber\\
    &=\frac{8}{m^2} \left\|\EE_{\zeta^{'}}\left[\sum^n_{j=1}\frac{\hat\Q_{-s}{\ba_{\C}}_j{\ba_{\C}}^\top_j\hat\Q^2_{-s}{\ba_{\C}}_j{\ba_{\C}}_j^\top\hat\Q_{-s}}{\pi_j}\right]\right\|\nonumber\\
    &=\frac{288\rho_{\max} d_{\eff}}{m^2}  \left\|\EE_{\zeta^{'}}\left[\hat\Q_{-s}\A_{\C}^\top\A_{\C}\hat\Q_{-s}\right]\right\| = O\Big(\frac{\rho_{\max} d_{\eff}}{m^2}\Big).\nonumber
 \end{align}
We thus conclude that   $T_{21}=  O(1/m)$.

Following the methodology used to bound $\|\H^{1/2}\check{\Z}_2\H^{-1/2}\|$ and $\|\H^{1/2}\check{\Z}_3\H^{-1/2}\|$ in \Cref{subsec:detailed_proof_prop:debias}, together with \eqref{eq:breve_q_I}, we get similarly 
\begin{equation*}
    T_{22}=O \left(\sqrt{\frac{\rho_{\max}^3d^3_{\eff}}{m^3}} \right), \quad \|\TT_3\|=O \left( \frac{\log n}{\log (\log n)}\sqrt{\frac{\rho_{\max}^3d^3_{\eff}}{m^3}} \right).
\end{equation*}
Utilizing the fact that $\bar F_{ii}\leq 36\rho_{\max} d_{\eff}/m$, together with \eqref{eq:Q_diffe_hat} and \eqref{eq:breve_q_I}, it follows that 
\begin{equation*}
    \|\TT_5\|\leq \|\EE_{\zeta}[\hat\Q_{-s}-\hat\Q] \|\|\A_{\C}^\top\bar\F\A_{\C}\|\leq O\left(\frac{\rho_{\max} d_{\eff}}{m^2}\right).
\end{equation*}
Since \eqref{eq:breve_q_I} and  $T_{21}=O(1/m)$, we can bound $\|\TT_6\|$ as
\begin{align}
    \|\TT_6\|&\leq\|\EE_{\zeta}[\hat\Q_{-s}]\|\|\A_{\C}\|^2\|\bar\F^{'}-\bar\F\|\leq 6\mathop{\max}\limits_{1\leq  j\leq  n}\frac{|{\ba_{\C}}^\top_j (\EE_{\zeta}[\hat\Q^2_{-s}]-\EE_{\zeta}[\hat\Q^2]){\ba_{\C}}_j|}{m\pi_j}\nonumber\\
    &\leq \frac{6\rho_{\max} d_{\eff}}{m}\|\EE_{\zeta}[\hat\Q^2_{-s}]-\EE_{\zeta}[\hat\Q^2]\| = O\Big(\frac{\rho_{\max} d_{\eff}}{m^2}\Big).\nonumber
\end{align}
We then move on to bound $\|\TT_7\|$. 
We start by rewriting
\begin{align}
    \|\TT_7\|&=\left\|\EE_{\zeta}\left[\hat\Q_{-s}\left(\A_{\C}^\top\bar\F^{'}\A_{\C}-\frac{\hat\x_s^\top\hat\Q^2_{-s}\hat\x_s\hat\x_s\hat\x_s^\top}{m}\right)\right] \right\|\leq \underbrace{\left\|\EE_{\zeta}\left[\hat\Q_{-s}\left(\A_{\C}^\top\bar\F^{'}\A_{\C}-\frac{\hat\x_s^\top\EE_{\zeta}[\hat\Q^2_{-s}]\hat\x_s\hat\x_s\hat\x_s^\top}{m}\right)\right] \right\|}_{T_{71}}\nonumber\\
    &+\underbrace{\left\|\EE_{\zeta}\left[\hat\Q_{-s}\frac{\hat\x_s^\top(\EE_{\zeta'}[\hat\Q^2_{-s}]-\hat\Q^2_{-s})\hat\x_s\hat\x_s\hat\x_s^\top}{m}\right] \right\|}_{T_{72}}+\underbrace{\left\|\EE_{\zeta}\left[\hat\Q_{-s}\frac{\hat\x_s^\top(\EE_{\zeta}[\hat\Q^2_{-s}]-\EE_{\zeta'}[\hat\Q^2_{-s}])\hat\x_s\hat\x_s\hat\x_s^\top}{m}\right] \right\|}_{T_{73}}. \nonumber
\end{align}
Note that $\EE_{\zeta^{'}}\Big[\hat\Q_{-s}\Big(\A_{\C}^\top\bar\F^{'}\A_{\C}-\frac{\hat\x_s^\top\EE_{\zeta}[\hat\Q^2_{-s}]\hat\x_s\hat\x_s\hat\x_s^\top}{m}\Big)\Big]=0$. By adapting the  techniques of bounding   $\|\H^{1/2}\check{\Z}_2\H^{-1/2}\|$ in \Cref{subsec:detailed_proof_prop:debias}, for  $\delta_3<m^{-3}$,  we further get
\begin{align}
    T_{71}&\leq 2\left\|\EE_{\zeta'}\left[\hat\Q_{-s}\left(\A_{\C}^\top\bar\F^{'}\A_{\C}-\frac{\hat\x_s^\top\EE_{\zeta}[\hat\Q^2_{-s}]\hat\x_s\hat\x_s\hat\x_s^\top}{m}\right) \cdot \mathbf{1}_{\neg\zeta_3}\right]\right\|\nonumber\\
    &\leq O\left(\frac{\rho_{\max}  d_{\eff}}{m^4}\right)+12 \EE_{\zeta'}\left[\frac{\hat\x_s^\top\EE_{\zeta}[\hat\Q^2_{-s}]\hat\x_s\hat\x_s^\top\hat\x_s}{m} \cdot \mathbf{1}_{\neg\zeta_3}\right]. \nonumber
\end{align}
Furthermore, applying the Chebyshev's inequality again, and considering  $\EE_{\zeta'}[\hat\x_s^\top\EE_{\zeta}[\hat\Q^2_{-s}]\hat\x_s\hat\x_s^\top\hat\x_s/m]\leq 36\rho_{\max} d^2_{\eff}/m$, alongside 
\begin{align}
    \Var_{\zeta'}\left[\frac{\hat\x_s^\top\EE_{\zeta}[\hat\Q^2_{-s}]\hat\x_s\hat\x_s^\top\hat\x_s}{m}
    \right]\leq \EE_{\zeta'}\left[\frac{(\hat\x_s^\top\EE_{\zeta}[\hat\Q^2_{-s}]\hat\x_s\hat\x_s^\top\hat\x_s)^2}{m^2}\right]\leq \frac{6^4\rho_{\max}^3d^4_{\eff}}{m^2},\nonumber
\end{align}
 it follows that, for  $x\geq 72\rho_{\max} d^2_{\eff}/m$, $ \Pr(\hat\x_s^\top \EE_{\zeta}[\hat\Q^2_{-s}]\hat\x_s\hat\x_s^\top\hat\x_s/m\geq x~|~\zeta')\leq 
 6^4\rho_{\max}^3 d^4_{\eff}/(m^2x^2)$.
Subsequently, by $\delta_3<m^{-3}$,  we deduce that
 \begin{align}
    \EE_{\zeta'}\left[\frac{\hat\x_s^\top\EE_{\zeta}[\hat\Q^2_{-s}]\hat\x_s\hat\x_s^\top\hat\x_s}{m} \cdot \mathbf{1}_{\neg\zeta_3}\right]&=\int^\infty_0 \Pr\left(\frac{\hat\x_s^\top \EE_{\zeta}[\hat\Q^2_{-s}]\hat\x_s\hat\x_s^\top\hat\x_s}{m} \cdot \mathbf{1}_{\neg\zeta_3}\geq x  ~\Big|~\zeta'\right)dx \nonumber\\
    &\leq 2m^{2}\delta_3+\int^\infty_{2m^{2}}\Pr\left(\frac{\hat\x_s^\top \EE_{\zeta}[\hat\Q^2_{-s}]\hat\x_s\hat\x_s^\top\hat\x_s}{m}\geq x~\Big|~\zeta'\right)dx\nonumber\\
    &\leq \frac{2}{m}+ \frac{6^4\rho_{\max}^3 d^4_{\eff}}{m^2}\int^\infty_{2m^{2}}\frac{1}{x^2}dx= \frac{2}{m}+\frac{6^4\rho_{\max}^3 d^4_{\eff}}{2m^{4}} = O\left(\sqrt{\frac{\rho_{\max}^3 d^3_{\eff}}{m^3}} \right).\nonumber
 \end{align}
This leads to $T_{71}=O\left(\sqrt{\rho_{\max}^3 d^3_{\eff}/m^3}\right)$.

Next, we show a bound on $T_{72}$.
Using again \eqref{eq:condition_expec_three} and \eqref{eq:breve_q_I}, we get
\begin{align}
    T_{72}&\leq\sup\limits_{\|\uu\|=1,\|\vv\|=1} \EE_{\zeta}\left[\left|\hat\x_s^\top(\EE_{\zeta'}[\hat\Q^2_{-s}]-\hat\Q^2_{-s})\hat\x_s\right |\left|\frac{\uu^\top\hat\Q_{-s}\hat\x_s\hat\x_s^\top\vv}{m}\right|\right]\nonumber\\
    &\leq2\sup\limits_{\|\uu\|=1,\|\vv\|=1} \EE_{\zeta^{'}}\left[\left|\hat\x_s^\top(\EE_{\zeta'}[\hat\Q^2_{-s}]-\hat\Q^2_{-s})\hat\x_s\right |\left|\frac{\uu^\top\hat\Q_{-s}\hat\x_s\hat\x_s^\top\vv}{m}\right|\right]\nonumber\\
    &\leq\frac{1}{m}\sup\limits_{\|\uu\|=1,\|\vv\|=1} \EE_{\zeta^{'}}\left[\left|\hat\x_s^\top(\EE_{\zeta'}[\hat\Q^2_{-s}]-\hat\Q^2_{-s})\hat\x_s\right |\left(\uu^\top\hat\Q_{-s}\hat\x_s\hat\x_s^\top\hat\Q_{-s}\uu+\vv^\top\hat\x_s\hat\x_s^\top\vv\right)\right]\nonumber\\
    &\leq\frac{1}{m}\sup\limits_{\|\uu\|=1,\|\vv\|=1} \EE_{\zeta^{'}}\left[\sum^n_{j=1}\frac{|{\ba_{\C}}_j^\top(\EE_{\zeta'}[\hat\Q^2_{-s}]-\hat\Q^2_{-s}){\ba_{\C}}_j|}{\pi_j}\left(\uu^\top\hat\Q_{-s}{\ba_{\C}}_j{\ba_{\C}}_j^\top\hat\Q_{-s}\uu+\vv^\top{\ba_{\C}}_j{\ba_{\C}}_j^\top\vv\right)\right]\nonumber\\
    &\leq\frac{1}{m}\sup\limits_{\|\uu\|=1} \EE_{\zeta^{'}}\left[\max\limits_{1\leq j\leq n}\frac{|{\ba_{\C}}_j^\top(\EE_{\zeta'}[\hat\Q^2_{-s}]-\hat\Q^2_{-s}){\ba_{\C}}_j|}{\pi_j}\sum^n_{j=1}\uu^\top\hat\Q_{-s}{\ba_{\C}}_j{\ba_{\C}}_j^\top\hat\Q_{-s}\uu\right]\nonumber\\
    &+\frac{1}{m}\sup\limits_{\|\vv\|=1} \EE_{\zeta^{'}}\left[\max\limits_{1\leq j\leq n}\frac{|{\ba_{\C}}_j^\top(\EE_{\zeta'}[\hat\Q^2_{-s}]-\hat\Q^2_{-s}){\ba_{\C}}_j|}{\pi_j}\sum^n_{j=1}\vv^\top{\ba_{\C}}_j{\ba_{\C}}_j^\top\vv\right]\nonumber\\
    &\leq\EE_{\zeta^{'}}\left[\max\limits_{1 \leq j \leq n}\frac{37|{\ba_{\C}}_j^\top(\EE_{\zeta'}[\hat\Q^2_{-s}]-\hat\Q^2_{-s}){\ba_{\C}}_j|}{m\pi_j}\right].\nonumber
\end{align}
Observe that   adapting the proof of \Cref{lemma:max_expec}, we can readily ascertain the following:
\begin{align*}
   \EE_{\zeta^{'}}\left[\max\limits_{1 \leq j \leq n}\frac{37|{\ba_{\C}}_j^\top(\EE_{\zeta'}[\hat\Q^2_{-s}]-\hat\Q^2_{-s}){\ba_{\C}}_j|}{m\pi_j}\right]=  O\left(\frac{\log n}{\log (\log n)}\sqrt{\frac{\rho_{\max}^3 d^3_{\eff}}{m^3}}\right),
\end{align*}
so that $ T_{72}=O\left(\sqrt{(\log n)^2\rho_{\max}^3 d^3_{\eff}/((\log (\log n))^2m^3)}\right)$.
Recalling \eqref{eq:breve_q_I} again, we derive
\begin{align}
  T_{73}& \leq 12\EE_{\zeta'}\left[\left|\frac{\hat\x_s^\top(\EE_{\zeta}[\hat\Q^2_{-s}]-\EE_{\zeta'}[\hat\Q^2_{-s}])\hat\x_s\hat\x_s^\top\hat\x_s}{m}\right|\right] 
  \leq   12\sum^n_{j=1}\left|\frac{{\ba_{\C}}_j^\top(\EE_{\zeta}[\hat\Q^2_{-s}]-\EE_{\zeta'}[\hat\Q^2_{-s}]){\ba_{\C}}_j{\ba_{\C}}_j^\top{\ba_{\C}}_j}{m\pi_j}\right| \nonumber\\
    &\leq \frac{12\rho_{\max} d_{\eff}}{m}\sum^n_{j=1} \left|{\ba_{\C}}_j^\top(\EE_{\zeta}[\hat\Q^2_{-s}]-\EE_{\zeta'}[\hat\Q^2_{-s}]){\ba_{\C}}_j \right|. \nonumber
\end{align}
Along with 
\begin{align}
   \EE_{\zeta}[\hat\Q^2_{-s}]-\EE_{\zeta'}[\hat\Q^2_{-s}]=\frac{\delta_3}{1-\delta_3}\EE_{\zeta'}[\hat\Q^2_{-s}] -\frac{\delta_3}{1-\delta_3}\EE_{\zeta'}[\hat\Q^2_{-s} ~|~ \mathbf{1}_{\neg\zeta_3}],\nonumber
\end{align}
we obtain, for   $\delta_3\leq m^{-3}$,
\begin{align}
   &|{\ba_{\C}}_j^\top(\EE_{\zeta}[\hat\Q^2_{-s}]-\EE_{\zeta'}[\hat\Q^2_{-s}]){\ba_{\C}}_j|\leq 2\delta_3 {\ba_{\C}}_j^\top {\ba_{\C}}_j(\|\EE_{\zeta'}[\hat\Q^2_{-s}]\|+\|\EE_{\zeta'}[\hat\Q^2_{-s}~|~\mathbf{1}_{\neg\zeta_3}]\|)\leq \frac{144{\ba_{\C}}_j^\top {\ba_{\C}}_j}{m^3}.\nonumber
\end{align}
This   results in 
$ T_{73}=O(\rho_{\max} d^2_{\eff}/m^4)$.

Now, it remains to bound $\|\TT_8\|$.  
Noting  \eqref{eq:breve_q_I} and  $\frac{F_{i_si_s}}{\check \gamma_s}\leq 2$, it follows that  
\begin{align}
    \|\TT_8\|&\leq\sup\limits_{\|\uu\|=1,\|\vv\|=1} \EE_{\zeta}\left[\left|\frac{F_{i_si_s}^2}{\check \gamma^2_s}-1\right|\left|\frac{\uu^\top\hat\x_s^\top\hat\Q^2_{-s}\hat\x_s\hat\Q_{-s}\hat\x_s\hat\x_s^\top\vv}{m}\right|\right]\nonumber\\
    &\leq\sup\limits_{\|\uu\|=1,\|\vv\|=1} \EE_{\zeta}\left[\left|\frac{F_{i_si_s}}{\check \gamma_s}\left(\frac{F_{i_si_s}}{\check \gamma_s}-1\right)\right |\left|\frac{\uu^\top\hat\x_s^\top\hat\Q^2_{-s}\hat\x_s\hat\Q_{-s}\hat\x_s\hat\x_s^\top\vv}{m}\right|\right]\nonumber\\
    &+\sup\limits_{\|\uu\|=1,\|\vv\|=1} \EE_{\zeta}\left[\left|\frac{F_{i_si_s}}{\check \gamma_s}-1\right |\left|\frac{\uu^\top\hat\x_s^\top\hat\Q^2_{-s}\hat\x_s\hat\Q_{-s}\hat\x_s\hat\x_s^\top\vv}{m}\right|\right]\nonumber\\
    &\leq3\sup\limits_{\|\uu\|=1,\|\vv\|=1} \EE_{\zeta}\left[\left|\frac{F_{i_si_s}}{\check \gamma_s}-1\right |\left|\frac{\uu^\top\hat\x_s^\top\hat\Q^2_{-s}\hat\x_s\hat\Q_{-s}\hat\x_s\hat\x_s^\top\vv}{m}\right|\right]\nonumber\\
     &\leq\frac{108\rho_{\max} d_{\eff}}{m}\sup\limits_{\|\uu\|=1,\|\vv\|=1} \EE_{\zeta}\left[\left|\frac{F_{i_si_s}}{\check \gamma_s}-1\right |\left|\uu^\top\hat\Q_{-s}\hat\x_s\hat\x_s^\top\vv\right|\right].\nonumber
\end{align}
Further, applying the techniques of bounding 
$ \|\H^{1/2}\check{\Z}_3\H^{-1/2}\|$ in \Cref{subsec:detailed_proof_prop:debias} again, we get  $\|\TT_8\| = O\left(\sqrt{(\log n)^2\rho_{\max}^5d^5_{\eff}/((\log (\log n))^2m^5)}\right)$. 
Putting the above together, we conclude that
\begin{equation*}
    \|  \EE_{\zeta}[\hat\Q^2] - (\I_d+\A_{\C}^\top\bar\F\A_{\C})\|=O \left( \frac{\log n}{\log (\log n)}\sqrt{\frac{\rho_{\max}^3d^3_{\eff}}{m^3}}\right).
\end{equation*}
This thus concludes the proof of \Cref{prop:sec.moment}.
\qedwhite
 
Based on the second inverse moment result in  \Cref{prop:sec.moment}, we follow the line of arguments in~\Cref{coro:inver_bias_constant_debias} and~\Cref{coro:debiasing_srht},to derive the following fine-grained second inverse moment results for exact and/or approximate leverage score sampling and SRHT using scalar debiasing. 
The proofs of these results follow from a direct combination of the proofs of \Cref{coro:inver_bias_constant_debias}, \Cref{coro:debiasing_srht}~and~\Cref{prop:sec.moment}, and are omitted here.

\begin{corollary}[Second inverse moment using scalar debiasing under approximate leverage]\label{coro:sec.moment_lev}
Under the settings and notations of \Cref{prop:sec.moment}, for any random sampling scheme with sampling distribution $\pi_i\in [\ell^{\C}_{i}/(d_{\eff}\rho_{\max}), \ell^{\C}_{i}/(d_{\eff} \rho_{\min})]$ with some $\rho_{\min} \in [1/2,1]$ as in \Cref{def:approx_factor} and diagonal matrix $\bar \F=\diag\{\bar F_{ii}\}^n_{i=1}$ with
\begin{equation*}
    \bar F_{ii}= \frac{{{\ba_{\C}}_i}^\top\EE_{\zeta}[(\frac{m}{m-d_{\eff}}\A_{\C}^\top \S^\top \S\A_{\C}+ \C_{\A} )^{-2}]{\ba_{\C}}_i}{m\pi_i}.
\end{equation*}  
Then, there exists universal constant $C > 0$ independent of $n,d_{\eff}$ such that for $m\geq C\rho_{\max} d_{\eff}\log(d_{\eff}/\delta )$ with $\delta\leq m^{-3}$, $(\frac{m}{m-d_{\eff}}\A_{\C}^\top \S^\top \S\A_{\C} + \C_{\A} )^{-2}$ is  an $(\epsilon,\delta)$-unbiased estimator of $\I_{d}+{\A_{\C}}^\top\bar \F{\A_{\C}}$ with inversion bias $\epsilon=\max\left\{\tilde O\left(\sqrt{\rho_{\max}^3d^3_{\eff}/m^3}\right),O(\epsilon_{\rho}\rho_{\max}d_{\eff}/m)\right\}$ for $\epsilon_{\rho}=\max\{\rho_{\min}^{-1}-1,1 - \rho_{\max}^{-1}\}$. 
\end{corollary}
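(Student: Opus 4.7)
The plan is to piggyback on the proof of \Cref{prop:sec.moment} in exactly the way \Cref{coro:inver_bias_constant_debias} piggybacked on \Cref{prop:debias}, substituting the matrix debiasing weight $F_{i_s i_s} = \sqrt{m\pi_{i_s}/(m\pi_{i_s}-\ell_{i_s}^\C)}$ by the scalar weight $F_{i_s i_s}=\sqrt{m/(m-d_{\eff})}$ and tracking the resulting perturbation. Under the hypothesis $\pi_i \in [\ell_i^\C/(d_{\eff}\rho_{\max}), \ell_i^\C/(d_{\eff}\rho_{\min})]$ with $\rho_{\min}\geq 1/2$, we have $|\pi_i - \ell_i^\C/d_{\eff}| \leq \epsilon_{\rho}\,\ell_i^\C/d_{\eff}$, so $|\ell_i^\C/\pi_i - d_{\eff}|\leq \epsilon_\rho\, d_{\eff}$; combined with $m \geq C\rho_{\max} d_{\eff}$, this lets us replace the per-row weight by its scalar counterpart up to a multiplicative error of order $\epsilon_{\rho}\rho_{\max} d_{\eff}/m$. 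Exactly this bookkeeping was carried out for the first moment in \Cref{rem:proof_inver_bias_constant_debias}, and our task is to perform the analogous substitution throughout \Cref{sec:proof_sec_moment}.

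First I would re-run the decomposition $\EE_{\zeta}[\hat\Q^2]-(\I_d+\A_\C^\top \bar\F\A_\C)=\TT_1+\cdots+\TT_8$ of \Cref{sec:proof_sec_moment}, but with $\hat\Q$ redefined as $(\frac{m}{m-d_{\eff}}\A_\C^\top\S^\top\S\A_\C+\C_{\A})^{-1}$ and $F_{i_s i_s}=m/(m-d_{\eff})$. The events $\zeta_1,\zeta_2,\zeta_3$ and the conditioning arguments carry over verbatim since subspace embedding (\Cref{lem:sub_embed}) only uses $\rho_{\max}$, and the bound $\hat\Q\preceq \hat\Q_{-s}\preceq 6\I_d$ still holds on $\zeta$ because $m/(m-d_{\eff})<2$. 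Terms $\TT_1$ and $\TT_4$ are dominated by the first-moment result from \Cref{coro:inver_bias_constant_debias}, which already delivers the bound $\max\{O(\sqrt{\rho_{\max}^3 d_{\eff}^3/m^3}),O(\epsilon_\rho\rho_{\max}d_{\eff}/m)\}$. Terms $T_{21}$, $T_{22}$, $\TT_5$, $\TT_6$, $\TT_7$ do not involve $|1-F_{i_s i_s}/\check\gamma_s|$ and therefore admit identical upper bounds to those in \Cref{sec:proof_sec_moment}, since they only use $F_{i_s i_s}\leq 2$ and the Chebyshev/leave-one-out arguments there.

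The only genuinely new work occurs in $\TT_3$ and $\TT_8$, where the factor $1-F_{i_s i_s}/\check\gamma_s$ appears. With scalar debiasing,
\begin{equation*}
\Big|1-\tfrac{F_{i_s i_s}}{\check\gamma_s}\Big|
= \Big|\tfrac{\ba_{i_s}^\top\check\Q_{-s}\ba_{i_s}-d_{\eff}\pi_{i_s}}{m\pi_{i_s}-d_{\eff}\pi_{i_s}+\ba_{i_s}^\top\check\Q_{-s}\ba_{i_s}}\Big|
\leq 2\Big|\tfrac{\ba_{i_s}^\top\check\Q_{-s}\ba_{i_s}-\ell_{i_s}^\C}{m\pi_{i_s}}\Big|+\tfrac{2\epsilon_\rho\rho_{\max}d_{\eff}}{m},
\end{equation*}
exactly as in \Cref{rem:proof_inver_bias_constant_debias}. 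Plugging this into the Cauchy--Schwarz/variance bounds used to control $\TT_3$ and $\TT_8$ in \Cref{sec:proof_sec_moment} inflates each of those terms by an additive $O(\epsilon_\rho\rho_{\max}d_{\eff}/m)$ contribution; the variance part $M_1$/$T_{72}$-type terms remain of order $\sqrt{\rho_{\max}^3 d_{\eff}^3/m^3}$ by the same application of \Cref{lemma:cond_var}.

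The main obstacle, which is why this is merely a corollary rather than a self-contained theorem, is not the discovery of any new idea but verifying that the $\epsilon_\rho$ perturbation does not interact pathologically with the self-referential bound on $M_2$ (and its second-moment analogue $T_{73}$), where $M_2$ is itself dominated by the other terms. Provided $\rho_{\max} d_{\eff}/m$ is small enough that $76\rho_{\max}d_{\eff}/m<1$ (which holds for $m\geq C\rho_{\max}d_{\eff}\log(d_{\eff}/\delta)$), the same rearrangement as in \Cref{subsec:detailed_proof_prop:debias} closes the recursion with the advertised bound. Assembling all terms yields the claimed $\epsilon=\max\{O(\sqrt{\rho_{\max}^3 d_{\eff}^3/m^3}), O(\epsilon_\rho\rho_{\max}d_{\eff}/m)\}$, completing the proof of \Cref{coro:sec.moment_lev}.
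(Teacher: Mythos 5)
Your proposal is correct and matches the paper's intended argument: the paper omits this proof precisely because it is, as you carry out, a direct combination of the proof of \Cref{prop:sec.moment} with the scalar-debiasing perturbation bookkeeping from \Cref{rem:proof_inver_bias_constant_debias}, where only the terms containing $|1-F_{i_si_s}/\check\gamma_s|$ (i.e., $\TT_3$ and $\TT_8$, plus the first-moment terms $\TT_1,\TT_4$ now controlled by \Cref{coro:inver_bias_constant_debias}) pick up the extra $O(\epsilon_\rho\rho_{\max}d_{\eff}/m)$ contribution.
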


\begin{corollary}[Second moment for SRHT using scalar debiasing ]\label{coro:sec.moment_SRHT}
Under the  settings and notations of \Cref{prop:sec.moment}, 
for $\tilde{\A}_{\rm SRHT} \in \RR^{m \times n}$ the SRHT of $\A$ as in \Cref{def:srht}, and diagonal matrix $\bar \F=\diag\{\bar F_{ii}\}^n_{i=1}$ with
\begin{equation*}
    \bar F_{ii}= \frac{\ee_i^\top\H_n\D_n\A_{\C}\EE_{\zeta}[(\frac{m}{m-d_{\eff}}\H^{-1/2}\tilde{\A}_{\rm SRHT} ^\top\tilde{\A}_{\rm SRHT}\H^{-1/2} + \C_{\A} )^{-2}]\A^\top_{\C}\D_n\H_n\ee_i}{nm\pi_i},
\end{equation*} 
then there exists universal constant $C > 0$ independent of $n,d_{\eff}$ and $n \exp(-d_{\eff}) < \delta < m^{-3} $ such that for $m\geq C\rho_{\max} d_{\eff}\log(d_{\eff}/\delta )$ with max  factor $\rho_{\max} = \max_{1\leq i \leq n}n\ell^{\C}_i(\H_n \D_n \A/\sqrt{n})/d_{\eff}$, $(\frac{m}{m-d_{\eff}}\H^{-1/2}\tilde{\A}_{\rm SRHT} ^\top \tilde{\A}_{\rm SRHT}\H^{-1/2} + \C_{\A} )^{-2}$ is  an $(\epsilon,\delta)$-unbiased estimator of $\I_{d}+\frac{1}{n}\A^\top_{\C}\D_n\H_n\bar\F\H_n\D_n\A_{\C}$ with inversion bias $\epsilon=\tilde O\left(\sqrt{\rho_{\max}^3d^3_{\eff}/m^3}\right)+O\left(\sqrt{d_{\eff}\log(n/\delta)}/m
\right)$. 
\end{corollary}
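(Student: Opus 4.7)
The plan is to reduce to the uniform-sampling case on the transformed data $\H_n\D_n\A/\sqrt{n}$ and then deploy the proof scheme of \Cref{prop:sec.moment} with a perturbation argument that tracks the price of using the scalar factor $\frac{m}{m-d_{\eff}}$ instead of the exact row-wise factors $F_{i_s i_s}=\sqrt{m/(m-\ell^\C_{i_s}/\pi_{i_s})}$ from \Cref{prop:debias}. Concretely, I would write $\tilde{\A}_{\rm SRHT}=\S\cdot(\H_n\D_n\A/\sqrt{n})$ with $\S$ a uniform sampling matrix ($\pi_i=1/n$) and condition on the event $\zeta\cap\zeta_\omega$ used in the proof of \Cref{coro:debiasing_srht}, where $\zeta_\omega$ is the high-probability event from \Cref{lemm: HD_balance_row_norms} on which $|\ell^\C_i(\H_n\D_n\A/\sqrt{n})-d_{\eff}/n|\leq \omega$ with $\omega=\Theta(\sqrt{d_{\eff}\log(n/\delta)}/n)$, and $\zeta$ is the subspace-embedding event in~\eqref{eq:events}. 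A union bound gives $\Pr(\zeta\cap\zeta_\omega)\geq 1-\delta$ for the stated range of $\delta$, and on $\zeta_\omega$ the scheme is approximate leverage sampling with $\rho_{\min}^{-1},\rho_{\max}-1 = O(n\omega/d_{\eff})=O(\sqrt{\log(n/\delta)/d_{\eff}})$.

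Next I would follow the decomposition of $\EE_\zeta[\hat\Q^2]-(\I_d+\A_\C^\top\bar\F\A_\C)$ into the eight pieces $\TT_1,\dots,\TT_8$ used in the proof of \Cref{prop:sec.moment} in \Cref{sec:proof_sec_moment}, but now with $F_{i_si_s}$ taken to be the \emph{scalar} value $F\equiv\sqrt{m/(m-d_{\eff})}$ instead of the leverage-dependent $\sqrt{m/(m-\ell_{i_s}^\C/\pi_{i_s})}$. All bounds on $\TT_1,\TT_2,\TT_5,\TT_6,\TT_7$ carry over verbatim since they only use $F\leq 2$, the rank-one/leave-one-out identities, and the subspace-embedding consequence $\H^{1/2}\hat\Q_{-s}\H^{1/2}\preceq 6\I_d$ which is independent of the debiasing scalar. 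The only two terms sensitive to the mismatch between the scalar $F$ and the ideal row-wise $F_{i_s i_s}^\star=\sqrt{m\pi_{i_s}/(m\pi_{i_s}-\ell_{i_s}^\C)}$ are $\TT_3$ and $\TT_8$, which both contain the factor $(1-F/\hat\gamma_s)$. Here I would replicate the computation used for $\TT_3$ in \Cref{subsec:detailed_proof_prop:debias} and the bound on $\|\check\Z_3\|$ in~\Cref{rem:proof_inver_bias_constant_debias}, split
\[
\left|1-\tfrac{F}{\hat\gamma_s}\right|\leq 2\left|\tfrac{\hat\x_s^\top\hat\Q_{-s}\hat\x_s-\ell_{i_s}^\C}{m\pi_{i_s}}\right|+2\left|\tfrac{\ell_{i_s}^\C-d_{\eff}\pi_{i_s}}{m\pi_{i_s}}\right|,
\]
and bound the first term by the $\check M_1,\check M_2$ analysis (yielding $O(\sqrt{\rho_{\max}^3 d_{\eff}^3/m^3})$) and the second, deterministic on $\zeta_\omega$, by $2n\omega/m=O(\sqrt{d_{\eff}\log(n/\delta)}/m)$. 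This gives the claimed bias $\epsilon=O(\sqrt{\rho_{\max}^3 d_{\eff}^3/m^3}+\sqrt{d_{\eff}\log(n/\delta)}/m)$.

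The final ingredient is identifying the leading term: the definition of $\bar F_{ii}$ in the statement involves the $i^{\rm th}$ row of the \emph{transformed} matrix $\H_n\D_n\A_\C/\sqrt{n}$, which is exactly $\ee_i^\top\H_n\D_n\A_\C/\sqrt{n}$, so the leading constant matrix is $\frac{1}{n}\A_\C^\top\D_n\H_n\bar\F\H_n\D_n\A_\C$ rather than $\A_\C^\top\bar\F\A_\C$; this is just a restatement of $\bar\F^\prime$ from the proof of \Cref{prop:sec.moment} written in the coordinates of the original matrix $\A$ after undoing the randomized Hadamard rotation. I would finish by verifying that all ``$\rho_{\max}$'' appearing in the proof can indeed be taken as $\max_i n\ell^\C_i(\H_n\D_n\A/\sqrt n)/d_{\eff}$ (bounded by $2$ on $\zeta_\omega$) so that the sample-size condition $m\geq C\rho_{\max}d_{\eff}\log(d_{\eff}/\delta)$ of the base subspace-embedding lemma \Cref{lem:sub_embed} is met.

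The main obstacle is the careful bookkeeping in $\TT_3$ and $\TT_8$: the additional $(\ell_{i_s}^\C-d_{\eff}\pi_{i_s})/(m\pi_{i_s})$ perturbation must be shown to propagate multiplicatively through the $\TT_6,\TT_7,\TT_8$ chain without triggering any self-referential blow-up (note that $\TT_6$ involves the difference $\bar\F^\prime-\bar\F$, which itself depends on $\EE_\zeta[\hat\Q^2]$); this is handled by the same fixed-point argument used at the end of \Cref{subsec:detailed_proof_theo:inverse-bias} to bound $M_2$ via the full error $\|\EE_\zeta[\hat\Q^2]-(\I_d+\A_\C^\top\bar\F\A_\C)\|$ and absorbing the $O(\rho_{\max}d_{\eff}/m)\cdot\epsilon$ term on the left-hand side.
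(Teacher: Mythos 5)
Your proposal is correct and follows essentially the route the paper intends: the paper omits this proof, stating it is a direct combination of \Cref{coro:inver_bias_constant_debias}, \Cref{coro:debiasing_srht} and \Cref{prop:sec.moment}, and that is exactly the combination you execute (the $\zeta\cap\zeta_\omega$ event via \Cref{lemm: HD_balance_row_norms}, the $\TT_1,\dots,\TT_8$ decomposition from the second-moment proof, and the scalar-versus-leverage-factor perturbation split from the approximate-leverage analysis). The only small imprecision is that $\TT_1$ (and hence $\TT_4$) do not carry over verbatim under scalar debiasing --- they must be bounded via \Cref{coro:debiasing_srht} rather than \Cref{prop:debias}, which contributes the same $O\left(\sqrt{d_{\eff}\log(n/\delta)}/m\right)$ term --- but this is absorbed into your claimed bias, so the argument stands.
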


These results are extension of the results in Corollaries in \ref{coro:inver_bias_constant_debias}~and~\ref{coro:debiasing_srht}, and can be used to derive SSN local convergence rates under exact/approximate leverage and SRHT similar to \Cref{theom:RS-convergence}. 
See Corollaries~\ref{coro:ssn_exact_approximate_lev}~and~\ref{coro:ssn_SRHT} in \Cref{subsec:additional_SSN}.

\subsection{Proof of \Cref{theom:RS-convergence}} 
\label{subsec:proof_convergence}

Here, we prove \Cref{theom:RS-convergence} by adapting the proof approaches of our Propositions~\ref{prop:debias}~and~\ref{prop:sec.moment} and~\citet[Theorem~10]{derezinski2021newtonless}. 
The proof of \Cref{theom:RS-convergence} comes in the following two steps: 
\begin{itemize}
    \item we first establish \Cref{lemm:pre_convergence} that connects the de-biased SSN iterations $\check{\bbeta}_{t+1}$ to the true $\bbeta_{t+1}$; and
    \item perform detailed convergence analysis of de-biased SSN.
\end{itemize}

To begin, we define a high probability event $\zeta_t$ for each $t=0,\ldots, T-1$, as defined in \eqref{eq:events}. 
These events are established independently for each iteration, and we denote $\zeta=\bigcap^{T-1}_{t=0} \zeta_t$. 
Under the setting of \Cref{theom:RS-convergence}, it follows from \Cref{lem:sub_embed} that $\Pr(\zeta_t)\geq 1-\delta/T$ and $\Pr(\zeta)\geq 1-\delta$.  
These events will be constantly exploited in the remainder of the proof of \Cref{theom:RS-convergence}.

Proceeding to the core of our proof,  we present an auxiliary lemma, which uses Propositions~\ref{prop:debias}~and~\ref{prop:sec.moment} to connect the de-biased iteration $\check{\bbeta}_{t+1}$ to the true Newton iteration $\bbeta_{t+1}$. 
This is pivotal to prove \Cref{theom:RS-convergence}.
Note that this result is universally applicable to any de-biased $\check{\bbeta}_t$ and is independent of the smoothness assumption on $F$ in, e.g., \Cref{assum:lip_f}.
 
\begin{lemma}\label{lemm:pre_convergence}
Let $\H_t=\nabla^2f(\check{\bbeta}_{t})+\C(\check{\bbeta}_t)$ with  $\nabla^2f(\check{\bbeta}_{t})=\A(\check{\bbeta}_t)^\top \A(\check{\bbeta}_t)$  and $\C(\check{\bbeta}_t)=\nabla^2\Phi(\check{\bbeta}_t)$ a p.s.d.\@ matrix, and let $\check{\bbeta}_{t+1}$ be the de-biased SSN iteration as in \eqref{eq:de_newton-sub-sampled} with de-biased $\check \S_t$ as in \Cref{prop:debias}.  
Denote $\Delta_{t}=\bbeta_{t}-\bbeta^{\ast}$ and $\check{\Delta}_{t}=\check{\bbeta}_t-\bbeta^{\ast}$. If the exact Newton step $\bbeta_{t+1}=\check{\bbeta}_t-\mu_t\H^{-1}_t\g_t$ (with $\g_t$ the gradient of $F$ at $\check\bbeta_t$) is a descent direction, i.e., $\|\Delta_{t+1}\|_{\H_t}\leq \|\check{\Delta}_{t}\|_{\H_t}$, then, there exists universal constant $C > 0$ independent of $n, d_{\eff}$ such that for $m\geq C\rho_{\max} d_{\eff}(\log(d_{\eff}T/\delta )+ (\log n)^{2/3} /(\epsilon\log (\log n))^{2/3} ) $ with $\epsilon > 0$, $\delta\leq m^{-3}$, 
we have
 \begin{align}
   \EE_{\zeta_t}[\|\check{\Delta}_{t+1}\|^2_{\H_t}] \leq \|\Delta_{t+1}\|^2_{\H_t} +\epsilon\|\check{\Delta}_{t}\|^2_{\H_t}+\frac{36\rho_{\max}  d_{\eff}}{m}\|\bbeta_{t+1}-\check{\bbeta}_t\|^2_{\nabla^2f(\check{\bbeta}_{t})}.\nonumber
\end{align} 
Here, $\rho_{\max}$ is the max importance sampling approximation factor in \Cref{def:approx_factor} for $\ell_{i}^\C=\max_{1\leq t \leq T}\ell_{i}^\C(\check{\bbeta}_t)$ and $d_{\eff}=\max_{1\leq t \leq T}d_{\eff}(\check{\bbeta}_t)$ with $\ell_{i}^\C(\check{\bbeta}_t)$ and $d_{\eff}(\check{\bbeta}_t)$ the leverage scores and effective dimension of $\A(\check{\bbeta}_t)$ given $\C(\check{\bbeta}_t)$, respectively.
\end{lemma}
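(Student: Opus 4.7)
The plan is to write $\check{\bbeta}_{t+1} - \bbeta_{t+1} = -\mu_t (\check{\Q}_t - \H_t^{-1}) \g_t$, where $\check{\Q}_t \equiv (\A(\check{\bbeta}_t)^\top \check{\S}_t^\top \check{\S}_t \A(\check{\bbeta}_t) + \C(\check{\bbeta}_t))^{-1}$, so that $\check{\Delta}_{t+1} = \Delta_{t+1} - \mu_t(\check{\Q}_t - \H_t^{-1})\g_t$. Expanding $\|\check{\Delta}_{t+1}\|_{\H_t}^2$ and taking the conditional expectation $\EE_{\zeta_t}[\cdot]$ then yields three pieces:
\begin{align*}
  \EE_{\zeta_t}[\|\check{\Delta}_{t+1}\|_{\H_t}^2]
  &= \|\Delta_{t+1}\|_{\H_t}^2
  \underbrace{- 2\mu_t\Delta_{t+1}^\top \H_t \bigl(\EE_{\zeta_t}[\check{\Q}_t] - \H_t^{-1}\bigr)\g_t}_{A_1} \\
  &\quad + \underbrace{\mu_t^2 \g_t^\top \EE_{\zeta_t}\bigl[(\check{\Q}_t - \H_t^{-1})\H_t(\check{\Q}_t - \H_t^{-1})\bigr] \g_t}_{A_2}.
\end{align*}
The identity $\mu_t\H_t^{-1}\g_t = \check{\bbeta}_t - \bbeta_{t+1}$ will be used throughout to replace $\mu_t\g_t$ by $\H_t(\check{\bbeta}_t - \bbeta_{t+1})$ and thus rewrite everything in terms of $\|\bbeta_{t+1} - \check{\bbeta}_t\|_{\H_t}$ and $\|\bbeta_{t+1} - \check{\bbeta}_t\|_{\nabla^2 f(\check{\bbeta}_t)}$.

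For the cross term $A_1$, I would invoke \Cref{prop:debias}, which guarantees $\|\H_t^{1/2}(\EE_{\zeta_t}[\check{\Q}_t] - \H_t^{-1})\H_t^{1/2}\| \leq \epsilon$ on $\zeta_t$ for $m$ in the stated range. Then by Cauchy--Schwarz, $|A_1| \leq 2\epsilon \|\Delta_{t+1}\|_{\H_t}\|\bbeta_{t+1} - \check{\bbeta}_t\|_{\H_t}$. The descent hypothesis $\|\Delta_{t+1}\|_{\H_t} \leq \|\check{\Delta}_t\|_{\H_t}$ combined with the triangle inequality $\|\bbeta_{t+1}-\check{\bbeta}_t\|_{\H_t} \leq \|\Delta_{t+1}\|_{\H_t} + \|\check{\Delta}_t\|_{\H_t} \leq 2\|\check{\Delta}_t\|_{\H_t}$ then gives $|A_1| \leq 4\epsilon\|\check{\Delta}_t\|_{\H_t}^2$.

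For the quadratic term $A_2$, I would first use the bias--variance decomposition
\[
  \EE_{\zeta_t}\bigl[(\check{\Q}_t - \H_t^{-1})\H_t(\check{\Q}_t - \H_t^{-1})\bigr]
  = \EE_{\zeta_t}[\check{\Q}_t \H_t \check{\Q}_t] - \EE_{\zeta_t}[\check{\Q}_t]\H_t\EE_{\zeta_t}[\check{\Q}_t] + \bigl(\EE_{\zeta_t}[\check{\Q}_t] - \H_t^{-1}\bigr)\H_t\bigl(\EE_{\zeta_t}[\check{\Q}_t] - \H_t^{-1}\bigr).
\]
The squared--bias piece is $O(\epsilon^2)\H_t^{-1}$ by \Cref{prop:debias}. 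For the variance piece, change variables via $\A_\C = \A(\check{\bbeta}_t)\H_t^{-1/2}$ and $\hat{\Q}_t = \H_t^{1/2}\check{\Q}_t\H_t^{1/2}$, so that $\check{\Q}_t\H_t\check{\Q}_t = \H_t^{-1/2}\hat{\Q}_t^2 \H_t^{-1/2}$, and apply \Cref{prop:sec.moment} to sandwich $\EE_{\zeta_t}[\hat{\Q}_t^2]$ between $(1\pm\epsilon)(\I_d + \A_\C^\top\bar{\F}\A_\C)$. Transforming back and using $\mu_t\A\H_t^{-1}\g_t = \A(\check{\bbeta}_t - \bbeta_{t+1})$ yields
\[
  A_2 \leq (1+\epsilon)\,(\bbeta_{t+1}-\check{\bbeta}_t)^\top \A^\top \bar{\F} \A (\bbeta_{t+1}-\check{\bbeta}_t) + O(\epsilon)\|\bbeta_{t+1}-\check{\bbeta}_t\|_{\H_t}^2.
\]

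The crucial input is the upper bound $\bar{F}_{ii} \leq 36\rho_{\max} d_{\eff}/m$ on the diagonal weights of $\bar{\F}$; this follows from the conditional-on-$\zeta_t$ inequality $\hat{\Q}_t \preceq 6\I_d$ established in the proof of \Cref{prop:debias}, so $\EE_{\zeta_t}[\hat{\Q}_t^2] \preceq 36\I_d$ and $\bar{F}_{ii} \leq 36\|{\ba_{\C}}_i\|^2/(m\pi_i) \leq 36\rho_{\max} d_{\eff}/m$. This gives $\A^\top\bar{\F}\A \preceq (36\rho_{\max} d_{\eff}/m)\nabla^2 f(\check{\bbeta}_t)$, yielding the quadratic term in the desired bound, while the residual $O(\epsilon)\|\bbeta_{t+1}-\check{\bbeta}_t\|_{\H_t}^2$ is again absorbed into $\epsilon\|\check{\Delta}_t\|_{\H_t}^2$ via the descent hypothesis and triangle inequality. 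Combining all three pieces and absorbing the universal constants into a rescaled $\epsilon$ completes the proof. The main technical obstacle is the careful propagation of the first- and second-moment approximation errors through the bias--variance decomposition so that the cross-term and the excess-variance contributions are both controlled by $\epsilon\|\check{\Delta}_t\|_{\H_t}^2$, rather than by an uncontrolled $\|\g_t\|_{\H_t^{-1}}^2$.
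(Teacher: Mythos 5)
Your proposal is correct and follows essentially the same route as the paper: expand $\|\check{\Delta}_{t+1}\|_{\H_t}^2$ around the exact Newton step, control the cross term via \Cref{prop:debias}, control the quadratic term via \Cref{prop:sec.moment} together with the bound $\bar F_{ii}\leq 36\rho_{\max}d_{\eff}/m$ (from $\hat\Q_t\preceq 6\I_d$ on $\zeta_t$), and absorb the remainder using the descent hypothesis and triangle inequality. Your bias--variance rearrangement of the quadratic term is only a cosmetic algebraic variant of the paper's direct expansion of $\EE_{\zeta_t}[(\hat\Q_t-\I_d)^2]$.
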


\begin{proof}[Proof of \Cref{lemm:pre_convergence}]
In both this proof and the subsequent proof of  \Cref{theom:RS-convergence}, to  simplify the notation, we abbreviate 
$\A_{\C}(\check{\bbeta}_t)$ as $\A_{\C}$.
We first recall some notations from (the proof of) \Cref{prop:sec.moment}. 
Let $\pp_t=\bbeta_{t+1}-\check{\bbeta}_t=-\mu_t\H_t^{-1}\g_t$,  and $\hat \Q_t=(\A^\top_{\C}\check{\S}^\top_t\check{\S}_t\A_\C +\C_{\A} )^{-1}$ with the de-biased sampling matrix $\check{\S}_t = \diag\left\{\sqrt{  m/(m - \ell^\C_{i_s}(\check{\bbeta}_t)/\pi_{i_s} )} \right \}_{s=1}^m \cdot \S_t$ as in \Cref{prop:debias} and  $\C_{\A} =\H_t^{-1/2}\C(\check{\bbeta}_t)\H_t^{-1/2}$.
Define the diagonal matrix $\bar \F=\diag\{\bar F_{ii}\}^n_{i=1}$ with  $\bar F_{ii}={{\ba_{\C}}_i}^\top\EE_{\zeta_t}[(\A_{\C}^\top\check{\S}^\top\check{\S}\A_{\C}+ \C_{\A} )^{-2}]{\ba_{\C}}_i/m\pi_i$ as in \Cref{prop:sec.moment} and ${\ba_{\C}}_i^\top \in\RR^d$  the $i^{th}$ row of $\A_{\C}$.
  
Building on the results from~\Cref{prop:debias} and~\Cref{prop:sec.moment}, for $\epsilon=O \left(\frac{\log n}{\log (\log n)}\sqrt{\frac{\rho_{\max}^3 d^3_{\eff}} {m^3}} \right)$,   we obtain 
    \begin{align}
       &\EE_{\zeta_t}[\|\check{\Delta}_{t+1}\|^2_{\H_t}] -\|\Delta_{t+1}\|^2_{\H_t}
       =\EE_{\zeta_t}[\|\check{\bbeta}_{t+1}-\bbeta_{t+1}\|^2_{\H_t}]+2\Delta_{t+1}^\top\H_t\EE_{\zeta_t}[\check{\bbeta}_{t+1}-\bbeta_{t+1}]\nonumber\\
        &=2\mu_{t}\Delta_{t+1}^\top\H^{\frac{1}{2}}_t\EE_{\zeta_t}[(\hat \Q_t-\I_d)]\H^{\frac{1}{2}}_t\H_t^{-1}\g_t+\mu^2_{t}\g^\top_t\H^{-1}_t\H^{\frac{1}{2}}_t\EE_{\zeta_t}[(\hat \Q_t-\I_d)^2]\H^{\frac{1}{2}}_t\H_t^{-1}\g_t\nonumber\\
    &\leq 2\|\Delta_{t+1}\|_{\H_t}\|\pp_t\|_{\H_t}\|\EE_{\zeta_t}[\hat \Q_t]-\I_d\|+\pp^\top_t\H^{\frac{1}{2}}_t\EE_{\zeta_t}[\hat \Q_t^2-\I_d-2(\hat \Q_t-\I_d)]\H^{\frac{1}{2}}_t\pp_t\nonumber\\ 
    &\leq2\|\Delta_{t+1}\|_{\H_t}\|\pp_t\|_{\H_t}\|\EE_{\zeta_t}[\hat \Q_t]-\I_d\|+2\|\pp_t\|^2_{\H_t}\|\EE_{\zeta_t}[\hat \Q_t]-\I_d\| +\pp^\top_t\H^{\frac{1}{2}}_t\EE_{\zeta_t}[\hat \Q_t^2-\I_d-\A_\C^\top\bar\F_t\A_\C]\H^{\frac{1}{2}}_t\pp_t \nonumber \\
    &+\pp^\top_t\H^{\frac{1}{2}}_t\A_\C^\top\bar\F_t\A_\C\H^{\frac{1}{2}}_t\pp_t
       \nonumber\\
       &\leq 2\epsilon\|\Delta_{t+1}\|_{\H_t}\|\pp_t\|_{\H_t} +2\epsilon\|\pp_t\|^2_{\H_t}+\|\pp_t\|^2_{\H_t}\|\EE_{\zeta_t}[\hat \Q_t^2]-\I_d-\A_\C^\top\bar\F_t\A_\C\| +\|\pp_t\|^2_{\nabla^2f(\check{\bbeta}_{t})}
    \|\bar\F_t\|
       \nonumber\\
        &\leq  \epsilon\|\Delta_{t+1}\|_{\H_t}\|\pp_t\|_{\H_t} +\epsilon\|\pp_t\|^2_{\H_t}+\|\pp_t\|^2_{\nabla^2f(\check{\bbeta}_{t})}\max\limits_{1\leq i\leq n}\frac{{{\ba_{\C}}_i}^\top\EE_{\zeta_t}[\hat \Q_t^2]{\ba_{\C}}_i}{m\pi_i}
       \nonumber\\
       &\leq  \epsilon\|\check{\Delta}_{t}\|_{\H_t}\|\pp_t\|_{\H_t} +\epsilon\|\pp_t\|^2_{\H_t}+\frac{36\rho_{\max} d_{\eff}}{m}\|\pp_t\|^2_{\nabla^2f(\check{\bbeta}_{t})},
       \nonumber
    \end{align}
    which combined with $\|\pp_t\|_{\H_t}\leq\|\Delta_{t+1}\|_{\H_t}+\|\check{\Delta}_{t}\|_{\H_t}\leq2\|\check{\Delta}_{t}\|_{\H_t}$ and $\|\check{\Delta}_{t}\|_{\H_t}\|\pp_t\|_{\H_t}\leq 2\|\check{\Delta}_{t}\|^2_{\H_t}$ leads to
    \begin{align}
     \EE_{\zeta_t}[\|\check{\Delta}_{t+1}\|^2_{\H_t}] -\|\Delta_{t+1}\|^2_{\H_t}   &\leq2 \epsilon\|\check{\Delta}_{t}\|^2_{\H_t} +4 \epsilon\|\check{\Delta}_{t}\|^2_{\H_t}+\frac{36\rho_{\max} d_{\eff}}{m}\|\pp_t\|^2_{\nabla^2f(\check{\bbeta}_{t})}\nonumber\\
     &=\epsilon\|\check{\Delta}_{t}\|^2_{\H_t} +\frac{36\rho_{\max} d_{\eff}}{m}\|\pp_t\|^2_{\nabla^2f(\check{\bbeta}_{t})}.\nonumber
    \end{align}
   This concludes the proof of \Cref{lemm:pre_convergence}.
\end{proof}

\paragraph{Proof of \Cref{theom:RS-convergence}.}
First note that by~\Cref{assum:lip_f}, we have the following result
\begin{align}\label{eq:ass_for_conver}
    \|\H_t\check{\Delta}_{t}-\g_t\|_{\H_t^{-1}}\leq \upsilon \|\check{\Delta}_{t}\|_{\H_t},~~~~~\H_t\approx_{\upsilon}\H,~\text{for}~\upsilon=O \left(\frac{\log n}{\log (\log n)}\sqrt{\frac{\rho_{\max}^3 d^3_{\eff}} {m^3}} \right).
\end{align}
Letting  $\check{\Delta}_{t}=\check{\bbeta}_t-\bbeta^{\ast}$ and $\Delta_{t}=\bbeta_{t}-\bbeta^{\ast}$, we then show \eqref{eq:ass_for_conver} holds under \Cref{assum:lip_f}. 
Consider $\check\bbeta_t\in U$ for all $t$,  
we get the following condition:
\begin{align}
   \|\check{\Delta}_{t}\|_{\H} <\frac{\log n}{\log (\log n)} \left(\frac{\rho_{\max} d_{\eff}\sigma_{\min}}{m}\right)^{3/2}/L\nonumber,
\end{align}
where $\sigma_{\min}$ denotes the smallest eigenvalue of $\H$. 
We further derive
\begin{align}
\|\H^{-\frac{1}{2}}(\H_t-\H)\H^{-\frac{1}{2}}\|&\leq\frac{1}{\sigma_{\min}}\|\H_t-\H\|\leq\frac{L}{\sigma_{\min}}\|\check{\bbeta}_t-\bbeta^{\ast}\|= \frac{L}{\sigma_{\min}}\|\check{\Delta}_{t}\|\leq \frac{L}{\sigma^{3/2}_{\min}}\|\check{\Delta}_{t}\|_{\H}\nonumber\\
&\leq \frac{\log n}{\log (\log n)}\sqrt{\frac{\rho_{\max}^3d^3_{\eff}} {m^3}}=\upsilon,\nonumber
\end{align}
which leads to that $\H_t\approx_\upsilon\H$.  
We now advance to verify the second part of \eqref{eq:ass_for_conver}. 
Recalling that the standard analysis of Newton's method~\citep{boyd2004convex}, we have
\begin{align}
\|\H_t\check{\Delta}_{t}-\g_t\|
& = \left\|\H_t\check{\Delta}_{t}-\left(\int^1_0\nabla^2F(\bbeta^{\ast}+\tau\check{\Delta}_{t})d\tau\right)\check{\Delta}_{t}\right\|\leq\|\check{\Delta}_{t}\|\int^1_0\|\nabla^2F(\check{\bbeta}_t)-\nabla^2F(\bbeta^{\ast}+\tau\check{\Delta}_{t})\|d\tau\nonumber\\
&\leq\|\check{\Delta}_{t}\|\int^1_0(1-\tau)L\|\check{\Delta}_{t}\|d\tau=\frac{L}{2}\|\check{\Delta}_{t}\|^2,\nonumber
\end{align}
which together with $\upsilon< 1/2$  and the fact that  $\H_t\approx_\upsilon\H$ implies that $\|\H_t^{-1}\|\leq \frac{1+\upsilon}{\sigma_{\min}}$  achieves 
\begin{align}
\|\H_t\check{\Delta}_{t}-\g_t\|_{\H_t^{-1}} & \leq \sqrt{\frac{1+\upsilon}{\sigma_{\min}}}\|\H_t\check{\Delta}_{t}-\g_t\|\leq \sqrt{\frac{L^2(1+\upsilon)}{4\sigma_{\min}}}\|\check{\Delta}_{t}\|^2\leq\sqrt{\frac{L^2(1+\upsilon)}{4\sigma^3_{\min}}}\|\check{\Delta}_{t}\|^2_{\H}\leq\frac{\upsilon\sqrt{1+\upsilon}}{2} \|\check{\Delta}_{t}\|_{\H} \nonumber\\
&\leq \frac{\upsilon(1+\upsilon)^{3/2}}{2}\|\check{\Delta}_{t}\|_{\H_t} \leq \upsilon\|\check{\Delta}_{t}\|_{\H_t}. \nonumber
\end{align}
Thus, \eqref{eq:ass_for_conver} holds.

Next,  we proceed to the main part of the proof. We first rewrite $\|\Delta_{t+1}\|^2_{\H_t}$ as
\begin{align}    \|\Delta_{t+1}\|^2_{\H_t}&=\Delta_{t+1}^\top\H_t(\bbeta_{t+1}-\check{\bbeta}_{t}+\check{\bbeta}_{t}-\bbeta^{\ast})=\Delta_{t+1}^\top\H_t\check{\Delta}_{t}-\mu_t\Delta_{t+1}^\top\g_t \nonumber\\
    &=(1-\mu_t)\Delta_{t+1}^\top\g_t+\Delta_{t+1}^\top(\H_t\check{\Delta}_{t}-\g_t )
    \nonumber\\
    &=(1-\mu_t)\Delta_{t+1}^\top\H_t\check{\Delta}_{t}-(1-\mu_t)\Delta_{t+1}^\top(\H_t\check{\Delta}_{t}-\g_t )+\Delta_{t+1}^\top(\H_t\check{\Delta}_{t}-\g_t )\nonumber\\
    &=(1-\mu_t)(\check{\Delta}_{t}^\top\H_t\check{\Delta}_{t}-\mu_t\g_t^\top \check{\Delta}_{t})+\mu_t\Delta_{t+1}^\top(\H_t\check{\Delta}_{t}-\g_t )\nonumber\\
  &  =(1-\mu_t) ^2\|\check{\Delta}_{t}\|^2_{\H_t}+\mu_t(\Delta_{t+1}+(1-\mu_t)\check{\Delta}_{t})^\top(\H_t\check{\Delta}_{t}-\g_t).\nonumber
\end{align}
Invoking \eqref{eq:ass_for_conver} and triangle inequality, we get
\begin{align}
  \|\Delta_{t+1}\|^2_{\H_t} \leq(1-\mu_t) ^2\|\check{\Delta}_{t}\|^2_{\H_t}+  \upsilon \mu_t\|\Delta_{t+1}\| _{\H_t}\|\check{\Delta}_{t}\|_{\H_t}+\upsilon\mu_t(1-\mu_t)\|\check{\Delta}_{t}\|^2_{\H_t}. \nonumber
\end{align}
With the fact that if $x^2\leq a x+b$ then $x^2\leq a^2+2 b$, it follows  that:
\begin{align*}
\|\Delta_{t+1}\|^2_{\H_t}&  \leq \upsilon^2 \mu^2_t\|\check{\Delta}_{t}\|^2_{\H_t}+2(1-\mu_t) ^2\|\check{\Delta}_{t}\|^2_{\H_t}+2\upsilon\mu_t(1-\mu_t)\|\check{\Delta}_{t}\|^2_{\H_t}\\ 
&=(2(1-\mu_t) ^2+2\upsilon\mu_t+\mu^2_t(\upsilon^2-2\upsilon))\|\check{\Delta}_{t}\|^2_{\H_t}\nonumber\\
&\leq2((1-\mu_t) ^2+\upsilon\mu_t)\|\check{\Delta}_{t}\|^2_{\H_t}. \nonumber
\end{align*}
It then follows from \Cref{lemm:pre_convergence} and the inequality $\nabla^2f(\check{\bbeta}_t)\preceq \H_t$ that
\begin{align}\label{eq:norm_E[tilde]-real}
   \EE_{\zeta_t}[\|\check{\Delta}_{t+1}\|^2_{\H_t}] -\|\Delta_{t+1}\|^2_{\H_t}
     &\leq \upsilon\|\check{\Delta}_{t}\|^2_{\H_t} +\frac{36\rho_{\max} d_{\eff}}{m}\|\pp_t\|^2_{\H_t}. 
\end{align}
For the second term in \eqref{eq:norm_E[tilde]-real}, we rewrite
\begin{align}
\frac{36\rho_{\max} d_{\eff}}{m}\|\pp_t\|^2_{\H_t}&= \frac{36\rho_{\max} d_{\eff}}{m}\mu^2_t(\g_t^\top\check{\Delta}_{t}-\g_t^\top\H_t^{-1}(\H_t\check{\Delta}_{t}-\g_t))  \nonumber\\
  &=\frac{36\rho_{\max} d_{\eff}}{m}\mu^2_t(\check{\Delta}_{t}^\top\H_t\check{\Delta}_{t}-\check{\Delta}_{t}^\top(\H_t\check{\Delta}_{t}-\g_t)-\g_t^\top\H_t^{-1}(\H_t\check{\Delta}_{t}-\g_t))\nonumber\\
  &=\frac{36\rho_{\max} d_{\eff}}{m}\mu^2_t\|\check{\Delta}_{t}\|^2_{\H_t}-\frac{36\rho_{\max} d_{\eff}}{m}\mu^2_t(\check{\Delta}_{t}+\H_t^{-1}\g_t)^\top(\H_t\check{\Delta}_{t}-\g_t)).\nonumber
\end{align}
Using again \eqref{eq:ass_for_conver}, we further get 
\begin{align}
-\frac{36\rho_{\max} d_{\eff}}{m}\mu^2_t(\check{\Delta}_{t}+\H_t^{-1}\g_t)^\top(\H_t\check{\Delta}_{t}-\g_t)
&=\frac{36\rho_{\max} d_{\eff}}{m}(-\mu^2_t\check{\Delta}_{t}+\mu_t(\Delta_{t+1}-\check{\Delta}_{t}))^\top(\H_t\check{\Delta}_{t}-\g_t) \nonumber\\
&\leq \frac{36\rho_{\max} d_{\eff}\upsilon}{m}\mu_t (\mu_t+2)\|\check{\Delta}_{t}\|^2_{\H_t}. \nonumber
\end{align}
Then, putting the above together, we have 
\begin{align}
    \EE_{\zeta_t}[\|\check{\Delta}_{t+1}\|^2_{\H_t}] & \leq \left((2(1-\mu_t) ^2+2\upsilon\mu_t+\upsilon+\frac{36\rho_{\max} d_{\eff}}{m}\mu^2_t+\frac{36\rho_{\max} d_{\eff}\upsilon}{m}\mu_t (\mu_t+2)\right)  \|\check{\Delta}_{t}\|^2_{\H_t}\nonumber\\
    &=\left(\left(2(1-\mu_t) ^2+\frac{36\rho_{\max} d_{\eff}}{m}\mu^2_t\right) +\upsilon\left(2\mu_t+1+\frac{36\rho_{\max} d_{\eff}}{m}\mu_t (\mu_t+2)\right) \right)  \|\check{\Delta}_{t}\|^2_{\H_t},\nonumber
\end{align}
which along  with  $\upsilon=O\left(\sqrt{(\log n)^2\rho_{\max}^3 d^3_{\eff}/((\log(\log  n))^2m^3)}\right)$ and $\mu_t=\frac{1}{1+\rho_{\max} d_{\eff}/m}<1$ results in 
\begin{align}
     \EE_{\zeta_t}[\|\check{\Delta}_{t+1}\|^2_{\H_t}] &  \leq\left(\left(2(1-\mu_t) ^2+\frac{36\rho_{\max} d_{\eff}}{m}\mu^2_t\right) +\upsilon\right)  \|\check{\Delta}_{t}\|^2_{\H_t}
\leq \left(\frac{37\rho_{\max} d_{\eff}}{m}+\upsilon\right)\|\check{\Delta}_{t}\|^2_{\H_t}.
\nonumber
\end{align}
Applying $\upsilon<\frac{1}{2}$ and  the fact that $ \H_t\approx_{\upsilon}\H$ indicates  $ \|\vv\|^2_{\H_t}\approx_{\upsilon}\|\vv\|^2_{\H}
$, we get the following bound: 
\begin{align}\label{eq:each_iterate_error}
     \EE_{\zeta_t}[\|\check{\Delta}_{t+1}\|^2_{\H}]
     & \leq    (1+\upsilon) \EE_{\zeta_t}[\|\check{\Delta}_{t+1}\|^2_{\H_t}]\leq (1+\upsilon) \left( \frac{37\rho_{\max} d_{\eff}}{m}+\upsilon\right) \|\check{\Delta}_{t}\|^2_{\H_t}\nonumber\\
     &\leq (1+\upsilon)^2 \left( \frac{37\rho_{\max} d_{\eff}}{m}+\upsilon\right) \|\check{\Delta}_{t}\|^2_{\H}=\left( \frac{\rho_{\max} d_{\eff}}{m}+\upsilon\right)\|\check{\Delta}_{t}\|^2_{\H},
\end{align}
where the constant ``$(1+\upsilon)^2\cdot37$'' is  absorbed into $m$.

Now, it remains to  check that $\check\bbeta_t\in U$ for all $t$ when conditioned on the event $\zeta=\bigcap^{T-1}_{t=0}\zeta_t$.  Assuming that this holds for $t=0$, then it remains to prove that, conditioned on  the event $\zeta$, $\|\check{\Delta}_{t+1}\|_{\H}\leq\|\check{\Delta}_{t}\|_{\H} $ holds for each $t$ almost surely.  Following the fact that  \Cref{prop:debias} implies that, conditioned on $\zeta_t$,  we obtain 
\begin{align}
  \|\hat \Q_t-\I_d\|\leq \varepsilon, \nonumber
\end{align}
where Lemma \ref{lem:sub_embed} guarantees that $\varepsilon$ is small. Adapting the techniques  used in the proof of Lemma \ref{lemm:pre_convergence} and by the analysis of the exact Newton step $\Delta_{t+1}$, we arrive at
\begin{align}
    \|\check{\Delta}_{t+1}\|^2_{\H_t}&\leq \|\Delta_{t+1}\|^2_{\H_t} +2\|\Delta_{t+1}\|_{\H_t}\|\pp_t\|_{\H_t}\|\hat \Q_t-\I_d\|+\pp^\top_t\H^{\frac{1}{2}}_t(\hat \Q_t-\I_d)^2\H^{\frac{1}{2}}_t\pp_t \nonumber\\
   & \leq \|\Delta_{t+1}\|^2_{\H_t} +8\|\check{\Delta}_{t}\|^2_{\H_t}\|\hat \Q_t-\I_d\| 
   \leq \left(\varepsilon+\frac{\rho_{\max} d_{\eff}}{m}+\upsilon\right)\|\check{\Delta}_{t}\|^2_{\H_t}. \nonumber
\end{align}
From Lemma \ref{lem:sub_embed}, it is deduced that  $\varepsilon+\frac{\rho_{\max} d_{\eff}}{m}+\upsilon$ is sufficiently small. Using $\H_t\approx_\upsilon\H$ again, we deduce  $\|\check{\Delta}_{t+1}\|^2_{\H}\leq\|\check{\Delta}_{t}\|^2_{\H}$. Consequently, we infer that every iterate lies within $U$, and the result \eqref{eq:each_iterate_error} holds for $t=0,1,\ldots,T-1$.
Putting the above together, we conclude that, conditioned on the event  $\zeta$ that  holds with probability at least $1-\delta$, the union bound  \eqref{eq:result_conver} is achieved, thereby completing the proof.
\qedwhite

\subsection{Additional Results on SSN and Further Discussions} 
\label{subsec:additional_SSN}

Based on the above analysis, for any sampling method whose sampling probabilities are close to the exact approximate leverage scores, we then show that using scalar debiasing $m/(m-d_{\eff})$ yields a slightly weaker local convergence result for SSN  than that in~\Cref{theom:RS-convergence}, yet achieves enhanced computational efficiency. The result naturally follows  by
recalling~\Cref{coro:inver_bias_constant_debias} and~\Cref{coro:sec.moment_lev},  and adapting the proof of ~\Cref{theom:RS-convergence} with a neighborhood  $U=\{\bbeta \colon \|\bbeta-\bbeta^{\ast}\|_{\H}< u_{\rm lev}\sigma_{\min}^{3/2}/L\}$ in place of  $U=\{\bbeta \colon \|\bbeta-\bbeta^{\ast}\|_{\H}< (\rho_{\max} d_{\eff} \sigma_{\min}/m)^{3/2}/L\}$,  and  the corresponding  proof  is omitted for brevity.

\begin{corollary}[Local convergence of SSN using scalar debiasing under approximate leverage  ]\label{coro:ssn_exact_approximate_lev}
Under the settings and notations of~\Cref{theom:RS-convergence},
for any random sampling scheme with sampling distribution
$\pi_i\in [\ell^{\C}_{i}(\check\bbeta_t)/(d_{\eff}(\check\bbeta_t) \rho_{\max}(\check\bbeta_t)), \ell^{\C}_{i}(\check\bbeta_t)/(d_{\eff}(\check\bbeta_t) \rho_{\min}(\check\bbeta_t))]$ as in~\Cref{coro:inver_bias_constant_debias}, there exists a neighborhood $U$ of $\bbeta^{\ast}$ such that the \emph{de-biased} SSN iteration  $ \check{\bbeta}_{t+1}= \check{\bbeta}_{t}-\mu_t \left(\frac{m}{m-d_{\eff}(\check{\bbeta}_t)}\A(\check\bbeta_t)^\top \S_t^\top  \S_t \A(\check\bbeta_t)+\C(\check\bbeta_t) \right)^{-1}\g_t$ starting from $\check{\bbeta}_0 \in U$ satisfies, for $U=\{\bbeta \colon \|\bbeta-\bbeta^{\ast}\|_{\H}< u_{\rm lev}\sigma_{\min}^{3/2}/L\}$, step size $\mu_t=1-\frac{\rho_{\max} }{m/d_{\eff} + \rho_{\max} }$,  and $m\geq C\rho_{\max} d_{\eff}\log(d_{\eff}T/\delta ) $, that
\begin{equation*}
   \left( \EE_{\zeta}\left[\frac{\|\check{\bbeta}_T-\bbeta^{\ast}\|_{\H}}{\|\check{\bbeta}_0-\bbeta^{\ast}\|_{\H}}\right]\right)^{1/T}\leq \frac{\rho_{\max} d_{\eff}}{m} ( 1+ \epsilon+\epsilon_{\rho}), 
\end{equation*}
holds for $\epsilon=\tilde O\left(\sqrt{\rho_{\max} d_{\eff}/m}\right), \epsilon_{\rho}=\max\{\rho_{\min}^{-1}-1,1 - \rho_{\max}^{-1}\}$ and conditioned on an event $\zeta$ that happens with probability at least $1-\delta$. Here, $u_{\rm lev}=\log n(\rho_{\max} d_{\eff} /m)^{3/2}/\log (\log n)+\epsilon_{\rho}\rho_{\max} d_{\eff}/m$, $\sigma_{\min}$ is the smallest singular value of $\H \equiv\A(\bbeta^{\ast}) ^\top\A(\bbeta^{\ast})+\C(\bbeta^{\ast})$, $d_{\eff}=\max_{1\leq t \leq T}d_{\eff}(\check{\bbeta}_t)$ with $\ell_{i}^\C(\check{\bbeta}_t)$ and  $d_{\eff}(\check{\bbeta}_t)$ the leverage scores and effective dimension of $\A(\check{\bbeta}_t)$ given $\C(\check{\bbeta}_t)$, respectively, $\rho_{\max}=\max_{1\leq t \leq T}\rho_{\max}(\check{\bbeta}_t)$ and $\rho_{\min}=\min_{1\leq t \leq T}\rho_{\min}(\check{\bbeta}_t)$ with $\rho_{\max}(\check{\bbeta}_t)$  and $\rho_{\min}(\check{\bbeta}_t)$ the (max and min)  importance sampling approximation   factors (of $\A(\check{\bbeta}_t)$ given $\C(\check{\bbeta}_t)$) in \Cref{def:approx_factor}.
\end{corollary}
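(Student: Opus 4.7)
The plan is to re-run the argument of \Cref{theom:RS-convergence} verbatim, but feed it the weaker first- and second-inverse-moment bounds that hold under scalar debiasing $m/(m-d_{\eff})$ when the sampling distribution only approximates the exact leverage scores. Concretely, in the proof of \Cref{theom:RS-convergence} the only places where the fine-grained debiased sketch $\check\S_t$ enters are: (i) through \Cref{prop:debias}, which controls $\|\EE_{\zeta_t}[\hat\Q_t]-\I_d\|$; and (ii) through \Cref{prop:sec.moment}, which controls the second inverse moment $\EE_{\zeta_t}[\hat\Q_t^2]$. I would replace these two inputs with \Cref{coro:inver_bias_constant_debias} and \Cref{coro:sec.moment_lev} respectively, which handle exactly the scalar-debiased estimator $(\tfrac{m}{m-d_{\eff}}\A^\top\S^\top\S\A+\C)^{-1}$ under the assumption $\pi_i\in[\ell_i^\C/(d_{\eff}\rho_{\max}),\ell_i^\C/(d_{\eff}\rho_{\min})]$. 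The key quantitative change is that the first- and second-moment biases are now $O(\sqrt{\rho_{\max}^3 d_{\eff}^3/m^3})+O(\epsilon_\rho\rho_{\max} d_{\eff}/m)$ rather than only $O(\sqrt{\rho_{\max}^3 d_{\eff}^3/m^3})$, where $\epsilon_\rho=\max\{\rho_{\min}^{-1}-1,1-\rho_{\max}^{-1}\}$.

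With these replacements in hand, I would first establish the analogue of \Cref{lemm:pre_convergence}: conditioned on $\zeta_t$ and for the scalar-debiased iterate,
\begin{equation*}
  \EE_{\zeta_t}[\|\check\Delta_{t+1}\|^2_{\H_t}]\leq \|\Delta_{t+1}\|^2_{\H_t}+(\epsilon+\epsilon_\rho)\|\check\Delta_t\|^2_{\H_t}+\tfrac{36\rho_{\max} d_{\eff}}{m}\|\bbeta_{t+1}-\check\bbeta_t\|^2_{\nabla^2 f(\check\bbeta_t)},
\end{equation*}
where the extra $\epsilon_\rho\|\check\Delta_t\|^2_{\H_t}$ term comes from the $O(\epsilon_\rho\rho_{\max} d_{\eff}/m)$ contribution to both the first- and second-moment biases. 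Next, I would plug this into the existing chain in \Cref{subsec:proof_convergence}: bound $\|\Delta_{t+1}\|^2_{\H_t}$ in terms of $\|\check\Delta_t\|^2_{\H_t}$ via Lipschitzness of $\nabla^2 F$ (\Cref{assum:lip_f}) exactly as in that proof, combine with the step-size choice $\mu_t=1-\rho_{\max}/(m/d_{\eff}+\rho_{\max})$ so that the deterministic contraction constant is $2(1-\mu_t)^2+\tfrac{36\rho_{\max} d_{\eff}}{m}\mu_t^2=O(\rho_{\max} d_{\eff}/m)$, and absorb the additional $\epsilon_\rho$ factor into the final $(1+\epsilon+\epsilon_\rho)$ multiplier as stated.

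Third, I would verify that the choice $U=\{\bbeta:\|\bbeta-\bbeta^\ast\|_{\H}\leq u_{\rm lev}\sigma_{\min}^{3/2}/L\}$ with $u_{\rm lev}=(\rho_{\max} d_{\eff}/m)^{3/2}+\epsilon_\rho\rho_{\max} d_{\eff}/m$ is sufficient to keep all iterates inside $U$. The Lipschitz-Hessian argument from \Cref{subsec:proof_convergence} that shows $\H_t\approx_\upsilon\H$ and $\|\H_t\check\Delta_t-\g_t\|_{\H_t^{-1}}\leq\upsilon\|\check\Delta_t\|_{\H_t}$ goes through unchanged, but now we need $\upsilon\lesssim u_{\rm lev}$, which is exactly why $u_{\rm lev}$ must include both the $(\rho_{\max} d_{\eff}/m)^{3/2}$ piece (matching the fine-grained bias) and the $\epsilon_\rho\rho_{\max} d_{\eff}/m$ piece (matching the deviation-from-exact-leverage bias). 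A union bound over $t=0,\dots,T-1$ of the events $\zeta_t$, each holding with probability $1-\delta/T$ by \Cref{lem:sub_embed}, then gives the $1-\delta$ high-probability guarantee on $\zeta=\bigcap_t\zeta_t$, and the per-iterate recursion chains to the geometric rate claimed in \eqref{eq:result_conver}.

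The main obstacle will be keeping track of how the additional $O(\epsilon_\rho\rho_{\max} d_{\eff}/m)$ term in both moment bounds propagates through the bound $\pp_t^\top\H_t^{1/2}\EE_{\zeta_t}[\hat\Q_t^2-\I_d-\A_\C^\top\bar\F_t\A_\C]\H_t^{1/2}\pp_t$ in the proof of \Cref{lemm:pre_convergence}, and ensuring the two error scales $\sqrt{\rho_{\max}^3 d_{\eff}^3/m^3}$ and $\epsilon_\rho\rho_{\max} d_{\eff}/m$ do not get confused when one dominates the other. In particular, when $\epsilon_\rho=\Theta(1)$ the second-moment bias becomes the binding constraint and this is exactly why the neighborhood radius $u_{\rm lev}$ must be enlarged by an additive $\epsilon_\rho\rho_{\max} d_{\eff}/m$ instead of multiplying the fine-grained radius; everything else is essentially bookkeeping along the lines of \Cref{subsec:proof_convergence}.
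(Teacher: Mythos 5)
Your proposal is correct and follows the same route the paper itself indicates (the paper states this proof only in outline): replace \Cref{prop:debias} and \Cref{prop:sec.moment} by \Cref{coro:inver_bias_constant_debias} and \Cref{coro:sec.moment_lev} inside the proof of \Cref{theom:RS-convergence}, and enlarge the neighborhood radius to $u_{\rm lev}\sigma_{\min}^{3/2}/L$ so that the Lipschitz--Hessian error $\upsilon$ matches the new (larger) bias level, which is exactly where the additive $\epsilon_\rho\rho_{\max} d_{\eff}/m$ piece of $u_{\rm lev}$ comes from. One small correction: in your analogue of \Cref{lemm:pre_convergence} the extra term should read $O\!\left(\epsilon_\rho\rho_{\max} d_{\eff}/m\right)\|\check{\Delta}_t\|^2_{\H_t}$ rather than $\epsilon_\rho\|\check{\Delta}_t\|^2_{\H_t}$ (your surrounding text shows this is what you intend), since a bare coefficient $\epsilon_\rho=\Theta(1)$ would not chain to the claimed rate $\tfrac{\rho_{\max} d_{\eff}}{m}(1+\epsilon+\epsilon_\rho)$.
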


Similarly, for SRHT, applying the scalar debiasing  $m/(m-d_{\eff})$ to SSN  also  leads to a   slightly weaker local convergence than that in~\Cref{theom:RS-convergence}  but offers greater computational efficiency. This follows  readily  by
recalling~\Cref{coro:debiasing_srht} and~\Cref{coro:sec.moment_SRHT},  and adapting the proof of ~\Cref{theom:RS-convergence} with a neighborhood  $U=\{\bbeta \colon \|\bbeta-\bbeta^{\ast}\|_{\H}< u_{\rm SRHT}\sigma_{\min}^{3/2}/L\}$ instead of  $U=\{\bbeta \colon \|\bbeta-\bbeta^{\ast}\|_{\H}< (\rho_{\max} d_{\eff} \sigma_{\min}/m)^{3/2}/L\}$, and we omit the detailed proof for brevity.

\begin{corollary}[Local convergence of SSN for SRHT using scalar debiasing]\label{coro:ssn_SRHT}
Under the settings and notations of~\Cref{theom:RS-convergence},
for $\tilde{\A}_{\rm SRHT}(\check{\bbeta}_t)= \S_t\H_n\D_n\A(\check{\bbeta}_t)/\sqrt{n}  \in \RR^{m \times n}$ the SRHT of $\A$ as in \Cref{def:srht}, there exists a neighborhood $U$ of $\bbeta^{\ast}$ such that the \emph{de-biased} SSN iteration $ \check{\bbeta}_{t+1}= \check{\bbeta}_{t}-\mu_t \left(\frac{m}{m-d_{\eff}(\check{\bbeta}_t)}\tilde{\A}_{\rm SRHT}^\top(\check\bbeta_t) \S_t^\top  \S_t \tilde{\A}_{\rm SRHT}(\check\bbeta_t)+\C(\check\bbeta_t) \right)^{-1}\g_t$ starting from $\check{\bbeta}_0 \in U$ satisfies, for $U=\{\bbeta \colon \|\bbeta-\bbeta^{\ast}\|_{\H}<u_{\rm SRHT}\sigma_{\min}^{3/2}/L\}$, step size $\mu_t=1-\frac{\rho_{\max} }{m/d_{\eff} + \rho_{\max} }$, $m\geq C\rho_{\max} d_{\eff}\log(d_{\eff}T/\delta ) $, and $Tn \exp(-d_{\eff}) < \delta < Tm^{-3}$, that
\begin{equation}\label{eq:SSN_SRHT_scalar_debiasing}
   \left( \EE_{\zeta}\left[\frac{\|\check{\bbeta}_T-\bbeta^{\ast}\|_{\H}}{\|\check{\bbeta}_0-\bbeta^{\ast}\|_{\H}}\right]\right)^{1/T}\leq \frac{\rho_{\max} d_{\eff}}{m} ( 1+\epsilon), 
\end{equation}
holds for $\epsilon=\tilde  O\left(\sqrt{\rho_{\max} d_{\eff}/m} \right) + O\left(\rho^{-1}_{\max} \sqrt{d_{\eff}^{-1}\log(nT/\delta)}\right)$  and conditioned on an event $\zeta$ that happens with probability at least $1-\delta$. Here, $u_{\rm SRHT}=\log n(\rho_{\max} d_{\eff} /m)^{3/2}/\log (\log n)+\sqrt{d_{\eff} \log(nT/\delta)}/m $, $\sigma_{\min}$ is the smallest singular value of $\H \equiv\A(\bbeta^{\ast})^\top \A(\bbeta^{\ast})+\C(\bbeta^{\ast})$, $d_{\eff}=\max_{1\leq t \leq T}d_{\eff}(\check{\bbeta}_t)$ with $\ell_{i}^\C(\check{\bbeta}_t)$ and  $d_{\eff}(\check{\bbeta}_t)$ the leverage scores and effective dimension of $\H_n\D_n\A(\check{\bbeta}_t)/\sqrt{n}$ given $\C(\check{\bbeta}_t)$, respectively, $\rho_{\max}=\max_{1\leq t \leq T}\rho_{\max}(\check{\bbeta}_t)$  with $\rho_{\max}(\check{\bbeta}_t)$ the max  importance sampling  approximation factor (of $\H_n\D_n\A(\check{\bbeta}_t)/\sqrt{n}$ given $\C(\check{\bbeta}_t)$) in \Cref{def:approx_factor}. 
\end{corollary}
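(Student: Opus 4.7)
The plan is to mirror the proof of \Cref{theom:RS-convergence}, substituting its two fine-grained inputs (\Cref{prop:debias} for the first inverse moment and \Cref{prop:sec.moment} for the second) by the SRHT-with-scalar-debiasing counterparts \Cref{coro:debiasing_srht} and \Cref{coro:sec.moment_SRHT}. First I would build a high-probability event $\zeta = \bigcap_{t=0}^{T-1}\zeta_t$, where each $\zeta_t$ is the intersection of (i) the subspace-embedding event from \Cref{lem:sub_embed} at iterate $\check{\bbeta}_t$, and (ii) the approximate-uniform-leverage event supplied by \Cref{lemm: HD_balance_row_norms} for $\H_n \D_n \A(\check{\bbeta}_t)/\sqrt{n}$, tuned so that the per-iteration failure probability is $\delta/T$. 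The hypothesis $T n \exp(-d_{\eff}) < \delta < T m^{-3}$ is precisely what makes this union bound deliver $\Pr(\zeta) \geq 1-\delta$ while still allowing \Cref{coro:debiasing_srht} and \Cref{coro:sec.moment_SRHT} to apply at each of the $T$ iterates.

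Conditioned on $\zeta_t$, the two SRHT corollaries yield per-step first- and second-inverse-moment biases of size
\begin{equation*}
\epsilon_t = O\!\left(\sqrt{\tfrac{\rho_{\max}^3 d_{\eff}^3}{m^3}}\right) + O\!\left(\tfrac{\sqrt{d_{\eff}\log(nT/\delta)}}{m}\right).
\end{equation*}
I would then replay the algebraic identity from \Cref{lemm:pre_convergence} verbatim: decomposing $\EE_{\zeta_t}[\|\check{\Delta}_{t+1}\|_{\H_t}^2] - \|\Delta_{t+1}\|_{\H_t}^2$ into a first-moment piece (controlled by $\epsilon_t$) and a second-moment piece (controlled by $\epsilon_t$ together with $\|\bar{\F}_t\|\le 36\rho_{\max} d_{\eff}/m$) gives
\begin{equation*}
\EE_{\zeta_t}[\|\check{\Delta}_{t+1}\|_{\H_t}^2] \leq \|\Delta_{t+1}\|_{\H_t}^2 + \epsilon_t \|\check{\Delta}_t\|_{\H_t}^2 + \tfrac{36\rho_{\max} d_{\eff}}{m}\|\pp_t\|_{\nabla^2 f(\check{\bbeta}_t)}^2,
\end{equation*}
where $\pp_t = \bbeta_{t+1} - \check{\bbeta}_t$ is the exact-Newton offset and $\Delta_{t+1} = \bbeta_{t+1}-\bbeta^{\ast}$.

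The next step is to combine this with the Lipschitz-Hessian exact-Newton estimate $\|\Delta_{t+1}\|_{\H_t}^2 \leq 2((1-\mu_t)^2 + \upsilon\mu_t)\|\check{\Delta}_t\|_{\H_t}^2$, with $\upsilon \lesssim L\|\check{\Delta}_t\|_{\H}/\sigma_{\min}^{3/2}$ from \Cref{assum:lip_f}, exactly as in \eqref{eq:ass_for_conver}. With the prescribed step size $\mu_t = 1 - \rho_{\max}/(m/d_{\eff} + \rho_{\max})$, the coefficient $2(1-\mu_t)^2 + (36\rho_{\max} d_{\eff}/m)\mu_t^2$ collapses to a multiple of $\rho_{\max} d_{\eff}/m$ after absorbing constants into the lower bound on $m$, delivering the one-step contraction $\EE_{\zeta_t}[\|\check{\Delta}_{t+1}\|_{\H}^2] \leq (\rho_{\max} d_{\eff}/m)(1+\epsilon_t)\|\check{\Delta}_t\|_{\H}^2$. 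Iterating over $t = 0,\ldots,T-1$ and taking a $T$-th root then produces \eqref{eq:SSN_SRHT_scalar_debiasing}.

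The hard part will be the invariance step: proving by induction on $t$ that every iterate $\check{\bbeta}_t$ remains inside $U$, which is what keeps the Lipschitz estimate on $\upsilon$ valid throughout. The enlarged radius $u_{\rm SRHT} = (\rho_{\max} d_{\eff}/m)^{3/2} + \sqrt{d_{\eff}\log(nT/\delta)}/m$ is forced to dominate \emph{both} summands of $\epsilon_t$ simultaneously; this is exactly why it is strictly larger than $u_{\rm lev} = (\rho_{\max} d_{\eff}/m)^{3/2}$ used in the approximate-leverage case of \Cref{coro:ssn_exact_approximate_lev}. Verifying that the induction step $\|\check{\Delta}_{t+1}\|_{\H} \leq \|\check{\Delta}_t\|_{\H}$ still closes with this enlarged $U$ — using a subspace-embedding-only bound on $\|\hat{\Q}_t - \I_d\|$ analogous to \eqref{eq:breve_q_I} — is the only nontrivial bookkeeping, and it works precisely because $m \geq C\rho_{\max} d_{\eff}\log(d_{\eff}T/\delta)$ keeps both components of $\epsilon_t$ and the new log-term in $u_{\rm SRHT}$ small.
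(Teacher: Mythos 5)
Your proposal follows essentially the same route as the paper, which itself only sketches this corollary: it states that the result follows by combining \Cref{coro:debiasing_srht} and \Cref{coro:sec.moment_SRHT} and adapting the proof of \Cref{theom:RS-convergence} with the enlarged neighborhood radius $u_{\rm SRHT}$, exactly the substitution-and-replay strategy (per-iteration SRHT events, \Cref{lemm:pre_convergence}-style decomposition, induction keeping iterates in $U$) that you spell out. Your additional bookkeeping of how the $\sqrt{d_{\eff}\log(nT/\delta)}/m$ term forces the enlarged $u_{\rm SRHT}$ is consistent with the paper's intent.
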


\begin{remark}[Comparison between \Cref{coro:ssn_SRHT} and{~\citet[Theorem 2]{lacotte2021adaptive}}]\normalfont \label{rem:ssn_srht_scalar}
Using~\Cref{coro:debiasing_srht} and \Cref{coro:sec.moment_SRHT} with $\nu = 0$, it follows, by adapting the proof of \Cref{theom:RS-convergence}, that for a number $m>Cd_{\eff}$ of samples, the linear convergence rate in \Cref{coro:ssn_SRHT} holds \emph{but} with a right-hand side term of $O(1)$ in \eqref{eq:SSN_SRHT_scalar_debiasing}. 
That is similar to the \emph{problem-dependent} linear convergence rate obtained in e.g.,~\citet[Theorem 2]{lacotte2021adaptive} for self-concordant $f$, $\Phi$, and $F$ in~\eqref{eq:optimization_problem}. 
Notably, using a larger number of samples $m$ as in \Cref{coro:ssn_SRHT} (than $\Theta(d_{\eff})$), the (linear) convergence rate in \Cref{coro:ssn_SRHT} becomes dependent on $\rho_{\max} d_{\eff}/m$, whereas the linear or quadratic rates stated in \citet[Theorem 2]{lacotte2021adaptive} cannot be characterized using $n$, $d_{\eff}$ or $m$.
\end{remark}

\section{Additional Numerical Experiments and Implementation Details}
\label{sec:imple_detail_nmerical_exper}

Here, we provide in \Cref{sec:sketch_matrix} and \Cref{sec:Datasets} implementation details of the numerical experiments on SSN in \Cref{sec:num}, and then in \Cref{subsec:numerical inversion bias} additional numerical results on the inversion bias under approximate versus exact leverage score sampling. 

\subsection{Sketching Matrices and Step Size}
\label{sec:sketch_matrix}

For a data matrix $\A\in\RR^{n\times d}$, the approximate ridge leverage scores and  leverage scores are computed using the method provided in~\citet{drineas2012fast}~and~\citet{cohen2017input}. 
The LESS-uniform sketching matrix  is constructed as in \citet[Section~E.1]{derezinski2021newtonless}.  Further implementation details, including those for the SRHT, are available in the public repository provided by \citet{derezinski2021newtonless} at \url{https://github.com/lessketching/newtonsketch}. 
Additionally, the Shrinkage Leverage Score sampling probability is formulated by combining uniform sampling probability and approximate leverage score sampling probability in equal proportions. 

In our experiments, the first-order methods, Gradient Descent and Stochastic Gradient Descent, are used with a fixed step size. 
As done in~\citet{derezinski2021newtonless}, second-order methods, specifically Sub-sampled Newton and Newton Sketch with Less-uniform Sketches, employ step sizes that are dynamically adjusted using a line search algorithm based on the Armijo condition~\citep[Chapter~3]{bonnans2006numerical}.

For a fair comparison of the ``convergence-complexity'' trade-off across different optimization methods, the time reported in Figures~\ref{fig:sketch size}~and~\ref{fig:convergence_time} include the time for input data pre-processing, e.g.,  the computation of exact or approximate leverage scores, and Walsh–Hadamard transform, as well as the computational overhead associated with the sketching process.
And we fix in Figures~\ref{fig:sketch size}~and~\ref{fig:convergence_time} the ridge regularization parameter to $\lambda=10^{-2}$ for both MNIST and CIFAR-10 data in \Cref{sec:num}.

\subsection{Datasets}\label{sec:Datasets}

For MNIST data matrix, we have $n = 2^{13}$ and $d =2^7$, and for CIFAR-10 data, we have $n =2^{14}$ and $d =2^8$.  
We use \textsf{torchvision.transforms} from PyTorch to pre-process each image. 
We divide the ten classes of MNIST and CIFAR-10 datasets into two groups, assigning them labels of $-1$ and $+1$, respectively.

\subsection{Inversion Bias for Exact versus Approximate Leverage Score Sampling under Scalar Debiasing}
\label{subsec:numerical inversion bias}

In this section, we present empirical experiments to compare the inversion bias of exact versus approximate leverage score sampling under scalar debiasing $m/(m-d_{\eff})$, as shown in \Cref{coro:inver_bias_constant_debias} and discussed in \Cref{rem:inv_bias_exact_VS_approx}, for both MNIST and CIFAR-10 data.

\Cref{fig:sketch size of inversion bias} depicts the inversion bias (with and without the scalar debiasing $m/(m-d_{\eff})$ measured in spectral norm:
$\|\H^{1/2}(\EE[(\frac{m}{m-d_{\eff}}\A^\top \S^\top\S \A + \lambda\I_d )^{-1}]-(\A^\top \A + \lambda\I_d )^{-1})\H^{1/2}\|$ and $\|\H^{1/2}(\EE[(\A^\top \S^\top\S \A + \lambda\I_d )^{-1}]-(\A^\top \A + \lambda\I_d )^{-1})\H^{1/2}\|$,  $\H=\A^\top \A + \lambda\I_d$ ) as a function of the sketch size $m$, using the following random sampling schemes:
\begin{enumerate}
    \item \textbf{RLev}: Exact $\lambda$-ridge leverage score sampling with scalar debiasing $m/(m-d_{\eff})$.
      \item \textbf{NO-RLev}: Standard exact $\lambda$-ridge leverage score sampling \emph{without} scalar debiasing $m/(m-d_{\eff})$.
     \item \textbf{ARLev}: Approximate $\lambda$-ridge leverage score sampling (identical to that used in \Cref{sec:num}) with scalar debiasing $m/(m-d_{\eff})$, where the approximate ridge leverage scores $\hat \ell_i^{\C}\approx \|\ee_i^\top\A(\A^\top\S_1^\top\S_1 \A + \lambda\I_d )^{-1/2}\|^2$  are computed using sparse Johnson Lindenstrauss transform (SJLT)~\citep{clarkson2017low} $\S_1\in\RR^{m_1\times n }$ of size $m_1$.
      \item \textbf{NO-ARLev}: Standard approximate $\lambda$-ridge leverage score sampling without scalar debiasing $m/(m-d_{\eff})$.
      \item \textbf{DARLev}: A more efficient double-sketches variant of approximate $\lambda$-ridge leverage score sampling, together with  scalar debiasing $m/(m-d_{\eff})$, where the approximate ridge leverage scores $\hat \ell_i^{\C}\approx \|\ee_i^\top\A((\A^\top\S_1^\top\S_1  \A + \lambda\I_d )^{-1/2}\S^\top_2)\|^2$ are constructed using two SJLT matrices $\S_1\in\RR^{m_1 \times n }$ and $\S_2\in\RR^{m_2 \times d }$, with $m_2<m_1$, see~\citep{drineas2012fast,cohen2017input}.
      \item \textbf{NO-DARLev}: As \textbf{DARLev} but without the scalar debiasing $m/(m-d_{\eff})$.
\end{enumerate}

We observe from \Cref{fig:sketch size of inversion bias} that the inversion bias of all random sampling methods consistently decreases as $m$ increases. 
Comparing solid to dashed lines, we see that the proposed scalar debiasing effectively reduces the inversion bias by a significant margin.
We also find that \textbf{RLev} yields a lower inversion bias compared to its approximate counterparts \textbf{ARLev} and \textbf{DARLev}, under the scalar debiasing.
These results corroborate the conclusions in \Cref{coro:inver_bias_constant_debias} and \Cref{rem:inv_bias_exact_VS_approx}.

\begin{figure}[!t]
  \centering
   \begin{subfigure}[c]{0.48\textwidth}
\begin{tikzpicture}
\renewcommand{\axisdefaulttryminticks}{4} 
\pgfplotsset{every major grid/.style={densely dashed}}       
\tikzstyle{every axis y label}+=[yshift=-10pt] 
\tikzstyle{every axis x label}+=[yshift=5pt]
\pgfplotsset{every axis legend/.append style={cells={anchor=east},fill=none,   at={(0.99,0.99)}, anchor=north east, font=\tiny}}
\begin{axis}[
width=0.95\columnwidth,
height=0.75\columnwidth,
xlabel style={font=\small},
    ylabel style={font=\small},
    tick label style={font=\tiny},
    xmin =500,
        xmax =3000,
        ymax =0.4,
        ymin =0.01, 
        ymajorgrids=true,
        scaled ticks=true,
          xlabel = { Sketch size $m$ },
          ylabel = { Inversion bias },
        ymode=log
        ]

        \addplot[mark=pentagon*,color={cmyk:cyan,1;magenta,0;yellow,0;black,0},line width=0.8pt] coordinates{
          (500,0.0455) ( 1400, 0.0202) (2000, 0.0172)  (2500, 0.0147) (3000,   0.0117)
        };
          \addlegendentry{{RLev}}[ font=\tiny];

            \addplot[mark=pentagon*,color={cmyk:cyan,1;magenta,0;yellow,0;black,0},line width=0.8pt, dashed, mark options={solid,fill={cmyk:cyan,1;magenta,0;yellow,0;black,0}}] coordinates{
          (500,0.1858) ( 1400,  0.0631) (2000, 0.0482)  (2500, 0.0382) (3000,   0.0325)

        };
          \addlegendentry{{NO-RLev}}[ font=\tiny];
      \addplot[mark=*,color=RED,line width=0.8pt] coordinates{
   (500,0.0527) ( 1400,0.0223)(2000, 0.0202)  (2500, 0.0175) (3000, 0.0137)
      };
\addlegendentry{{ARLev}}[ font=\tiny];

 \addplot[mark=*,color=RED,line width=0.8pt, dashed, mark options={solid,fill=RED}] coordinates{
(500,0.1952) ( 1400, 0.0669) (2000, 0.0503)  (2500, 0.0421) (3000,  0.0347)
      };
\addlegendentry{{NO-ARLev}}[ font=\tiny];

      \addplot[mark=|,color={cmyk:cyan,0.4;magenta,0.2;yellow,1;white,0;green,1},line width=0.8pt] coordinates{
   (500,0.1821) ( 1400,   0.0316) (2000, 0.0252)  (2500,  0.0221) (3000,  0.0205)
      };
\addlegendentry{{DARLev}}[ font=\tiny];

      \addplot[mark=|,color={cmyk:cyan,0.4;magenta,0.2;yellow,1;white,0;green,1},line width=0.8pt, dashed, mark options={solid,fill={cmyk:cyan,0.4;magenta,0.2;yellow,1;white,0;green,1}}] coordinates{
 (500,0.3092) ( 1400,  0.0787) (2000, 0.0579)  (2500, 0.0472) (3000,  0.0419)
      };
\addlegendentry{{NO-DARLev}}[ font=\tiny];

\end{axis}
\end{tikzpicture}
 \caption{MNIST data}
\end{subfigure}
  \hfill{}
  \begin{subfigure}[c]{0.48\textwidth}
  \begin{tikzpicture}
\renewcommand{\axisdefaulttryminticks}{4} 
\pgfplotsset{every major grid/.style={densely dashed}}       
\tikzstyle{every axis y label}+=[yshift=-10pt] 
\tikzstyle{every axis x label}+=[yshift=5pt]
\pgfplotsset{every axis legend/.append style={cells={anchor=east},fill=white, at={(0.8,1.4)}, anchor=north west, font=\tiny}}
\begin{axis}[
width=0.95\columnwidth,
height=0.75\columnwidth,
xlabel style={font=\small},
    ylabel style={font=\small},
    tick label style={font=\tiny},
    xmin =500,
        xmax = 4000,
        ymax =1.8,
        ymin = 0.02,
          ymajorgrids=true,
        scaled ticks=true,
          xlabel = { Sketch size $m$ },
          ylabel = {Inversion bias },
        ymode=log
        ]

        \addplot[mark=pentagon*,color={cmyk:cyan,1;magenta,0;yellow,0;black,0},line width=0.8pt] coordinates{
    (500, 0.1091) (1000, 0.0547) (1500, 0.0413) (3500, 0.0249) (4000, 0.0230)
        };

            \addplot[mark=pentagon*,color={cmyk:cyan,1;magenta,0;yellow,0;black,0},line width=0.8pt, dashed] coordinates{
      (500, 1.2728) (1000, 0.4176) (1500, 0.2556) (3500, 0.1056) (4000, 0.0929)
        };
      \addplot[mark=*,color=RED,line width=0.8pt] coordinates{
 (500, 0.1114) (1000, 0.0571) (1500, 0.0429) (3500, 0.0258) (4000, 0.0236)
      };

 \addplot[mark=*,color=RED,line width=0.8pt, dashed] coordinates{
(500, 1.2775) (1000, 0.4208) (1500, 0.2575) (3500, 0.1067) (4000, 0.0936)
      };

      \addplot[mark=|,color={cmyk:cyan,0.4;magenta,0.2;yellow,1;white,0;green,1},line width=0.8pt] coordinates{
   (500, 0.1854) (1000, 0.0915) (1500, 0.0664) (3500, 0.0362) (4000, 0.0346)
      };

      \addplot[mark=|,color={cmyk:cyan,0.4;magenta,0.2;yellow,1;white,0;green,1},line width=0.8pt, dashed] coordinates{
(500, 1.4290) (1000, 0.4671) (1500, 0.2858) (3500, 0.1180) (4000, 0.1054)
      };
\end{axis}
\end{tikzpicture}
 \caption{CIFAR-10 data}
  \end{subfigure}
\caption{{ Inversion bias as a function of the sketch size $m$, for various sampling methods on both MNIST and CIFAR-10 data, with ridge parameter $\lambda=10^{-1}$ for MNIST data and $\lambda=10^{-6}$ for CIFAR-10 data. The results are averaged over $500$ independent runs.
}}
\label{fig:sketch size of inversion bias}
\end{figure}
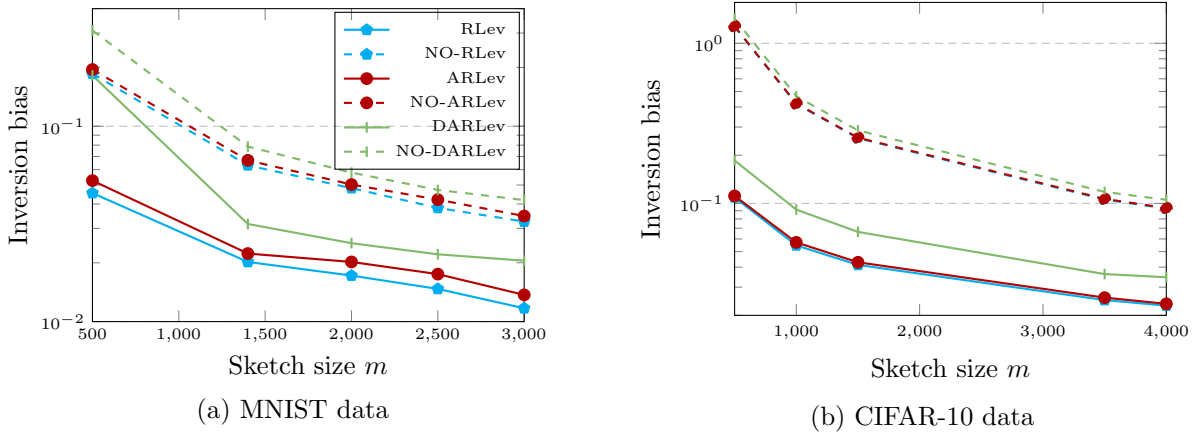

\end{document}